\documentclass[oneside]{amsart}
\usepackage{a4wide}
\usepackage{amssymb,amsmath}
\usepackage{enumerate}
\usepackage{comment}
\usepackage[shortlabels]{enumitem}
\usepackage{marginnote}
\usepackage{bbold}
\usepackage{smartref}
\usepackage{subcaption,nicefrac}
\usepackage[unicode,naturalnames,hidelinks]{hyperref}
\usepackage{bm}
\usepackage[utf8]{inputenc}
\usepackage[T1]{fontenc}
\usepackage{microtype}
\usepackage{mathscinet}
\usepackage{xcolor}

\DeclareMathOperator{\sign}{sgn}

\usepackage{accents}
\MakeRobust{\underaccent} 
\newcommand{\name}[1]{\underaccent{\sim}{#1}}
\newcommand{\neta}{{\name\eta}}
\newcommand{\ntau}{\name\tau}
\newcommand{\nsigma}{\name\sigma}
\newcommand{\na}{\name a}
\newcommand{\npi}{\name\pi}
\newcommand{\nphi}{\name \phi}

\DeclareMathOperator{\majority}{major}




\DeclareMathOperator{\poss}{poss}
\DeclareMathOperator{\POSSBACK}{POSS}
\newcommand{\QPOSS}{\POSSBACK^{Q}}
\newcommand{\PPOSS}{\POSSBACK}
\newtheorem{theorem}[equation]{Theorem}

\newtheorem{lemma}[equation]{Lemma}
\newtheorem{definitionandlemma}[equation]{Definition and Lemma}
\newtheorem{plemma}[equation]{Preliminary Lemma}

\newtheorem{corollary}[equation]{Corollary}

\theoremstyle{definition}
\newtheorem{definition}[equation]{Definition}
\theoremstyle{remark}
\newtheorem{remark}[equation]{Remark}
\newtheorem*{remark*}{Remark}
\newtheorem*{remarks*}{Remarks}
\newtheorem*{notation*}{Notation}
\newtheorem{assumption}[equation]{Assumption}
\newtheorem{fact}[equation]{Fact}
\newtheorem{facts}[equation]{Facts}
\newtheorem*{facts*}{Facts}

\numberwithin{equation}{section}

\DeclareMathOperator{\dom}{dom}

\DeclareMathOperator{\cf}{cf}
\DeclareMathOperator{\supp}{supp}

\newcommand{\odd}{\mathnormal{\textsc{odd}}}

\newcommand{\nx}{{\name x}}
\newcommand{\ny}{{\name y}}
\newcommand{\nX}{{\name X}}
\newcommand{\Gen}{f_{\text{gen}}}

\date{2024-03-04}


\thanks{Supported by 
Austrian Science Fund (FWF): grants P33420	and P33895
(first author) and T1081 (second author), and
Israel Science Foundation (ISF) grant 2320/23 (third author).
Variants of Sections~\ref{sec:ma} and~\ref{sec:meas} where 
included in the second author's thesis.
This is publication number~1224 of the third author.}

\author{Jakob Kellner}
\address{Technische Universität Wien (TU Wien).}
\email{jakob.kellner@tuwien.ac.at}
\urladdr{\url{http://dmg.tuwien.ac.at/kellner/}}
\author{Anda Ramona T\u anasie}
\address{Technische Universität Wien (TU Wien).}
\email{anda-ramona.latif@tuwien.ac.at}
\author{Saharon Shelah}
\address{The Hebrew University of Jerusalem and Rutgers University.}
\email{shlhetal@mat.huji.ac.il}
\urladdr{\url{http://shelah.logic.at/}}

\title{On automorphisms of $\mathcal P(\lambda)/[\lambda]^{<\lambda}$.}

\begin{document}
\begin{abstract}
We investigate the statement ``all automorphisms  of $\mathcal P(\lambda)/[\lambda]^{<\lambda}$ are trivial''. We show that MA implies the statement for regular uncountable $\lambda<2^{\aleph_0}$; 
that the statement is false for measurable $\lambda$ if $2^\lambda=\lambda^+$; and that for ``densely trivial'' it can be forced (together with $2^\lambda=\lambda^{++}$) for inaccessible~$\lambda$.
\end{abstract}

\maketitle

\section{Introduction}
\newcommand{\myte}{T1}
\newcommand{\mytz}{T2}
\newcommand{\mytd}{T3}
\newcommand{\mytv}{T4} 
We investigate automorphisms of Boolean algebras of the form
\[P^\lambda_\kappa:=\mathcal P(\lambda)/[\lambda]^{<\kappa}\]
The instance $P^{\omega}_{\omega}$, i.e., $\mathcal P(\omega)/\mathrm{FIN}$, has
been studied extensively for many years.\footnote{Rudin~\cite{rudin1956,MR80903} showed in the 1950s that CH implies that there is a non-trivial automorphism; Shelah~\cite{MR675955} showed that consistently all automorphisms are trivial. Further results can be found, e.g., in~\cite{Sh:987,Sh:990,MR1202874,MR1896046,MR3880881,MR1711328,farah2024corona}.} 
One can study variants for uncountable cardinals $\lambda$.
Unsurprisingly, the behaviour here 
tends to be quite different to the countable case.
One moderately popular\footnote{See e.g.~\cite{MR1202874,MR3549381,MR3846847}.} such generalisation is $P^\lambda_{\omega}$.
Here, we study another obvious generalization of the countable case, $P^\lambda_{\lambda}$.
Some results for general $P^\lambda_\kappa$ can be found in~\cite{MR3480121}.

The main result of the paper is: 
\begin{equation}
\tag{\myte, Thm.~\ref{thm:main4}} \parbox{0.8\textwidth}{The following is equiconsistent  with an inaccessible: $\lambda$ is inaccessible,
    $2^\lambda$ is $\lambda^{++}$ 
    and all automorphisms of $P^\lambda_{\lambda}$
    are densely trivial.}    
\end{equation}
Here, $2^\lambda>\lambda^{+}$ is necessary, at least for measurables:
\begin{equation}
\tag{\mytz, Thm.~\ref{thm:meas}} \parbox{0.8\textwidth}{If $\lambda$ is measurable and 
    $2^\lambda=\lambda^+$, then there is a nontrivial automorphism
    of $P^\lambda_{\lambda}$.}
\end{equation}
\begin{remark} 
From~\cite[Lem.~3.2]{Sh:990} it would follow that \mytz\ holds even when 
``measurable'' is replaced by just ``inaccessible''.
However, the proof there turned out to be incorrect.\footnote{A corrected version has been submitted, see~\url{https://shelah.logic.at/papers/990a/}. This version again establishes the result only assuming inaccessibility.}
\end{remark}

For $\lambda$  below the continuum
we get the following result under Martin's Axiom (MA). More explicitly, MA$_{=\lambda}(\sigma\text{-centered})$ is sufficient,
which is the statement that for any $\sigma$-centered poset $P$ and 
${\le}\lambda$ many open dense sets in $P$ there is a filter $G$ meeting all these open sets:
\begin{equation}\tag{\mytd, Thm.~\ref{Matriv}}
\parbox{0.8\textwidth}{For $\aleph_0<\kappa\le\lambda<2^{\aleph_0}$ and $\kappa$ regular, MA$_{=\lambda}(\sigma\text{-centered})$  implies that every automorphism of $P^\lambda_{\kappa}$ is trivial.}    
\end{equation}

Larson and McKenney~\cite{MR3480121} showed the same
under MA$_{\aleph_1}$ for the case  $\lambda=2^{\aleph_0}$ and $\kappa=\aleph_1$.

Contrast this to the case $\lambda=\kappa=\omega$:
Due to results of Veli\v{c}kovi\'{c}, Stepr\=ans and the third author,
``Every automorphisms
of $\mathcal P(\omega)/[\omega]^{<\omega}$ is trivial''
is implied by PFA~\cite{Sh:315}, in fact even by 
MA+OCA~\cite{MR1202874},
but not by MA alone~\cite{MR1202874}
(not even for ``somewhere trivial''~\cite{MR1896046}).




\subsection*{Contents}
    We start by introducing some notation and basic results
    in Sec.~\ref{sec:def} (p.~\pageref{sec:def}).
    
    The following sections are independent of each other: 
    
    In Sec.~\ref{sec:ma} (p.~\pageref{sec:ma}) we show 
    \mytd, i.e., Thm.~\ref{Matriv}; 
    in Sec.~\ref{sec:meas} (p.~\pageref{sec:meas}), 
    we show \mytz, i.e., Thm.~\ref{thm:meas}; and finally in the main part, Sec.~\ref{sec:forc}  (p.~\pageref{sec:forc})
    we develop some forcing notions to prove \myte, i.e., Thm.~\ref{thm:main4}.

\subsection*{Acknowledgments}
We thank an anonymous referee for numerous corrections.

\section{Definitions}\label{sec:def}
We always assume:
\begin{itemize}
    \item $\lambda$ is a cardinal and $\kappa\le\lambda$ is regular.
    \item The case $\kappa=\aleph_0$ or $\lambda=\aleph_0$
    is included only for completeness sake in the following definitions.
    \item In  Section~\ref{sec:ma} we will assume that $\aleph_1\le \kappa \le \lambda< 2^{\aleph_0}$.
    \item In Section~\ref{sec:meas} we assume that $\lambda$ is measurable and $\kappa=\lambda$.
    \item In Section~\ref{sec:forc} we assume that $\lambda$ is inaccessible and $\kappa=\lambda$.
\end{itemize}
Notation:
\begin{itemize}
    \item  We investigate 
    the 
Boolean algebra (BA)  $P^\lambda_\kappa:=\mathcal P(\lambda)/[\lambda]^{{<}\kappa}$,
i.e., the power set of $\lambda$ factored by the ideal of
sets of size ${<}\kappa$.
    \item For $A\subseteq \lambda$, we 
    denote the equivalence class of $A$ with  
    $[A]$. We set $\mathbb 0:=[\emptyset]$.
    \item $A\subseteq^* B$ means $|B\setminus A|<\kappa$,
    analogously for $A=^*B$; and ``for almost all $\alpha\in A$''
    means for all but ${<}\kappa$ many in $A$.
In particular, $A=^*\lambda$ means $A\subseteq \lambda$ and
$|\lambda\setminus A|<\kappa$.
\item We denote the BA-operations in 
$P^\lambda_\kappa$ with $x \vee y$, 
$x \wedge y $ and $x^c$ (for the complement).

So we have $[A]\vee [B]=[A\cup B]$,
$[A]\wedge [B]=[A\cap B]$, and $[A]^c=[\lambda\setminus A]$.
\item A function $\phi: P^\lambda_\kappa\to P^\lambda_\kappa$ is a BA-automorphism (which we will just
call \emph{automorphism}), if it is bijective,  
compatible with $\wedge$ and the complement, and satisfies
$\phi(\mathbb 0)=\mathbb 0$.
\item Preimages of a function $f$ are denoted by $f^{-1}x$, images
by $f''x$. 
\item We sometimes 
identify $\eta\in 2^\lambda$  with
$\eta^{-1}\{1\}\subseteq \lambda$
 without explicitly mentioning it, by 
referring to $\eta$ as element of $2^\lambda$
or of $P(\lambda)$.
\end{itemize}


Let us note that 
$P^\lambda_\kappa$ is ${<}\kappa$-complete\footnote{I.e.,
if  $|I|<\kappa$ then 
$\bigvee_{i\in I}[A_i]=[\bigcup_{i\in I} A_i]$}
and $\lambda^+$-cc.
Also, 
any automorphism $\phi$ is closed under
${<}\kappa$ unions:  
$\phi(\bigvee_{i\in I}[A_i])=
\bigvee_{i\in I}\phi([A_i])$.

An automorphism is trivial if it is induced by a function
on $\lambda$. A standard definition to capture this concept is the following:
\begin{definition}
  An automorphism $\phi:P^\lambda_\kappa\to P^\lambda_\kappa$
  is trivial, if there is a $g:\lambda\to\lambda$ 
  such that $\phi([A])=[g^{-1}A]$ for all $A\subseteq \lambda$.
\end{definition}

However, we prefer to use forward images instead of inverse images; which can easily be seen to be equivalent:
\begin{definition}\mbox{}
  \begin{itemize}
      \item  For $f:A_0 \to\lambda$ with $A_0=^* \lambda$, 
   define $\pi_f:P^\lambda_\kappa\to P^\lambda_\kappa$
   by $\pi_f([B]):=[f''(B\cap A_0)]$
   for all $B\subseteq \lambda$.
      \item $f$ is an almost permutation, if there are
      $A_0=^*\lambda$ and
      $B_0=^*\lambda$ with $f:A_0\to B_0$ bijective.
  \end{itemize}

\end{definition}
(Such a $\pi_f$ is always a well-defined function.)

\begin{lemma} Let $\phi:P^\lambda_\kappa\to P^\lambda_\kappa$ be a function. The following are equivalent:
\begin{enumerate}
    \item  $\phi$ is a trivial automorphism.
    \item  There is an almost permutation $f$ such that $\phi=\pi_f$.
    \item  (Assuming $\kappa>\aleph_0$.) There is a bijection $f:\lambda\to\lambda$ such that
    $\phi=\pi_f$.
\end{enumerate}
\end{lemma}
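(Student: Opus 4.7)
The plan is to prove the cycle (3)$\Rightarrow$(2)$\Rightarrow$(1)$\Rightarrow$(2), together with (2)$\Rightarrow$(3) under the added hypothesis $\kappa>\aleph_0$. The implication (3)$\Rightarrow$(2) is immediate, a bijection $\lambda\to\lambda$ being an almost permutation. For (2)$\Rightarrow$(1), pick any extension $g:\lambda\to\lambda$ of $f^{-1}:B_0\to A_0$; a direct calculation gives $g^{-1}A\cap B_0=f''(A\cap A_0)$, whence $[g^{-1}A]=\pi_f([A])$, and $\pi_f$ is routinely seen to be an automorphism (with inverse $\pi_{f^{-1}}$) using the bijectivity of $f\restriction A_0$.

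The main implication is (1)$\Rightarrow$(2). Suppose $\phi([A])=[g^{-1}A]$. I want to find a co-small $B_0\subseteq\lambda$ on which $g$ is injective and whose image $A_0:=g''B_0$ is again co-small, and take $f:=(g\restriction B_0)^{-1}$; then $\pi_f=\phi$ follows from the identity $g^{-1}B\cap B_0=f''(B\cap A_0)$ combined with $|\lambda\setminus B_0|<\kappa$. To find such $B_0$ I extract three properties of $g$: (a) each fibre $g^{-1}\{\alpha\}$ has size $<\kappa$, from $\phi([\{\alpha\}])=\mathbb 0$; (b) $g''\lambda=^*\lambda$, since otherwise $[\lambda\setminus g''\lambda]$ would be a non-zero element mapped by $\phi$ to $\mathbb 0$, contradicting injectivity; and (c) the collision set $D:=\{\alpha:|g^{-1}\{\alpha\}|\ge 2\}$ has $|D|<\kappa$. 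Given (a)--(c), the set $B_0:=\lambda\setminus g^{-1}D$ is co-small by regularity of $\kappa$, and $A_0:=g''B_0=g''\lambda\setminus D$ is co-small by (b), (c).

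Property (c) is the heart of the proof and the main obstacle. Supposing $|D|\ge\kappa$ for contradiction, pick distinct $\beta_\alpha^0,\beta_\alpha^1\in g^{-1}\{\alpha\}$ for each $\alpha\in D$, and set $C_i:=\{\beta_\alpha^i:\alpha\in D\}$. By surjectivity of $\phi$, there is some $A$ with $g^{-1}A=^*C_0$; fix $X\supseteq g^{-1}A\triangle C_0$ of size $<\kappa$. For each $\alpha\in D$: if $\alpha\in A$ then $\beta_\alpha^1\in g^{-1}A\setminus C_0\subseteq X$, and if $\alpha\notin A$ then $\beta_\alpha^0\in C_0\setminus g^{-1}A\subseteq X$. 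Since the elements $\beta_\alpha^i$ lie in pairwise disjoint fibres, each $\alpha\in D$ contributes a distinct element to $X$, forcing $|X|\ge|D|\ge\kappa$, a contradiction.

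Finally, for (2)$\Rightarrow$(3) with $\kappa>\aleph_0$, pick $X\subseteq A_0$ of cardinality $\aleph_0+|\lambda\setminus A_0|+|\lambda\setminus B_0|$, which is $<\kappa$; then $|X\cup(\lambda\setminus A_0)|=|X|=|f''X\cup(\lambda\setminus B_0)|$, so there exists a bijection between these two sets. Combining it with $f\restriction(A_0\setminus X)$ produces a bijection $\bar f:\lambda\to\lambda$ agreeing with $f$ off a $<\kappa$ set, hence $\pi_{\bar f}=\pi_f=\phi$. The hypothesis $\kappa>\aleph_0$ is used precisely to ensure $|X|\ge\aleph_0$, which is what absorbs any (potentially finite) cardinality mismatch between $\lambda\setminus A_0$ and $\lambda\setminus B_0$.
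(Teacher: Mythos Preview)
Your proof is correct. The implication (1)$\Rightarrow$(2) is essentially the paper's argument: the core step---showing that the set of $\alpha$ with multiple $g$-preimages is small by picking two witnesses per $\alpha$, pulling back one of the resulting transversals via $\phi^{-1}$, and exploiting that both witnesses lie in the same fibre---is identical. A minor difference: to see that $g^{-1}D$ is small you invoke property~(a) together with regularity of $\kappa$, whereas the paper simply uses $[D]=\mathbb 0\Rightarrow [g^{-1}D]=\phi([D])=\mathbb 0$, which is shorter and makes (a) unnecessary.

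For (2)$\Rightarrow$(3) your route genuinely differs from the paper's. The paper proves a separate lemma: starting from an almost permutation $f:A_0\to B_0$, it iterates $X_{n+1}:=X_n\cap f''X_n\cap f^{-1}X_n$ for $n<\omega$ and takes $S:=\bigcap_n X_n$, obtaining $S=^*\lambda$ with $f\restriction S$ a permutation of $S$; one then extends by the identity on $\lambda\setminus S$. Your argument is more elementary: absorb the two small defect sets $\lambda\setminus A_0$ and $\lambda\setminus B_0$ into a single infinite set $X$ of size ${<}\kappa$, then redefine $f$ arbitrarily on $X\cup(\lambda\setminus A_0)$. Both methods use $\kappa>\aleph_0$ (yours to ensure $|X|\ge\aleph_0$ so the cardinalities match; the paper's to ensure the $\omega$-intersection stays co-small). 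The paper's version has the mild advantage of producing a bijection that actually restricts to a permutation of a single co-small set, which is occasionally convenient later; your version is quicker for the purpose at hand.
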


\begin{proof}
(1) implies (2):
Assume $\phi$ is a trivial automorphism, witnessed by $g$. 

Then
$X:=g''\lambda=^*\lambda$ (as $\phi([X])=[g^{-1}X]= [\lambda]$),
and 
$Y:=\{\alpha\in X:\, |g^{-1}\{\alpha\}|\ne 1\}=^*\emptyset$: Otherwise,
pick $y^0_\alpha\ne y^1_\alpha$ for each $\alpha\in Y$ with $g(y^0_\alpha)=g(y^1_\alpha)=\alpha$.
So $y^0_\alpha\in g^{-1}C$ iff
$y^1_\alpha \in g^{-1}C$ for any $C\subseteq \lambda$.
Set $B^i:=\{y^i_\alpha:\, \alpha\in Y\}$ for $i=0,1$ and let 
$[C]=\phi^{-1}([B^0])$.
So $\phi([C])=[g^{-1}C]=[B^0]$, i.e., almost all $y^0_\alpha$ are in
$g^{-1}C$, but then almost all $y^1_\alpha$ are in
$g^{-1}C$ as well, i.e., $[B^0]=\phi([C])\ge [B^1]$, a contradiction
as $B^0\cap B^1=\emptyset$.

Set $A_0:=X\setminus Y$, and $B_0:=g^{-1}A_0$. Note that $B_0=^*\lambda$, as
$\mathbb 0=\phi(\mathbb 0)=\phi([Y])=[g^{-1}Y]$. So $g\restriction B_0\to A_0$
is bijective, and we can set $f:A_0\to B_0$ the inverse. Then $f$ is an almost
permutation, and $\pi=\pi_f$.

(2) implies (1):
Let $f:A_0\to B_0$ be an almost permutation, and $g:B_0\to A_0$ the inverse
(and let $g$ be defined arbitrarily on $\lambda\setminus B_0$).
Then $\pi_f([X])=[f''(X\cap A_0)]=[g^{-1}(X)]$. It remains to be shown that
$\pi_f$ is an automorphism: $\pi_f([\emptyset])=[f''\emptyset]=[\emptyset]$;
$\pi_f([X\cap Y])= [f''(X\cap Y\cap A_0)]=[f''(X\cap A_0)\cap f''(Y\cap A_0)]$;
and $\pi_f([\lambda \setminus X])= [f''(A_0\setminus X)]=[B_0\setminus f''X]$.

(2) implies (3) if $\cf(\kappa)>\aleph_0$: This follows from the follwing lemma.
\end{proof}

\begin{lemma}\label{lem:almostisall} ($\kappa>\aleph_0$)
Let $f$ be a $\kappa$-almost permutation. Then there is an 
$S=^*\lambda$ such that  $f\restriction S:\, S\rightarrow S$ is bijective.
\end{lemma}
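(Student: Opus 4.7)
The plan is to construct $S$ as the complement of a small ``bad'' set obtained by closing $\lambda\setminus(A_0\cap B_0)$ under $f$ and $f^{-1}$. First I note what $S$ should satisfy: for $f\restriction S:S\to S$ to make sense we need $S\subseteq A_0$, and for it to be a bijection onto $S$ we need both $f''S\subseteq S$ and $f^{-1}{}''S\subseteq S$ (since $f$ is injective on $A_0$, injectivity on $S$ is free). These two inclusions together mean $f''S=S$.

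So I would define $C_0:=\lambda\setminus(A_0\cap B_0)$, which has size ${<}\kappa$ since both $A_0$ and $B_0$ are $=^*\lambda$. Then recursively set
\[
C_{n+1}:=C_n\cup f''(C_n\cap A_0)\cup f^{-1}{}''(C_n\cap B_0),
\]
and let $C:=\bigcup_{n<\omega} C_n$ and $S:=\lambda\setminus C$. Each $C_n$ has cardinality ${<}\kappa$ (images and preimages don't enlarge sets, and a finite union of such sets still has size ${<}\kappa$). The only non-routine step is that $|C|<\kappa$: this is where the assumption $\kappa>\aleph_0$ (in fact regularity of $\kappa$ together with uncountability) is used, so that the countable union $C=\bigcup_n C_n$ of sets of size ${<}\kappa$ still has size ${<}\kappa$. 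Hence $S=^*\lambda$, and $S\subseteq A_0\cap B_0$ because $C\supseteq C_0$.

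It then remains to check $f''S=S$. For $s\in S$, note $s\in A_0$ so $f(s)\in B_0$ is defined. If we had $f(s)\in C$, say $f(s)\in C_n$, then $f(s)\in C_n\cap B_0$ would give $s=f^{-1}(f(s))\in C_{n+1}\subseteq C$, contradicting $s\in S$. So $f(s)\in S$, giving $f''S\subseteq S$. A symmetric argument using $C_n\cap A_0$ shows that $f^{-1}(s)\in S$ for every $s\in S$, hence $s=f(f^{-1}(s))\in f''S$, so $S\subseteq f''S$. Together with injectivity of $f$ on $A_0\supseteq S$, this shows $f\restriction S:S\to S$ is bijective.

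The main obstacle, if any, is merely keeping the bookkeeping straight: one has to close under \emph{both} $f$ and $f^{-1}$, not just one, because a point $s\in A_0\cap B_0$ could fail to stay in $S$ either by being mapped into the removed region or by being the image of a removed point. The cardinal arithmetic step relies essentially on $\kappa$ being uncountable and regular, which is exactly the hypothesis of the lemma.
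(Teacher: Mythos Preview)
Your proof is correct and is essentially the same as the paper's: the paper intersects a decreasing sequence of large sets $X_{n+1}=X_n\cap f''X_n\cap f^{-1}X_n$ and sets $S=\bigcap_n X_n$, which is precisely the complement of your increasing union $C=\bigcup_n C_n$. Both arguments close under $f$ and $f^{-1}$ in $\omega$ steps and invoke $\cf(\kappa)>\aleph_0$ at the countable union/intersection.
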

\begin{proof}
Set $X_0:=A_0=\dom(f)$, and $X_{i+1}:= X_{i} \cap f'' X_i \cap f^{-1}X_{i}$,
and $S:=\bigcap_{i\in \omega}X_i$.

The $X_n$ are decreasing, and 
$|\lambda\setminus X_n|<\kappa$ and thus
$|\lambda\setminus  (f'' X_n)|<\kappa$ for $n<\omega$.
Accordingly, $|\lambda\setminus S|<\kappa$.
We claim that $g:=f\restriction S$ is a permutation of $S$.
Clearly it is injective.
If $\alpha \in S$ then $\alpha \in X_n$ for all $n\in\omega$, 
so $f(\alpha)\in X_{n+1}$ for all $n$. So $g:S\to S$.
If $\alpha\in S$, then $\alpha\in X_{n+1}$ for all $n$, so $f^{-1}(\alpha)$
exists and is in $X_n$.
\end{proof}

Remark: For $\kappa=\lambda=\omega$, there are trivial automorphisms
that are not induced by ``proper'' bijections $f:\omega\to\omega$, e.g.
the automorphism $\phi$ induced by the almost permutation $n\mapsto n+1$.\footnote{A bijection $f:\omega\to\omega$ has infinitely many 
$n$ such that $f(n)\ne n+1$, and therefore an infinite
set $A$ such that $f''A$ is disjoint to $\{n+1:\, n\in A\}$.}

\medskip

We will investigate somewhere and densely trivial 
automorphisms. To simplify notation, we assume 
$\kappa=\lambda>\aleph_0$: 
\begin{definition}
($\lambda>\aleph_0$ regular.) 
Let $\phi:P^\lambda_\lambda\to P^\lambda_\lambda$ be an automorphism.
\begin{itemize}
\item 
$\phi$ is trivial on $A\in [\lambda]^\lambda$, if
there is an $f:A\to\lambda$ with 
$\phi([B])=[f''B]$ for all $B\subseteq A$.
\item 
$\phi$ is somewhere trivial, 
if it is trivial on some 
$A\in[\lambda]^{\lambda}$.
\item
$\phi$ is densely trivial, if for 
all $A\in[\lambda]^{\lambda}$ there is a $B\subseteq A$
of size $\lambda$ such that $\phi$ is trivial on $B$.
\end{itemize}
\end{definition}

Just as before it is easy to see that we can assume 
$f$ 
to be a full permutation:
\begin{fact} ($\lambda>\aleph_0$ regular.)
An automorphism 
$\phi:P^\lambda_\lambda\to P^\lambda_\lambda$ is trivial on $A\in[\lambda]^\lambda$
iff 
there is a bijection $f:\lambda\to\lambda$ 
such that
$\phi([B])=[f''(B)]$ for all
$B\subseteq A$.
\end{fact}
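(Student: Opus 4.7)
The plan is as follows. The ``$\Leftarrow$'' direction is immediate, since restricting such a bijection to $A$ yields a witness for triviality on $A$. For the nontrivial ``$\Rightarrow$'' direction, I would start with a witness $h:A\to\lambda$ satisfying $\phi([B])=[h''B]$ for $B\subseteq A$ and upgrade it to a bijection $f:\lambda\to\lambda$ in two stages, modelled on the proof of the preceding lemma.

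First I would pass to a large subset of $A$ on which $h$ is already injective. Setting $Y:=\{\alpha:|h^{-1}\{\alpha\}|\ge 2\}$, the familiar argument --- use selectors $y^0_\alpha\ne y^1_\alpha$ in $h^{-1}\{\alpha\}$ to form disjoint $B^i:=\{y^i_\alpha:\alpha\in Y\}\subseteq A$ with $h''B^0=h''B^1=Y$, and note that $|Y|=\lambda$ would force $\phi([B^0])=\phi([B^1])\ne\mathbb 0$, contradicting injectivity of $\phi$ --- gives $|Y|<\lambda$; since $\phi([h^{-1}Y\cap A])=[Y]=\mathbb 0$, also $|h^{-1}Y\cap A|<\lambda$. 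So $A_1:=A\setminus h^{-1}Y$ satisfies $A_1=^*A$, and $h_1:=h\restr A_1$ is a bijection onto $A_1':=h''A_1$ with $[A_1']=\phi([A])$.

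The remaining task is to extend $h_1$ to a bijection $f:\lambda\to\lambda$ without altering $[h''B]$ for $B\subseteq A$. The key observation is that since $\phi$ preserves the top element, $\mu_1:=|\lambda\setminus A_1|$ and $\mu_2:=|\lambda\setminus A_1'|$ satisfy $\mu_1<\lambda\iff[A_1]=[\lambda]\iff[A_1']=[\lambda]\iff\mu_2<\lambda$, so they are simultaneously small or simultaneously equal to $\lambda$. If both equal $\lambda$, any bijection $\lambda\setminus A_1\to\lambda\setminus A_1'$ glued to $h_1$ gives the desired $f$. Otherwise I fix an infinite cardinal $\nu$ with $\max(\mu_1,\mu_2)\le\nu<\lambda$ (possible since $\lambda$ is regular uncountable), choose any $C\subseteq A_1$ of size $\nu$, and redefine $f$ on $C\cup(\lambda\setminus A_1)$ by an arbitrary bijection onto $h_1''C\cup(\lambda\setminus A_1')$ (both sides of cardinality $\nu$ by absorption), keeping $f=h_1$ on $A_1\setminus C$. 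In either construction $f$ agrees with $h$ on $A$ modulo a set of size ${<}\lambda$, so $[f''B]=[h''B]=\phi([B])$ for every $B\subseteq A$. The only real subtlety is the potential mismatch $\mu_1\ne\mu_2$ in the extension step, which the automorphism property reduces to the routine cardinal absorption above.
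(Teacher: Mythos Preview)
Your proof is correct and follows precisely the route the paper intends with its ``just as before it is easy to see'' (the paper omits the proof entirely): you adapt the injectivity argument from the preceding lemma to get an almost-injective witness, and then handle the extension to a full bijection via the case split on whether $[A]=[\lambda]$, using cardinal absorption for the potential mismatch $|\lambda\setminus A_1|\ne|\lambda\setminus A_1'|$. This is exactly the expected argument.
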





\begin{lemma}\label{lem:somewhereimpliesdensely}($\lambda>\aleph_0$ regular.)
If every automorphism of $P^\lambda_\lambda$ is somewhere trivial, then every automorphism of $P^\lambda_\lambda$ is densely trivial.
\end{lemma}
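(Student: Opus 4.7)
The plan is to reduce ``densely trivial'' to ``somewhere trivial'' by transporting the relativization of $\phi$ below a given large $[A]$ back to an automorphism of all of $P^\lambda_\lambda$.

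Fix an automorphism $\phi$ of $P^\lambda_\lambda$ and some $A\in[\lambda]^\lambda$. Since $[A]\ne\mathbb 0$, the image $\phi([A])$ admits a representative $A^*\subseteq\lambda$ of size~$\lambda$. Choose bijections $g:\lambda\to A$ and $h:\lambda\to A^*$. The map $[X]\mapsto[g''X]$ is an isomorphism from $P^\lambda_\lambda$ onto the relativization of $P^\lambda_\lambda$ to elements below $[A]$, because $P(A)/[A]^{<\lambda}$ is canonically isomorphic to $P^\lambda_\lambda$ via $g$; the analogous statement holds for $h$ and $[A^*]$. Composing these with the restriction of $\phi$ (which is a Boolean isomorphism between the relativizations below $[A]$ and $[A^*]$) yields an automorphism $\psi$ of $P^\lambda_\lambda$, characterised by $\psi([X])=[h^{-1}Y]$ for any $Y\subseteq A^*$ representing $\phi([g''X])$.

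By hypothesis, $\psi$ is somewhere trivial, witnessed by some $C\in[\lambda]^\lambda$ and a bijection $f:\lambda\to\lambda$ with $\psi([D])=[f''D]$ for all $D\subseteq C$. I then set $B:=g''C\subseteq A$, which has size~$\lambda$, and $\tilde f:=h\circ f\circ g^{-1}$ defined on $B$. The desired triviality of $\phi$ on $B$ follows by unwinding: given $E\subseteq B$, write $E=g''D$ for $D:=g^{-1}E\subseteq C$; then
\[\phi([E])=\phi([g''D])=[h''(f''D)]=[\tilde f''E],\]
where the middle equality uses that $h''(f''D)\subseteq A^*$ represents $\phi([g''D])$ by the defining property of $\psi$ and the choice of $f$.

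The only delicate step is verifying that $\psi$ is well-defined and a genuine automorphism, which amounts to checking the canonical identifications between $P^\lambda_\lambda$, $P(A)/[A]^{<\lambda}$, and the relativization $P^\lambda_\lambda\restr[A]$; this is straightforward algebraic bookkeeping and I do not foresee any essential obstacle.
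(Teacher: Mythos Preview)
Your argument is correct. Both the conjugation $\psi:=(\text{mult.\ by }h)^{-1}\circ\phi\restriction[0,[A]]\circ(\text{mult.\ by }g)$ and the unwinding at the end are sound; the ``delicate step'' you flag is indeed routine.

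The paper proceeds differently. It first disposes of the case $A=^*\lambda$, and for $A\ne^*\lambda$ builds a new automorphism $\pi'$ by fixing bijections $i:\lambda\setminus A\to A$ and $j:\pi^*(\lambda\setminus A)\to\pi^*(A)$ and setting $\pi'([D])=[\pi^*(D\cap A)\cup j^{-1}\pi^*(i''(D\setminus A))]$; one then has to argue by cases according to whether the witness $D_0$ of somewhere-triviality for $\pi'$ meets $A$ in size $\lambda$ or not, and in the second case transport back along $i$. Your conjugation approach is cleaner: by collapsing $A$ and $A^*$ to $\lambda$ via $g,h$ you avoid both the case split $A=^*\lambda$ and the ``wrong half'' case for $D_0$, at the (negligible) cost of passing through the identification $P(A)/[A]^{<\lambda}\cong P^\lambda_\lambda$.
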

\begin{proof}
  Assume $\pi$ is an automorphism of $P^\lambda_\lambda$, and fix $A\in[\lambda]^\lambda$.
  If $A=^*\lambda$ and if $\pi$ is trivial on some $B$, then  $\pi$ is
  trivial on $B\cap A\subseteq A$, so we are done.
  So assume $A\ne^*\lambda$. 

  Pick some representative $\pi^*:\mathcal P(\lambda) \to \mathcal P(\lambda)$
  of $\pi$ such that 
  $\pi^*(A)$ and $\pi^*(\lambda\setminus A)$ 
  partition $\lambda$, and such that 
  $\pi^*(C)\subseteq \pi^*(A)$ for every $C\subseteq A$.
  Let $i: \lambda\setminus A \to A$ 
  and $j:\pi^*(\lambda \setminus A)\to \pi^*(A)$ both be bijective.
  Let $\pi'$ map $[D]$ to $[\pi^*(D\cap A)\cup j^{-1}\pi^*(i''(D\setminus A))]$.
  This is an automorphism of $P^\lambda_\lambda$, so it is trivial on some $D_0$. If $|D_0\cap A|=\lambda$, we are done,
  as $\pi'$ restricted to $D_0\cap A$ is the same as $\pi$
  and trivial.
  So assume otherwise. Then  
  $\pi'$ is trivial on the large set $D_0\setminus A$.
  Then $\pi$ is trivial on $i''(D_0\setminus A)\subseteq A$.
\end{proof}

\section{Under MA,  every automorphism is
trivial  for \texorpdfstring{$\omega_1\le\lambda<2^{\aleph_0}$}{ℵ₁≤λ<continuum}}\label{sec:ma}

\begin{theorem}\label{Matriv}
Assume $\aleph_0<\kappa\leq \lambda<2^{\aleph_0}$, 
$\kappa$ regular,
and  $\textnormal{\textrm{MA}}_{(=\lambda)}(\sigma\textnormal{\textrm{-centered}})$ holds. Then every automorphism of $P^\lambda_\kappa$ is trivial.
\end{theorem}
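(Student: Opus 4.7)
The strategy I would use is to apply $\textnormal{MA}_{=\lambda}(\sigma\text{-centered})$ to produce an almost permutation $f\colon\lambda\to\lambda$ with $\pi_{f}=\phi$; by the lemma characterizing trivial automorphisms this suffices. First, I would fix a set-level representative $\Phi\colon\mathcal{P}(\lambda)\to\mathcal{P}(\lambda)$ of $\phi$ chosen so that $\Phi(\lambda\setminus A)=\lambda\setminus\Phi(A)$ for every $A$. I would also fix a family $\mathcal{F}\subseteq\mathcal{P}(\lambda)$ of cardinality $\lambda$, ``rich enough'' that $\pi_{f}$ agreeing with $\phi$ on $\mathcal{F}$ forces $\pi_{f}=\phi$ on all of $P^{\lambda}_{\kappa}$; this should exploit the $\lambda^{+}$-cc and ${<}\kappa$-completeness of the algebra together with $\lambda<2^{\aleph_{0}}$.

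The forcing $P$ has as conditions pairs $q=(p_{q},\mathcal{S}_{q})$, with $p_{q}$ a finite partial injection $\lambda\to\lambda$ and $\mathcal{S}_{q}\in[\mathcal{F}]^{<\omega}$ a finite set of ``commitments''. The order $q'\leq q$ requires $p_{q}\subseteq p_{q'}$, $\mathcal{S}_{q}\subseteq\mathcal{S}_{q'}$, and the ``respect'' clause: for every $(\alpha,\beta)\in p_{q'}\setminus p_{q}$ and every $A\in\mathcal{S}_{q}$, $\alpha\in A\iff\beta\in\Phi(A)$. Given a generic filter $G$, set $f=\bigcup_{q\in G}p_{q}$. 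Once $A\in\mathcal{S}_{q_{0}}$ for some $q_{0}\in G$, every subsequent extension respects $A$, so the symmetric difference $(f''A)\triangle\Phi(A)$ is contained in the finite set $\dom(p_{q_{0}})\cup p_{q_{0}}''\dom(p_{q_{0}})$, giving $f''A=^{*}\Phi(A)$ for each $A\in\mathcal{F}$.

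To apply MA, I would meet three families of dense sets, $\lambda$-many in total: for each $\alpha\in\lambda$ one forcing $\alpha\in\dom(p_{q})$ and one forcing $\alpha\in p_{q}''\dom(p_{q})$, and for each $A\in\mathcal{F}$ one forcing $A\in\mathcal{S}_{q}$. Commitment density is trivial. Density of the domain requirement is the main computation: given $q$ and $\alpha\notin\dom(p_{q})$, a valid $\beta$ must lie in $\bigcap\{\Phi(A):A\in\mathcal{S}_{q},\,\alpha\in A\}\cap\bigcap\{\lambda\setminus\Phi(A):A\in\mathcal{S}_{q},\,\alpha\notin A\}$. This set is $=^{*}\Phi(B)$, where $B$ is the atom of the finite Boolean algebra generated by $\mathcal{S}_{q}$ containing $\alpha$; provided $|B|=\lambda$ (a property the choice of $\mathcal{F}$ must guarantee), $|\Phi(B)|=\lambda$ and a suitable $\beta$ outside $p_{q}''\dom(p_{q})$ is easily found. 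The range direction is symmetric, using a lift of $\phi^{-1}$.

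The principal obstacle I anticipate is showing $P$ is $\sigma$-centered. Partitioning conditions by their finite part $p_{q}$ gives $\lambda$-many classes, too many. I would instead group conditions by their ``combinatorial type'': for each $q$, record for each $(\alpha,\beta)\in p_{q}$ and each $A\in\mathcal{S}_{q}$ the two bits $\alpha\in A$ and $\beta\in\Phi(A)$, together with the equality pattern among the coordinates of $p_{q}$. Each type is a finite combinatorial object, yielding only countably many types, and conditions of a common type should amalgamate---the truth values in the respect clause are prescribed by the type, so extending one condition by the coordinates of another preserves all commitments. Making this amalgamation precise, especially when coordinates of the two conditions overlap, is where I expect the bulk of the technical work to lie.
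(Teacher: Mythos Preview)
Your approach has a genuine gap: the poset $P$ you describe is not $\sigma$-centered, and no regrouping by ``combinatorial type'' can fix this. The problem is already present in the finite-partial-injection component, before any commitments enter. The conditions $q_\alpha=(\{(0,\alpha)\},\emptyset)$ for $\alpha<\lambda$ are pairwise incompatible, since a common extension would have to send $0$ to two different values. That is an antichain of size $\lambda\geq\aleph_1$, so $P$ is not even ccc, let alone $\sigma$-centered. Your proposed typing records only finite combinatorial data and hence collapses uncountably many of these $q_\alpha$ into a single class; but no two of them are compatible, so that class is not centered. This is not a technicality about overlapping coordinates: building an injection by finite pieces on a set of size $\lambda>\aleph_0$ simply cannot be done with a $\sigma$-centered poset.

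There is a second gap as well. You need a family $\mathcal{F}$ of size $\lambda$ such that agreement of $\pi_f$ with $\phi$ on $\mathcal{F}$ forces $\pi_f=\phi$ globally. Neither $\lambda^+$-cc nor ${<}\kappa$-completeness supplies such a family; the algebra has $2^\lambda>\lambda$ elements and there is no reason to expect a dense or generating subset of size $\lambda$. Without this, even a generic $f$ meeting all your dense sets need not witness triviality.

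The paper's argument is quite different and avoids both problems. It fixes an injection $\eta:\lambda\to 2^\omega$, defines $\nu:\lambda\to 2^\omega$ by $\nu(\beta)(n)=0$ iff $\beta\in\pi^*(\eta^{-1}\{x:x(n)=0\})$, and then uses $\textnormal{MA}_{=\lambda}(\sigma\text{-centered})$ only once, to prove a separation lemma: disjoint subsets $A_0,A_1\subseteq 2^\omega$ of size $\leq\lambda$ can be separated by a closed set containing $\geq\kappa$ points of $A_0$. The $\sigma$-centered poset here approximates a colouring of the tree $2^{<\omega}$, not an injection on $\lambda$, and its $\sigma$-centeredness is witnessed by restriction to a finite level. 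With this lemma in hand, one shows that for all but ${<}\kappa$ many $\alpha$ there is a unique $\beta$ with $\eta(\alpha)=\nu(\beta)$; the map $\alpha\mapsto\beta$ is then the desired almost-permutation.
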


For the proof we will use that we can separate certain sets by closed sets.

A tree $T$ is a subset of  $2^{<\omega}$ such that 
$s\in T\cap 2^n$ and $m\le n$ implies $s\restriction m\in T$; for such a $T$ we set $\lim(T)=\{\eta\in 2^\omega:\, (\forall n\in\omega)\, \eta\restriction n\in T\}$. A subset of $2^\omega$ is closed iff it is of the form  $\lim(T)$
for some tree $T$.
\begin{lemma}\label{lem:thelem} 
Assume $\aleph_0<\theta\leq \lambda<2^{\aleph_0}$, 
$\cf(\theta)>\aleph_0$,
and  $\textnormal{\textrm{MA}}_{(=\lambda)}(\sigma\textnormal{\textrm{-centered}})$ holds.
    Assume $A_0, A_1$ 
    are disjoint subsets of $2^{\omega}$
    of size $\le\lambda$; 
    $|A_0|\ge\theta$.
    Then there is a tree $T_0$
    in $2^{<\omega}$ such that
    $|A_0\cap \lim(T_0)|\geq \theta$ and
    $A_1\cap \lim(T_0)=\emptyset$.
        
    If additionally $|A_1|\ge \theta$, we get an additional tree $T_1$ such that
    $|A_1\cap \lim(T_1)|\geq \theta$, $A_0\cap \lim(T_1)=\emptyset$, and
    $T_0\cap T_1\subseteq 2^n$ for some $n$.    
\end{lemma}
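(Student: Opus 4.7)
I first localize $A_0$ inside its $\theta$-condensation tree $T^* := \{s \in 2^{<\omega} : |A_0 \cap [s]| \ge \theta\}$. Since $\cf(\theta) > \aleph_0$ and $|A_0|\ge \theta$, the set $A_0 \setminus \lim(T^*) = \bigcup_{n<\omega}\bigcup_{s \in 2^n \setminus T^*}(A_0 \cap [s])$ is a countable union of sets each of size ${<}\theta$, hence has size ${<}\theta$, so $|A_0 \cap \lim(T^*)| \ge \theta$. If it happens that $A_1 \cap \lim(T^*) = \emptyset$ one simply takes $T_0 := T^*$ and stops; in general, MA is needed to prune the condensation points of $A_0$ that happen to lie in $A_1$.

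The forcing $P$ that I propose has conditions $(t,h)$, where $t$ is a finite subtree of $T^*$ whose leaves lie in $T^*$ at a common level $n(t)$, and $h$ is a finite partial function from $A_1$ to $\omega$ satisfying $\eta \restriction h(\eta) \notin t$ for each $\eta \in \dom h$. The order is coordinate-wise extension subject to $t^q \cap 2^{\le n(t^p)} = t^p$ and to the preservation of all existing promises $\eta \restriction h^p(\eta)\notin t^q$. Conditions sharing the same value of $\bigl(t,\,\{\eta \restriction h(\eta):\eta \in \dom h\}\bigr)$ are pairwise compatible (union the $h$'s), so $P$ is $\sigma$-centered. Applying $\mathrm{MA}_{(=\lambda)}(\sigma\text{-centered})$ to the dense sets $D_\eta := \{p : \eta \in \dom h^p\}$ for $\eta \in A_1$, together with ``inclusion'' dense sets of the form $\{p : \eta \restriction n \in t^p\}$ for suitably chosen $\eta \in A_0 \cap \lim(T^*)$ and $n < \omega$, produces a filter $G$ whose associated tree $T_G := \bigcup_{p \in G} t^p \subseteq T^*$ satisfies $A_1 \cap \lim(T_G) = \emptyset$ and $|A_0 \cap \lim(T_G)| \ge \theta$.

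The delicate step will be the density of $D_\eta$ for $\eta \in A_1 \cap \lim(T^*)$: every initial segment of such an $\eta$ is itself a $\theta$-condensation node, so naive extensions of $t$ seem forced to track $\eta$'s path. The saving observation is that $\eta$'s path in $T^*$ must split cofinally often, for otherwise $\lim(T^*) \cap [\eta \restriction n]$ would reduce to $\{\eta\}$ for all large $n$; but a pigeonhole at level $n$ combined with $\cf(\theta) > \aleph_0$ shows that some $[s]$ with $s \in 2^n$ must contain ${\ge}\,\theta$ many $A_0$-condensation points, and this yields some $\eta' \in A_0 \cap \lim(T^*) \cap [\eta\restriction n]$ distinct from $\eta$, whose divergence from $\eta$ supplies a splitting sibling in $T^*$ along which one can legally divert $t$.

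For the second part --- constructing $T_1$ symmetric to $T_0$ with $T_0 \cap T_1 \subseteq 2^{\le n}$ for some $n$ --- I would first locate a level $n$ and disjoint $S_0, S_1 \subseteq 2^n$ with $|A_i \cap \bigcup_{s\in S_i}[s]|\ge\theta$ for $i=0,1$, using pigeonhole on $A_0, A_1$ (both of size ${\ge}\,\theta$) against the finiteness of $2^n$ and $\cf(\theta)>\aleph_0$, refining $n$ if necessary to separate the condensation spines of $A_0$ and $A_1$ (which are disjoint because $A_0 \cap A_1 = \emptyset$). Then I apply the first part twice, independently inside $\bigcup_{s\in S_0}[s]$ and $\bigcup_{s\in S_1}[s]$, producing $T_0, T_1$ whose portions above level $n$ lie in disjoint clopens; this gives $T_0 \cap T_1 \subseteq 2^{\le n}$.
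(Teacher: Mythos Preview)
Your approach is quite different from the paper's, and the first part has a real gap: the ``inclusion'' dense sets $\{p : \eta \restriction n \in t^p\}$ are \emph{not} dense for any fixed $\eta \in A_0 \cap \lim(T^*)$. Under your end-extension order ($t^q \cap 2^{\le n(t^p)} = t^p$), once a condition $p$ satisfies $\eta \restriction n(t^p) \notin t^p$, no $q \le p$ can ever contain $\eta \restriction n$ for $n \ge n(t^p)$; and such $p$ exist below every condition (divert the leaf $\eta\restriction n(t)$ to a $T^*$-sibling of $\eta$, which exists by exactly your splitting argument). So one cannot fix $\theta$ many $\eta$'s in advance for which these sets are dense, and ``suitably chosen'' does not resolve this. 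Meeting only the $D_\eta$ for $\eta \in A_1$ together with height-increasing dense sets produces an infinite $T_G \subseteq T^*$ avoiding $A_1$, but nothing stops $T_G$ from being a single branch, giving $|A_0 \cap \lim(T_G)| \le 1$. Your diversion argument correctly handles the exclusion side; the difficulty is on the inclusion side, which you have not addressed.

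The paper sidesteps this by not building $T_0$ directly. Its conditions are $(n, S, f)$ where $S$ is $2^{\le n}$ together with finitely many full branches from $A_0 \cup A_1$ (branches sharing a level-$n$ node must lie in the same $A_i$), and $f\colon S \to 2$ colors each branch by its side above level $n$. This is $\sigma$-centered via $(n, f \restriction 2^{\le n})$, and for every $x \in A_0 \cup A_1$ the set of conditions carrying $x$ as a branch is dense. The generic yields a total $F\colon 2^{<\omega} \to 2$ and, for each $x \in A_i$, a threshold $n_x$ with $F(x \restriction m) = i$ for $m \ge n_x$. Now $\cf(\theta) > \aleph_0$ enters as pigeonhole on the countably many possible thresholds: some single $n^*$ works for $\theta$ many $x \in A_0$, and $T_0$ is taken to be the (downward closure of the) set of $s$ with $F(s \restriction k) = 0$ for all $n^* \le k \le |s|$. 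Both trees come from one coloring, so $T_0 \cap T_1 \subseteq 2^{< n^*}$ is automatic. The point is that the paper colors \emph{all} of $A_0 \cup A_1$ at once and extracts the tree afterward, rather than trying to anchor individual $A_0$-branches inside a growing finite tree.

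Your second part also needs repair: the condensation trees $T^*_0, T^*_1$ need not separate at any level --- one can arrange $T^*_0 = T^*_1 = 2^{<\omega}$ with $A_0, A_1$ disjoint. Incomparable $s_0 \in T^*_0$, $s_1 \in T^*_1$ do always exist (take a splitting node of $T^*_0$ and split cases on whether it lies in $T^*_1$), so your reduction to two applications of the first part is salvageable, but not via the justification you give.
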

\begin{proof}[Proof of the lemma]
In the following we identify an $x\in 2^\omega$
with the according (infinite) branch $b$ in the
tree $2^{<\omega}$. So a branch $b$ can be in $A_0$
or in $A_1$ (or in neither; but not both, as $A_0$ and $A_1$
are disjoint).

We define a poset $Q$ as follows:
 A condition $q\in Q$ is a triple $(n_q,S_q,f_q)$, where
	\begin{itemize}
	    \item $n_q\in\omega$,
		\item $S_q$ is a tree in $2^{<\omega}$ 
		of the following form:
		$S_q$ is the union of 
		$2^{\le n_q}$ and finitely many (infinite) branches $\{b_j:\, j\in m\}$ for some $m\in\omega$, 
		each $b_j\in 
		A_0\cup A_1$, and $b_j\restriction n_q=b_k\restriction n_q$ implies
		($b_j\in A_i$ iff $b_k\in A_i$).
		
		So every $s\in S_q$ with $|s|> n_q$
		is either ``in $A_0$-branches''
		(i.e., there is one or more $b_j\in A_0$
		with $s\in b_j$), or ``in $A_1$-branches''
		(but not in both).

            Note that an $s\in S_q$  of length $n_q$
		is either in $A_0$-branches, or in $A_1$-branches, or in neither (but not in both).

		\item $f_{q}:  S_q\to 2$ 
		such that, for $i=0,1$, $f_{q}(s)=i$ whenever
		$s\in S_q$, $|s|\ge n_q$ 
		and $s$ is in $A_i$-branches.
    \end{itemize}

  The order on $Q$ is the natural one:  $q\le p$ if $n_q\ge n_p$, $S_q\supseteq S_p$
  and $f_q$ extends $f_p$.

  $Q$ is $\sigma$-centered
  witnessed by $(n_q,S_q,f_q)\mapsto (n_q, f_q\restriction 2^{\le n_q})$:
  If $p,q$ are in $Q$ with $n_p=n_q=:n$ and  $f_p\restriction 2^{\le n}=f_q\restriction 2^{\le n}$, then
  $(n, S_p\cup S_q, f_p\cup f_q)$ is a valid condition stronger than
  both $p$ and $q$.

  For $x\in A_i$, the set $D_x$ of conditions containing $x$ as branch is dense: Given $p\in Q$, 
  let $n_q\ge n_p$ be such that all $A_{1-i}$-branches 
  in $p$ split off $x$ below $n_q$; set 
  $S_q:=S_p\cup 2^{\le n_q}\cup x$;
  and set $F_q(s)=i$ for $s\in S_q\setminus S_p$.
  
  Similarly, for all $n\in\omega$,
  the set $D^*_n$ of conditions $q$
  with $n_q\ge n$ is dense as well. 
  
  By $\textrm{MA}_{(=\lambda)}(\sigma\text{-centered})$
  and $|A_i|\le \lambda$,  
  we can find a filter $G$ 
  which has nonempty intersection
  with each $D_x$ for $x\in A_0\cup A_1$
  as well as for each  $D^*_n$.
  So $F:=\bigcup_{p\in G} f_p$ is a total
  function from $2^{<\omega}$ to $2$;
  and for all $x\in A_i$
  there is an $n_x\in\omega$ such that 
  $m\ge n_x$ implies $F(x\restriction m)=i$.
  
  As $|A_0|\geq \theta$ and $\cf(\theta)>\aleph_0$
  we can assume that there is an $n^*_0$
  such that $n_x=n^*_0$ for $\theta$ many $x\in A_0$.
  If additionally $|A_1|\geq \theta$, we analogously 
  get an $n^*_1$ and set 
  $n^*:=\max(n^*_0,n^*_1)$; otherwise we set
  $n^*:=n^*_0$.
  We set
  $T_i^*:=\{s\in 2^{<\omega}: |s|\ge n^*,\,
  (\forall n^*\le k\le |s|)\, F(s\restriction k)=i\}$
  and generate a tree from it;
  i.e., we set $T_i:=T^*_i\cup \{s\restriction m:\, m<n^*, s\in T^*_i\}$.
  As we have seen above, $\lim(T_i)\cap A_i\ge \theta$ 
  for $i=0$ (and, if $|A_1|\ge \theta$, for $i=1$
  as well).
  Clearly $T_0\cap T_1\subseteq 2^{n^*}$;
  and $\lim(T_i)\cap A_{i-1}$ is empty, as for 
  any $x\in A_{i-1}$, cofinally many $n$
  satisfy $F(x\restriction n)=i-1$.
\end{proof}
 
\begin{proof}[Proof of the theorem]
Fix an automorphism $\pi$ of $P^\lambda_\kappa$
represented by some $\pi^*:\mathcal P(\lambda)\to\mathcal P(\lambda)$,
and let $\pi^{-1*}$ represent $\pi^{-1}$. 
We have 
to show that $\pi$ is trivial.
  
Fix an injective 
function $\eta:\lambda\rightarrow 2^\omega$.
Set 
\[C_n:=\{x\in 2^\omega:\, x(n)=0\}\text{ and }
\Lambda_{n}:=\eta^{-1}C_n=\{\alpha<\lambda:\eta(\alpha)(n)=0\}.\] 

Define $\nu:\lambda\to 2^\omega$ by
\[\nu(\beta)(n)=0\text{  iff }\beta\in \pi^*(\Lambda_n).\]
In the following, ``large'' means ``of cardinality
${\ge}\kappa$'', and ``small'' means not large.
We will show:

\newcommand{\jvkafirst}{($*_1$)}
\newcommand{\jvkasecond}{($*_3$)}
\newcommand{\jvkathird}{($*_2$)}
\newcommand{\jvkasecondsecond}{($*_4$)}
\newcommand{\jvkfive}{($*_5$)}
\newcommand{\jvksix}{($*_6$)}

\begin{itemize}
\item[\jvkafirst]
    $\pi^*(\eta^{-1}C) =^* \nu^{-1}C$ for $C\subseteq 2^\omega$ closed.
\item[\jvkathird]
    $Y\subseteq \lambda\text{ and }|Y|\ge \kappa$
     implies $|\nu''Y|\ge \kappa$.
\item[\jvkasecond]
If $A_0, A_1$ are disjoint subsets of $2^\omega$, $A_0\subseteq \nu''\lambda$ large,
then 
$\pi^{-1*}(\nu^{-1}A_0)\setminus \eta^{-1} A_1$ is large.
\item[\jvkasecondsecond]
If $A_0, A_1$ are disjoint subsets of $2^\omega$, $A_0\subseteq \eta''\lambda$ large,
then
$\pi^*(\eta^{-1}A_0)\setminus \nu^{-1} A_1$ is large.
\end{itemize}
(Note that~{\jvkathird} is the
only place where we use that $\kappa$ is regular.)
\\
Proof:
\begin{itemize}
    \item[{\jvkafirst}]
    $\pi^*(\eta^{-1}C_n) = \nu^{-1}C_n$ holds by definition of $\nu$.
As 
$\pi$ honors ${<}\kappa$-unions and complements, 
and as the $C_n$ generate the open sets, this equation (with $=^*$) holds whenever
$C$ is generated by ${<}\kappa$-unions and complements
from the open sets, in particular, if $C$ is closed.

\item[\jvkathird] Fix $x\in 2^\omega$. 
Then 
$\eta^{-1}\{x\}$ has at most one element  (as $\eta$ is injective), 
and $\eta^{-1} \{x\}=^* \pi^{-1*}\nu^{-1}\{x\}$ by~{\jvkafirst}.
I.e., 
$\nu^{-1}\{x\}$ is small. 
And $Y\subseteq \bigcup_{x\in \nu''Y}\nu^{-1}\{x\}$, so
as $\kappa$ is regular we get $|\nu''Y|\ge \kappa$.)

\item[\jvkasecond]
Using the previous lemma (with $\kappa$ as $\theta$)
we get a tree 
$T_0$ separating $A_0$ and $A_1$.
I.e., $\lim(T_0)\cap A_1=\emptyset$
and $X:=\lim(T_0)\cap A_0$ is large.
As $X\subseteq A_0\subseteq \nu''\lambda$, we get  that
$\nu^{-1} X$ is large. And $\nu^{-1} X=\nu^{-1}\lim(T_0)\cap \nu^{-1}A_0=^*\pi^*(\eta^{-1}\lim(T_0))\cap 
\nu^{-1}A_0$, the last equation by~{\jvkafirst}. This implies
$\eta^{-1}\lim(T_0)\cap \pi^{-1*}(\nu^{-1}A_0)$ is large,
and so $\pi^{-1*}(\nu^{-1}A_0)\setminus \eta^{-1} A_1$ is large.

\item[\jvkasecondsecond]
We get an analogous result when interchanging $\nu$ and $\eta$ and using
$\pi^*$ instead of $\pi^{-1*}$.


\end{itemize}
We claim that the following sets $N_i$ are all small:
\newcommand{\None}{N_{\ref{item:gg1}}}
\newcommand{\Ntwo}{N_{\ref{item:gg2}}}
\newcommand{\Nthree}{N_{\ref{item:gg3}}}
\newcommand{\Nfour}{N_{\ref{item:gg4}}}
\newcommand{\Nfive}{N_{\ref{item:gg5}}}
\newcommand{\Nsix}{N_{\ref{item:gg6}}}
\newcommand{\Nseven}{N_{\ref{item:gg7}}}
\newcommand{\Nnew}{N_{\ref{item:new}}}
\newcommand{\Nnewtwo}{N_{\ref{item:new2}}}
\begin{enumerate}
\item\label{item:gg5}  $\Nfive:=\{\alpha\in \lambda:\, (\lnot\exists \beta\in\lambda) \,\eta(\alpha)=\nu(\beta)\}$.
\item\label{item:gg1} $\None:=\{\alpha\in \lambda:\,  (\exists^{(\geq 2)} \beta\in \lambda)\,  \eta(\alpha)=\nu(\beta)\}$.
\item\label{item:gg2} $\Ntwo:=\{\beta\in \lambda:\, (\lnot \exists  \alpha\in \lambda)\, \eta(\alpha)=\nu(\beta)\}$. 
\end{enumerate}
Proof: 
\begin{itemize}
    \item[(\ref{item:gg2})] 
Assume $\Ntwo$ is large.
Set $A_0:=\nu'' \Ntwo$, which is large by {\jvkathird}; and $A_1:=\eta''\lambda$.
So $A_0$ and $A_1$ are disjoint, and by~{\jvkasecond}
$\pi^{-1*}\nu^{-1}A_0\setminus \eta^{-1}A_1$ is large,
but $\eta^{-1}A_1=\lambda$.

\item[(\ref{item:gg5})] Assume $\Nfive$ is large. 
Set $A_0=\eta''\Nfive$ (large, as $\neta$ is injective)
and $A_1:=\nu''\lambda$. So $A_0$ and $A_1$ are disjoint,
and by~{\jvkasecondsecond} $\pi^*(\eta^{-1}A_0)\setminus \nu^{-1} A_1$ is large,
but $\nu^{-1} A_1=\lambda$.

\item[(\ref{item:gg1})]  Assume that $\None$ is large.
For every $\alpha \in \None$, let $\beta_\alpha^0\neq \beta_\alpha^1$ in $\lambda$ be such that $\eta(\alpha)=\nu({\beta_\alpha^0})=\nu({\beta_\alpha^1})$. 
For $i\in\{0,1\}$, set $Y_i:=\{ \beta_\alpha^i:\ \alpha\in \None\}$
and $X_i:=\pi^{-1*}(Y_i)$ (without loss of generality disjoint),
and $A_i:=\eta'' X_i$.
So the $A_i$ are large and disjoint,
and we can find a tree $T_0$ such that
$A_0\cap\lim(T_0)$ is large, and $A_1\cap\lim(T_0)$ is empty.

As $A_0\subseteq \eta''\lambda$, this implies that the inverse
$\eta$-image of $A_0\cap\lim(T_0)$ is also large. I.e.,
$\eta^{-1}(A_0\cap\lim(T_0))=\eta^{-1}A_0\cap \eta^{-1}\lim(T_0)=^*  X_0\cap \pi^{-1*}\nu^{-1}\lim(T_0)$
is large (for the last equation we use \jvkafirst).
Therefore also $Y_0 \cap \nu^{-1}\lim(T_0)$ is large,
and so, by~{\jvkathird}, 
$\nu''(Y_0 \cap \nu^{-1}\lim(T_0))=\lim(T_0)\cap\nu'' Y_0 $ is large as well.

On the other hand $\lim(T_0)\cap A_1$ is empty, so
$0=^*\pi^*\eta^{-1} (\lim(T_0)\cap A_1)=^* \pi^*\eta^{-1}\lim(T_0)\cap 
 \pi^*\eta^{-1}A_1$.
 Using {\jvkafirst} for $\lim(T_0)$, and noting that $\pi^*\eta^{-1}A_1=Y_1$, this set is (almost) equal to 
 $Y_1\cap \nu^{-1}\lim(T_0)$ which therefore is also small, and so
 $ \lim(T_0)\cap \nu''Y_1$ is small.

 So we know that 
 $\lim(T_0)\cap \nu'' Y_0$ is large
 and $ \lim(T_0)\cap \nu''Y_1$ is small, but $\nu'' Y_0=\nu'' Y_1$,
 a contradiction.
\end{itemize}
Note that this implies:
\begin{itemize}
    \item[\jvkfive] $X\cap Y$ small implies $\nu''X\cap \nu''Y$ small, for $X,Y\subseteq\lambda$. 
\item[\jvksix] $\nu^{-1}\nu'' X=^*X$ for $X\subseteq \lambda$.
\end{itemize}
Proof: 
\begin{itemize}
    \item[\jvkfive]
Assume otherwise. Without loss of generality we can assume that
$X$ and $Y$ are disjoint, and by~(\ref{item:gg2})
that $\nu''X$ and $\nu'' Y$ both are subsets of $\eta''\lambda$.
Then $\nu''X\cap \nu''Y\subseteq  \eta''\None$
is small.
    \item[\jvksix]
Set $Y:=\nu^{-1}\nu''X\setminus X$.
Then $\nu''Y\subseteq N_2\cup N_3$ is small, and by {\jvkathird} $Y$ is small.
\end{itemize}

\medskip

Set $D:=\lambda\setminus (\Nfive\cup\None)$ and define 
$e:D\to\lambda$ such that  
$e(\alpha)$ is the (unique) $\beta\in\lambda$
with $\eta(\alpha)=\nu(\beta)$.
Clearly $e$ is injective.
We claim that $e$ generates $\pi$, i.e., that the following are small (where we can assume $X\subseteq D$):
\begin{enumerate}[resume]
\item\label{item:gg7} $\Nseven:=\pi^*(X)\setminus e''X$.
\item\label{item:gg6}
$\Nsix:=e''X\setminus \pi^*(X)$.
\end{enumerate}
Proof:
\begin{itemize}
    \item[(\ref{item:gg7})] Assume that 
$\Nseven$ is large. 
Set $Y=\pi^{-1*}(\Nseven)$, without loss of generality $Y\subseteq X$
and $\pi^*(Y)=\Nseven$.
So $\pi^*(Y)$ is disjoint from $e''Y$ (as it is even disjoint from $e''X$).
We set $A_0:=\nu''\pi^*(Y)$ and $A_1:=\nu''e''Y$, 
by~{\jvkfive} we can assume they are disjoint, 
and by~{\jvkathird} both are large ($e$ is injective).

By~{\jvkasecond},
$\pi^{-1*}(\nu^{-1}A_0)\setminus \eta^{-1} A_1$ is large.

$\eta^{-1}(A_1)=Y$,
as $\nu(e(\alpha))=\eta(\alpha)$ for all $\alpha\in D$.
And $\pi^{-1*}(\nu^{-1}A_0)=^*Y$ by definition and
{\jvksix}, a contradiction.


\item[(\ref{item:gg6})] The same proof works: This time we set $Y=e^{-1}\Nsix$; see that 
$\pi^*(Y)$ and $e''Y$ are disjoint
and large; set
$A_0:=\nu'' \pi^*(Y)$
and
$A_1:=\nu'' e'' Y$;
use~{\jvkasecond} to see that
$Y\setminus \eta^{-1}\nu'' e''Y=Y\setminus Y$
is large, a contradiction.\qedhere
\end{itemize}
\end{proof}

\section{For measureables, GCH implies a nontrivial automorphism}\label{sec:meas}

\begin{theorem}\label{thm:meas}
If $\lambda$ is measurable and
$2^\lambda=\lambda^+$, then there is a nontrivial automorphism of 
$P^\lambda_\lambda$.
\end{theorem}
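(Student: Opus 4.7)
The plan is a transfinite back-and-forth construction of length $\lambda^+$, generalizing Rudin's CH-based construction of a nontrivial automorphism of $\mathcal P(\omega)/\mathrm{FIN}$. Using $2^\lambda=\lambda^+$, enumerate $\mathcal P(\lambda)$ as $\langle X_\xi:\xi<\lambda^+\rangle$ and all almost-permutations of $\lambda$ as $\langle f_\xi:\xi<\lambda^+\rangle$, and fix a normal $\lambda$-complete ultrafilter $U$ on $\lambda$ (provided by measurability). By induction on $\xi<\lambda^+$ I would construct an increasing chain of BA-isomorphisms $\pi_\xi\colon \mathcal B_\xi\to\mathcal B'_\xi$ between $\lambda$-complete subalgebras of $P^\lambda_\lambda$ of size $\le\lambda$, arranging: (a) $[X_\xi]\in\mathcal B_{\xi+1}\cap\mathcal B'_{\xi+1}$, so that $\pi:=\bigcup_\xi\pi_\xi$ is a total automorphism of $P^\lambda_\lambda$; and (b) some witness $W_\xi\in\mathcal B_{\xi+1}\setminus\mathcal B_\xi$ is chosen with $\pi_{\xi+1}([W_\xi])\ne[f_\xi''W_\xi]$, killing the candidate trivialization $f_\xi$. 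Unions handle limits of cofinality ${<}\lambda$; at cofinality-$\lambda$ limits take the $\lambda$-complete closure, which remains of size $\le\lambda$ by inaccessibility of $\lambda$.

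The central technical point is the extension step: given $\pi_\xi$ and some $[X]\notin\mathcal B_\xi$, find $[Y]\in P^\lambda_\lambda$ so that $\pi_\xi$ extends to a $\lambda$-complete iso sending $[X]$ to $[Y]$. Set $[B^+]=\bigvee\{[B]\in\mathcal B_\xi:[B]\le[X]\}$ and $[B^-]=\bigvee\{[B]\in\mathcal B_\xi:[B]\wedge[X]=0\}$ (both exist by $\lambda$-completeness), so $B^+\le X\le(B^-)^c$. A valid image $Y$ must satisfy $\pi_\xi(B^+)\le Y\le\pi_\xi((B^-)^c)$ and must split $\pi_\xi(B)$ for exactly those $B\in\mathcal B_\xi$ that $X$ splits. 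Since $|\mathcal B_\xi|\le\lambda$, these are $\le\lambda$ constraints; I would produce $Y$ by transporting the normal measure $U$ onto the ``undetermined piece'' $\pi_\xi((B^-)^c\setminus B^+)$ and using $\lambda$-completeness of the transported ultrafilter to intersect the $\lambda$ many positivity conditions. In fact this should yield $\lambda^+$ pairwise essentially-distinct valid $Y$'s — enough flexibility for (b), where we just need one with $[Y]\ne[f_\xi''W_\xi]$ for the chosen witness $W_\xi$.

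The main obstacle is the underlying saturation-type lemma: for every $\lambda$-complete subalgebra $\mathcal D\subseteq P^\lambda_\lambda$ of size $\le\lambda$ and every consistent ideal-type over $\mathcal D$, one must produce $\lambda^+$ pairwise essentially-distinct realizations in $P^\lambda_\lambda$. Measurability is crucial here: a $\lambda$-complete ultrafilter on any $\lambda$-sized subset of $\lambda$ supplies the ``generic'' element needed to realize the type, while different translates of $U$ yield essentially-distinct realizations; $2^\lambda=\lambda^+$ bounds the count at $\lambda^+$. Granted this lemma, the back-and-forth runs as a standard bookkeeping exercise, and the diagonalization at every stage guarantees that the resulting $\pi$ differs from every $\pi_{f_\xi}$, hence is nontrivial.
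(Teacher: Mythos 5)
Your plan is a Rudin-style back-and-forth, which is a genuinely different route from the paper, but as it stands it has a real gap at its central step: the ``saturation-type lemma'' is not proved, and the sketch you give for it does not work. First, the reduction to an interval plus splitting conditions is ill-founded: $B^+=\bigvee\{[B]\in\mathcal B_\xi:[B]\le[X]\}$ and $B^-$ are suprema of families of size up to $\lambda$, and $P^\lambda_\lambda$ is only ${<}\lambda$-complete, so these suprema need not exist (neither in $\mathcal B_\xi$ nor in the whole algebra; e.g.\ a $\lambda$-sized partition of $\lambda$ into large pieces has no supremum, as any transversal's complement is a nontrivial upper bound). Hence the type of $[X]$ over $\mathcal B_\xi$ is irreducibly a family of $\lambda$ many conditions of the form ``$[Y]\wedge\pi_\xi([B])\ne\mathbb 0$'', ``$[Y]\wedge\pi_\xi([B])=\mathbb 0$'', ``$\pi_\xi([B])\wedge [Y]^c\ne\mathbb 0$'', etc. Second, transporting the normal measure onto one ``undetermined piece'' cannot realize such a type: a $\lambda$-complete ultrafilter only controls intersections with its own members, whereas you must simultaneously split $\lambda$ many sets $\pi_\xi(B)$ that may be pairwise disjoint (at most one of which is in any ultrafilter), while being almost disjoint from $\lambda$ many others and almost containing $\lambda$ many others. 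Whether $\mathcal P(\lambda)/[\lambda]^{<\lambda}$ has the $\lambda^+$-saturation/homogeneity property your back-and-forth needs is exactly the hard part, and it is neither standard nor addressed by measurability in the way you indicate; without it the extension step, and with it the whole construction, is unsupported.

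The paper's proof avoids this issue entirely and uses measurability quite differently. Let $\mathcal D$ be a normal ultrafilter and $\mathcal I$ its dual ideal restricted to sets of size $\lambda$. One builds, by induction of length $\lambda^+$ (using $2^\lambda=\lambda^+$ to enumerate $\mathcal I$ and all permutations $e_\alpha$ of $\lambda$), a $\subseteq^*$-increasing family $A_\alpha\in\mathcal I$ exhausting $\mathcal I$ together with almost-coherent permutations $f_\alpha$ of $A_\alpha$; the automorphism is then defined directly by $\pi([X])=[f_\alpha''X]$ when $X\in\mathcal I$, $X\subseteq^*A_\alpha$, and by complementation when $X\notin\mathcal I$ (the ultrafilter guarantees exactly one case applies). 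The ${<}\lambda$-completeness and normality of $\mathcal D$ are what get the construction through limit stages (diagonal unions stay in $\mathcal I$), and at successor stages one arranges a large $X\subseteq A_{\alpha+1}$ with $e_\alpha''X$ disjoint from $f_{\alpha+1}''X$, which kills triviality. In other words, no type over a subalgebra is ever realized; images are produced directly from the coherent family of injections, so the saturation problem your approach runs into never arises. If you want to salvage your route, the burden is precisely to prove the $\lambda^+$-saturation lemma for this quotient, which is a substantial open-ended task rather than a bookkeeping exercise.
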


\begin{proof}  
Let $\mathcal D$ be a normal ultrafilter on $\lambda$ 
and denote by $\mathcal I:=[\lambda]^\lambda\setminus \mathcal D$ its dual ideal restricted to sets of size $\lambda$.

 Since $2^\lambda=\lambda^+$, we can list all permutations of $\lambda$ as $\{e_\alpha: \alpha<\lambda^+\}$; 
 and analogously all elements of $\mathcal I $ as $\{X_\alpha: \alpha<\lambda^+\}$.
 
 We will construct, 
 by induction on $\alpha<\lambda^+$ a set 
 $A_\alpha\in \mathcal I$ and   a permutation $f_\alpha$ of $A_\alpha$, such that for  $\alpha<\beta$:
 \begin{enumerate}
   \item $A_\alpha\subseteq^*A_\beta$,
   \item $X_\alpha\subseteq A_{\alpha+1}$,
   \item\label{item:almostext} $f_{\alpha}(x)=f_{\beta}(x)$ for almost all $x\in A_{\alpha}\cap A_{\beta}$, 
   \item\label{item:differ} there is some $X\subseteq A_{\alpha+1}$  of size $\lambda$
   such that $e_\alpha''X$ and $f_{\alpha+1}''X$ are disjoint. 
\end{enumerate}
(Note that by $x\subseteq^*y $ we mean  $|y\setminus x|=\lambda$,
not $y\setminus x\in \mathcal I$; and the same for `almost all''.)
\\ 
The construction:
\begin{itemize}
    \item 

 Successor stages $\alpha+1$:
 Fix any  $B\in \mathcal I$ disjoint to $A_\alpha$ such that
 $A_\alpha\cup B\supseteq X_\alpha$. Set $C:=e_\alpha'' B\cap A_\alpha$.

 First assume that $|C|=\lambda$.
 Then set $A_{\alpha+1}=A_\alpha\cup B$ and let $f_{\alpha+1}$
 extend $f_\alpha$ by the identity on $B$.
 Then~(\ref{item:differ}) is witnessed by $X:=e_\alpha^{-1}C$. 

 So we assume $|C|<\lambda$. 
 Partition $B$ into large sets $B_0,B_1,B_2$ such that $e_\alpha''B_i$
 is disjoint to $A_\alpha$ for $i=0,1$.
 Set $A_{\alpha+1}:=A_\alpha\cup B\cup e_\alpha'' B$, and 
 define $f_{\alpha+1}$ on $B$ such that the restriction to $B_i$
 is a bijection op $e_\alpha'' B_{1-i}$ for $i=0,1$, and the  restriction to $B_2$ a bijection to 
 $e''B_2\setminus A$.
 Then~(\ref{item:differ}) is witnessed by $X:=B_0$. 
 

\item
  Limit stages $\delta$ of cofinality ${<}\lambda$: Let $\xi:= \cf(\delta)$ and choose $\langle\alpha_i: i<\xi\rangle$ a cofinal increasing sequence converging to $\delta$. The union $\bigcup_{i< \xi }A_{\alpha_i}$ is, by ${<}\lambda$ completeness, in $\mathcal I$.
  Remove $<\lambda$ many points
  to get a subset $A_\delta$ such that
  \begin{itemize}
      \item For all $i<j<\xi$,
      $f_i$ and $f_j$ agree on
      $A_{\alpha_i}\cap A_\delta$,
      \item For all $i<\xi$,
      $f_i\restriction (A_{\alpha_i}\cap A_\delta)$
      is a full permutation (we can do this as in
      Lemma~\ref{lem:almostisall}).
  \end{itemize}
   Then $f_\delta$, defined as the
   union of the $f_{\alpha_i}$, 
   is a permutation  of $A_\delta$ and almost extends each $f_{\alpha_i}$.

\item
  Limit stages $\delta$ of cofinality  $\lambda$:  We choose an increasing cofinal sequence $\langle \alpha_i: i<\lambda\rangle$ converging to $\delta$. By induction on  $i\in \lambda$  
  we construct $ A'_i=^*A_{\alpha_i}$, such that 
\begin{itemize}
	\item $A'_i\cap i=\emptyset$,
	\item The $f_{\alpha_i}$'s fully extend each other on the $A'_i$'s, i.e.,
	if $x\in A'_i\cap A'_j$ then $f_{\alpha_i}(x)=f_{\alpha_j}(x)$,
	\item $f_{\alpha_i}: A'_i \rightarrow A'_i$ is a ``full'' permutation.
\end{itemize}
  We can do this by removing from $A_{\alpha_i}$: the points less than 
  $i$, the points where $f_{\alpha_i}$ disagrees with some previous 
  $f_{\alpha_j}$ for any $j<i$;
  and by removing ${<}\lambda$ many points to get a full permutation.
  
  Now we can set $A_\delta$ and $f_\delta$
  to be the unions of $A'_{i}$ and $f_{\alpha_i}$, respectively, for $i<\delta$.
  Note that $A_\delta$ is in $\mathcal I$ (as it is a subset of 
  the diagonal union); 
  and $f_\delta$ is a permutation of
  $A_\delta$ satisfying~(\ref{item:almostext}).
\end{itemize}




		
Note that for all $X\subseteq \lambda$,
either $X\in\mathcal I$ or $\lambda\setminus X\in\mathcal I$
(but not both),
i.e., either $X$ or $\lambda\setminus X$
is $\subseteq^* A_\alpha$ for coboundedly many $\alpha<\lambda$.

This allows us to define the automorphism $\pi$ as follows:
For $X\in [\lambda]^\lambda$, 
\[\pi([X]):=\begin{cases} [f_\alpha''X] &\text{ if } X\in \mathcal I, X\subseteq^* A_\alpha \text{ for some }\alpha<\lambda^+\text{ (Case 1)}\\
[\lambda \setminus f_\alpha''(\lambda\setminus X) ]&\text{ if } X\notin \mathcal I,\lambda \setminus 
X\subseteq^* A_\alpha \text{ for some }\alpha<\lambda^+\text{ (Case 2)}.\end{cases}\]
Note that in Case 2, $\pi([X])=[(\lambda\setminus A_\alpha)\cup (A_\alpha\setminus f''_\alpha(A_\alpha\setminus X))]=[(\lambda\setminus A_\alpha)\cup f_\alpha''(X\cap A_\alpha)]$, as $f_\alpha''A_\alpha=^*A_\alpha$.

$\pi$ is well defined on $[\lambda]^\lambda$, as 
exactly one of $X$ or $\lambda\setminus X$ will eventually be 
$\subseteq^* A_\alpha$.

$\pi$ is an automorphism: $\pi([\emptyset])=\emptyset$.
$\pi$ honors complements: If $X$ is Case 1, then 
$\pi([\lambda\setminus X])$ is by definition (Case 2)
$[\lambda\setminus f_\alpha''(X)]$.
$\pi$ honors intersections $X\cap Y$:
This is clear if  both sets are the same Case.
Assume that $X$ is Case~1 and 
$Y$ Case~2. Then $X\cap Y\subseteq X$ is Case~1,
and for any $\alpha$ suitable for both $X$ and $Y$ we have
\[
  \pi([X])\wedge \pi([Y])=
  [f_\alpha'' X\cap ((\lambda\setminus A_\alpha)\cup f_\alpha''(Y\cap A_\alpha))] =
  [f_\alpha'' X \cap f''_\alpha(Y\cap A_\alpha)]=
  [f_\alpha'' (X\cap Y))].
\]

$\pi$ is not trivial: Every automorphism $e$
is an ${e_\alpha}$ for some $\alpha\in \lambda^+$;
and 
according to~(\ref{item:differ})
there is some $X_\alpha\subseteq A_{\alpha+1}$ 
(and therefore in $\mathcal I$) of size $\lambda$
such that $e_\alpha''X_\alpha$ is disjoint to
$f_{\alpha+1}'' X_\alpha$, a representative of 
$\pi([X_\alpha])$.
\end{proof}

\section{For inaccessible \texorpdfstring{$\lambda$}{λ}, all automorphisms can be densely trivial}\label{sec:forc}

In this section, we always assume the following
(in the ground model):
\begin{assumption}\label{asm:main}
$\lambda$ is inaccessible and $2^\lambda=\lambda^+$.
We set $\mu:=\lambda^{++}$.

\end{assumption}




In the rest of the paper, we will show the following:

\begin{theorem}\label{thm:main4}
 ($\lambda$ is inaccessible and $2^\lambda=\lambda^+$.)
 There is a $\lambda$-proper,
 ${<}\lambda$-closed,
 $\lambda^{++}$-cc poset $P$ 
 (in particular, preserving all cofinalities)
 that forced: $2^{\lambda}=\lambda^{++}$,
 and every automorphism of $P^\lambda_\lambda$ is densely trivial.
\end{theorem}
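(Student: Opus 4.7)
The plan is to construct $P$ as a ${<}\lambda$-support iteration $\langle P_\alpha, \dot Q_\alpha : \alpha < \mu \rangle$ of length $\mu = \lambda^{++}$, where at stage $\alpha$ the iterand $\dot Q_\alpha$ is designed to force a given $P_\alpha$-name $\dot\pi_\alpha$ for an automorphism of $P^\lambda_\lambda$ to become somewhere trivial. By Lemma~\ref{lem:somewhereimpliesdensely}, ensuring that every automorphism in the final model is somewhere trivial is equivalent to the desired conclusion that every automorphism is densely trivial. A standard bookkeeping over $\mu$, using the anticipated $\mu$-cc of $P$ together with $|P_\alpha| \le \mu$ at each stage, arranges that every $P_\mu$-name for an automorphism is considered at some $\alpha < \mu$; combined with a reflection argument this guarantees that every automorphism in $V^{P_\mu}$ is in fact handled.

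For the iterand $\dot Q_\alpha$ the natural choice is an approximation forcing: a condition is a pair $(a,g)$ with $a \in [\lambda]^{<\lambda}$ and $g\colon a\to \lambda$ injective, subject to a compatibility requirement with $\dot\pi_\alpha$ -- roughly, that the condition in $P_\alpha$ forces $\dot\pi_\alpha([B])$ to agree with $[g''B]$ on the ``already decided'' part $[g''a]$ for every $B \subseteq a$. The order is extension of both components. This forcing is ${<}\lambda$-closed because a ${<}\lambda$-increasing sequence of approximations has a union which is still a valid approximation, using the ${<}\lambda$-completeness of $P^\lambda_\lambda$ and regularity of $\lambda$. A generic filter then yields $A = \bigcup a \in [\lambda]^\lambda$ and $f = \bigcup g$ with $\dot\pi_\alpha([B]) = [f''B]$ for all $B \subseteq A$, witnessing somewhere triviality.

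The $\mu$-cc of $P$ follows from a $\Delta$-system argument at $\mu = \lambda^{++}$, using the ground-model hypothesis $2^\lambda = \lambda^+$ and the fact that conditions carry only ${<}\lambda$-sized local data. Combined with ${<}\lambda$-closure this gives preservation of all cofinalities. The forcing has size $\mu$ and adds $\mu$-many generic functions from $\lambda$ to $\lambda$, so $2^\lambda = \mu$ in $V^P$; the matching upper bound $2^\lambda \le \mu$ comes from the standard name-counting using $\mu$-cc plus $2^\lambda = \lambda^+$ in $V$.

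The principal obstacle is verifying $\lambda$-properness of each $\dot Q_\alpha$ and preserving it through the iteration. For the single step, given $M \prec H(\chi)$ of size $\lambda$ with $M^{<\lambda} \subseteq M$ containing $\dot\pi_\alpha$ and $\dot Q_\alpha$, one must extend any $q \in \dot Q_\alpha \cap M$ to an $(M, \dot Q_\alpha)$-generic condition; the delicate point is deciding how to extend $g$ at ordinals $\ge \sup(M \cap \lambda)$ while maintaining the compatibility constraint with $\dot\pi_\alpha$, and here the fact that $\dot\pi_\alpha$ is forced to be an automorphism -- in particular bijective and ${<}\lambda$-additive -- provides enough room. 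Preserving $\lambda$-properness along the iteration then requires a Shelah-style preservation theorem for ${<}\lambda$-support iterations of ${<}\lambda$-closed $\lambda$-proper forcings at an inaccessible $\lambda$, typically organised via side conditions that are towers of elementary submodels of size $\lambda$ together with a completeness system or related device. This chain-of-models technology, and its interaction with the approximation-plus-compatibility structure of the iterands, is where the bulk of the technical work is expected to lie.
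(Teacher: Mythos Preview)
Your approximation-forcing iterand does not work as described. A condition $(a,g)$ with $|a|<\lambda$ carries no nontrivial information about an automorphism of $P^\lambda_\lambda$: every $B\subseteq a$ has $[B]=\mathbb{0}$, so $\dot\pi_\alpha([B])=\mathbb{0}=[g''B]$ automatically, and your ``compatibility requirement'' is vacuous. The forcing you describe is then just a ${<}\lambda$-closed forcing adding a partial injection $f\colon\lambda\to\lambda$ with no tie to $\dot\pi_\alpha$; the generic $f$ has no reason whatsoever to witness triviality of $\dot\pi_\alpha$ on its domain. Any attempt to impose a genuinely nontrivial compatibility requirement runs into the opposite problem: if $\dot\pi_\alpha$ is (as far as $P_\alpha$ knows) nowhere trivial, conditions need not be extendable at all, and the set of $(a,g)$ with large $a$ need not be dense. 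The fundamental difficulty --- that one cannot simply \emph{force} an automorphism to be trivial by approximation --- is precisely why this theorem is hard.

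The paper's strategy is entirely different and does not use bookkeeping of automorphism names. One iterates, with ${\le}\lambda$-support, a single fixed forcing $Q$ (independent of any $\dot\pi$) that adds a generic $\eta_\beta\in 2^\lambda$; $Q$ is a tree-like forcing with fusion and pure decision, giving ${<}\lambda$-closure, $\lambda$-properness, and $\lambda^\lambda$-bounding for the iteration $P$. The heart of the argument is then carried out \emph{after} the forcing is fixed: given a $P$-name $\npi$ for an automorphism, one analyzes $\na_\beta:=\npi(\neta_\beta)$ using a $\Delta$-system of elementary models together with a \emph{majority-decision} trick ($\neta_{\beta_0}=^*\majority_{i=1,2,3}\neta_{\beta_i}$ forces $\na_{\beta_0}=^*\majority_{i=1,2,3}\na_{\beta_i}$). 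This yields first that $\na_\beta\in V_{\beta+1}$, then that $\na_\beta$ can be read \emph{locally} from $\neta_\beta$ on an unbounded family of intervals, and finally that on a large set $X$ the resulting interval-wise maps assemble into a bijection generating $\nphi$. The technical work is not a preservation theorem for $\lambda$-properness (that part is routine fusion), but the combinatorial extraction of a trivializing bijection from the majority argument.
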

By Lemma~\ref{lem:somewhereimpliesdensely},
it is enough to show that every automorphism is somewhere trivial.


\newcommand{\xx}{\mathbf c}
\newcommand{\edn}{\epsilon_\mathrm{dn}}
\newcommand{\eup}{\epsilon_\mathrm{up}}

\subsection{The single forcing \texorpdfstring{$\bm Q$}{Q}}\label{sec:Q}

\begin{definition}
We fix a strictly increasing sequence $(\theta^*_\zeta)_{\zeta<\lambda}$
with $\theta^*_\zeta<\lambda$ regular
 and $\theta^*_\zeta>2^{|\zeta|}$.
\begin{itemize} 
\item 

Let $(I^*_{\zeta})_{\zeta\in\lambda}$
be an increasing interval partition of $\lambda$ such that $I^*_\zeta$ has size $2^{\theta^*_\zeta}$; and fix a bijection of
$I^*_\zeta$ and $2^{\theta^*_\zeta}$. 
Using this (unnamed) bijection, 
 we set
$[s]:=\{\ell \in I^*_\zeta: \ell>s\}$ for $s\in 2^{<\theta^*_\zeta}$.

So the $[s]$ are cones, i.e., the set of all branches in $I^*_\zeta$
extending $s$.


For $\zeta<\lambda$, we set $I^*({<}\zeta):=\bigcup_{\ell<\zeta} I^*_\ell$, and analogously
$I^*({\le}\zeta):=I^*({<}\zeta+1)$,
$I^*({\ge}\zeta):=\lambda\setminus I^*({<}\zeta)$,
and $I^*({\ge}\zeta,{<}\xi):=I^*({\ge}\zeta)\cap I^*({<}\xi)$.
\item A condition $q$ of the forcing notion $Q$ 
is a function with domain $\lambda$ such that,
for all $\zeta\in\lambda$,
$q(\zeta)$ is a partial function  from $I^*_\zeta$ to $2$, and such that
%
for a club-set $C^q\subseteq \lambda$ 
\begin{itemize}
\item if $\zeta\notin C^q$, then $q(\zeta)$ is total,
\item otherwise, the domain of $q(\zeta)$ is $I^*_\zeta\setminus [s^q_\zeta]$
for some $s^q_\zeta\in 2^{<\theta^*_\zeta}$.
\end{itemize}
$C^q$ and $s^q_\zeta$ are uniquely determined by $q$; 
and $q$ is uniquely determined by the partial function 
$\eta^q:\lambda\to 2$ defined as $\bigcup_{\zeta\in\lambda}q(\zeta)$.
\item $q$ is stronger than 
$p$ if $\eta^{q}$ extends $\eta^p$.

(This implies that $C^{q}\subseteq C^{p}$, and that $s^{q}_\zeta$ extends
$s^p_\zeta$ for all $\zeta\in C^{q}$.)
\end{itemize}
\end{definition}

The following is straightforward:
\begin{lemma}\label{lem:wrrq} 
$Q$ has size $2^{\lambda}$, is ${<}\lambda$-closed and adds a 
generic real 
$\neta:=\bigcup_{q\in G}\eta^q$
in $2^\lambda$.
\end{lemma}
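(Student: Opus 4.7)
The plan is to verify the three assertions of the lemma separately. The main subtlety lies in the ${<}\lambda$-closure argument, while the other two points are essentially bookkeeping.

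For the size claim, I would observe that each $q$ is uniquely determined by its partial function $\eta^q:\lambda\to 2$, giving $|Q|\le 2^\lambda$; for the lower bound, fixing any club $C\subseteq\lambda$ (e.g., the limit ordinals) and taking conditions of the form ``$C^q=C$, $s^q_\zeta=\langle\rangle$ for all $\zeta\in C$, and $q(\zeta)$ an arbitrary total function on $I^*_\zeta$ for $\zeta\notin C$'' yields $2^\lambda$ distinct conditions. For the generic real, it suffices to show that for each $\alpha\in I^*_\zeta$ the set $D_\alpha:=\{q\in Q:\alpha\in\dom(\eta^q)\}$ is dense: given $p$, if $\zeta\notin C^p$ then $p(\zeta)$ is already total; otherwise $\alpha$ corresponds to a branch $b_\alpha\in 2^{\theta^*_\zeta}$ extending $s^p_\zeta$, and I lengthen $s^p_\zeta$ by one bit opposite to $b_\alpha(|s^p_\zeta|)$ to obtain $s'$ with $\alpha\notin[s']$, then assign values (in particular at $\alpha$) on the newly uncovered piece $[s^p_\zeta]\setminus[s']$ arbitrarily.

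For ${<}\lambda$-closure, given a decreasing chain $\langle q_i:i<\delta\rangle$ with $\delta<\lambda$, I would set $\eta^*:=\bigcup_{i<\delta}\eta^{q_i}$ and let $D:=\bigcap_{i<\delta}C^{q_i}$, which is a club as the intersection of fewer than $\lambda$ clubs in the regular $\lambda$. Since $\lambda$ is inaccessible, the $\theta^*_\zeta$ are cofinal in $\lambda$, so I pick $\zeta_0$ with $\theta^*_{\zeta_0}>\delta$ and set $C^q:=D\cap[\zeta_0,\lambda)$. For $\zeta\in C^q$ the splitting strings $(s^{q_i}_\zeta)_{i<\delta}$ form an increasing chain in $2^{<\theta^*_\zeta}$ of length $\delta<\theta^*_{\zeta_0}\le\theta^*_\zeta$; by regularity of $\theta^*_\zeta$ the union $s^q_\zeta:=\bigcup_i s^{q_i}_\zeta$ still has length $<\theta^*_\zeta$, and the identity $\bigcap_i[s^{q_i}_\zeta]=[s^q_\zeta]$ shows that $\eta^*\restr I^*_\zeta$ has exactly the required domain $I^*_\zeta\setminus[s^q_\zeta]$. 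For $\zeta\notin C^q$ I extend $\eta^*\restr I^*_\zeta$ to a total function arbitrarily; this is legal because if $\zeta$ fell out of some $C^{q_{i_0}}$, then $q_{i_0}(\zeta)$, and hence $\eta^*\restr I^*_\zeta$, was already total, so the arbitrary extension is only exercised on the (at most $\delta$ many) coordinates $\zeta<\zeta_0$ with $\zeta\in D$. The resulting $q$ is a valid condition and extends every $q_i$.

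The only real obstacle, mild as it is, is the tension between the small coordinates (where $\theta^*_\zeta\le\delta$) and the club structure: a naive intersection of clubs need not preserve the length bound $|s^q_\zeta|<\theta^*_\zeta$. The standard fix is to pass to a tail of the club, and the freedom built into the condition format --- arbitrary totalness off the club --- absorbs the dropped coordinates at no cost.
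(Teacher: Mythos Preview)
Your proof is correct and follows essentially the same strategy as the paper: take the intersection of the clubs, then truncate a bounded initial segment to avoid any coordinate where the accumulated splitting string could be unbounded in $\theta^*_\zeta$, filling those coordinates arbitrarily. The paper is slightly sharper in noting that there is at most \emph{one} problematic $\zeta$ (namely the one with $\theta^*_\zeta=\cf(\delta)$), a precision that is referenced in later fusion arguments, but for the present lemma your blunter cut at $\zeta_0$ with $\theta^*_{\zeta_0}>\delta$ works just as well; your parenthetical ``at most $\delta$ many'' should read ``at most $\zeta_0$ many,'' though this is harmless.
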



\begin{proof}
${<}\lambda$-closure is obvious, but for later reference 
we would like to point out the ``problematic cases'':

Let $(p_i)_{i<\delta}$ be decreasing for a limit ordinal $\delta<\lambda$.

As a first approximation, set
$\eta^*:=\bigcup_{i<\delta}\eta^{p_i}$ (a partial function) 
and $C^*:=\bigcap_{i<\delta}C^{p_i}$
(a club set)
and $s^*_\zeta:=\bigcup_{i<\delta}s^{p_i}_\zeta\in 2^{\le \theta^*_\zeta}$ 
for $s\in C^*$. For $\zeta\notin C^*$, $\eta^*$ is indeed total on 
$I^*_\zeta$, and for $\zeta\in C^*$ the domain in $I^*_\zeta$
is $I^*_\zeta\setminus [s^*_\zeta]$.

The \textbf{problematic case} is when 
$s^*_\zeta$ is unbounded in $\theta^*_\zeta$.
(This can only happen if $\cf(\delta)= \theta^*_\zeta$, in particular for at most one $\zeta$.)
In this case we can just pick 
any extension $\eta^q$ of $\eta^{*}$
by filling all values in $I^*_{\le\zeta}$.
This gives the desired $q$, with $C^{q_\delta}=C^*\setminus {\zeta+1}$.
\end{proof}

\begin{remarks*}\mbox{}
\begin{itemize}
\item
The limits of ${<}\lambda$-sequences of conditions
are not 
``canonical'' if there are problematic $\zeta$'s, as 
we have to fill in arbitrary values.
\item
$\neta$ determines the generic filter, by 
$G=\{p\in Q:\,  \eta^p \subseteq \neta\}$.
This follows from the following facts: 
\begin{itemize}
    \item 
$p$ and $q$ are compatible (as conditions in $Q$)
iff $\eta^p$ and $\eta^q$
are compatible as partial functions and 
$X_{p,q}:=\{\zeta\in C^p:\, s^p_\zeta\text{ and }s^q_\zeta\text{ are incomparable}\}$
is non-stationary.
\item If $p,q$ are such that $X_{p,q}$ is stationary, 
then the set of conditions $r$ such that $\eta^r$ and $\eta^q$ are 
incompatible (as partial functions) is dense below $p$.
\end{itemize}
\end{itemize}
\end{remarks*}




\subsection{Properness of \texorpdfstring{$\bm Q$}{Q}: Fusion and pure decision.}\label{sec:Q2}
\begin{definition}
We say $q\le_{\xi} p$,
if $q\le p$, $\xi\in C^q$ and 
$q\restriction\xi=p\restriction \xi$.
\\
    $q\le^+_{\xi} p$ means $q\le_\xi p$ and
    $q(\xi)=p(\xi)$.
\end{definition}
(Note  the difference between $q\le^+_{\xi} p$ and
$q\le_{\xi+1} p$: The former does not require $\xi+1\in C^q$.)

\begin{lemma}\label{lem:Qfusion}
Let $\delta\le \lambda$ be a limit ordinal,
$\xi\in\lambda$ and $(q_i)_{i<\delta}$ a sequence in $Q$.
\begin{enumerate} 
\item\label{item:plainplus}
If $\delta<\lambda$ and 
$q_{j}<^+_{\xi} q_i$ for all $i<j<\delta$, then there is a 
 $q_\infty$ such that $q_\infty<^+_{\xi}q_i$ for all $i$.
\item\label{item:Qfusion}
If 
$q_{j}<_{\xi_i} q_i$ for $i<j<\delta$, where
$(\xi_i)_{i\in\delta}$ is
a strictly
increasing\footnote{For $\delta=\lambda$, it is enough that the $\xi_i$ converge to $\lambda$.
For $\delta<\lambda$, we use that the $\xi_i$ are increasing and that 
$\sup(\xi_i)\ge \cf(\delta)$.} sequence in $\lambda$,
then there is a (canonical) limit $q_\infty$ such that
$q_\infty<_{\xi_i}q_i$ for all $i$.
\end{enumerate}
\end{lemma}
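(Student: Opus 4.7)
The plan for both parts is to build $q_\infty$ via the "naive" union-of-conditions construction from the proof of Lemma~\ref{lem:wrrq}: set $\eta^*:=\bigcup_{i<\delta}\eta^{q_i}$, take $C^*$ to be the (diagonal, if $\delta=\lambda$) intersection of the $C^{q_i}$'s, and $s^*_\zeta:=\bigcup_{i<\delta}s^{q_i}_\zeta$ for $\zeta\in C^*$. Recall that the only potentially problematic case is when some $s^*_\zeta$ is unbounded in $\theta^*_\zeta$, which forces $\cf(\delta)=\theta^*_\zeta$, hence there is at most one such $\zeta=:\zeta_0$. The key is to locate any such $\zeta_0$ and show that it does not disrupt the desired inequality.

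For part~(\ref{item:plainplus}), the hypothesis $q_j<^+_\xi q_i$ gives $q_i\restriction\xi=q_0\restriction\xi$ and $q_i(\xi)=q_0(\xi)$ for all $i$; in particular $s^{q_i}_\zeta=s^{q_0}_\zeta$ is constant in $i$ for $\zeta\le\xi$, ruling out any problematic $\zeta_0\le\xi$. I would define $q_\infty$ to agree with $q_0$ on $\zeta\le\xi$ and to be the limit-with-arbitrary-filling from Lemma~\ref{lem:wrrq} on $\zeta>\xi$: if a problematic $\zeta_0>\xi$ exists, fill in arbitrary values on $I^*({\ge}\xi+1,{\le}\zeta_0)$. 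Then $C^{q_\infty}=(C^{q_0}\cap(\xi+1))\cup(C^*\cap(\zeta_0,\lambda))$, which is a club in $\lambda$ (the gap $(\xi,\zeta_0]$ introduces no limit points because the lower piece is bounded by $\xi$), and it contains $\xi$. This gives $\xi\in C^{q_\infty}$, $q_\infty\restriction\xi=q_0\restriction\xi$, and $q_\infty(\xi)=q_0(\xi)$, i.e., $q_\infty<^+_\xi q_i$ for all $i$.

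For part~(\ref{item:Qfusion}), set $\xi^*:=\sup_{i<\delta}\xi_i$. For any $\zeta<\xi^*$, pick $i_0<\delta$ with $\xi_{i_0}>\zeta$; since $q_j\restriction\xi_{i_0}=q_{i_0}\restriction\xi_{i_0}$ for $j\ge i_0$, the sequence $s^{q_j}_\zeta$ stabilizes, so $\zeta$ is non-problematic. When $\delta=\lambda$ we have $\xi^*=\lambda$ and we are done. When $\delta<\lambda$, any problematic $\zeta_0$ would satisfy $\theta^*_{\zeta_0}=\cf(\delta)$; but $\theta^*_{\zeta_0}>2^{|\zeta_0|}\ge|\zeta_0|^+>\zeta_0$, so $\zeta_0<\cf(\delta)\le\xi^*$ by the assumption on the $\xi_i$, placing $\zeta_0$ into the already-stabilized range — contradiction. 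Hence the naive union defines $q_\infty$ canonically, $C^{q_\infty}$ is the (diagonal) intersection of the $C^{q_i}$'s, and each $\xi_i$ belongs to it (since $\xi_i\in C^{q_j}\subseteq C^{q_i}$ for $j\ge i$). Together with $q_\infty\restriction\xi_i=q_i\restriction\xi_i$ by construction, this yields $q_\infty<_{\xi_i}q_i$.

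The main obstacle is navigating the problematic case from Lemma~\ref{lem:wrrq}: in part~(\ref{item:plainplus}) it may still arise above $\xi$, and the real content is arranging the arbitrary filling so that $C^{q_\infty}$ stays a club containing $\xi$; in part~(\ref{item:Qfusion}) the content is the estimate $\zeta_0<\theta^*_{\zeta_0}$ combined with $\sup\xi_i\ge\cf(\delta)$, which rules the problematic case out altogether and makes the limit canonical.
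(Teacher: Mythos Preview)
Your argument is correct and follows essentially the same route as the paper's proof. One small correction: in part~(\ref{item:Qfusion}) for $\delta=\lambda$, the set $C^{q_\infty}$ determined by $\eta^*$ is the \emph{full} intersection $\bigcap_{i<\delta}C^{q_i}$ (not a diagonal one), and it is club precisely because, as you yourself observe at the end, each $\xi_i$ lies in every $C^{q_j}$.
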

\begin{proof}
    (\ref{item:plainplus}): We perform the same 
    construction as in the proof
    of Lemma~\ref{lem:wrrq}.
    If there is a problematic case $\zeta$,
    then $\zeta>\xi$ (as for $\zeta'\le \xi$ the 
    conditions $q_i(\zeta')$ are constant).
    We can then make $\eta^*$  total on $I^*(>\xi,\le\zeta)$.
    (It may not be enough to make it total on $I^*_\zeta$,
    as $C^*\setminus \{\zeta\}$ might not be club.)


    (\ref{item:Qfusion}): 
    Define $q_\infty(\zeta):= \bigcup_{i\in\delta} q_i(\zeta)$
    for $\zeta\in \lambda$.
    
    This is a non-total function (on $I^*_\zeta$) iff
    $\zeta\in C^{q_\infty}:=\bigcap_{i<\delta} C^{q_i}$,
    which is closed as intersection of closed sets, and also unbounded:
    If $\delta<\lambda$ because we have a small intersections of clubs,
    if $\delta=\lambda$ as it contains each $\xi_i$.
        
    There are no problematic cases: 
    If $\zeta$ is below some $\xi_i$, then $q_j(\zeta)$ is eventually constant.
    If $\zeta$ is above all $\xi_i$,
    which can only happen if $\delta<\lambda$,
    then $\cf(\delta)\le\delta\le\sup(\xi_i) \le \zeta< \theta^*_\zeta$.
\end{proof}

    
      
    

So $Q$ satisfies fusion; and we will now show that it also satisfies
``pure decision''; standard arguments then imply that $Q$
is $\lambda$-proper and $\lambda^\lambda$-bounding.

\begin{definition}\label{def:Qrelx} Let $\xi\in\lambda$, $q\in Q$.
    \begin{itemize}
    \item $\QPOSS(\xi):=2^{I^*({<}\xi)}$.
    So in the extension $V[G]$,
    for each $\xi$ there will be exactly one $x\in \QPOSS(\xi)$ compatible with
    (or equivalently: an initial segment of) the generic real $\neta$.
    We write ``$x\subseteq \neta$'' or ``$G$ chooses $x$'' for this $x$.
    \item 
    $\poss(q,\xi)$ is the set of $x\in \QPOSS(\xi)$ compatible with $ \eta^q$ (as partial functions), or equivalently:
    $x\in \poss(q,\xi)$ iff $\lnot q\Vdash x \nsubseteq\neta$.
    So $q$ forces that exactly one $x\in\poss(q,\xi)$ is chosen by $G$.
    \item 
    Let $\ntau$ be a name for an ordinal.
    We say that $q$ $\xi$-decides $\ntau$, if there is 
    for all $x\in \poss(q,\xi)$ an ordinal $\tau^x$ such that $q$ forces
    $x\subseteq\neta \rightarrow \ntau=\tau^x$.
\end{itemize}
\end{definition}
Note that for $p\in Q$ and $\zeta\in C^p$,
$q\le^+_{\zeta} p$ is equivalent to $\poss(q,\zeta+1)=\poss(p,\zeta+1)$,
while $q\le_\zeta p$ is equivalent to $\zeta\in C^q$ and $\poss(q,\zeta)=\poss(p,\zeta)$.

\begin{lemma}\label{lem:orelse}
  Assume $p\in Q$, $\zeta\in C^p$, 
  $x\in \poss(p,\zeta+1)$,
  and $r\le p$ extends\footnote{By which we mean $x\subseteq \eta^r$.} $x$.
  Then there is a $q\le^+_\zeta p$ 
  forcing: $x\subseteq\neta\rightarrow r\in G$.
  This condition is denoted by
  $r\vee (p\restriction \zeta+1)$.
\end{lemma}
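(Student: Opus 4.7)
The plan is to construct $q$ by ``gluing'' the piece of $p$ below (and at) level $\zeta$ with the piece of $r$ above level $\zeta$. Concretely, set
\[
\eta^q \restriction I^*({\le}\zeta) := \eta^p \restriction I^*({\le}\zeta),\qquad
\eta^q \restriction I^*({>}\zeta) := \eta^r \restriction I^*({>}\zeta),
\]
and take $C^q := (C^p \cap (\zeta+1)) \cup (C^r\setminus(\zeta+1))$. Since $r\le p$, $\eta^r$ extends $\eta^p$, so $\eta^q$ extends $\eta^p$, and hence (once we check $q$ is a legal condition) we will automatically have $q \le p$.

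The first thing I would check is that $C^q$ is a club and that $q$ really is a condition. Closedness: a limit point $\delta\le \zeta$ is a limit of $C^p$, hence in $C^p\cap(\zeta+1)\subseteq C^q$; a limit point $\delta>\zeta$ is eventually in $C^r\setminus(\zeta+1)$; the point $\delta=\zeta$ lies in $C^p$, hence in $C^q$. Unboundedness follows from $C^r\subseteq C^p$ being a club. For $\ell\in C^q$ with $\ell\le \zeta$ we have $q(\ell)=p(\ell)$ with $s^q_\ell=s^p_\ell$; for $\ell>\zeta$ in $C^q$ we have $q(\ell)=r(\ell)$ with $s^q_\ell=s^r_\ell$. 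Outside $C^q$, $q(\ell)$ equals either $p(\ell)$ (for $\ell<\zeta$) or $r(\ell)$ (for $\ell>\zeta$), and both are total.

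Next I would verify $q\le^+_\zeta p$, which is immediate from the construction: $\zeta\in C^q$ because $\zeta\in C^p\cap(\zeta+1)$; $q\restriction \zeta=p\restriction\zeta$ because both agree with $\eta^p$ below $\zeta$; and $q(\zeta)=p(\zeta)$ by definition. It is worth noting that it is essential here to put $\zeta$ into $C^q$ via $C^p$, not via $C^r$: the latter would give $s^q_\zeta=s^r_\zeta$, which could strictly extend $s^p_\zeta$ and so violate $q(\zeta)=p(\zeta)$.

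Finally I would check the forcing statement. Work in the extension and assume $q\in G$ and $x\subseteq\neta$. Then $\eta^q\subseteq\neta$, so $\eta^r\restriction I^*({>}\zeta)=\eta^q\restriction I^*({>}\zeta)\subseteq\neta$. On $I^*({\le}\zeta)$, since $r$ extends $x$ we have $\eta^r\restriction I^*({\le}\zeta)\subseteq x\subseteq\neta$. Together, $\eta^r\subseteq\neta$, and by the characterization of the generic filter in the remarks after Lemma~\ref{lem:wrrq}, $r\in G$. I do not anticipate any real obstacle; the whole argument is bookkeeping, and the only subtle point is the one just highlighted: $\zeta$ must be handled on the ``$p$-side'' of the splice so that the condition $q(\zeta)=p(\zeta)$ is preserved.
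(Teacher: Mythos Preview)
Your proof is correct and follows the same construction as the paper: define $q$ by splicing $p$ below level $\zeta$ (inclusive) with $r$ above. The paper's proof is the two-liner ``set $q(\ell)=p(\ell)$ for $\ell\le\zeta$ and $r(\ell)$ otherwise; if $q'\le q$ forces $x\subseteq\neta$ then $q'$ extends $x$ and thus $q'\le r$'' --- your version simply unpacks the implicit checks (that $C^q$ is club, that $q$ is a condition, that $q\le^+_\zeta p$) and for the forcing statement works in the extension invoking $G=\{p:\eta^p\subseteq\neta\}$ rather than the paper's density argument, which is an equivalent route.
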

    
\begin{proof}
  We set 
  $q(\ell)$ to be $p(\ell)$ for $\ell\le \zeta$, and 
  $r(\ell)$ otherwise.
  If $q'\le q$ forces $x\subseteq\neta$ then 
  $q'$ extends $x$ and thus $q'\le r$.
\end{proof}

\begin{corollary}\label{cor:Qpure} (``Pure decision'') 
    Let $\ntau$ be a name for an ordinal, $p\in Q$, and $\zeta\in C^p$.
    Then there is a $q\le^+_\zeta p$
    which $(\zeta+1)$-decides $\ntau$.
\end{corollary}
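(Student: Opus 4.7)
The plan is to decide $\ntau$ separately on each possibility $x\in\poss(p,\zeta+1)$ by applying Lemma~\ref{lem:orelse} once per possibility, and to glue the resulting $\le^+_\zeta$-extensions via the fusion of Lemma~\ref{lem:Qfusion}(\ref{item:plainplus}). The key quantitative input is that $|\poss(p,\zeta+1)|<\lambda$: since $\lambda$ is inaccessible and $\theta^*_\ell<\lambda$ for each $\ell\le\zeta$, we have $|I^*({\le}\zeta)|=\sum_{\ell\le\zeta}2^{\theta^*_\ell}<\lambda$, whence $\alpha:=|\poss(p,\zeta+1)|\le 2^{|I^*({\le}\zeta)|}<\lambda$. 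Enumerate $\poss(p,\zeta+1)=\{x_i:i<\alpha\}$.

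Recursively build $(p_i)_{i\le\alpha}$ in $Q$ with $p_0:=p$ and $p_j\le^+_\zeta p_i$ whenever $i<j\le\alpha$. At a limit stage $\gamma\le\alpha$ (so $\gamma<\lambda$), invoke Lemma~\ref{lem:Qfusion}(\ref{item:plainplus}) to obtain $p_\gamma\le^+_\zeta p_i$ for all $i<\gamma$. At a successor stage $i+1$: since $p_i\le^+_\zeta p$ gives $\poss(p_i,\zeta+1)=\poss(p,\zeta+1)$, the possibility $x_i$ is compatible with $\eta^{p_i}$, so there exists $r_0\le p_i$ with $x_i\subseteq\eta^{r_0}$ (concretely, fill the missing cone $[s^{p_i}_\zeta]$ at level $\zeta$ with the values of $x_i$, which makes $\eta^{r_0}$ total on $I^*({\le}\zeta)$). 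By the density of conditions deciding a name for an ordinal, pick $r\le r_0$ and an ordinal $\tau^{x_i}$ with $r\Vdash\ntau=\tau^{x_i}$. Lemma~\ref{lem:orelse} applied to $p_i,\zeta,x_i,r$ then produces $p_{i+1}:=r\vee(p_i\restriction\zeta+1)\le^+_\zeta p_i$ forcing $x_i\subseteq\neta\rightarrow r\in G\rightarrow\ntau=\tau^{x_i}$.

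Finally set $q:=p_\alpha$. Then $q\le^+_\zeta p$, so $\poss(q,\zeta+1)=\poss(p,\zeta+1)=\{x_i:i<\alpha\}$, and for each $i<\alpha$ the relation $q\le p_{i+1}$ gives $q\Vdash x_i\subseteq\neta\rightarrow\ntau=\tau^{x_i}$; hence $q$ $(\zeta+1)$-decides $\ntau$. The only delicate point is the cardinality bound $|\poss(p,\zeta+1)|<\lambda$, which is exactly where inaccessibility of $\lambda$ enters; everything else is a routine $\alpha$-length iteration of Lemma~\ref{lem:orelse} glued together by the $\le^+_\zeta$-fusion of Lemma~\ref{lem:Qfusion}(\ref{item:plainplus}), whose statement is designed to permit exactly this kind of ``decide the $\zeta$-th level, then leave it alone'' construction.
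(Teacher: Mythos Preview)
Your proof is correct and follows essentially the same approach as the paper: enumerate $\poss(p,\zeta+1)$ (of size ${<}\lambda$), build a $\le^+_\zeta$-decreasing sequence using Lemma~\ref{lem:orelse} at successors and Lemma~\ref{lem:Qfusion}(\ref{item:plainplus}) at limits, and take $q$ to be the final condition. One tiny imprecision: in the parenthetical construction of $r_0$ you should fill the missing cones at \emph{all} levels $\ell\le\zeta$ with $\ell\in C^{p_i}$ (not just at level $\zeta$) to make $\eta^{r_0}$ total on $I^*({\le}\zeta)$, but this does not affect the argument.
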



\begin{proof}
  Let $(x_i)_{i\in \delta}$ enumerate $\poss(p,\zeta+1)$, for some $\delta<\lambda$. Set $p_0=p$,
  and define a $\le^+_\zeta$-decreasing sequence 
  $p_j$ by induction on $j\le\delta$:
  For limits use Lemma~\ref{lem:Qfusion}(\ref{item:plainplus}),
  and for successors 
  choose some $r\le p_i$ deciding $\ntau$ with a stem extending $x_i$
  and set $p_{i+1}$ to 
  $r\vee p_i\restriction (\zeta+1)$.
\end{proof}
 
 


From fusion and pure decision we get 
bounding and $\lambda$-proper, via ``continuous reading of names''. This is a standard argument, and we will
not give it here; we will anyway prove a more ``general'' variant (for an iteration of $Q$'s), 
in Lemmas~\ref{lem:blb2} and~\ref{lem:esm1}.
\begin{fact}\mbox{}
   \begin{itemize}  
       \item $Q$ has continuous reading of names:
       If $\nsigma$ is a $Q$-name for a $\lambda$-sequence of ordinals, and $p\in Q$, then
       there is a $q\le p$ and there are $\xi_i\in \lambda$ 
       such that $q$  $\xi_i$-decides $\nsigma(i)$ for all $i\in\lambda$.   
       \item $Q$ is $\lambda^\lambda$-bounding. I.e., for every 
       name $\nsigma\in\lambda^\lambda$ and $p\in Q$
       there is an $f\in\lambda^\lambda$ and $q\le p$ such that 
       $q$ forces $f(i)>\nsigma(i)$ for all $i\in\lambda$.
       \item $Q$ is $\lambda$-proper. This means: If $N$ is a ${<}\lambda$-closed elementary submodel of $H(\chi)$ of size $\lambda$ containing $Q$, 
       with $\chi$ sufficently large and regular, and if $p\in Q\cap N$,
       then there is a $q\le p$ $N$-generic (i.e., forcing that
       each name of an ordinal which is in $N$ is evaluated
       to an ordinal in $N$).
   \end{itemize}
\end{fact}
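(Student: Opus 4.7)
My plan is to derive all three items from a single fusion construction, combining Lemma~\ref{lem:Qfusion} (fusion) with Corollary~\ref{cor:Qpure} (pure decision). Continuous reading of names is established first; $\lambda^\lambda$-bounding falls out immediately, and $\lambda$-properness follows by replaying the same fusion inside a suitable elementary submodel.

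For continuous reading: given $p\in Q$ and a $Q$-name $\nsigma$ for a $\lambda$-sequence of ordinals, I construct by recursion on $i<\lambda$ a fusion sequence $(p_i,\xi_i)$ with $p_0=p$, the $\xi_i\in C^{p_i}$ strictly increasing and cofinal in $\lambda$, and $p_{i+1}\le^+_{\xi_i}p_i$ produced by Corollary~\ref{cor:Qpure} so as to $(\xi_i+1)$-decide $\nsigma(i)$. After each successor step I take $\xi_{i+1}$ to be the least element of $C^{p_{i+1}}$ strictly above $\max(\xi_i,i)$, which enforces $\sup_i\xi_i=\lambda$. At limit stages $\delta<\lambda$, Lemma~\ref{lem:Qfusion}(\ref{item:Qfusion}) delivers the canonical fusion limit $p_\delta\le_{\xi_i}p_i$ for all $i<\delta$; because $(\xi_i)_{i<\delta}$ is strictly increasing, no ``problematic case'' arises in the limit construction, so $p_\delta$ is canonical. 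The terminal fusion $q$ at stage $\lambda$ satisfies $q\le p_{i+1}$ and $q\restriction(\xi_i+1)=p_{i+1}\restriction(\xi_i+1)$, hence $\poss(q,\xi_i+1)\subseteq\poss(p_{i+1},\xi_i+1)$, and $q$ inherits the $(\xi_i+1)$-decision of $\nsigma(i)$ from $p_{i+1}$.

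For $\lambda^\lambda$-bounding, inaccessibility of $\lambda$ gives $|\poss(q,\xi_i+1)|=2^{|I^*({\le}\xi_i)|}<\lambda$, so the set of decided values of $\nsigma(i)$ is bounded in $\lambda$ and $f(i):=\sup\{\tau^x+1:x\in\poss(q,\xi_i+1)\}$ defines a ground-model $f\in\lambda^\lambda$ with $q\Vdash\nsigma(i)<f(i)$. For $\lambda$-properness, let $N\prec H(\chi)$ be ${<}\lambda$-closed of size $\lambda$ with $p,Q\in N$, and fix inside $N$ an enumeration $(\ntau_i)_{i<\lambda}$ of all $Q$-names in $N$ for ordinals. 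Replay the same fusion, now $(\xi_i+1)$-deciding $\ntau_i$ at step $i$, and keep $(p_i,\xi_i)\in N$ throughout: successor stages stay in $N$ by elementarity since the pure-decision output and the choice of $\xi_{i+1}$ are definable from data in $N$; limit stages $\delta<\lambda$ preserve $N$-membership because ${<}\lambda$-closure puts $(p_j,\xi_j)_{j<\delta}\in N$ and the canonical fusion limit is a definable function of that sequence. The final $q$ at stage $\lambda$ need not lie in $N$, but each decided value $\tau^x$ of $\ntau_i$ is definable from $p_{i+1},\ntau_i,\xi_i\in N$ together with the ${<}\lambda$-sized set $\poss(p_{i+1},\xi_i+1)\subseteq N$, hence every $\tau^x\in N$; therefore $q\Vdash\ntau_i\in N$ and $q$ is $N$-generic.

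The real work is in the bookkeeping: the $\xi_i$ must be strictly increasing, cofinal in $\lambda$, in each club $C^{p_j}$ for $j\ge i$, and (for properness) also inside $N$. Choosing $\xi_{i+1}$ as the least element of $C^{p_{i+1}}$ above $\max(\xi_i,i)$ handles all four requirements simultaneously; at limit stages the absence of a ``problematic case'' for strictly increasing $\xi_j$ is exactly what makes the fusion limit of Lemma~\ref{lem:Qfusion}(\ref{item:Qfusion}) canonical, and hence keeps it in $N$. Once this bookkeeping is set up, continuous reading, bounding, and $N$-genericity all follow by the outline above.
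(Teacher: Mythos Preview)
Your argument is correct and is precisely the ``standard argument'' the paper explicitly declines to spell out here, pointing instead to the iteration versions in Lemmas~\ref{lem:blb2} and~\ref{lem:esm1}; your fusion-plus-pure-decision construction is the single-$Q$ specialisation of that later proof.

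Two small wording points, neither fatal. First, the enumeration $(\ntau_i)_{i<\lambda}$ of all ordinal names in $N$ cannot itself lie in $N$ (the model does not see its own size); what you need, and what you actually use, is only that each individual $\ntau_i$ is in $N$, so the construction proceeds externally while keeping each $(p_i,\xi_i)\in N$. Second, at successor steps the pure-decision condition from Corollary~\ref{cor:Qpure} is not literally \emph{definable} from the data (it is not unique); the correct justification is elementarity: since $H(\chi)$ sees that some $q\le^+_{\xi_i}p_i$ with the desired decision exists, and the parameters $p_i,\xi_i,\ntau_i$ are in $N$, such a $q$ exists in $N$.
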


For completeness, we also mention the following well-known fact 
(the proof is straightforward):
\begin{fact}\label{fact:boundingclub}
  Assume $\kappa$ is regular, and that the forcing notion
  $R$ is $\kappa^\kappa$-bounding. 
  Then $R$ preserves the regularity of $\kappa$, and
  every club-subset of $\kappa$ in the extension contains a ground model club-set.
\end{fact}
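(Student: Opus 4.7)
The plan is to prove the two parts of Fact~\ref{fact:boundingclub} separately, both by reducing to the bounding hypothesis applied to a well-chosen name or function.

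For regularity preservation, I would argue by contradiction in $V[G]$. Suppose $\cf^{V[G]}(\kappa) = \mu < \kappa$, witnessed by a cofinal map $f\colon \mu \to \kappa$. Extend $f$ trivially to a function $g\colon \kappa\to\kappa$ in $V[G]$, say $g(\alpha)=f(\alpha)$ for $\alpha<\mu$ and $g(\alpha)=0$ otherwise. By $\kappa^\kappa$-bounding, there is some $h\colon\kappa\to\kappa$ in $V$ with $h(\alpha)\ge g(\alpha)$ for all $\alpha<\kappa$. Then $h\restriction \mu \in V$ is a function from $\mu<\kappa$ to $\kappa$, so by regularity of $\kappa$ in $V$ the ordinal $\beta := \sup_{\alpha<\mu} h(\alpha)$ is strictly less than $\kappa$. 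But $\beta$ bounds the range of $f$, contradicting its cofinality.

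For the club statement, work in $V[G]$ (where $\kappa$ is now known to remain regular). Let $C\subseteq\kappa$ be club in the extension. Since $|C|=\kappa$, its increasing enumeration $g\colon\kappa \to C$ is a well-defined function in $V[G]$. By bounding, fix $h\colon\kappa\to\kappa$ in $V$ with $h(\alpha)\ge g(\alpha)$ for all $\alpha<\kappa$. Define in $V$ the set
\[ D := \{\alpha<\kappa : \alpha \text{ is a limit ordinal and } h''\alpha \subseteq \alpha\}. \]
Standard closure-point arguments show $D$ is a club of $\kappa$ in $V$. For $\alpha\in D$, since $g$ is strictly increasing we have $\beta\le g(\beta)\le h(\beta)<\alpha$ for every $\beta<\alpha$; hence $g''\alpha\subseteq\alpha$ is cofinal in $\alpha$. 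Since $g''\alpha\subseteq C$ and $C$ is closed, $\alpha\in C$. Therefore $D\subseteq C$.

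I do not expect any real obstacle: both arguments are direct applications of bounding. The only minor technical point is that one must ensure the enumeration $g$ is a function $\kappa\to\kappa$ (so that bounding applies), which uses the regularity of $\kappa$ in $V[G]$ established in the first part; equivalently, the whole argument can be phrased with names $\nF$ and $\myng$ in $V$, extracting a ground-model club forced to be contained in $\nF$.
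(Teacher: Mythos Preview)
Your argument is correct. The paper does not actually give a proof of this fact; it only remarks that the proof is straightforward and well known, so there is nothing to compare against beyond confirming that your two standard reductions to the bounding hypothesis are the expected ones.
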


\subsection{The iteration \texorpdfstring{$\bm P$}{P}}
Let us first recall some well-known facts:
\begin{facts}\label{fact:trivial}
A ${<}\lambda$-closed forcing preserves cofinalities
${\le}\lambda$ and also the inaccessibilty
of $\lambda$.
\\
The ${\le}\lambda$-support iteration of
${<}\lambda$-closed forcings is ${<}\lambda$-closed.
\end{facts}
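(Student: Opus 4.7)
The plan is to reduce both claims to a single observation: any $<\lambda$-closed forcing adds no new sequence of ordinals of length $<\lambda$. To establish this, given a name $\nsigma$ for a $\delta$-sequence of ordinals with $\delta<\lambda$ and a condition $p$, I would build a decreasing chain $(p_i)_{i\le\delta}$ by recursion: pick $p_{i+1}\le p_i$ deciding $\nsigma(i)$, and take lower bounds at limit stages, which exist by $<\lambda$-closure since each such stage has length $<\delta<\lambda$. Then $p_\delta$ forces $\nsigma$ to agree with a ground model sequence.

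From this, the first claim is immediate. For regularity of $\kappa\le\lambda$: a new cofinal map $\nu\to\kappa$ with $\nu<\kappa$ would be a $<\lambda$-sequence of ordinals, hence in $V$, contradicting the regularity of $\kappa$ in $V$. Preservation of arbitrary cofinalities $\le\lambda$ follows by combining this with the fact that ground model cofinal maps remain cofinal. For inaccessibility: regularity of $\lambda$ is preserved as above, and the strong limit property is preserved because for $\kappa<\lambda$, any new element of $2^\kappa$ would again be a forbidden $<\lambda$-sequence of bits, so $(2^\kappa)^{V[G]}=(2^\kappa)^V<\lambda$.

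For the second claim, I would induct on the length $\alpha$ of the $\le\lambda$-support iteration. The only step of content is at limit $\alpha$: given a decreasing sequence $(p_i)_{i<\delta}$ in $P_\alpha$ with $\delta<\lambda$, define the lower bound $p$ coordinate-by-coordinate. At each $\beta<\alpha$, the induction hypothesis gives that $P_\beta$ is $<\lambda$-closed, and ${\name Q}_\beta$ is forced to be $<\lambda$-closed, so there is a $P_\beta$-name for a lower bound of $(p_i(\beta))_{i<\delta}$; collect these to define $p$. Its support is $\bigcup_{i<\delta}\supp(p_i)$, of cardinality at most $\delta\cdot\lambda=\lambda$, so meets the $\le\lambda$-support constraint. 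There is no real obstacle here --- both facts are textbook material --- the only mild subtlety is tracking the support bound when combining $<\lambda$ many conditions each with support of size up to $\lambda$.
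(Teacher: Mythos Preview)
Your proof is correct and entirely standard. The paper does not actually prove this statement: it is labeled as ``Facts'' and introduced with ``Let us first recall some well-known facts,'' so there is no proof in the paper to compare against. Your argument via ``no new ${<}\lambda$-sequences of ordinals'' is exactly the textbook route the authors are implicitly invoking.
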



We will iterate the forcings $Q$ from the previous section in a
${<}\lambda$-closed ${\le}\lambda$-support iteration of length $\mu:=\lambda^{++}$:

\begin{definition}
Let $(P_\alpha,Q_\alpha)_{\alpha< \mu}$ be the ${\le}\lambda$-support
iteration such that each $Q_\alpha$ is the forcing $Q$ (evaluated in the $P_\alpha$-extension). We will write $P$ to denote the limit.
\end{definition}

\begin{remark*}
  One way to see that $P$ is proper
  is to use the framework of~\cite{MR2777755}.
  However, we will need an explicit form
  of continuous reading for $P$ anyway, which in turn gives
  properness for free.
\end{remark*}

\begin{definition}
Assume that $w\in [\mu]^{<\lambda}$ and $\xi\in\lambda$.
\begin{itemize}
    \item 
  $\bar\neta=(\neta_\alpha)_{\alpha\in\mu}$ is the sequence of
  $Q_\alpha$-generic reals  added by $P$.
\item 
  $\PPOSS(w,\xi):=2^{w\times I^*({<}\xi)}$.
  Exactly one $x\in \PPOSS(w,\xi)$ 
  is extended by $\bar\neta$, we write ``$x$ is selected by $G$,''
  or ``$x\triangleleft G$.''
  
\item 
  $\poss(p,w,\xi):=\{x\in\PPOSS(w,\xi):\, \lnot p\Vdash \lnot x\triangleleft G\}$. 
\item
  Let $\ntau$ be a name of an ordinal.
  $\ntau$ is $(w,\xi)$-decided by 
  $q$, if
  there are $(\tau^x)_{x\in \poss(q,w,\xi)}$ such that
  $q$ forces $x\triangleleft G\rightarrow \ntau=\tau^x$.
\end{itemize}
\end{definition}

Clearly, if 
$\ntau$ is $(w,\xi)$-decided by $q$,  and if
   $q'\le q$, $w'\supseteq w$ and $\xi'\ge \xi$, then
   $\ntau$ is $(w',\xi')$-decided by $q'$.

\begin{remark*}
  If $q\in P$ 
  $(w,\zeta)$-decides some $P_\alpha$-name $\ntau$, then the \emph{same} $q$ will
  generally \emph{not}  $(w\cap\alpha,\xi)$-decide $\ntau$ for any 
  $\xi$.\footnote{For example: 
  For a $p$-condition $Q$, 
  let $\odd^p$ be the set of odd elements of $C^p$
  (or any other unbounded subset $X$ of $C^p$ such that $C^p\setminus X$ is still club),
  and set $\odd^p_?:=\bigcup_{\zeta\in \odd^p}I^*_\zeta\setminus \dom(\eta^p)$. 
  Note that for any $x:\odd^p_?\to 2$,
  $\eta^p\cup x$ defines a condition in $Q$ (stronger than $p$).
  So if we fix any $p(0)\in P_1$, and define the $P_1$-name 
  $\ntau\in\{0,1\}$ to be 0 iff 
  $\neta_0\restriction\odd^{p(0)}_?$ is eventually constant to $0$,
  then $\ntau$ 
  cannot be $(\{0\},\zeta)$-decided by $p(0)$ for any $\zeta$.
  And if $p(1)$ is any condition with $p(0)\Vdash \eta^{p(1)}(0)=\ntau$, then $\ntau$ is $(\{1\},1)$-decided by $q:=(p(0),p(1))$.}
\end{remark*}

In the following, whenever we say
that $q$ $(w,\zeta)$-decides something, we 
implicitly assume that $w\in[\mu]^{<\lambda}$ and $\zeta\in\lambda$.
\begin{definition}Let $\nsigma$ be a $P$-name for a $\lambda$-sequence of ordinals.
\begin{itemize}
    \item  
  $q$ continuously reads $\nsigma$,
  if there are
  $(w_i,\xi_i)_{i\in\lambda}$
  such that $q$ $(w_i,\xi_i)$-decides $\nsigma(i)$
  for each $i\in \lambda$.
  \item 
  $P$ has continuous reading, if for each such $\nsigma$
  and $p\in P$ there is some $q\le p$ continuously reading $\nsigma$.
\end{itemize}
\end{definition}

The following is a straightforward standard argument:
\begin{fact}
        If $P$ has continuous reading, then it is $\lambda^\lambda$-bounding.
\end{fact}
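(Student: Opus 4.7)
The plan is to apply continuous reading directly and then exploit the inaccessibility of $\lambda$ to bound possibilities. Given a $P$-name $\nsigma$ forced to lie in $\lambda^\lambda$ and a condition $p \in P$, I would first invoke continuous reading to obtain some $q \le p$ together with sequences $(w_i, \xi_i)_{i \in \lambda}$ such that $q$ $(w_i, \xi_i)$-decides $\nsigma(i)$ for every $i$. Unpacking the definition, this gives for each $i$ a family of ordinals $(\tau^x_i)_{x \in \poss(q, w_i, \xi_i)}$ with the property that $q$ forces $\nsigma(i) = \tau^x_i$ whenever $x$ is the (unique) possibility selected by the generic.

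Next I would bound the number of possibilities. By definition $\PPOSS(w_i, \xi_i) = 2^{w_i \times I^*({<}\xi_i)}$, and since $|w_i| < \lambda$, $|I^*({<}\xi_i)| < \lambda$ (because $\xi_i < \lambda$ and the blocks $I^*_\zeta$ have size less than $\lambda$), and $\lambda$ is inaccessible (hence a strong limit), this set has cardinality strictly below $\lambda$. Consequently $|\poss(q, w_i, \xi_i)| < \lambda$, and since $\nsigma(i)$ is forced to be an ordinal below $\lambda$, each $\tau^x_i$ is itself an ordinal below $\lambda$.

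With these ingredients in place I would define a ground model function $f : \lambda \to \lambda$ by
\[
f(i) := \sup\{\tau^x_i + 1 : x \in \poss(q, w_i, \xi_i)\}.
\]
This is the supremum of fewer than $\lambda$ ordinals, each below $\lambda$, so the regularity of $\lambda$ gives $f(i) < \lambda$; hence $f \in \lambda^\lambda$ in the ground model. Finally, for any generic $G$ containing $q$ and any $i \in \lambda$, exactly one $x \in \poss(q, w_i, \xi_i)$ satisfies $x \triangleleft G$, and for that $x$ we have $\nsigma(i)[G] = \tau_i^x < f(i)$. Thus $q \le p$ forces $\nsigma(i) < f(i)$ for all $i$, as desired.

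There is essentially no obstacle to this argument beyond unwinding definitions; the only nontrivial input is inaccessibility of $\lambda$, which ensures $2^\kappa < \lambda$ for every $\kappa < \lambda$ and therefore keeps each slice of possibilities small enough for the supremum defining $f(i)$ to remain below $\lambda$. This is precisely why the fact is advertised as a standard consequence of continuous reading in this setting.
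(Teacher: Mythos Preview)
Your proof is correct and is precisely the standard argument the paper alludes to; the paper does not spell out a proof of this Fact, simply labeling it ``a straightforward standard argument.'' Your unwinding of continuous reading together with the cardinality bound on $\poss(q,w_i,\xi_i)$ via inaccessibility is exactly what is intended.
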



As a first step towards pure decision, let us generalize the $\le_\zeta$-notation we
defined for $Q$:
\begin{definition}Let $p\in P$, $w\in[\mu]^{<\lambda}$ and
$\xi\in\lambda$.
    \begin{itemize}
        \item $p$ fits $(w,\xi)$, if $w\subseteq \dom(p)$ and 
  $p\restriction\alpha \Vdash \xi\in C^{p(\alpha)}$ for all $\alpha\in w$.
  \item $q\leq_{w,\xi} p$
        means:
        $q\leq p$, and for all $\alpha \in w$,
        $q\restriction\alpha$ forces $q(\alpha)<_\xi p(\alpha)$.
    \item $q\le^+_{w,\xi} p$ is defined analogously using
    $<^+_\xi$ instead of $<_\xi$.
  \end{itemize}
\end{definition}
Obviously $q\le^+_{w,\xi} p$ implies $q\leq_{w,\xi} p$; and 
$q\leq_{w,\xi} p$ implies that both $p$ and $q$ fit $(w,\xi)$.

\begin{remark*}
In contrast to the single forcing (or a product of such forcings), 
$q\le_{w,\xi}p$ (or $q\le^+_{w,\xi}p$) does \emph{not} imply $\poss(q,w,\xi)=\poss(p,w,\xi)$.\footnote{An example: 
$\dom(p)=\dom(q)=w=\{0,1\}$,
$\min(C^{p(0)})=\min(C^{q(0)})=\xi$,
and both 
$p(0)$ and $q(0)$ have trunk $a\in\QPOSS(\xi)$.
$p(0)$ forces that $p(1)=q(1)$,
that $\min(C^{p(1)})=\xi$ and that 
the trunk of
$p(1)$ is either $b$ or $c$ (elements of $\QPOSS(\xi)$);
both are possible with $p(0)$.
Now $q(0)\le^+_\xi p(0)$ decides that 
the trunk of $p(1)$ is $b$.
Then $q\le^+_{w,\xi}p$, and $(a,c)$ is in $\poss(p,w,\xi)\setminus \poss(q,w,\xi)$.
In particular 
$(a,c)\in\poss(p,w,\xi)$
but $p$ does not force that
$a\subseteq \neta_0$ implies $c\in\poss(p(1),\xi)$.}
More explicitly, setting $w=\{0,1\}$, it is possible that 
  $x\in\poss(p,w,\xi)$ but $p$ does not force that 
  $x(0)\subseteq \neta_0$ implies $x(1)\in\poss(p(1),\xi)$.
  (But see Section~\ref{sec:canonical}.)
\end{remark*}
\subsection{Continuous reading and properness of  \texorpdfstring{$\bm P$}{P}}

\begin{lemma}\label{lem:pluslimit}
  If $q_i$ is a $\leq^+_{w,\zeta}$-decreasing sequence of length $\delta<\lambda$,
  then there is an $r\leq^+_{w,\zeta}q_i$ for all $i<\delta$.
\end{lemma}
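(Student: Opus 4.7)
The plan is a straightforward transfinite induction on $\alpha$ along the support $W:=\bigcup_{i<\delta}\dom(q_i)$. At each coordinate the real work has already been done: when $\alpha\in w$ I invoke the single-coordinate limit Lemma~\ref{lem:Qfusion}(\ref{item:plainplus}), and when $\alpha\notin w$ I only need the plain ${<}\lambda$-closure of $Q$ from Lemma~\ref{lem:wrrq}.

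More concretely, the inductive hypothesis at stage $\alpha$ is that $r\restriction\alpha\in P_\alpha$ satisfies $r\restriction\alpha\leq^+_{w\cap\alpha,\zeta} q_i\restriction\alpha$ for every $i<\delta$. In particular $r\restriction\alpha\leq q_i\restriction\alpha$ for all $i$, so $r\restriction\alpha$ forces that $(q_i(\alpha))_{i<\delta}$ is $\leq$-decreasing in $Q_\alpha$, and if $\alpha\in w$ it forces it to be $\leq^+_\zeta$-decreasing of length $\delta<\lambda$. In that case I apply Lemma~\ref{lem:Qfusion}(\ref{item:plainplus}) inside the $P_\alpha$-extension and (via the maximality principle) pick a $P_\alpha$-name $r(\alpha)$ that $r\restriction\alpha$ forces to satisfy $r(\alpha)\leq^+_\zeta q_i(\alpha)$ for every $i$. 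If $\alpha\notin w$, I use the ${<}\lambda$-closure of $Q$ in the same way to pick $r(\alpha)\leq q_i(\alpha)$ for every $i$. At limit stages of the induction there is nothing to do beyond taking the union, since each $r\restriction\beta$ for $\beta<\alpha$ extends the previous ones on their common domain.

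The total support $W$ has size at most $\delta\cdot\lambda=\lambda$ (each $\dom(q_i)$ is of size $\le\lambda$ and $\delta<\lambda$), so the resulting $r$ is a legitimate condition in the ${\le}\lambda$-support iteration $P$, and by construction $r\leq^+_{w,\zeta} q_i$ for every $i<\delta$. There is no substantial obstacle here: everything has been reduced to coordinatewise closure statements already supplied by Lemmas~\ref{lem:wrrq} and~\ref{lem:Qfusion}, and the ${\le}\lambda$-support of $P$ is wide enough to absorb the union of $\delta<\lambda$ many supports of size $\le\lambda$.
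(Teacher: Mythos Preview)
Your proposal is correct and follows essentially the same approach as the paper's proof: induct along the union of supports, at each coordinate $\alpha$ invoke Lemma~\ref{lem:Qfusion}(\ref{item:plainplus}) when $\alpha\in w$ and plain ${<}\lambda$-closure of $Q$ when $\alpha\notin w$. The paper's version is terser (it omits the explicit inductive hypothesis and the support-size remark), but the content is the same.
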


\begin{proof}
Set $\dom(r):=\bigcup_{i\in\delta}\dom(q_i)$, without loss of generality 
closed under limits.
By induction on $\alpha\in \dom(r)$ 
we know that $r\restriction\alpha\le q_i\restriction\alpha$ 
for all $i$, and define $r(\alpha)$ as follows:
If $\alpha\in w$, we know that the $q_i(\alpha)$ are $\le^+_\zeta$-increasing.
Using
Lemma~\ref{lem:Qfusion}(\ref{item:plainplus}), 
we pick some $r(\alpha)$ such that $r(\alpha)\le^+_\zeta q_i(\alpha)$ for all $i$.
If $\alpha\notin w$, we just pick any $r(\alpha)\le q_i(\alpha)$ for all $i$.
\end{proof}

It is easy to see that $P$ satisfies a version of fusion:
\begin{lemma}\label{lem:fusion}
    Assume $(p_i)_{i<\delta}$ is a sequence of length $\delta\le\lambda$,
    such that $p_{j}\leq_{w_i,\xi_i} p_i$
    for $i\leq j<\delta$, $w_i\in[\mu]^{<\lambda}$ increasing, $\xi_i\in\lambda$ strictly increasing. Set $w_\infty:=\bigcup_{i<\delta} w_i$,
    $\dom_\infty:=\bigcup_{i<\delta}\dom(p_i)$ and 
    $\xi_\infty:=\sup_{i<\delta}\xi_i$.
    If 
    $\delta=\lambda$, we additionally assume $w_\infty=\dom_\infty$.

    Then there is a limit $q_\infty$ 
    with $\dom(q_\infty)=\dom_\infty$
    such that
    $q_\infty \le_{w_i,\xi_i} p_i$ for all $i<\delta$.
    
    If $\delta<\lambda$, then 
    $q_\infty$ fits $(w_\infty,\xi_\infty)$.
\end{lemma}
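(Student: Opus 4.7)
The plan is to construct $q_\infty$ coordinate‑by‑coordinate along $\alpha\in \dom_\infty$, delegating at each stage to the single‑forcing fusion Lemma~\ref{lem:Qfusion}. Set $\dom(q_\infty):=\dom_\infty$; this union has size $\le\lambda$ (union of ${\le}\delta$ many sets of size $\le\lambda$), so the support requirement is satisfied. We construct $q_\infty(\alpha)$ by induction on $\alpha\in\dom_\infty$, maintaining the inductive invariant that $q_\infty\restriction\alpha\le_{w_i\cap\alpha,\,\xi_i}p_i\restriction\alpha$ for every $i<\delta$.

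For the inductive step at $\alpha\in\dom_\infty$, work in the $P_\alpha$-extension below $q_\infty\restriction\alpha$. Split on two cases. If $\alpha\in w_\infty$, let $i^*=i^*(\alpha):=\min\{i<\delta:\alpha\in w_i\}$. Since the $w_i$ are increasing, $\alpha\in w_i$ for every $i\ge i^*$, so the hypothesis $p_j\le_{w_i,\xi_i}p_i$ yields $p_j(\alpha)\le_{\xi_i}p_i(\alpha)$ whenever $i^*\le i\le j<\delta$. The strictly increasing sequence $(\xi_i)_{i^*\le i<\delta}$ has supremum $\xi_\infty\ge\delta\ge\cf(\delta)$ (being strictly increasing of length $\ge\delta-i^*$ whose cofinality equals $\cf(\delta)$), which is exactly the hypothesis required by Lemma~\ref{lem:Qfusion}(\ref{item:Qfusion}). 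Applying that lemma produces some $q_\infty(\alpha)\le_{\xi_i}p_i(\alpha)$ for all $i\ge i^*$; for $i<i^*$ one just observes $q_\infty(\alpha)\le p_{i^*}(\alpha)\le p_i(\alpha)$. If instead $\alpha\notin w_\infty$, then necessarily $\delta<\lambda$ (by the additional hypothesis for $\delta=\lambda$), so $(p_i(\alpha))_{i<\delta}$ is a $\le$-decreasing sequence of length ${<}\lambda$ in the ${<}\lambda$-closed forcing $Q$; pick any lower bound as $q_\infty(\alpha)$.

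Finally, for $\delta<\lambda$ we check that $q_\infty$ fits $(w_\infty,\xi_\infty)$. Fix $\alpha\in w_\infty$. The canonical limit from Lemma~\ref{lem:Qfusion}(\ref{item:Qfusion}) has $C^{q_\infty(\alpha)}=\bigcap_{i\ge i^*(\alpha)}C^{p_i(\alpha)}$, and by $p_j\le_{\xi_i}p_i$ each $\xi_i$ (for $i\ge i^*(\alpha)$) belongs to every $C^{p_j(\alpha)}$ with $j\ge i$, hence to the intersection. Since clubs are closed and $\xi_\infty=\sup_{i<\delta}\xi_i$ with $\delta<\lambda$, we conclude $\xi_\infty\in C^{q_\infty(\alpha)}$, as required.

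The only real obstacle is the non‑active case $\alpha\notin w_\infty$: there fusion is not available because the $p_i(\alpha)$ are only $\le$-decreasing, not $\le_{\xi_i}$-decreasing, so one must rely on naive ${<}\lambda$-closure. This is exactly why the statement of the lemma forbids this situation when $\delta=\lambda$ (by imposing $w_\infty=\dom_\infty$): the only available closure of $Q$ has length ${<}\lambda$, while a ${\le}\lambda$-support limit over a $\lambda$-sequence could encounter cofinality $\lambda$ coordinates where ${<}\lambda$-closure is insufficient.
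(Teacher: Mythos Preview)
Your proof is correct and follows essentially the same approach as the paper: induct on $\alpha\in\dom_\infty$, split into the cases $\alpha\in w_\infty$ (apply Lemma~\ref{lem:Qfusion}(\ref{item:Qfusion}) to the tail $(p_i(\alpha))_{i\ge i^*}$) and $\alpha\notin w_\infty$ (use ${<}\lambda$-closure, possible only when $\delta<\lambda$), and then verify the fit of $(w_\infty,\xi_\infty)$ via closedness of the intersected clubs. The only cosmetic point is that in your final paragraph the clause ``hence to the intersection'' needs the additional (easy) observation that for $i^*\le j<i$ one has $\xi_i\in C^{p_i(\alpha)}\subseteq C^{p_j(\alpha)}$ since $p_i(\alpha)\le p_j(\alpha)$; the paper leaves the same detail implicit.
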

(If $w_\infty=\dom_\infty$, then the limit $q_\infty$ is ``canonical''.)

\begin{proof}
   We define $q_\infty(\alpha)$ by induction on $\dom_\infty$.
   We assume that we already have 
   $q':=q_\infty\restriction \alpha$ which satisfies
   $q'\le_{w_i\cap \alpha, \xi_i}p_i$ for all $i<\delta$.
   
   Case 1: $\alpha\notin w_\infty$ (this can only happen if $\delta<\lambda$):
   We know that $q'$ forces that $(p_i(\alpha))_{i<\delta}$
   is a decreasing sequence, and we just pick some $q_\infty(\alpha)$
   stronger then all of them.
   
   Case 2: $\alpha\in w_\infty$:
   Let $i^*$ be minimal such that $\alpha\in w_{i^*}$.
   We know that $q'$ forces for all $i^*\le i<j<\delta$ that 
   $p_j(\alpha)<_{\xi_i}p_i(\alpha)$,
   so according to Lemma~\ref{lem:Qfusion}(\ref{item:Qfusion})
   there is a  limit $q_\infty(\alpha)<_{\zeta_i}p_i(\alpha)$
   (so in particular $q'\Vdash \zeta_i\in C^{q_\infty(\alpha)}$ for all $i\ge i^*$).
   
   Now assume $\delta<\lambda$.
   If $\alpha\in w_\infty$, then it is in $w_i$ for coboundedly many $i<\delta$.
   In other words,
   $p_j\restriction\alpha\Vdash \zeta_i\in C^{p_j(\alpha)}$ for coboundedly many $i\in\delta$ and all $j>i$,
   which implies $q_\infty\restriction\alpha\Vdash \xi_\infty \in C^{q_\infty(\alpha)}$.

\end{proof}

\begin{plemma}\label{plem:iubqwf}
  Let $p$ fit $(w,\zeta)$,
  $x\in \poss(p,w,\zeta+1)$, 
  and let $r\le p$ extend  $x$, i.e., $r\Vdash x\triangleleft G$.
  Then there is a $q\le^+_{w,\zeta} p$
  forcing that $x\triangleleft G$
  implies $r\in G$.
\end{plemma}
\begin{proof}
Set $\dom(q):=\dom(r)$. 
We define $q(\alpha)$
by induction on $\alpha\in \dom(q)$ and show inductively:
\begin{itemize}
    \item $q\restriction \alpha\le^+_{w\cap\alpha,\zeta}p\restriction\alpha$.
    \item $q\restriction\alpha\Vdash (x\restriction\alpha\triangleleft G_\alpha
\rightarrow r\restriction\alpha\in G_\alpha)$.
\end{itemize}
For notational convenience, we assume $\dom(p)=\dom(r)$ (by setting 
$p(\alpha)=\mathbb{1}_Q$ for any $\alpha$ outside the original domain of $p$).

Assume we already have constructed 
$q_0=q\restriction \alpha$.
Work in the $P_\alpha$-extension $V[G_\alpha]$ with $q_0\in G$.
\\
Case 1: $r\restriction \alpha\notin G_\alpha$.
Set $q(\alpha):=p(\alpha)$.
\\
Case 2: $r\restriction \alpha\in G_\alpha$. 
Then $r(\alpha)\le p(\alpha)$. If $\alpha\notin w$,
we set $q(\alpha):=r(\alpha)$;
otherwise we set $q(\alpha)$ to be 
$r(\alpha)\vee (p(\alpha)\restriction \zeta+1)$
as in Lemma~\ref{lem:orelse}.

If $\alpha\in w$, then in both cases we get 
 $q\restriction\alpha\vDash
 q(\alpha)\le^+_\zeta p(\alpha)$.
Also, if $G_{\alpha+1}$ selects $x\restriction (\alpha+1)$,
then at stage $\alpha$ we used, by induction, Case~2; so then
$r(\alpha)\in G(\alpha)$ as
$x(\alpha)\subseteq \neta_\alpha$. 
\end{proof}

We can iterate the construction for
all elements of $\poss(w, \zeta+1)$, which gives us:

\begin{lemma}\label{lem:pureplus}
  If $p$ fits $(w,\zeta)$ and $\ntau$ is a name for an ordinal, 
  then there is a  $q\le^+_{w,\zeta}p$ which $(w,\zeta+1)$-decides $\ntau$.
\end{lemma}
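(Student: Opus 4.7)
The plan is to enumerate $\poss(p,w,\zeta+1)$ as $(x_i)_{i<\delta}$ and build a $\leq^+_{w,\zeta}$-decreasing sequence $(p_i)_{i\le\delta}$ starting with $p_0=p$, each $p_{i+1}$ taking care of the case ``$x_i\triangleleft G$'' by deciding $\ntau$ on that possibility. First note that $\delta<\lambda$: since $\lambda$ is inaccessible, $|w|<\lambda$, and $|I^*({<}\zeta+1)|=\sum_{\ell\le\zeta}2^{\theta^*_\ell}<\lambda$, we have $|\PPOSS(w,\zeta+1)|=2^{|w\times I^*({<}\zeta+1)|}<\lambda$.

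At a successor stage, given $p_i$ and $x_i$: if $x_i\notin\poss(p_i,w,\zeta+1)$, set $p_{i+1}:=p_i$. Otherwise pick any $r\leq p_i$ extending $x_i$ (using compatibility of $x_i$ with $p_i$) that decides $\ntau$ to some value $\tau^{x_i}$. Then apply Preliminary Lemma~\ref{plem:iubqwf} to obtain $p_{i+1}\leq^+_{w,\zeta}p_i$ forcing that $x_i\triangleleft G$ implies $r\in G$, hence $\ntau=\tau^{x_i}$. At limit stages $j<\delta$, use Lemma~\ref{lem:pluslimit} to get a lower bound $p_j\leq^+_{w,\zeta}p_i$ for all $i<j$ (this applies because $j<\delta<\lambda$). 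Finally, let $q:=p_\delta$ be any such $\leq^+_{w,\zeta}$-lower bound.

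To verify that $q$ $(w,\zeta+1)$-decides $\ntau$: take any $x\in\poss(q,w,\zeta+1)$. Since $q\le p$, we have $x\in\poss(p,w,\zeta+1)$, so $x=x_i$ for some $i<\delta$. Moreover $x_i$ remains in $\poss(p_i,w,\zeta+1)$ (otherwise $q\le p_i$ would force $\lnot x_i\triangleleft G$, contradicting $x_i\in\poss(q,w,\zeta+1)$), so the nontrivial case applied at stage $i$, producing some $r$ and value $\tau^{x_i}$, and $q\le p_{i+1}$ forces that $x_i\triangleleft G\to\ntau=\tau^{x_i}$. Thus $(\tau^{x})_{x\in\poss(q,w,\zeta+1)}$ witnesses $(w,\zeta+1)$-decision.

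The main thing to be careful about is the subtlety flagged in the remark after the definition of $\leq_{w,\xi}$: a priori $\poss(q,w,\zeta+1)$ can be strictly smaller than $\poss(p,w,\zeta+1)$, so we cannot expect to assign values $\tau^{x_i}$ for \emph{every} $x_i$ in the original enumeration. The argument is organized so that this is harmless: we only need the decision to hold on $\poss(q,w,\zeta+1)$, and for those $x_i$ we verify that the ``nontrivial case'' was actually used at stage $i$. The other technical point is ensuring the sequence can be collected at limits, which is exactly the content of Lemma~\ref{lem:pluslimit}, available because $\delta<\lambda$.
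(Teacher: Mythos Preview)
Your proof is correct and follows essentially the same approach as the paper's own proof: enumerate $\poss(p,w,\zeta+1)$, build a $\le^+_{w,\zeta}$-decreasing sequence handling one possibility at a time via Preliminary Lemma~\ref{plem:iubqwf}, and collect at limits using Lemma~\ref{lem:pluslimit}. You supply more detail than the paper does (the cardinality bound $\delta<\lambda$, and the explicit verification that $q$ really $(w,\zeta+1)$-decides $\ntau$ despite the possible shrinking of $\poss$), all of which is correct and welcome.
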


\begin{proof}
  We enumerate $\poss(p,w,\zeta+1)$ as $(x_i)_{i\in\delta}$.
  We start with $p_0:=p$. Inductively we construct $p_\ell$: 
  If at step $\ell$, if $x_\ell$ is not in $\poss(p_\ell,w,\zeta+1)$ any more, then
  we set $p_{\ell+1}:=p_\ell$. Otherwise,
  pick an 
  $r\le p_\ell$ that 
  decides $\ntau$ to be some $\tau^{x_\ell}$
  and extends $x_\ell$.
  Then apply~\ref{plem:iubqwf}
  to
  get $p_{\ell+1}\le^+_{w,\zeta} p_\ell$ which 
  forces that 
  $x_\ell\triangleleft G$
  implies $\ntau=\tau^{x_\ell}$. At limits use Lemma~\ref{lem:pluslimit}. 
\end{proof}

For the proof of Lemma~\ref{plem:blba} we will need a variant where the ``height'' $\zeta$ is not 
the same for all elements of $w$, more specifically:
\begin{plemma}\label{lem:pureplusplus}
  Assume that $p$ fits $(w,\zeta)$ and $p\restriction\alpha^*\Vdash \zeta^*\in C^{p(\alpha^*)}$, and that $\ntau$ is a name for an ordinal.
  Then there is a $q\le^+_{w,\zeta}p$ such that 
  $q\restriction\alpha^*\Vdash q(\alpha^*)\le^+_{\zeta^*}p(\alpha^*)$
  and there is a (ground model) set $A$ of size ${<}\lambda$ 
  such that $q\Vdash\ntau\in A$.
\end{plemma}

\begin{proof} This is just a notational variation of the previous proof.
  For notational simplicity we assume $\alpha^*\notin w$.
  
  First we have to modify~\ref{plem:iubqwf}:
  A candidate is a pair $(x,a)$ where 
  $x\in\PPOSS(w,\zeta)$ and $a^*\in\QPOSS(\zeta^*)$.
  Assume that $(x,a)$ is a candidate, that 
  $p\in P$ fits $(w,\zeta)$ and that 
  $p\restriction\alpha^*\Vdash \zeta^*\in C^{p(\alpha^*)}$,
  and assume that $r\leq p$ extends $(x,a)$, i.e.,
  $r\Vdash (x\triangleleft G\,\&\, a^*\subseteq \neta_{\alpha^*})$.
  Then there is a $q$ such that
  \begin{equation}\tag{$*$}\label{eq:wt33}
      q\le^+_{w,\zeta} p,\quad q\restriction\alpha^*\Vdash 
  q(\alpha^*)\le^+_{\zeta^*} p(\alpha^*),\, \  \text{and}\quad q\Vdash \bigl(
  (x\triangleleft G\, \&\, a^*\subseteq \neta_{\alpha^*})\, \rightarrow\,  r\in G\bigr).
  \end{equation}
  The same proof works, with the obvious modifications:
  
  When defining $q(\alpha)$, we inductively show:
\begin{itemize}
    \item $q\restriction \alpha\le^+_{w\cap\alpha,\zeta}p\restriction\alpha$
    and if $\alpha>\alpha^*$
    then $q\restriction\alpha^*\Vdash q(\alpha^*)\le^+_{\zeta^*} p(\alpha^*)$,
    \item 
$q\restriction\alpha\Vdash \bigl((x\restriction\alpha\triangleleft G_\alpha
\, \& \, a^*\subseteq \neta_{\alpha^*})
\rightarrow r\restriction\alpha\in G_\alpha\bigr)$, unless $\alpha\le \alpha^*$
in which case we omit the clause about $\alpha^*$.
\end{itemize}
Again, in the $P_\alpha$-extension we have:
\\
Case 1: $r\restriction \alpha\notin G_\alpha$.
Set $q(\alpha):=p(\alpha)$.
\\
Case 2: $r\restriction \alpha\in G_\alpha$. 
Then $r(\alpha)\le p(\alpha)$. If $\alpha\notin w\cup\{\alpha^*\}$,
we set $q(\alpha):=r(\alpha)$;
otherwise we set $q(\alpha)$ to be 
$r(\alpha)\vee (p(\alpha)\restriction \zeta+1)$
as in Lemma~\ref{lem:orelse}. 

Then we can show~\eqref{eq:wt33} as before.

\medskip

We then enumerate all candidates (there are ${<}\lambda$ many)
as $(x_\ell,a_\ell)$,
and at step $\ell$, if 
$(x_\ell,a_\ell)$ is compatible with $p_\ell$, use~\eqref{eq:wt33} to decide $\ntau$ to be some $\ntau^{\ell}$.
\end{proof}


We will now show that $P$ is $\lambda^\lambda$-bounding and proper.
We first give two preliminary lemmas that assume this is already the case
for all $P_{\beta'}$ with $\beta'<\beta$.

\begin{plemma}\label{plem:blba}
    Let $\beta\le\mu$, and 
    assume that $P_{\beta'}$ is $\lambda^\lambda$-bounding
    for all $\beta'<\beta$.
    
    Assume $p\in P_\beta$ fits $(w,\zeta)$, 
    $\tilde C\subseteq \lambda$ is club, and $\alpha^*<\beta$.
    
    Then there is a $q\le^+_{w,\zeta} p$
    and a 
    $\xi\in\tilde C$ 
    such that
    $q$ fits $(w\cup\{\alpha^*\},\xi)$.
    
    If additionally $\alpha^*\in\dom(p)$ and
    $p\restriction\alpha^*\Vdash \zeta^*\in C^{p(\alpha^*)}$ for some $\zeta^*\in\lambda$,
    then we can additionally get 
    $q\restriction\alpha^*\Vdash q(\alpha^*)\le^+_{\zeta^*} p(\alpha^*)$.
\end{plemma}

\begin{proof}
   For notational simplicity assume $\alpha^*\notin w$
   and
   $\min(\tilde C)>\max(\zeta,\zeta^*)$.
   By induction on $\alpha\le\beta$ we show
   that the result holds for all $w,\alpha^*$ 
   with $w\cup\{\alpha^*\}\subseteq\alpha$.

   \textbf{Successor case} $\alpha+1$: 
   Set $w_0:=w\cap \alpha$.
   
   By our assumption $P_\alpha$ is $\lambda^\lambda$-bounding, so every club-set in the $P_\alpha$-extension
   contains a ground-model club (see Fact~\ref{fact:boundingclub}). In particular,
   $C^{p(\alpha)}$ contains some ground-model $C^*$.
   By Lemma~\ref{lem:pureplus}  (or~\ref{lem:pureplusplus}, if $\alpha^*<\alpha$) 
   there is a
   $p'\le^+_{w_0,\zeta}p\restriction\alpha$
   (also dealing with $\alpha^*$, if  $\alpha^*<\alpha$)
   leaving only ${<}\lambda$ many possibilities for $C^*$. So
   we can intersect them all, resulting in $C'$. 
   Set $C'':=C'\cap \tilde C$.
   Apply the induction hypothesis in $P_\alpha$ to get 
   $q'\le^+_{w_0,\zeta} p'$ and $\xi$ in $C''$ such that
   $q'$ fits $(w_0,\xi)$ (and also $(\{\alpha^*\},\xi)$, if
   $\alpha^*<\alpha$).
   Set $q:=q'\cup\{(\alpha, p(\alpha))\}$,
   so trivially
   $q\le^+_{w,\zeta} p$
   (and, 
   if $\alpha=\alpha^*$, then 
   $q\restriction \alpha \Vdash q(\alpha)\le^+_{\zeta^*} p(\alpha)$),
   and $q$ fits $(w\cup\{\alpha\},\xi)$.

   
   \textbf{Limit case:} 
   If $w$ is bounded in $\alpha$ there is nothing to do. So assume $w$ is cofinal. 

   Set $\alpha_0:=\min(w\setminus\alpha^*)$
   and $w_0:=(w\cap\alpha_0)\cup\{\alpha^*\}$.
   Use the induction hypothesis in $P_{\alpha_0}$ 
   using 
   $(p\restriction \alpha_0,w_0,\zeta,\alpha^*,\zeta^*)$ as $(p,w,\zeta,\alpha^*,\zeta^*)$.
   This gives us some $p_0'\le^+_{w\cap\alpha_0,\zeta} p\restriction \alpha_0$ fitting 
   $(w_0,\zeta_0)$ and dealing with $\alpha^*$,
   for some $\zeta_0\in\tilde C$. Set $p_0:=p'\wedge p$.
   
   Enumerate $w\setminus w_0$  increasingly as $(\alpha_i)_{i<\delta}$, and 
   set $w_j:=w_0\cup \{\alpha_i:\ i<j\}$ for $j\le\delta$.
   
   We will construct $p'_i$ in
   $P_{\alpha_i}$ 
   and $(\zeta_i)_{i<\delta}$ a strictly 
   increasing sequence in $\tilde C$, 
   and we set $p_j:=p'_j\wedge p$ and will get:
   $p_\ell$ fits $(w_\ell,\zeta_\ell)$,and 
   $p_\ell \le^+_{w_i,\zeta_i} p_i$ 
   for all $i<\ell\le j$.

   For successors $\ell=i+1$, we use the induction hypothesis 
   in $P_{\alpha_{i+1}}$, using $(p_i\restriction \alpha_{i+1},w_i,\zeta_i,\alpha_i,\zeta)$ as $(p,w,\zeta,\alpha^*,\zeta^*)$.
   This gives us $p'_{i+1}\le^+_{w_i,\zeta_i} p_i\restriction \alpha_{i+1}$ and some 
   $\zeta_{i+1}>\zeta_i$ in $\tilde C$ 
   such that $p_{i+1}$ fits $(w_{i+1},\zeta_{i+1})$
   and $p_{i+1}\restriction \alpha_i\Vdash p_{i+1}(\alpha_i)\le^+_{\zeta} p_i(\alpha_i)$.
   
   For $j$ limit, we set $\zeta_j:=\sup_{i<j}\zeta_i$ (which is in $\tilde C$),
   and let $p_j$ be a limit of the 
   $(p_i)_{i<j}$. I.e., $\dom(p_j)=\bigcup_{i<j} \dom(p_i)$, and 
   for $\beta\in\dom(p_j)$ let $p_j(\beta)$ be as follows:
   If $\beta\notin w$, fix some condition
   $p_j(\beta)$ stronger than all $p_i(\beta)$.
   Otherwise, there is a minimal $i_0<j$ such that 
   $\beta\in w_{i_0}$, and $p_\ell(\beta)<^+_{\zeta_i} p_i(\beta)$
   for all $i_0\le i<\ell<j$.
   In that case let $p_j(\beta)$ be the (canonical) limit of the $(p_i(\beta))_{i_0\le i<j}$, and note that $\zeta_j\in C^{p_j(\beta)}$.
\end{proof}

\begin{plemma}\label{plem:blba5}
    Let $\beta\le\mu$, and 
    assume that $P_{\beta'}$ is $\lambda^\lambda$-bounding
    for all $\beta'<\beta$.
    
    Assume that $p\in P_\beta$ fits $(w,\zeta)$, 
    and $\nsigma$ is a $P_\beta$-name for a $\lambda$-sequence of ordinals.
    Then there is a $q\le^+_{w,\zeta} p$
    continuously reading $\nsigma$.
\end{plemma}

\begin{proof}
    Set $p_0:=p$, $\zeta_0:=\zeta$, $w_0:=w$.
    We construct by induction on $i< \lambda$ 
    $p'_i$, $p_i$, $\zeta_i$, $\alpha_i$ and $w_i$ as follows:
    \begin{itemize}
        \item Given $p_j$, $w_j$, and $\zeta_j$, pick $\alpha_j\in \dom(p_j)\setminus w_j$ by bookkeeping (so that in the end the domains of all conditions will 
        be covered).
        \item\label{item:wq38h3} Successor $j=i+1$: 
        Set $w_{i+1}:=w_i\cup\{\alpha_i\}$.
        Find $p'_{i+1}\le^+_{w_i,\zeta_i} p_i$ and $\zeta_{i+1}>\zeta_i$ 
        such that $p'_{i+1}$ fits $(w_{i+1},\zeta_{i+1})$ (using the previous preliminary lemma).
        \item Limit $j$: Let $p'_j$ be the canonical limit of the
        $(p_i)_{i<j}$, $\zeta_j:=\sup_{i<j}(\zeta_i)$, and $w_j:=\bigcup_{i<j} w_i$.
        Note that $p'_j$ fits $(w_j,\zeta_j)$.
        \item\label{item:wqtqw} In any case, given $p'_j$ we pick some $p_j\le^+_{w_j,\zeta_j} p'_j$
        which $(w_j,\zeta_j+1)$-decides $\nsigma(\zeta_j)$.
    \end{itemize}
    Then the limit $q$ of the $p_i$ continuously reads $\nsigma$.
\end{proof}

\begin{lemma}\label{lem:blb2}
    $P$ has continuous reading (and in particular is 
    $\lambda^\lambda$-bounding).
\end{lemma}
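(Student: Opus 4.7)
The plan is a straightforward induction on $\beta\le\mu$ showing that $P_\beta$ has continuous reading; since by the fact preceding this statement continuous reading implies $\lambda^\lambda$-bounding, this also yields the bounding hypothesis needed to feed back into the induction.

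The base case $\beta=0$ is trivial. For the inductive step, fix $\beta\le\mu$ and assume $P_{\beta'}$ is $\lambda^\lambda$-bounding (equivalently, has continuous reading) for every $\beta'<\beta$. Given $p\in P_\beta$ and a $P_\beta$-name $\nsigma$ for a $\lambda$-sequence of ordinals, take $w:=\emptyset$ and $\zeta:=0$; then $p$ vacuously fits $(w,\zeta)$. Preliminary Lemma~\ref{plem:blba5} now applies (its only hypothesis is exactly the inductive assumption on the $P_{\beta'}$ for $\beta'<\beta$) and produces a $q\le^+_{\emptyset,0} p$ continuously reading $\nsigma$. Thus $P_\beta$ has continuous reading, which in turn gives $\lambda^\lambda$-bounding for $P_\beta$, closing the induction.

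The genuine work has already been done in Preliminary Lemmas~\ref{plem:blba} and~\ref{plem:blba5}: the fusion-style construction there repeatedly extends a condition by adding one coordinate $\alpha_i$ to the ``controlled'' set $w_i$ while pushing the height $\zeta_i$ upward, using pure decision (Lemma~\ref{lem:pureplus}, resp.~\ref{lem:pureplusplus}) and the bounding hypothesis at smaller stages to ensure that clubs in intermediate extensions are pinned down by ground-model clubs. The only subtlety in the present lemma is to observe that this machinery can be turned into an induction on $\beta$: the bounding assumption used inside Preliminary Lemma~\ref{plem:blba5} refers only to $P_{\beta'}$ with $\beta'<\beta$, so the implication ``continuous reading for all $\beta'<\beta$ $\Rightarrow$ continuous reading for $P_\beta$'' is well-founded, and the induction goes through at successors and limits alike (limits of $\le\lambda$-support iterations of $<\lambda$-closed forcings being ${<}\lambda$-closed by Facts~\ref{fact:trivial}, so the ambient iteration is well-defined).

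The main potential obstacle is a circularity: Preliminary Lemma~\ref{plem:blba5} is stated for $P_\beta$ but internally invokes bounding for proper initial segments, and one must check that the starting trivial fitting $p$ fits $(\emptyset,0)$ is a legitimate launching point so that no unproved instance of bounding for $P_\beta$ itself is needed. This is indeed the case, so no further argument beyond citing the preliminary lemmas is required.
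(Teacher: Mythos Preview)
Your proof is correct and follows exactly the paper's approach: an induction on $\beta\le\mu$ in which the bounding hypothesis for $P_{\beta'}$, $\beta'<\beta$, feeds into Preliminary Lemma~\ref{plem:blba5} to yield continuous reading for $P_\beta$, which in turn gives bounding for $P_\beta$. The paper's proof is a two-line version of yours; your explicit choice of $(w,\zeta)=(\emptyset,0)$ to launch the preliminary lemma and your check that no circularity arises are the details the paper leaves implicit.
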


\begin{proof}
   Assume by induction that $P_{\beta'}$ is $\lambda^\lambda$-bounding for all $\beta<\beta'$.
   Then the previous lemma gives us that $P_\beta$ has continuous reading of names,
   and thus is $\lambda^\lambda$-bounding.
\end{proof}

The same construction shows $\lambda$-properness:
\begin{definition}\label{def:esm}
   Let $\chi\gg\mu$ be sufficiently large and regular.
   An ``elementary model'' is an
   $M\preceq H(\chi)$ of size $\lambda$ which is ${<}\lambda$-closed
   and contains $\lambda$ and $\mu$ (and thus $P$).
\end{definition}

\begin{lemma}\label{lem:esm1}
If $M$ is an elementary model containing $p\in P$,
then there is a $q\le p$ which is strongly $M$-generic in the following sense:
For each $P$-name $\ntau$ in $M$ for an ordinal,
$q$ $(w,\zeta)$-decides $\ntau$ via a decision function in $M$
(so in particular $q\Vdash \ntau\in M$).
\end{lemma}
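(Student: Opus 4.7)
The plan is to iterate the construction from Plemma~\ref{plem:blba5} entirely inside $M$, bookkeeping all $P$-names for ordinals that lie in $M$. Since $|M|=\lambda$ and $M$ is ${<}\lambda$-closed, we may fix, working inside $M$, an enumeration $(\ntau_i)_{i<\lambda}$ of the $P$-names for ordinals in $M$, together with the usual bookkeeping that will exhaust $\bigcup_i \dom(p_i)$.

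By induction on $i<\lambda$ I construct $p_i\in P\cap M$, $w_i\in [\mu]^{<\lambda}\cap M$ and $\zeta_i<\lambda$ so that: $p_i$ fits $(w_i,\zeta_i)$, the $\zeta_i$ are strictly increasing, $p_j\le^+_{w_i,\zeta_i}p_i$ for $i\le j$, and at every successor stage $i+1$ the condition $p_{i+1}$ $(w_{i+1},\zeta_{i+1}+1)$-decides $\ntau_i$ via a decision function that itself lies in $M$. The successor step is the two-step procedure of Plemma~\ref{plem:blba5}: first apply Plemma~\ref{plem:blba} (combined with an interleaved bookkeeping choice of some $\alpha_i\in\dom(p_i)\setminus w_i$) to obtain $p'_{i+1}\le^+_{w_i,\zeta_i} p_i$ fitting some larger $(w_{i+1},\zeta_{i+1})$; then apply Lemma~\ref{lem:pureplus} to refine to $p_{i+1}\le^+_{w_{i+1},\zeta_{i+1}}p'_{i+1}$ which $(w_{i+1},\zeta_{i+1}+1)$-decides $\ntau_i$. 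All inputs lie in $M$, so by elementarity these witnesses, including the decision function and the range $\{\tau^x : x\in\poss(p_{i+1},w_{i+1},\zeta_{i+1}+1)\}\subseteq M\cap\mathrm{Ord}$, can be chosen in $M$. Limit stages below $\lambda$ are handled by the canonical fusion limit of Lemma~\ref{lem:fusion}; since the sequence constructed so far has length ${<}\lambda$ and lies in $M$, and $M$ is ${<}\lambda$-closed, this limit is again in $M$.

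Finally, let $q$ be the canonical $\lambda$-length fusion limit provided by Lemma~\ref{lem:fusion}. The bookkeeping is arranged so that $\bigcup_{i<\lambda}w_i=\bigcup_{i<\lambda}\dom(p_i)$, which is exactly the hypothesis of Lemma~\ref{lem:fusion} in the $\delta=\lambda$ case. Then $q\le p_{i+1}$ for every $i$, so for each $P$-name $\ntau=\ntau_i$ in $M$ the decision function in $M$ produced at stage $i+1$ still decides $\ntau$ below $q$; since the range of that function is contained in $M\cap\mathrm{Ord}$, we get $q\Vdash\ntau\in M$, as required.

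The main obstacle is the bookkeeping: one must simultaneously process the next name $\ntau_i$ and add the next element of $\bigcup_j\dom(p_j)\setminus w_j$ to $w_{i+1}$, all while the successor step of Lemma~\ref{lem:pureplus} may itself enlarge $\dom(p_{i+1})$. Standard interleaving (treat names at odd stages, exhaust the domain at even stages) resolves this, together with the observation that every new coordinate introduced at a successor step can, by elementarity, be chosen in $M$, and will therefore eventually appear in the enumeration driving the bookkeeping.
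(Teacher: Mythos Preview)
Your approach is correct and is exactly the paper's argument: enumerate the $P$-names for ordinals lying in $M$ and run the construction of Preliminary Lemma~\ref{plem:blba5} with successor steps carried out inside $M$ and limits below $\lambda$ handled by ${<}\lambda$-closure of $M$.

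One small but conceptually relevant correction: the enumeration $(\ntau_i)_{i<\lambda}$ cannot be fixed \emph{working inside $M$}, since $M\prec H(\chi)$ believes there are far more than $\lambda$ many $P$-names for ordinals; the enumeration lives in $V$, and what matters is only that each individual $\ntau_i$ is an element of $M$, so that the successor step (with parameter $\ntau_i$) can be performed in $M$ by elementarity. Your subsequent argument uses only this, so the slip is harmless to the logic.
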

(The decision function being in $M$ is equivalent to $w\subseteq M$, as $M$ is ${<}\lambda$ closed.)
\begin{proof}
   Let $\nsigma$ be a sequence of all $P$-names for ordinals that are in $M$.
   Starting with $p_0\in M$, perform the successor step of the previous construction 
   within $M$; as $M$ is closed the limits at steps ${<}\lambda$
   are in $M$ as well. 
   Then the $\lambda$-limit is $M$-generic.
\end{proof}

\subsection{Canonical conditions}\label{sec:canonical}
We will use conditions that ``continuously read themselves.''

\begin{definition}\label{def:canonic}
$p\in P$ is $(w,\zeta)$-canonical 
if $p$ fits $(w,\zeta)$ and 
$p(\alpha)\restriction (\zeta+1)$  is
$(w\cap\alpha,\zeta+1)$-decided
by $p\restriction \alpha$
for all $\alpha\in w$.
\end{definition}

\begin{facts}\label{fact:wrq325325} Let $p$ be canonical for $(w,\zeta)$. 
\begin{enumerate}
    \item\label{item:uqiwhrqwr} If $q\le^+_{w,\zeta} p$,
    then $q$ is canonical for $(w,\zeta)$
    and $\poss(p,w,\zeta+1)=\poss(q,w,\zeta+1)$
\item Let $x\in\poss(p,w,\zeta+1)$. There is a naturally defined $p\wedge x\le p$
such that $p\Vdash (p\wedge x\in G \leftrightarrow x\triangleleft G)$.
 $\{p\wedge x:\, x\in\poss(p,w,\zeta+1)\}$ is a maximal antichain below $p$.
 \item\label{item:uhwet}  Let 
    $x\in\poss(p,w,\zeta+1)$. In an intermediate $P_\alpha$-extension $V[G_\alpha]$
with $x\restriction \alpha\triangleleft G_\alpha$
the rest of $x$, i.e., $x\restriction [\alpha,\mu]$,
is compatible with $p/G_\alpha$ in the quotient forcing.

Or equivalently: If $r_0\leq p\restriction\alpha$
in $P_\alpha$ extends $x\restriction \alpha$, then there
is an $r\le r_0$ extending $x$.
\end{enumerate}
\end{facts}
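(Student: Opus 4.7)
The common thread is to exploit the canonicity of $p$: for each possibility $x\restriction\alpha\in\poss(p\restriction\alpha,w\cap\alpha,\zeta+1)$, the partial function $p(\alpha)\restriction(\zeta+1)$ is a decided ground-model object. Each of (1), (2), (3) then reduces to a straightforward induction along $\alpha\in\dom(p)$.

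For (1), the key input is that $q(\alpha)\le^+_\zeta p(\alpha)$ means $\eta^{q(\alpha)}$ and $\eta^{p(\alpha)}$ agree on $I^*({<}\zeta+1)$ and $s^{q(\alpha)}_\zeta=s^{p(\alpha)}_\zeta$, so $\poss(q(\alpha),\zeta+1)=\poss(p(\alpha),\zeta+1)$. Inducting on $\alpha\in w$ then yields $\poss(q\restriction\alpha,w\cap\alpha,\zeta+1)=\poss(p\restriction\alpha,w\cap\alpha,\zeta+1)$; and canonicity of $q$ is immediate, since the ground-model decision function that $p\restriction\alpha$ uses for $p(\alpha)\restriction(\zeta+1)$ serves $q\restriction\alpha$ verbatim for the identical partial function $q(\alpha)\restriction(\zeta+1)$.

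For (2), I would define $p\wedge x$ coordinatewise by recursion on $\alpha$: put $(p\wedge x)(\alpha):=p(\alpha)$ for $\alpha\notin w$; and for $\alpha\in w$, use that $(p\wedge x)\restriction\alpha$ inductively forces $x\restriction\alpha\triangleleft G_\alpha$ (so by canonicity it decides $p(\alpha)\restriction(\zeta+1)$) to let $(p\wedge x)(\alpha)$ be the $Q$-condition with partial function $x(\alpha)\cup(\eta^{p(\alpha)}\restriction I^*({>}\zeta))$ and club $C^{p(\alpha)}\cap(\zeta,\lambda)$. This is well-defined because $x(\alpha)\in\poss(p(\alpha),\zeta+1)$, and the biconditional $p\Vdash(p\wedge x\in G\leftrightarrow x\triangleleft G)$ is then a coordinatewise check. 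Maximality of $\{p\wedge x:x\in\poss(p,w,\zeta+1)\}$ below $p$ follows by applying Lemma~\ref{lem:pureplus} (or continuous reading via Lemma~\ref{lem:blb2}) to any $r\le p$ to obtain $r'\le r$ that $(w,\zeta+1)$-decides $\neta_\alpha\restriction I^*({<}\zeta+1)$ for every $\alpha\in w$; the resulting $x$ satisfies $r'\Vdash p\wedge x\in G$, so $r'$ is compatible with $p\wedge x$.

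For (3), given $r_0\le p\restriction\alpha$ with $r_0\Vdash x\restriction\alpha\triangleleft G_\alpha$, the plan is to glue: set $r\restriction\alpha:=r_0$ and $r(\beta):=(p\wedge x)(\beta)$ for $\beta\ge\alpha$, where the latter is given by the same recursion as in (2). The recursion closes because $r\restriction\beta$ inductively forces $x\restriction\beta\triangleleft G_\beta$, and hence by canonicity it agrees with $(p\wedge x)\restriction\beta$ on the decided value of $p(\beta)\restriction(\zeta+1)$. The main obstacle I expect is purely bookkeeping: one must check that $x\in\poss(p,w,\zeta+1)$ really does propagate into the compatibility $r_0\Vdash x(\beta)\in\poss(p(\beta),\zeta+1)$ for every $\beta\in w$ with $\beta\ge\alpha$, and that $C^{p(\beta)}\cap(\zeta,\lambda)$ is still a club of $\lambda$. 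Neither is a deep issue, but both are needed for the $Q$-condition defining $r(\beta)$ to be valid.
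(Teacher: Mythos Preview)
The paper states these as \emph{Facts} without proof, so there is no argument to compare against; your proposal is a correct elaboration of what the authors leave implicit, and the inductive use of canonicity (the decision functions $D_\alpha$ for $p(\alpha)\restriction(\zeta+1)$) is exactly the intended mechanism.

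One small simplification in (2): you do not need Lemma~\ref{lem:pureplus} or continuous reading for maximality. Since $p$ forces that exactly one $x\in\PPOSS(w,\zeta+1)$ is selected by $G$, and that $x$ is by definition in $\poss(p,w,\zeta+1)$, any $r\le p$ has an extension forcing some fixed $x\triangleleft G$ and hence $p\wedge x\in G$; pairwise incompatibility of the $p\wedge x$ is immediate from incompatibility of distinct $x$'s.
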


\begin{definition}\label{def:canonicreading}
Assume $p\in P$, 
and $\nsigma$ is a $P$-name for a $\lambda$-sequence of ordinals.
Let 
$E\subseteq \lambda$ be a club-set 
and 
$\bar w=(w_\zeta)_{\zeta\in E}$
an increasing 
sequence in
$[\mu]^{<\lambda}$.

$p$ canonically reads $\nsigma$
as witnessed by $\bar w$ if  the following holds:
\begin{itemize}
     \item 
     $\dom(p)=\bigcup_{\zeta\in E} w_\zeta$.
     \item 
     $p$ is $(w_\zeta,\zeta)$-canonical  for all $\zeta\in E$.
     \item 
     $p\restriction \alpha\Vdash C^{p(\alpha)}=E\setminus (\zeta'_\alpha)$
     for some (ground model) $\zeta'_\alpha$.
       \item 
       $\nsigma\restriction I^*({\le}\zeta+1)$ is 
       $(w_\zeta,\zeta+1)$-decided by $p$
       for all $\zeta\in E$.
   \end{itemize}
\end{definition}
If $\sigma$ is the constant $0$ sequence (or any sequence in $V$),
we just say ``$p$ is canonical'' (as witnessed by $\bar w$).

\begin{lemma}\label{lem:canonicdense}
  For $p$, $\nsigma$ as above,
  there is a $q\le p$ canonically reading  $\nsigma$.

If $p\in P_\alpha$ and $\nsigma$ is a $P_\alpha$-name for some $\alpha<\mu$,
then $q\in P_\alpha$.
\end{lemma}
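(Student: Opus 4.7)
The strategy is a fusion construction enriching the continuous-reading argument of Lemma~\ref{plem:blba5}: at every stage we pure-decide not only $\nsigma$ but also the stems of the condition under construction, and then tidy up the club structure. I would inductively build $(p_i, w_i, \zeta_i)_{i < \lambda}$ starting from $p_0 = p$, $w_0 = \emptyset$, $\zeta_0 = 0$, where $(w_i)$ is an increasing sequence in $[\mu]^{<\lambda}$ that by bookkeeping exhausts $\dom(p)$, $(\zeta_i)$ is strictly increasing and continuous, and $p_j \le^+_{w_i, \zeta_i} p_i$ for $i < j$. The invariants are: $p_i$ is $(w_i, \zeta_i)$-canonical, $(w_i, \zeta_i+1)$-decides $\nsigma \restriction I^*({\le}\zeta_i+1)$, and satisfies $p_i \restriction \alpha \Vdash C^{p_i(\alpha)} \cap (\zeta_i+1) = \{\zeta_k : i_\alpha \le k \le i\}$ for each $\alpha \in w_i$, where $i_\alpha := \min\{k : \alpha \in w_k\}$. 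The required $q$ is then the $\lambda$-limit from Lemma~\ref{lem:fusion}, and $E := \{\zeta_i : i < \lambda\}$ (a club by continuity) witnesses canonical reading with $\zeta'_\alpha := \zeta_{i_\alpha}$.

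At a successor step $i+1$, first apply Preliminary Lemma~\ref{plem:blba} to extend to $p' \le^+_{w_i, \zeta_i} p_i$ fitting $(w_i \cup \{\alpha_i\}, \zeta')$ for a fresh $\alpha_i$ chosen by bookkeeping and some $\zeta' > \zeta_i$; set $w_{i+1} := w_i \cup \{\alpha_i\}$. Then iterate Lemma~\ref{lem:pureplus} (using Lemma~\ref{lem:pluslimit} for $\le^+$-limits of length $<\lambda$) to $(w_{i+1}, \zeta'+1)$-decide first the $<\lambda$ values of $\nsigma \restriction I^*({\le}\zeta'+1)$, and then, for each $\alpha \in w_{i+1}$ in increasing order, the $P_\alpha$-name for the restriction to levels ${\le}\zeta'$ of the $\alpha$-coordinate of the condition currently being constructed (a name taking values in a ground-model set of size $<\lambda$). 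Because this last name lives in $P_\alpha$, its $(w_{i+1}, \zeta'+1)$-decision factors through a $(w_{i+1} \cap \alpha, \zeta'+1)$-decision by the restriction to $P_\alpha$, which is exactly what canonicity at $(w_{i+1}, \zeta')$ demands. Finally, strengthen each $p(\alpha)$ by a deterministic fill-in: eliminate every $\zeta \in C^{p(\alpha)} \cap (\zeta'+1) \setminus \{\zeta_k : i_\alpha \le k \le i+1\}$ by filling the missing values of $p(\alpha)(\zeta)$ with zeros; this forces $C^{p_{i+1}(\alpha)} \cap (\zeta'+1) = \{\zeta_k : i_\alpha \le k \le i+1\}$, and because the fill-in is a deterministic function of the decided possibility, canonicity and the decisions of $\nsigma$ are preserved. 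Set $\zeta_{i+1} := \zeta'$.

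At a limit step $j < \lambda$, first take the canonical limit $p_j^*$ via Lemma~\ref{lem:fusion}, then apply the same pure-decision-then-trim procedure at $(w_j, \zeta_j)$ to obtain $p_j \le^+_{w_j, \zeta_j} p_j^*$ which is $(w_j, \zeta_j)$-canonical and $(w_j, \zeta_j+1)$-decides $\nsigma \restriction I^*({\le}\zeta_j+1)$; transitivity across increasing $w$ and $\zeta$ gives $p_j \le^+_{w_i, \zeta_i} p_i$ for all $i < j$. The clause $q \restriction \alpha \Vdash C^{q(\alpha)} = E \setminus \zeta_{i_\alpha}$ then follows from the invariant together with the fact that $\le^+$-extensions preserve every previously-fixed trunk at a $\zeta_k \in E$, while the successor fill-ins remove every $\zeta \notin E$ at some finite stage. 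The localisation claim --- $q \in P_\alpha$ when $p \in P_\alpha$ and $\nsigma$ is a $P_\alpha$-name --- is immediate since no operation touches coordinates above $\alpha$. The main obstacle is the joint coordination at each stage of pure decision of $p(\alpha)\restriction(\zeta'+1)$ with the deterministic fill-in: one must decide enough of the internal data (the whole restriction, including its club-structure and trunks) so that the fill-in is a genuine function of the decided possibilities and does not reintroduce name-dependence, which is only possible because this restriction ranges over a ground-model set of size $<\lambda$ and hence pure decision applies to it as if to a single ordinal-valued name.
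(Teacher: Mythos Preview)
Your overall strategy matches the paper's: a fusion argument layered on top of Preliminary Lemma~\ref{plem:blba5}, where at each stage you (i) extend to fit the enlarged $(w_{i+1},\zeta')$, (ii) pure-decide $\nsigma$, (iii) arrange canonicity, and (iv) trim the club structure.  The order of (iii) and (iv) differs from the paper's, but your deterministic-fill-in argument handles that correctly, and the bookkeeping and $P_\alpha$-localisation are fine (though you should say the $w_i$ exhaust $\bigcup_i\dom(p_i)$, not just $\dom(p)$).

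There is, however, a real gap in step (iii).  You write that after applying Lemma~\ref{lem:pureplus} in $P$ to $(w_{i+1},\zeta'+1)$-decide the $P_\alpha$-name $p(\alpha)\restriction(\zeta'+1)$, ``its $(w_{i+1},\zeta'+1)$-decision factors through a $(w_{i+1}\cap\alpha,\zeta'+1)$-decision by the restriction to $P_\alpha$''.  This is exactly what the Remark following the definition of $\poss(p,w,\xi)$ warns is false: if $q$ $(w,\zeta)$-decides a $P_\alpha$-name $\ntau$, then $q\restriction\alpha$ will \emph{not} in general $(w\cap\alpha,\xi)$-decide $\ntau$ for any $\xi$.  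The counterexample in that footnote applies verbatim to your situation, since at the moment you invoke the factoring the condition is not yet $(w_{i+1},\zeta')$-canonical.  Without canonicity, two possibilities $x,x'\in\poss(q,w_{i+1},\zeta'+1)$ agreeing on $w_{i+1}\cap\alpha$ need not be realisable over a common $G_\alpha$, so there is no reason their decided values coincide.

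The fix is what the paper actually does: rather than deciding in $P$ and then factoring, construct $s^*\restriction\alpha$ by recursion on $\alpha\in w_{i+1}$, at each step strengthening \emph{only} the $P_\alpha$-condition $s\restriction\alpha$ (via Lemma~\ref{lem:pureplus} applied inside $P_\alpha$) so that it $(w_{i+1}\cap\alpha,\zeta'+1)$-decides $s(\alpha)\restriction(\zeta'+1)$.  This directly produces the restricted decisions that canonicity requires, and since you only touch coordinates below $\alpha$, the resulting $s^*$ satisfies $s^*\le^+_{w_{i+1},\zeta'}s$.
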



\begin{proof}
  We just have to slightly modify the proof of
  Lemma~\ref{plem:blba5}.
  
    We will construct 
    $p_j$, $\xi_j$ and $\alpha_j$ 
    by induction on $j\in \lambda$,
    setting $w_j:=\{\alpha_i:i<j\}$,
    such that for $0<j<k$ the following holds:
    \begin{itemize}
        \item $p_{k}\le^+_{w_j,\xi_j} p_j$.
        \item $p_{j}$ is $(w_j,\xi_j)$-canonical.
        \item $p_{j}$ $(w_j,\xi_j+1)$-decides $\nsigma\restriction I^*({\le}\xi_j+1)$.
        \item In $p_{k}$,
        for $\alpha_j\in w_k$,
        $\{\zeta_i:\, j<i<k\}$
        is (forced to be) an initial segment of
        $C^{p_k(\alpha_j)}$.
        \item The $\alpha_j$ are chosen (by some book-keeping)
        so that $\{\alpha_i:\, i\in\lambda\}=\bigcup_{i\in\lambda}\dom(p_i)$.
    \end{itemize}
    Then the limit of the $p_j$ is as required,
    with $E=\{\xi_i:\, i\in \lambda\}$
    and, for $\zeta=\xi_j$ in $E$, we use $w_j$ as $w_\zeta$.

    Set $p_0\le p$ such that $|\dom(p_0)|=\lambda$,
    and set $\xi_0:=0$. 
    Assume we already have $p_i,\alpha_i$ for $i<j$
    (so we also have $w_j$). 
    \begin{itemize}
      \item For $j$ limit, let $s$ be a limit of $(p_i)_{i<j}$,
      and set $\xi_j:= \sup_{i<j}\xi_i$. Note that $s$ fits
      $(w_j,\xi)$.
      \item Successor case $j=i+1$:
        Find $s_0\le^+_{w_i,\xi_i} p_i$ and $\xi_{j}>\xi_i$ 
        such that $s$ fits $(w_{j},\xi_{j})$.
        (As in Lemma~\ref{plem:blba}. Recall that $w_j=w_i\cup\{\alpha_i\}$.)
        
        Strengthen   $s_0$ to $s\le^+_{w_i,\xi_i}$ so that:
        \begin{itemize}
            \item $s$ still fits $(w_{j},\xi_{j})$,
            \item the trunk at $\alpha_i$ has length $\xi_j$, 
        i.e.,
        $s\restriction \alpha_i\Vdash \min(C^{s(\alpha_i)})=\xi_j$),
        \item for $\alpha_{i'}$, $i'<i$,
        there are no elements in $C^{s(\alpha_{i'})}$
        between $\xi_i$ and $\xi_j$.
        \end{itemize}
      \item 
      Construct $s^{*}\restriction\alpha$ by recursion on $\alpha\in w_j$,
        such that $s^{*}\restriction \alpha\le^+_{w_j\cap\alpha,\xi_j} s\restriction\alpha$
        and $s^{*}\restriction \alpha$
        $(w_j\cap\alpha,\xi_j+1)$-decides $s(\alpha)\restriction (\xi_j+1)$ (which is the same as $s^*(\alpha)\restriction (\xi_j+1)$).
        This gives $s^{*}\le^+_{w_j,\xi_j} s$.
      \item         
        Find $p_j\le^+_{w_j,\xi_j} s^{*}$ which 
        $(w_j,\xi_j+1)$ decides $\nsigma\restriction I^*({\le}\xi+1)$.
      \item Choose $\alpha_j\in \dom(p_j)\setminus w_j$ by bookkeeping.
     \qedhere
    \end{itemize}
        
        
    
    
\end{proof}




\begin{facts}\label{fact:countingcanonics}
    \begin{enumerate}
    \item\label{item:trivial134} If a $P_\beta$-name $\name x\subseteq \lambda$
    is continuously read (by some $P_\beta$-condition $p$),
    and $\cf(\beta)>\lambda$, then there is an
    $\alpha<\beta$ such that:
    $p\in P_\alpha$, and
    $\name x$ is already a 
    $P_\alpha$-name (formally:
    there is a $P_\alpha$-name $\name y$ such that 
    $p\Vdash \name x=\name y$).
    
        \item\label{item:countingcanonics} There are at most $|\alpha|^\lambda\le \lambda^+$
        many pairs\footnote{Depending on the formal definition, we could/should add ``modulo equivalence'', i.e.,
        there is a ${\le}|\alpha|^\lambda$-sized set $Z$ of such pairs
        such that whenever $p$ canonically reads $\name y$ in
        $P_\alpha$ then there is a $\name x$ such that
        $(p,\name x)\in Z$ and $p\Vdash \name x=\name y$.} $(p,\name x)$ such that 
        $p$ canonically reads $\name x$ in $P_\alpha$.       
    \end{enumerate}
\end{facts}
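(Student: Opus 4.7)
The plan for~(\ref{item:trivial134}) is a straightforward cofinality argument. Continuous reading provides witnesses $(w_i,\xi_i)_{i<\lambda}$ with $w_i\in[\beta]^{<\lambda}$ and $p$ $(w_i,\xi_i)$-deciding $\name x(i)$ for each $i<\lambda$. Because $P$ has ${\le}\lambda$-support, $|\dom(p)|\le\lambda$, and $|\bigcup_{i<\lambda}w_i|\le\lambda$. Setting $A:=\dom(p)\cup\bigcup_{i<\lambda}w_i\subseteq\beta$, we have $|A|\le\lambda$, and $\cf(\beta)>\lambda$ forces $A$ to be bounded below $\beta$. Any $\alpha<\beta$ with $A\subseteq\alpha$ then works: $p\in P_\alpha$, and since every $\name x(i)$ is decided using only coordinates in $w_i\subseteq\alpha$, the name $\name x$ can be rewritten (modulo forcing equivalence below $p$) as a $P_\alpha$-name $\name y$.

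For~(\ref{item:countingcanonics}), I would first enumerate the combinatorial data that, by Definition~\ref{def:canonicreading}, completely determine a canonical pair $(p,\name x)$: the club $E\subseteq\lambda$; the increasing sequence $\bar w=(w_\zeta)_{\zeta\in E}$ in $[\alpha]^{<\lambda}$; for each $\zeta\in E$ and each $\gamma\in w_\zeta$, the decision function assigning to every $y\in\poss(p\restriction\gamma,w_\zeta\cap\gamma,\zeta+1)$ the induced value of $p(\gamma)\restriction(\zeta+1)$; and similarly a decision function for $\name x\restriction I^*({\le}\zeta+1)$ at each $\zeta\in E$. By Fact~\ref{fact:wrq325325}(\ref{item:uqiwhrqwr}) these data inductively reconstruct $p$, and by Definition~\ref{def:canonicreading} they reconstruct $\name x$ as well.

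The key counts are then: $|[\lambda]^\lambda|=2^\lambda=\lambda^+$ choices for $E$; at most $|\alpha|^\lambda$ choices for $\bar w$; and, since $\lambda$ is inaccessible, $|I^*({<}\zeta)|<\lambda$, so $|\poss(\cdot,\cdot,\zeta+1)|<\lambda$ and the range of each decision function also has size~${<}\lambda$. Hence a single $\zeta$-slice contributes $({<}\lambda)^{{<}\lambda}<\lambda$ possibilities, and multiplying over the $\lambda$ elements of $E$ gives at most $\lambda^\lambda=\lambda^+$ decision systems. The grand total is $|\alpha|^\lambda\cdot\lambda^+$, which collapses to $|\alpha|^\lambda$ as soon as $|\alpha|\ge 2$ (since then $|\alpha|^\lambda\ge 2^\lambda=\lambda^+$). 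Finally, $|\alpha|\le\lambda^+$ together with Hausdorff's formula $(\lambda^+)^\lambda=\lambda^+\cdot\lambda^\lambda=\lambda^+$ (using $2^\lambda=\lambda^+$) yields the asserted bound $|\alpha|^\lambda\le\lambda^+$.

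The main obstacle will be verifying that the ``decode $p$ from its decision tables'' step really works recursively: at each $\gamma$, the range from which $p(\gamma)\restriction(\zeta+1)$ is drawn must be determined purely by data at stages below~$\gamma$, so that the enumeration is a genuine product rather than self-referential. This is essentially the content of the canonicity clause in Definition~\ref{def:canonic}, combined with the fact that $\poss(p\restriction\gamma,w_\zeta\cap\gamma,\zeta+1)$ depends only on $p\restriction\gamma$; once that is spelled out, the counting above is routine.
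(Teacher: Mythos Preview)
Your argument is correct. The paper states this as a ``Facts'' item without proof, treating both parts as routine; your write-up supplies exactly the standard details one would expect, and they check out (the cofinality bound in~(\ref{item:trivial134}) and the counting in~(\ref{item:countingcanonics}) both use inaccessibility of $\lambda$ and $2^\lambda=\lambda^+$ in the obvious places).

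One small remark on~(\ref{item:countingcanonics}): the appeal to Fact~\ref{fact:wrq325325}(\ref{item:uqiwhrqwr}) is not quite the right pointer for ``the decision tables reconstruct $p$''; what you actually need is just the clause in Definition~\ref{def:canonic} that $p(\gamma)\restriction(\zeta+1)$ is $(w_\zeta\cap\gamma,\zeta+1)$-decided by $p\restriction\gamma$, together with $C^{p(\gamma)}=E\setminus\zeta'_\gamma$ from Definition~\ref{def:canonicreading}. These let you recover each $p(\gamma)$ (as a $P_\gamma$-name) by induction on $\gamma\in\dom(p)$ from the tables alone, which is the recursive step you flagged. The reference you cited is about preservation of $\poss$ under $\le^+_{w,\zeta}$, which is related but not what drives the reconstruction.
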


\subsection{\texorpdfstring{$\mathbf\Delta$}{Δ} systems}\label{ss:stat}

In this section we define $\Delta$-systems and show that such systems exist, which we 
will in the indirect proofs of Lemmas~\ref{lem:notinbeta} and~\ref{lem:wrtqewqw}.

In Section~\ref{sec:fixit} we will then fix a specific $\Delta$-system for the rest of the paper.

From now on, we 
assume that $p_*$ forces
\begin{equation}\label{eq:basiasm}
\npi: \mathcal P(\lambda)\to\mathcal P(\lambda)
\text{ represents the automorphism }\nphi: P^\lambda_\lambda \to P^\lambda_\lambda,
\end{equation}
and we
set, for $\beta\in\mu$, 
\[
   \na_\beta:=\npi(\neta_\beta),
\]
where, as usual, we identify $\neta_\beta\in 2^\lambda$ with $\neta_\beta^{-1}\{1\}\subseteq \lambda$. 

Note that, other than $\neta_\beta$, $\na_\beta$ is a priori not a $P_{\beta+1}$-name (but see Section~\ref{sec:inbeta}).

We also fix a $P$-name for a representation
of the inverse automorphism $\nphi^{-1}$.
Abusing notation, we call it $\npi^{-1}$.

With  $S^\mu_{\lambda^+}$ we denote the stationary subset of 
$\mu$ consisting of ordinals with cofinality $\lambda^+$.


\begin{definition}\label{def:esmsystem}
   Let $S\subseteq S^{\mu}_{\lambda^+}$ be stationary,
   $\chi\gg\mu$ sufficiently large and regular, and
   $z\in H(\chi)$.
   ``An elementary $S$-system'' (using parameter $z$) is a sequence 
   $(M_\beta,p_\beta)_{\beta\in S}$ such that, for each $\beta\in S$,
    $M_\beta$ is an elementary model (as in Definition~\ref{def:esm})
    and contains $z$, $\beta$, $p_*$, $\nphi$, $\npi$ and $\npi^{-1}$,
    and $p_\beta\in P\cap M_\beta$ canonically  reads $\na_\beta$ witnessed by some $(w^{p_\beta}_\zeta)_{\zeta\in E^{p_\beta}}$,
    which $E^{p_\beta}\subseteq\lambda$ club (cf.\ Def.~\ref{def:canonicreading}).
\end{definition}


By a simple $\Delta$-system argument we can make 
an $S$-system homogeneous:
\begin{definition}\label{def:delta}
$(M_\beta, p_\beta)_{\beta\in S}$ 
forms a ``$\Delta$-system'',
if  
$\bar M,\bar p$ is an elementary $S$-system with parameter $z$,
 and
   is homogeneous in the 
    following sense: For $\beta$ and $\beta_1< \beta_2$ in $S$, we get:
\begin{enumerate}
  \item $M_{\beta_1}\cap M_{\beta_2}\cap \mu$
  is constant. We call this set the ``heart'' and, abusing notation, denote it with
  $\Delta$. Obviously $\Delta\supseteq \lambda$,
  $\Delta\supseteq \dom(p_*)$, $\lambda^+\in\Delta$, etc.
  \item $M_{\beta}\cap {\beta}=\Delta$.
  So in particular $\beta$ is the minimal element of $M_\beta$ above $\Delta$.
  All the non-heart elements
  of $M_{\beta_2}$ are above all elements of $M_{\beta_1}$. I.e.,
  $\sup(M_{\beta_1}\cap \mu)<\beta_2$.
  \item There is an $\in$-isomorphism $h^*_{\beta_1,\beta_2}: M_{\beta_1}\to 
  M_{\beta_2}$, mapping $\beta_1$ to $\beta_2$,
  $p_{\beta_1}$ to $p_{\beta_2}$,
  $\na_{\beta_1}$ to $\na_{\beta_2}$
  and fixing $\lambda,\mu,\nphi,\npi$ as well as each $\alpha$ in $\Delta$.
\end{enumerate}    
\end{definition}
Note that this implies that the continuous reading of $\na_\beta$ 
works the same way for all $\beta$.  
In particular the $E^{p_{\beta}}$ are that same $E$
for all $\beta$; and if $F^\beta_\zeta$ is the function
  mapping $\PPOSS(w^{p_\beta}_\zeta,\zeta+1)$ to the value of
  $\na_\beta\restriction I^*({\le}\zeta+1)$
  (for $\zeta\in E$), then
  $h^*_{\beta_1,\beta_2}(F^{\beta_1}_\zeta)=F^{\beta_2}_\zeta$ and in particular
  $h^*_{\beta_1,\beta_2}(w^{p_{\beta_1}}_\zeta)=w^{p_{\beta_2}}_\zeta$; i.e., they are the same apart 
  from shifting coordinates above $\Delta$.

\begin{lemma}\label{lem:deltaexists}\mbox{}
Assume $S\subseteq S^\mu_{\lambda^+}$ is stationary.
\begin{itemize} 
    \item For every $z\in H(\chi)$ 
    and $(p'_\beta)_{\beta\in S}$
    there are $M_\beta$ and $p_\beta\le p'_\beta$ such that $\bar M,\bar p$ is an $S$-system
    with parameter $z$.
    \item If $\bar M,\bar p$ is an $S$-system then there 
    is an $S'\subseteq S$ stationary 
    such that $(M_\beta,p_\beta)_{\beta\in S'}$ is a $\Delta$-system on $S'$.
\end{itemize}
\end{lemma}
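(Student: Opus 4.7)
For the first bullet, I work $\beta$ by $\beta$. Since $\lambda$ is inaccessible we have $\lambda^{<\lambda}=\lambda$, so by the standard construction (take a Skolem hull of size $\lambda$ in $H(\chi)$ containing $\{z,\beta,p_*,\nphi,\npi,\npi^{-1},p'_\beta\}\cup\{\lambda,\mu\}$, then close under ${<}\lambda$-sequences in $\lambda$ iterated steps) I produce an elementary $M_\beta$ as in Definition~\ref{def:esm} containing all the listed parameters and $p'_\beta$. Since $p'_\beta\in P\cap M_\beta$ and $\na_\beta=\npi(\neta_\beta)$ is a $P$-name definable from elements of $M_\beta$, Lemma~\ref{lem:canonicdense} applied inside $M_\beta$ by elementarity yields some $p_\beta\in M_\beta$ with $p_\beta\le p'_\beta$ that canonically reads $\na_\beta$. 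This is an elementary $S$-system with parameter $z$.

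For the second bullet I thin $S$ in three steps. \emph{Step A} (the heart): for each $\beta\in S$, $|M_\beta|=\lambda<\lambda^+=\cf(\beta)$, so $f(\beta):=\sup(M_\beta\cap\beta)<\beta$ is regressive on the stationary set $S\subseteq S^\mu_{\lambda^+}$. By Fodor there is a stationary $S_1\subseteq S$ and $\gamma^*<\mu$ with $f\equiv\gamma^*$ on $S_1$. Then $M_\beta\cap\beta\subseteq\gamma^*+1$, a set of cardinality $\le\lambda^+$, so the number of possible values of $M_\beta\cap\beta$ is at most $(\lambda^+)^{\lambda}=2^\lambda=\lambda^+$. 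Using that the nonstationary ideal on $\mu=\lambda^{++}$ is $\lambda^{++}$-complete, I thin to $S_2\subseteq S_1$ stationary on which $M_\beta\cap\beta$ equals a fixed $\Delta_0\subseteq\gamma^*+1$.

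\emph{Step B} (separate the tails): the set $C:=\{\alpha<\mu:(\forall\beta\in S_2\cap\alpha)\,M_\beta\cap\mu\subseteq\alpha\}$ is a club in $\mu$: closedness is immediate, and unboundedness follows from $\cf(\mu)=\lambda^{++}$ together with $|S_2\cap\alpha|\le\lambda^+$ and $\sup(M_\beta\cap\mu)<\mu$ for each $\beta$. Set $S_3:=S_2\cap C$, still stationary. For $\beta_1<\beta_2$ in $S_3$, $M_{\beta_1}\cap\mu\subseteq\beta_2$, so
\[
 M_{\beta_1}\cap M_{\beta_2}\cap\mu\;=\;M_{\beta_1}\cap(M_{\beta_2}\cap\beta_2)\;=\;M_{\beta_1}\cap\Delta_0\;=\;\Delta_0,
\]
and clearly $\beta_2=\min(M_{\beta_2}\cap\mu\setminus\Delta_0)$. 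Hence items (1) and (2) of Def.~\ref{def:delta} hold on $S_3$ with heart $\Delta_0$.

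\emph{Step C} (isomorphism type): for each $\beta\in S_3$ consider the enriched structure
\[
  \mathcal N_\beta\;:=\;\bigl(M_\beta;\,\in,\,\beta,\,p_\beta,\,\na_\beta,\,z,\,p_*,\,\nphi,\,\npi,\,\npi^{-1},\,\lambda,\,\mu,\,(\alpha)_{\alpha\in\Delta_0}\bigr),
\]
where each element of $\Delta_0$ and each shared parameter is a named constant. Transferring to the universe $\lambda$ via some bijection and taking the induced transitive-like code, each such isomorphism type is represented by an element of $H(\lambda^+)$, so there are at most $2^\lambda=\lambda^+$ many types. Applying $\lambda^{++}$-completeness of NS on $\mu$ once more, I obtain a stationary $S'\subseteq S_3$ on which all $\mathcal N_\beta$ are pairwise isomorphic; the isomorphism $h^*_{\beta_1,\beta_2}:M_{\beta_1}\to M_{\beta_2}$ automatically fixes the shared constants $\lambda,\mu,\nphi,\npi$ and each $\alpha\in\Delta_0$ pointwise, and sends $\beta_1\mapsto\beta_2$, $p_{\beta_1}\mapsto p_{\beta_2}$, $\na_{\beta_1}\mapsto\na_{\beta_2}$ as required by item (3).

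\textbf{Where the work sits.} The first bullet is purely bookkeeping (Skolem hulls plus elementarity applied to Lemma~\ref{lem:canonicdense}). The only mildly delicate point in the second bullet is in Step~C: one must be careful that ``isomorphism type'' is declared with \emph{all} elements of the heart $\Delta_0$ and all of the globally fixed objects $z,p_*,\nphi,\npi,\npi^{-1},\lambda,\mu$ as named constants, so that the resulting $h^*_{\beta_1,\beta_2}$ fixes them pointwise/identically rather than merely setwise. Once this is done, the cardinal arithmetic $2^\lambda=\lambda^+<\lambda^{++}=\mu$ exactly matches the three successive thinnings to the $\lambda^{++}$-completeness of the nonstationary ideal on $\mu$.
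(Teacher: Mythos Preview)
Your argument is correct and follows essentially the same route as the paper: a Fodor/club thinning to stabilise $M_\beta\cap\beta$ and separate the tails (your Steps~A and~B; the paper compresses this into ``a standard $\Delta$-system argument''), followed by a count of isomorphism types of the collapsed structures with named constants to obtain the $\in$-isomorphisms (your Step~C; the paper phrases this via the transitive collapses $\iota_\beta$ and sets $h^*_{\beta_1,\beta_2}:=\iota_{\beta_2}^{-1}\circ\iota_{\beta_1}$, noting that $\iota_{\beta_1}(\alpha)=\iota_{\beta_2}(\alpha)$ for $\alpha\in\Delta$ already follows from Steps~A/B, so naming the heart elements is not strictly needed).
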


\begin{proof}
The first item is trivial, using the fact that everything can be read canonically.

Using $2^\lambda=\lambda^+$,
a standard $\Delta$-system argument (or: Fodor's Lemma argument)
lets us thin out $S$ to some $S^2$
so that $(M_\beta\cap \mu)_{\beta\in S^2}$ satisfies (1--3).
For $\beta\in S^2$ let 
$\iota_\beta:M_\beta\cup \{M_\beta\}\to H(\lambda^+)$
be the transitive collapse, and 
assign to $\beta$ the tuple of the $\iota_\beta$-images of 
the following objects:
\begin{itemize}
    \item $M_\beta$, $p_\beta$, $\na_\beta$, $\mu$, $\nphi$, $\npi$,
and $E^{p_\beta}$.
    \item For $\zeta\in E^{p_\beta}$, the object $w^{p_\beta}_\zeta$,
    \item For 
    $\zeta\in E^{p_\beta}$ and $\gamma\in w^{p_\beta}_\zeta$,
    the object 
$F^{p_\beta}_\gamma$.
\end{itemize}
Again, there are $|H(\lambda^+)|^\lambda<\mu$ many possibilities, so the objects are
constant on a stationary $S'\subseteq S^2$.

For $\alpha<\beta$ in $S'$, we define $h^*_{\beta_1,\beta_2}:=
\iota^{-1}_{\beta_2}\circ \iota_{\beta_1}$.
(Note that $\iota_{\beta_1}(\alpha)=\iota_{\beta_2}(\alpha)$ for $\alpha\in\Delta$.)
\end{proof}

  


So in particular 
if we have a $\Delta$-system on $S$, then
$p_\beta\restriction\sup(\Delta)=p_\beta\restriction\beta\in  M_\beta$ is the same for all $\beta\in S$, 
and outside
of $\Delta$ the domains of the $p_\beta$ are disjoint
for $\beta\in S$. In particular we get:

\newcommand{\blubb}[1]{\begin{equation}\parbox{0.9\textwidth}{#1}\end{equation}}

\begin{fact}\label{fact:joint}
For a $\Delta$-system with domain $S$, and 
$A\subseteq S$ of size ${\le}\lambda$,
the union of the $(p_{\beta})_{\beta\in A}$
is a condition in $P$ (and stronger than each $p_\beta$).
\end{fact}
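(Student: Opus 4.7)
The plan is to analyze how the $\Delta$-system constrains the domains and values of the conditions $p_\beta$, and then verify that piecing them together yields a valid condition.

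First I would observe that for each $\beta\in S$, the condition $p_\beta$ lies in $M_\beta$, so its domain satisfies $\dom(p_\beta)\subseteq M_\beta$. By property~(2) of Definition~\ref{def:delta}, $M_\beta\cap\beta=\Delta$, hence $\dom(p_\beta)\cap\beta\subseteq\Delta$. Consequently the ``low part'' of $p_\beta$, namely $p_\beta\restriction\beta$, is supported on $\Delta$. For two distinct elements $\beta_1<\beta_2$ of $A$, the same property gives $(M_{\beta_1}\setminus\Delta)\cap M_{\beta_2}=\emptyset$, so the ``high parts'' $\dom(p_{\beta_1})\setminus\Delta$ and $\dom(p_{\beta_2})\setminus\Delta$ are disjoint.

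Next I would argue that $p_\beta\restriction\beta$ is literally the same condition $p^*_\Delta\in P_{\sup(\Delta)}$ for every $\beta\in S$. Here is where canonical reading is essential: because $p_\beta$ is $(w^{p_\beta}_\zeta,\zeta)$-canonical, each value $p_\beta(\alpha)\restriction(\zeta+1)$ for $\alpha\in\Delta$ is $(w^{p_\beta}_\zeta\cap\alpha,\zeta+1)$-decided by $p_\beta\restriction\alpha$ via a decision function living in $M_\beta$, and $w^{p_\beta}_\zeta\cap\alpha\subseteq M_\beta\cap\alpha\subseteq\Delta$. The $\in$-isomorphism $h^*_{\beta_1,\beta_2}$ maps $p_{\beta_1}$ to $p_{\beta_2}$, fixes every element of $\Delta$ pointwise, and by the construction in Lemma~\ref{lem:deltaexists} also fixes all the decision data $F^{p_\beta}_\gamma$ for $\gamma\in\Delta$ (since their transitive collapse images are identical). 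It follows that $p_{\beta_1}(\alpha)=p_{\beta_2}(\alpha)$ as $P_\alpha$-names for every $\alpha\in\Delta$, and hence $p_{\beta_1}\restriction\beta_1=p_{\beta_2}\restriction\beta_2$.

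Given this, I would define
\[
   q \;:=\; p^*_\Delta \;\cup\; \bigcup_{\beta\in A}\bigl(p_\beta\restriction[\beta,\mu)\bigr),
\]
which is a well-defined function because the pieces $p_\beta\restriction[\beta,\mu)$ have pairwise disjoint domains (all contained in $M_\beta\setminus\Delta$) and agree with $p^*_\Delta$ on $\Delta$. Its domain has size at most $|A|\cdot\lambda\le\lambda$, which is legal for the ${\le}\lambda$-support iteration. To verify $q$ is a condition in $P$, one proceeds by induction on $\alpha\le\mu$: for each $\alpha\in\dom(q)$, there is a (unique when $\alpha\notin\Delta$) $\beta\in A\cup\{*\}$ with $q(\alpha)=p_\beta(\alpha)$, and since $q\restriction\alpha\le p_\beta\restriction\alpha$ in $P_\alpha$ and $p_\beta\restriction\alpha$ forces $p_\beta(\alpha)\in Q$, also $q\restriction\alpha$ forces $q(\alpha)\in Q$. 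The inequalities $q\le p_\beta$ for every $\beta\in A$ are then immediate from the construction.

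The only mildly delicate point is the agreement $p_{\beta_1}\restriction\beta_1 = p_{\beta_2}\restriction\beta_2$; everything else is bookkeeping. Canonical reading is doing the real work here, by making the low part of $p_\beta$ depend only on data supported by $\Delta$, so that the isomorphism $h^*$ forces this low part to be invariant.
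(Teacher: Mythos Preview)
Your proposal is correct and follows exactly the line the paper takes: the paper simply remarks (in the paragraph immediately preceding the fact) that $p_\beta\restriction\sup(\Delta)=p_\beta\restriction\beta$ is the same condition for all $\beta\in S$, and that outside $\Delta$ the domains are pairwise disjoint, from which the fact follows. You have unpacked these two observations carefully---the invariance of the low part via canonical reading plus the isomorphisms $h^*_{\beta_1,\beta_2}$, and the disjointness of the high parts via property~(2) of Definition~\ref{def:delta}---and then verified explicitly that the union is a condition, which the paper leaves implicit.
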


Whenever $r \in P_\beta\cap M_\beta$
(as is the case for 
$r=p_\beta\restriction \beta$),
we know that $r\in P_\alpha$ for $\alpha\in \Delta$
(as $M_\beta$ knows that $\beta$ has cofinality $\lambda^+$).

Instead of ``$r\in P_\alpha$ for some $\alpha\in \Delta$'' we will sometimes just state the weaker but shorter $r\in P_{\sup(\Delta)}$.

\begin{remark*}
This is an important effect also for some names.
Generally, a $P_\beta$-name in $M_\beta$ is of 
course not a $P_\alpha$-name for any $\alpha<\beta$
(just take the $P_\beta$-generic filter $G_\beta$).
However, as we will explicitly state in 
Lemma~\ref{lem:ogottogott},
 such names for subsets of $\lambda$
 are, modulo some condition, $P_\alpha$-names for some $\alpha\in \Delta$ and independent of $\beta$. 
 In 
 the specific case of the 
 $P_\beta$-name $p_\beta(\beta)$ we do not have to
 increase the condition:
\end{remark*}




\begin{definitionandlemma}
$\tilde p:=p_\beta(\beta)$
is a $P_{\sup(\Delta)}$-name independent of $\beta\in S$.
\end{definitionandlemma}

\begin{proof}
$p_\beta(\beta)\restriction\zeta+1$
is $(w^{p_\beta}_\zeta,\zeta+1)$-determined 
for cofinally many $\zeta \in E$, where
$w^{p_\beta}_\zeta\in [\beta]^{<\lambda}$
is a subset of $M_\beta$.
So 
$w^{p_\beta}_\zeta\subseteq \Delta$,
and the isomorphisms between the $M_\beta$ guarantee that each
$w^{p_\beta}_\zeta$ is the same, and that
$p_\beta(\beta)\restriction\zeta+1$ is 
decided the same way.
So $\tilde p$ is a $P_\gamma$-name for $\gamma=\sup(w^{p_\beta}_\zeta)_{\zeta\in E}$.
This $\gamma$ is independent of $\beta\in S$, and is in $\Delta$.
So $\tilde p$ is actually a $P_\alpha$-name for some $\alpha\in \Delta$; and certainly a $P_{\sup(\Delta)}$-name.     
\end{proof}




For later reference we note:
\begin{lemma}\label{lem:notinbeta}
For all but non-stationary many $\beta$,
$p_*$ forces $\na_\beta\notin V_\beta$.
\end{lemma}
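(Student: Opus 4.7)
The plan is to derive a contradiction by combining a pigeon-hole on canonical readings with the $\Delta$-system machinery. Assume that $S:=\{\beta\in S^\mu_{\lambda^+}:\, p_*\not\Vdash \na_\beta\notin V_\beta\}$ is stationary in $\mu$. For each $\beta\in S$ I would fix $q_\beta\le p_*$ together with a $P_\beta$-name $\name y_\beta\subseteq\lambda$ such that $q_\beta\Vdash\na_\beta=\name y_\beta$. Applying Lemma~\ref{lem:canonicdense} inside $P_\beta$ (and replacing $q_\beta\restriction\beta$ by the strengthening) I may assume $q_\beta\restriction\beta$ canonically reads $\name y_\beta$ in $P_\beta$; since $\cf(\beta)=\lambda^+>\lambda$, Fact~\ref{fact:countingcanonics}(\ref{item:trivial134}) then furnishes an $\alpha_\beta<\beta$ with $q_\beta\restriction\beta\in P_{\alpha_\beta}$ and $\name y_\beta$ already a $P_{\alpha_\beta}$-name.

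Now I would thin out twice. Fodor applied to the regressive map $\beta\mapsto\alpha_\beta$ gives a stationary $S_1\subseteq S$ on which $\alpha_\beta=\alpha^*$ is constant. By Fact~\ref{fact:countingcanonics}(\ref{item:countingcanonics}) there are only $\lambda^+$ pairs $(p,\name y)$ canonically reading a subset of $\lambda$ in $P_{\alpha^*}$, so the $\mu$-completeness of the non-stationary ideal on $\mu=\lambda^{++}$ lets me refine to a stationary $S_2\subseteq S_1$ on which $(q_\beta\restriction\beta,\name y_\beta)$ is constantly $(q^*,\name y^*)$. I then apply Lemma~\ref{lem:deltaexists} to $(q_\beta)_{\beta\in S_2}$ (feeding $(q^*,\name y^*)$ into the parameter~$z$) to obtain a $\Delta$-system $(M_\beta,p_\beta)_{\beta\in S_3}$ with $S_3\subseteq S_2$ stationary and $p_\beta\le q_\beta$; in particular every $p_\beta$ still forces $\na_\beta=\name y^*$.

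To finish, pick any $\beta_1<\beta_2$ in $S_3$ and set $r:=p_{\beta_1}\cup p_{\beta_2}$, which is a condition below both by Fact~\ref{fact:joint}. Then $r$ forces
\[
  \npi(\neta_{\beta_1})=\na_{\beta_1}=\name y^*=\na_{\beta_2}=\npi(\neta_{\beta_2}),
\]
and since $\nphi$ is a bijection on $P^\lambda_\lambda$ this collapses to $|\neta_{\beta_1}\triangle\neta_{\beta_2}|<\lambda$. However $\neta_{\beta_1}\in V_{\beta_1+1}\subseteq V_{\beta_2}$, and by the ${<}\lambda$-closure of $Q_{\beta_2}$ the ${<}\lambda$-sized symmetric difference $\neta_{\beta_1}\triangle\neta_{\beta_2}$ lies in $V_{\beta_2}$ as well; hence $\neta_{\beta_2}\in V_{\beta_2}$, contradicting the standard genericity fact that $\neta_{\beta_2}\notin V_{\beta_2}$.

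The hard part, I expect, is the bookkeeping between the three thinnings: going from a $P$-level reading of $\na_\beta$ to a $P_\beta$-level reading of $\name y_\beta$, then dropping the latter into $P_{\alpha^*}$ using the cofinality assumption, and finally producing a $\Delta$-system over the restricted data. Once this is set up, the contradiction falls out immediately from Fact~\ref{fact:joint} (pairwise compatibility) together with the purely combinatorial observation that $<\lambda$-closed forcing cannot absorb a fresh generic into the ground model via a small symmetric difference.
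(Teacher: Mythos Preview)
Your argument is correct and follows essentially the same line as the paper's: assume failure on a stationary set, arrange that the witnessing $P_\beta$-names $\name y_\beta$ coincide across $\beta$, then use compatibility of two $p_\beta$'s from the $\Delta$-system to force $\na_{\beta_1}=\na_{\beta_2}$ and derive $\neta_{\beta_1}=^*\neta_{\beta_2}$, contradicting genericity.

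The only notable difference is in how the names are made to coincide. You do this by hand---Fodor on $\beta\mapsto\alpha_\beta$ followed by a $\lambda^+$-pigeonhole on canonical pairs in $P_{\alpha^*}$---\emph{before} forming the $\Delta$-system, so the $\Delta$-system is used only for compatibility (Fact~\ref{fact:joint}). The paper instead forms the $\Delta$-system first and lets its homogeneity do the work: the isomorphism $h^*_{\beta_0,\beta_1}$ maps $\name x_{\beta_0}$ to $\name x_{\beta_1}$, and since each $\name x_\beta$ is a $P_\beta$-name in $M_\beta$ it already lives below $\sup(\Delta)$ and is therefore fixed by $h^*$. Your route is slightly more elementary; the paper's is shorter because the ``same name'' conclusion is packaged into the $\Delta$-system machinery. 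Your final paragraph is also more explicit than the paper about why $\neta_{\beta_1}=^*\neta_{\beta_2}$ is impossible (the paper just says ``$\neta_{\beta_0}\ne\neta_{\beta_1}$'', which tacitly uses the same ${<}\lambda$-closure argument you spell out).
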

(Here, $V_\beta$ denotes the $P_\beta$-extension of the ground model.)

\begin{proof}
Assume that $p_\beta\le p_*$ forces that 
$\na_\beta=\name x_\beta$ for a $P_\beta$-name $\name x_\beta$ for all $\beta\in S^*$ stationary.
We can also assume that $p_\beta$ canonically reads
$\na_\alpha$. Pick $M_\beta$ containing $p_\beta$
and $S\subseteq S^*$ such that $(M_\beta,p_\beta)_{\beta\in S}$
is a $\Delta$-system, where we can assume 
(or get from homogeneity) that 
$h^*_{\beta_0,\beta_1}(\name x_{\beta_0})=\name x_{\beta_1}$. So  the $\name x_{\beta}$
are $P_\beta$-names in $M_\beta$ and therefore $P_{\sup(\Delta)}$-names,
and are the same for all $\beta$.
Choose $\beta_1>\beta_0$ in $S$.
So $p_{\beta_0}\wedge p_{\beta_1}$ force that
$\na_{\beta_0}=\name x=\na_{\beta_1}$, which
contradicts the injectivity of $\nphi$ and
the fact that $\neta_{\beta_0}\ne \neta_{\beta_1}$.
\end{proof}

\subsection{Preservation of cofinalities, catching canonical names}

\begin{corollary}\label{cor:cof}
  $P$ is $\lambda^{++}$-cc and preserves all cofinalities.
\end{corollary}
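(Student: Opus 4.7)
The plan is to establish the two assertions separately. The cofinality preservation will follow by combining three facts: ${<}\lambda$-closure handles cofinalities ${\le}\lambda$, $\lambda$-properness handles $\lambda^+$, and the $\lambda^{++}$-cc (which is the content of the first claim) handles cofinalities ${\ge}\lambda^{++}$.

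For $\lambda^{++}$-cc, I would run a standard $\Delta$-system argument using the canonical conditions of Section~\ref{sec:canonical}. Given any $\lambda^{++}$-many conditions $(p_\beta)_{\beta<\lambda^{++}}$, strengthen each $p_\beta$ to be canonical via Lemma~\ref{lem:canonicdense}, so that each has a well-defined support $\dom(p_\beta)\in[\mu]^{\le\lambda}$. Since $2^\lambda=\lambda^+$ and $\lambda$ is inaccessible, one has $\sigma^\lambda\le\lambda^+<\lambda^{++}$ for every $\sigma<\lambda^{++}$, so the $\Delta$-system lemma applies and, after thinning, produces $\lambda^{++}$-many supports forming a $\Delta$-system with common root $R\subseteq\mu$ of size ${\le}\lambda$. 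Now each restriction $p_\beta\restriction R$ is itself a canonical condition living in $P_{\sup(R)+1}$, and by Fact~\ref{fact:countingcanonics}(\ref{item:countingcanonics}) there are at most $|\sup(R)+1|^\lambda\le\lambda^+$ equivalence classes of such canonical conditions. Pigeonhole then yields $\lambda^{++}$-many $\beta$ for which $p_\beta\restriction R$ falls into a single equivalence class, hence any two such restrictions are compatible (modulo some $r\le p_{\beta_1}\wedge p_{\beta_2}$ forcing both to lie in the generic); since the supports are disjoint outside $R$, the full $p_{\beta_1}$ and $p_{\beta_2}$ are then compatible as well. This rules out an antichain of size $\lambda^{++}$.

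For the remaining cofinality $\lambda^+$, I would use $\lambda$-properness as given by Lemma~\ref{lem:esm1}. Suppose for contradiction some $p\in P$ forces a name $\nf$ for a cofinal map $\lambda\to\lambda^+$. Pick an elementary model $M$ in the sense of Definition~\ref{def:esm} containing $p$, $\nf$, $\lambda^+$, with $\delta:=M\cap\lambda^+<\lambda^+$. By Lemma~\ref{lem:esm1}, take a strongly $M$-generic $q\le p$; then for each $\alpha<\lambda$ the name $\nf(\alpha)$ lies in $M$, so $q\Vdash\nf(\alpha)<\delta$, contradicting that $\nf$ is cofinal. Hence $\lambda^+$ is preserved, completing the argument once combined with Fact~\ref{fact:trivial} and the $\lambda^{++}$-cc.

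The only subtle point is the equivalence-class pigeonholing in the $\lambda^{++}$-cc proof: one must note that two canonical conditions lying in the same equivalence class of Fact~\ref{fact:countingcanonics}(\ref{item:countingcanonics}) are automatically compatible (since some common extension forces both into the generic), so equivalence is strong enough to conclude compatibility. Everything else is routine given the tools already developed.
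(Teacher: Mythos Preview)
Your overall decomposition (${<}\lambda$-closure for cofinalities ${\le}\lambda$, $\lambda$-properness for $\lambda^+$, and $\lambda^{++}$-cc for the rest) and your treatment of the $\lambda^+$ case match the paper.

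The $\lambda^{++}$-cc argument, however, has a genuine gap. The claim that $p_\beta\restriction R$ is itself a canonical condition in $P_{\sup(R)+1}$ is not justified and is in general false: for $\alpha\in R$ the value $p_\beta(\alpha)\restriction(\zeta+1)$ is $(w_\zeta\cap\alpha,\zeta+1)$-decided by $p_\beta\restriction\alpha$, but $w_\zeta\cap\alpha\subseteq\dom(p_\beta)\cap\alpha$ may well contain coordinates outside $R$. Restricting to $R$ therefore drops exactly the information needed to read $p_\beta(\alpha)$, and Fact~\ref{fact:countingcanonics}(\ref{item:countingcanonics}) does not apply to the restrictions. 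Your reading of the ``equivalence'' in that Fact is also off: per the footnote there, the equivalence is between two names forced equal by the \emph{same} condition $p$, not a relation between distinct conditions, so it does not by itself yield compatibility.

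The paper avoids this by invoking the full elementary-submodel $\Delta$-system of Section~\ref{ss:stat}: given arbitrary $(p'_\alpha)_{\alpha<\mu}$, one strengthens to canonical $p_\beta\le p'_\beta$, places each inside an elementary model $M_\beta\ni p_\beta$, and thins to a stationary $S$ on which $(M_\beta,p_\beta)_{\beta\in S}$ is a $\Delta$-system in the sense of Definition~\ref{def:delta}; Fact~\ref{fact:joint} then gives compatibility of any two (indeed any ${\le}\lambda$ many) of the $p_\beta$. The crucial point is item~(2) of Definition~\ref{def:delta}: $M_\beta\cap\beta=\Delta$, so every coordinate appearing in any decision function for $p_\beta\restriction\beta$ already lies in the heart $\Delta$, and hence $p_\beta\restriction\beta$ is literally the same condition for all $\beta\in S$. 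That is precisely what a bare $\Delta$-system on supports fails to guarantee. Your approach can be repaired by additionally pigeonholing on the full decision data of each canonical $p_\beta$ (order type of the support, position of the root inside it, all decision functions transported via the order-isomorphism), but carrying that out is essentially the elementary-submodel argument done by hand.
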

\begin{proof}
  Cofinalities ${\le}\lambda$ are preserved as $P$ is ${<}\lambda$-closed.
  
  Cofinality $\lambda^+$ is preserved by properness:
  Assume that it is forced by $p$ that $\kappa$ has a cofinal $\lambda$-sequence $\bar{\name\alpha}:=(\name\alpha_i)_{i\in\lambda}$.
  Then there is an elementary model $M$ containing $p$
  and $\bar {\name\alpha}$.
  If $q\le p$ is $M$-generic, and $G$ a $P$-generic
  filter containing $q$, then $\name\alpha_i[G]\in M$
  for all $i<\lambda$, so $M\cap\kappa$
  is a cofinal subset of $\kappa$ of size $\lambda$ in the ground model.
  
  Cofinality $\ge\lambda^{++}$ is preserved as $P$ has the  $\lambda^{++}$-cc, which we have shown in a very roundabout way
  with the fact about $\Delta$-systems:
  If $(p'_\alpha)_{\alpha\in \mu}$ are arbitrary conditions, then
  $(M_\beta,p_\beta)$ form a $\Delta$-system from some $p_\beta<p'_\beta$ 
  and stationary
  $S$, and any two 
  (in fact, ${\le}\lambda$ many) 
  $p_\beta$ are compatible for $\beta\in S$.
\end{proof}

\begin{remark}
This shows that $P$ is
$(\mu,\lambda)$-Knaster, i.e.,
for every $A\in [P]^\mu$ there is a
$B\in [A]^\mu$ which is $\lambda$-linked.
\end{remark}

The $\lambda^{++}$-cc also implies:
For every name $\name x$ for a subset of 
$\lambda$ (or of $\lambda^+$) 
there is a $\beta<\mu$
and a $P_\beta$-name $\name y$ such that 
the empty condition forces that $\name x = \name y$.

Given $\alpha<\mu$, there are ${<}\mu$ many
pairs $(p,\name x)$ where $p$ canonically reads $\name x\subseteq \lambda$ in $P_\alpha$, see Fact~\ref{fact:countingcanonics}(\ref{item:countingcanonics}).
So there is a $g(\alpha)<\mu$
such that for each such $p,\name x$,
both $\npi(\name x)$ and $\npi^{-1}(\name x)$
are equivalent (modulo the empty
condition) to some $P_{g(\alpha)}$-name. 
Let $C^*\subseteq\mu$ be the club set with
$(\zeta\in C^*\, \&\, \alpha<\zeta)\ \rightarrow\ g(\alpha)<\zeta$.

Given a $\Delta$-system on $S$ we can restrict it
to a $\Delta$-system on $S\cap C^*$; so we will
assume from now on that each $\Delta$-system 
we consider satisfies $S\subseteq C^*$.


To summarize: 

\begin{lemma}\label{lem:ogottogott}    
    \begin{enumerate}
        \item\label{item:whbgrqq} If $\beta\in S$,
        $p\in P_\beta$ and
        $\name x$ a $P_\beta$-name for a subset of $\lambda$, then there is an $\alpha<\beta$
        and a $q\le p$ canonically reading 
        $\name x$, $\ntau(\name x)$, $\ntau^{-1}(\name x)$ as $P_\alpha$-names.
        
        More explicitly: There is a $P_\alpha$-name
        $\name y$ which is canonically read by $q$
        such that $q\Vdash \name y=\name x$.
        (And analogously for $\ntau(\name x)$
            and $\ntau^{-1}(\name x)$ instead of $\name x$.)
        \item\label{item:qwiuhwqt} If additionally $p\le p_\beta\restriction\beta$
        in $P_\beta$ and $(p,\name x)\in M_\beta$,
        then we can additionally get:
        $\name x$, $\npi(\name x)$ and $\npi^{-1}(\name x)$
        are 
        $P_\alpha$-names 
        in $M_\beta$ independent of $\beta\in S$.
        
        More explicitly: 
        Let $\name y$ be as above
        (for $\name x$). Then $\alpha\in\Delta$,  
        $q$ and $\name y$ are in $M_\beta$, and if $\beta'\in S$ and 
        $h:=h^*_{\beta,\beta'}$, then 
        $h$ acts as identity on $\alpha$, $q$, and $\name y$, and 
        ($M_{\beta'}$ knows that) $q\Vdash \name y=h(\name x)$. 
        (And analogously for $\ntau(\name x)$
            and $\ntau^{-1}(\name x)$ instead of $\name x$.)
    \end{enumerate}
\end{lemma}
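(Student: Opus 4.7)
Part (1) follows from two applications of Lemma~\ref{lem:canonicdense} interleaved with Fact~\ref{fact:countingcanonics}(\ref{item:trivial134}) and the defining property of $g$. Explicitly: first strengthen $p$ to $q_0 \le p$ in $P_\beta$ which canonically reads $\name x$; since $\cf(\beta) = \lambda^+ > \lambda$ (as $\beta \in S \subseteq S^\mu_{\lambda^+}$), Fact~\ref{fact:countingcanonics}(\ref{item:trivial134}) furnishes $\alpha_0 < \beta$ with $q_0 \in P_{\alpha_0}$ and a $P_{\alpha_0}$-name $\name y$ satisfying $q_0 \Vdash \name x = \name y$. The inclusion $S \subseteq C^*$ gives $\beta \in C^*$, hence $\alpha := g(\alpha_0) < \beta$, and by the defining property of $g$ there are $P_\alpha$-names $\name y^+, \name y^-$ equivalent (modulo $\mathbb{1}_P$) to $\npi(\name y)$ and $\npi^{-1}(\name y)$ respectively. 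A final invocation of Lemma~\ref{lem:canonicdense} in $P_\alpha$, applied to $q_0 \in P_{\alpha_0} \subseteq P_\alpha$ and the $P_\alpha$-name-sequence $(\name y, \name y^+, \name y^-)$, produces the desired $q \le q_0$ in $P_\alpha$ canonically reading all three. Since $q \le q_0 \Vdash \name y = \name x$, we also have $q \Vdash \name y^+ = \npi(\name x)$ and $q \Vdash \name y^- = \npi^{-1}(\name x)$.

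For Part (2), I will incorporate $C^*$ and $g$ into the parameter $z$ of the $\Delta$-system (Def.~\ref{def:esmsystem}) so that both belong to every $M_\beta$, and then rerun the construction of Part~(1) internally inside $M_\beta$ using elementarity $M_\beta \preceq H(\chi)$. This places $\alpha_0, \alpha, q, \name y, \name y^+, \name y^-$ in $M_\beta$; the identity $M_\beta \cap \beta = \Delta$ from Def.~\ref{def:delta}(2) immediately gives $\alpha_0, \alpha \in \Delta$.

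The main obstacle is the $h$-invariance claim, namely that $h := h^*_{\beta,\beta'}$ fixes $\alpha$, $q$ and $\name y$. The plan is to mirror the argument already used to show $\tilde p = p_\beta(\beta)$ is a $P_{\sup(\Delta)}$-name independent of $\beta$: the canonical reading of $\name x$ (and of $\npi(\name x)$, $\npi^{-1}(\name x)$) carried out inside $M_\beta$ is controlled by witnessing sequences $w_\zeta \in [\Delta]^{<\lambda}$ together with decision functions whose domains and codomains lie in the heart; the homogeneity clause Def.~\ref{def:delta}(3) forces these witnessing data—and therefore the canonical condition $q$ and canonical name $\name y$ extracted from them—to be $h$-invariant. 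Once that is established, applying $h$ to the $M_\beta$-true statement $q \Vdash \name y = \name x$ transports it to the $M_{\beta'}$-true statement $q \Vdash \name y = h(\name x)$; the same reasoning applies verbatim to $\npi(\name x)$ and $\npi^{-1}(\name x)$ in place of $\name x$, completing the proof.
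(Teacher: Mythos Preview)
Your proposal is correct and follows essentially the same approach as the paper's proof. Both arguments run Part~(1) by first canonically reading $\name x$ below $\beta$, using $\beta\in C^*$ to pull $\npi(\name x)$ and $\npi^{-1}(\name x)$ below $\beta$ as well, and then canonically reading all three together; for Part~(2) both arguments rerun Part~(1) inside $M_\beta$ by elementarity, deduce $\alpha\in M_\beta\cap\beta=\Delta$, and argue $h$-invariance from the fact that the canonical reading data for $q$ and $\name y$ live entirely over coordinates in $\Delta$ (which $h$ fixes pointwise). Your write-up is somewhat more explicit about the role of $g$ and about placing $C^*,g$ in the parameter $z$, but the underlying reasoning is the same.
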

\begin{proof}
(\ref{item:whbgrqq}):
    Use Lemma~\ref{lem:canonicdense} to get a $q_1\in P_\beta$
    canonically reading $\name x$.
    And if $\beta\in S$ then $\cf(\beta)=\lambda^+$,
    so $\dom(p)$ is bounded by some $\alpha'<\beta$ and, by Fact~\ref{fact:countingcanonics}(\ref{item:trivial134}), 
    $q_1\in P_{\alpha_1}$ for some $\alpha'\le \alpha_1<\beta$.
    As $\beta\in C^*$, $\npi(\name x)$ and $\npi^{-1}(\name x)$
    are $P_\beta$-names. So repeat the same
    argument to get $q\le q_1$ in $P_\alpha$ canonically
reading all three subsets of $\lambda$.   

(\ref{item:qwiuhwqt}): Apply (\ref{item:whbgrqq}) inside
$M_\beta$. As $\alpha\in\beta\cap M_\beta$, we get $\alpha\in\Delta$.
As $q$ canonically reads 
itself as well as $\name y$, we know that 
$h$ does not change $q$ and $\name y$.
As $h$ is an isomorphism, we know that 
$h(q)=q$ forces that $h(\name x)=h(\name y)=\name y$.
\end{proof}
\subsection{Majority decisions}

For any $(a_1,a_2,a_3)$ with $a_i\in \{0,1\}$ there is a
$b\in \{0,1\}$ such that 
$b=a_i$ for at least two $i\in \{1,2,3\}$.
We write $b=\majority_{i=1,2,3}(a_i)$.

Similarly, if 
$f_1$, $f_2$, $f_3$ are functions $A\to 2$
we write $\majority_{i=1,2,3}(f_i)$
for the function $A\to 2$ that maps $\ell$
to $\majority_{i=1,2,3}(f_i(\ell))$.


The following is a central point of
the whole construction:  

\begin{lemma}\label{lem:blal0}
Let $(M_\alpha,p_\alpha)_{\alpha\in S}$ be a $\Delta$-system.
Pick $\beta_0<\beta_1<\beta_2<\beta_3$ in $S$. 
\begin{enumerate}
    \item\label{item:bla101}  $p_*$ forces: If
    $\neta_{\beta_0}=^*\majority_{i=1,2,3}(\neta_{\beta_i})$,
    then  
    $\na_{\beta_0}=^*\majority_{i=1,2,3}(\na_{\beta_i})$.
    \item\label{item:bla102}     
    Let $s=\bigwedge_{i<4} p_{\beta_i}$.
    Recall that $s(\beta_i)$ is the same $P_{\sup(\Delta)}$-name
    called $\tilde p$ for all $i$.
    We can strengthen $s$ by strengthening, for 
    $i=1,2,3$, the condition
    $s(\beta_i)=\tilde p$ 
    to some $P_{\beta_0+1}$-names  $r_i\le \tilde p$
    (without changing $C^{\tilde p}$) 
    such that the resulting condition forces
    $\neta_{\beta_0}=\majority_{i=1,2,3}(\neta_{\beta_i})$.

    (We do not have to strengthen $s(\beta_0)$ for this,
    i.e., we can use $r_0:=\tilde p$.)
\end{enumerate}
\end{lemma}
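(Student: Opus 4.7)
For part~(\ref{item:bla101}) I would use that majority is a Boolean combination,
\[
\majority(A_1,A_2,A_3)=(A_1\cap A_2)\cup(A_1\cap A_3)\cup(A_2\cap A_3),
\]
so the hypothesis $\neta_{\beta_0}=^*\majority_{i=1,2,3}(\neta_{\beta_i})$ translates to the quotient identity $[\neta_{\beta_0}]=\majority_{i=1,2,3}([\neta_{\beta_i}])$ in $P^\lambda_\lambda$. Applying the BA-automorphism $\nphi$ (which commutes with $\wedge$ and $\vee$) yields $[\na_{\beta_0}]=\majority_{i=1,2,3}([\na_{\beta_i}])=[\majority_{i=1,2,3}(\na_{\beta_i})]$, i.e.\ the desired $=^*$. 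This is essentially a one-line argument and uses nothing beyond the fact that $\nphi$ is a Boolean-algebra automorphism.

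For part~(\ref{item:bla102}) the idea is to exploit the freedom in $\tilde p$ inside each undecided cone. I work in $V[G_{\beta_0+1}]$: since $\beta_0\in S\subseteq S^\mu_{\lambda^+}$ the heart $\Delta$ is bounded in $\beta_0$, so $\tilde p$ is evaluated into a specific $Q$-condition and $\neta_{\beta_0}$ into a specific total extension of $\eta^{\tilde p}$. The key combinatorial observation is that for each $\zeta\in C^{\tilde p}$ the trunk $s^{\tilde p}_\zeta$ has length $<\theta^*_\zeta$, so I can pick three of the four length-two extensions, say
\[
s^{r_1}_\zeta:=s^{\tilde p}_\zeta{}^{\frown}00,\qquad s^{r_2}_\zeta:=s^{\tilde p}_\zeta{}^{\frown}01,\qquad s^{r_3}_\zeta:=s^{\tilde p}_\zeta{}^{\frown}10,
\]
whose cones $[s^{r_i}_\zeta]$ are pairwise disjoint subsets of $[s^{\tilde p}_\zeta]$.

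Using these trunks I define $r_i$ by setting $C^{r_i}:=C^{\tilde p}$, $r_i(\zeta):=\tilde p(\zeta)$ for $\zeta\notin C^{\tilde p}$, and, for $\zeta\in C^{\tilde p}$, letting $r_i(\zeta)$ be the partial function on $I^*_\zeta\setminus [s^{r_i}_\zeta]$ which copies $\tilde p(\zeta)$ on $I^*_\zeta\setminus [s^{\tilde p}_\zeta]$ and copies $\neta_{\beta_0}$ on $[s^{\tilde p}_\zeta]\setminus [s^{r_i}_\zeta]$. Then $r_i$ is a $P_{\beta_0+1}$-name for a $Q$-condition with $r_i\le\tilde p$ and $C^{r_i}=C^{\tilde p}$ unchanged. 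Replacing $s(\beta_i)$ by $r_i$ for $i=1,2,3$ (leaving $s(\beta_0)=\tilde p$ alone) gives the required strengthening. To verify the pointwise majority: if $\ell\in I^*_\zeta$ lies outside the undecided cone (either $\zeta\notin C^{\tilde p}$ or $\ell\notin [s^{\tilde p}_\zeta]$) all four generics agree with $\tilde p$ there; if $\ell\in [s^{\tilde p}_\zeta]$, pairwise disjointness of the $[s^{r_j}_\zeta]$ ensures $\ell$ is in at most one of them, so for the other two indices $i$ the condition $r_i$ forces $\neta_{\beta_i}(\ell)=\neta_{\beta_0}(\ell)$ and the majority matches $\neta_{\beta_0}(\ell)$.

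The heart of the argument is the combinatorial trick of splitting $[s^{\tilde p}_\zeta]$ into three pairwise disjoint sub-cones by extending the trunk by two bits; this is what makes the ``$3$-out-of-$4$'' bookkeeping succeed. The remaining work is routine verification that $r_i$ is a legitimate $Q$-condition (trunks stay below $\theta^*_\zeta$, $C^{r_i}$ is genuinely unchanged, and $\eta^{r_i}$ extends $\eta^{\tilde p}$).
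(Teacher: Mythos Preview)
Your proof is correct and follows essentially the same approach as the paper. Part~(\ref{item:bla101}) is identical in spirit: majority is a Boolean term, so the automorphism $\nphi$ commutes with it in the quotient. For part~(\ref{item:bla102}) your construction coincides with the paper's: work in $V[G_{\beta_0+1}]$, extend each trunk $s^{\tilde p}_\zeta$ by the three two-bit strings $00,01,10$ to obtain pairwise disjoint sub-cones, and fill the newly determined region $[s^{\tilde p}_\zeta]\setminus[s^{r_i}_\zeta]$ with the values of $\neta_{\beta_0}$; the paper uses exactly the same three strings $(0,0),(0,1),(1,0)$ and the same filling, and the majority verification is the same ``each $\ell$ lies in at most one of the three sub-cones'' argument.
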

We describe this by ``$(r_i)_{i<4}$ honors majority''.

Recall that $\nu_1=^*\nu_2$ denotes that $\nu_1(\ell)=\nu_2(\ell)$
    for all but ${<}\lambda$ many $\ell\in\lambda$.

\begin{proof}
   (\ref{item:bla101}) Identifying $2^\lambda$ with $P(\lambda)$, we 
   have 
   $\majority_{i=1,2,3}{f_i}= (f_1\cap f_2)\cup (f_2\cap f_3)\cup (f_1\cap f_3)$ for any tuple
   $(f_i)_{i=1,2,3}$.
As $\npi$ represents an automorphism,
we get
    $\npi(\majority_{i=1,2,3}(f_i))=^* 
\majority_{i=1,2,3}(\npi(f_i))$. Apply this to $f_i:=\neta_{\beta_i}$.


(\ref{item:bla102})
Work in the $P_{\beta_0+1}$-extension.
Recall  $\tilde p:=p_{\beta_0}(\beta_0)$.
So both $\tilde p$ and $\neta_{\beta_0}$
are already determined, and 
$\neta_{\beta_0}$ 
extends $\eta^{\tilde p}$. Set $r_0:=\tilde p$.

Set $s_1:=(0,0)$, $s_2:=(0,1)$, $s_3:=(1,0)$.
For $\zeta\in C^{\tilde p}$ and  $i=1,2,3$, 
we define $r_i(\zeta)\supseteq \tilde p(\zeta)$ as follows:
\begin{equation}\label{eq:rtqw}
\text{Extend }s^{\tilde p}_\zeta\text{ by }s_i,\text{i.e., } 
s^{r_i}_\zeta:=(s^{\tilde p}_\zeta)^\frown s_i; 
\text{ and set }r_i(\zeta)(\ell):=\neta_\beta(\ell)
\text{ for }\ell\in [s^{\tilde p}_\zeta]\setminus [s^{r_i}_\zeta].
\end{equation}
So $\eta^{r_i}$ 
agrees on its domain with $\neta_{\beta_0}$, 
and each $\ell\in \lambda$ is in 
$\dom(\eta^{r_i})$ for at least two $i\in\{1,2,3\}$.
Accordingly, an extension by a generic filter $G$ with $r_i\in G(\beta_i)$ for all $i<4$
will satisfy $\neta_{\beta_0}=\majority_{i=1,2,3}(\neta_{\beta_i})$. (We do not even have to assume
that any $p_\beta\in G$.)
\end{proof}



\begin{remark}
    Let $p'_{\beta_1}$ be the condition where we strengthen
    $p_{\beta_1}(\beta_1)$ to $r_1$.
    Note that $p'_{\beta_1}$
    is not in $M_{\beta_1}$, as 
    $\beta_0\notin M_{\beta_1}$
    and $r_1$ is defined using $\neta_{\beta_0}$.
    Similarly (basically the same):
    $r_1[G_{\beta_1}]\notin M_{\beta_1}[G_{\beta_1}]$,
    even if we assume that $G_{\beta_1}$ is $M_{\beta_1}$-generic. 
    But generally we will not be interested in $M_\beta$-generic conditions or extensions (we needed generic conditions only in Lemma~\ref{lem:esm1}, which
    in turn is needed for Corollary~\ref{cor:cof}).
    And while usually most conditions 
    we consider can be constructed within (and therefore
    will be elements of) some $M_\beta$, 
    this is generally not required
    (an example are the $s_i$'s in the following Lemma).
\end{remark}

The same proof works if we do not start with the
$p_\beta$ but with any stronger conditions, as long as they still
``cohere'' in the way that the $p_{\beta_i}$ cohere:
\begin{lemma}\label{lem:8hhgewuhwegw3t}
Let $(M_\alpha,p_\alpha)_{\alpha\in S}$ be a $\Delta$-system,
$\beta_0<\beta_1<\beta_2<\beta_3$ in $S$, and 
    $s_{i}\le p_{\beta_i}$ for $i=0,1,2,3$ such that:
    \begin{itemize}
        \item $\dom(s_i)\subseteq  M_{\beta_i}$
        \item $s^*:=s_i\restriction \beta_i$ is the same for all $i$,
        \item $s^*$ forces that 
    the $s_{i}(\beta_i)$ are the same for all $i$.

    (In the usual sense: The $s_{i}(\beta_i)$ are continuously read from 
    generics below $\beta_0$ in the same way for each $i<4$.)
    \end{itemize}
    Then there is condition stronger than all $s_{i}$
    forcing that 
    $\neta_{\beta_0}=\majority_{i=1,2,3}(\neta_{\beta_i})$ and thus $\na_{\beta_0}=^*\majority_{i=1,2,3}(\na_{\beta_i})$.  
\end{lemma}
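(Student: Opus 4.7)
The plan is to mimic the proof of Lemma~\ref{lem:blal0}(\ref{item:bla102}) essentially verbatim, with the common $P_{\beta_0}$-name $\tilde s := s_i(\beta_i)$ playing the role previously played by $\tilde p$. The three coherence conditions imposed on the $s_i$'s in the hypothesis parallel precisely those enjoyed by the $p_{\beta_i}$'s in Lemma~\ref{lem:blal0}: a common initial segment $s^* = s_i \restriction \beta_i$ below $\beta_0$, domains that are pairwise disjoint outside $\Delta$ (since $\dom(s_i) \subseteq M_{\beta_i}$ and $M_{\beta_i} \cap M_{\beta_j} = \Delta$ for $i \ne j$), and a common $P_{\beta_0}$-name at the critical coordinate $\beta_i$.

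To carry out the construction I would pass to $V_{\beta_0+1}$ and, for each $\zeta \in C^{\tilde s}$, define $r_i(\zeta) \ge \tilde s(\zeta)$ for $i = 1,2,3$ by extending the trunk $s^{\tilde s}_\zeta$ by one of the three length-$2$ bit-strings $(0,0), (0,1), (1,0)$ (one per $i$) and filling the freed cone with the corresponding bits of $\neta_{\beta_0}$, exactly as in equation~(\ref{eq:rtqw}). This yields $P_{\beta_0+1}$-names $r_1, r_2, r_3 \le \tilde s$. Setting $r_0 := \tilde s$ and letting $s'_i$ be the condition obtained from $s_i$ by replacing its coordinate $s_i(\beta_i)$ with $r_i$, the disjointness of domains outside $\Delta$ guarantees that $s' := \bigwedge_{i < 4} s'_i$ is a well-defined condition in $P$, stronger than every $s_i$.

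The verification that $s'$ forces $\neta_{\beta_0} = \majority_{i=1,2,3}(\neta_{\beta_i})$ then repeats the case analysis from the proof of Lemma~\ref{lem:blal0}(\ref{item:bla102}): each $\ell \in \lambda$ lies in $\dom(\eta^{r_i})$ for at least two $i \in \{1,2,3\}$, and at those coordinates the forced value is precisely $\neta_{\beta_0}(\ell)$. The second assertion $\na_{\beta_0} =^* \majority_{i=1,2,3}(\na_{\beta_i})$ then follows immediately from Lemma~\ref{lem:blal0}(\ref{item:bla101}), which uses only that $\npi$ represents an automorphism. I do not foresee a genuine obstacle; the present statement is a cosmetic generalisation of Lemma~\ref{lem:blal0} designed to permit additional strengthening of the $p_{\beta_i}$'s before invoking the majority trick, and the only point requiring care is the bookkeeping of domains, which is handled by the $\Delta$-system homogeneity of the $M_\beta$'s.
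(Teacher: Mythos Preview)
Your proposal is correct and is exactly what the paper does: it introduces this lemma immediately after Lemma~\ref{lem:blal0} with the one-line remark that ``the same proof works'' when the $p_{\beta_i}$ are replaced by suitably coherent $s_i$'s, and gives no separate argument. Your write-up simply spells out that remark, with $\tilde s$ replacing $\tilde p$ throughout.
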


\subsection{\texorpdfstring{
$\underaccent{\bm{\sim}}{\bm{a}}_{\bm{\beta}}$}{aᵦ} is  in the \texorpdfstring{\bm{$\beta+1$}}{β+1}-extension}\label{sec:inbeta}

We now show that $\na_\beta$ can be assumed to be a 
$P_\beta$-name.

The following definitions, in particular everything concerning
the notion of coherence, is used only in this section.
In the rest of the paper, we will use from this section only Lemma~\ref{lem:wrtqewqw}, i.e., the fact that 
$\na_\beta\in V_{\beta+1}$.



\begin{remark*}
Why do we introduce this (rather annoying) notion
of coherence? Well, we would like to simultaneously
construct something like $s_i\le p_{\beta_i}$ where
each $s_i$ ends up in $M_{\beta_i}$. 
We cannot directly do this
in $M_{\beta_0}$, as $M_{\beta_0}$ does not know about, e.g., $\beta_1$. So instead, we construct 
four different $s_i'\le p_{\beta_0}$ in $M_{\beta_0}$
in such a way (a ``coherent'' way) and use
$s_i:= h^*_{\beta_0,\beta_i}(s_i')$.
\end{remark*}

Let us for now (until Lemma~\ref{lem:wrtqewqw})
fix an arbitrary $\Delta$-system
$(M_\beta,p_\beta)_{\beta\in S}$  
as well as $\beta_0<\beta_1<\beta_2<\beta_3$ in $S$. 
For notational convenience, set
\[
\beta:=\beta_0.
\]

\begin{definition}

\begin{itemize}
    \item 
    $\bar q=(q_i)_{i<4}$ in $M_\beta$ is called coherent, if
    each $q_i$ is stronger than $p_{\beta}$ and 
    $q_i\restriction(\beta+1)$ is the same for all $i<4$.
    \item 
    If $\bar q$ is coherent, then
    $\bigwedge_{i<4}h^*_{\beta,\beta_i}(q_i)$
    is a valid condition
    in $P$, and we call it $q^*$.
    
    I.e., $q^*$ is the union of the copies of $q_i$ in $M_{\beta_i}$; and the
    copy for $q_0$ is just $q_0$.

    $r\in P$ is called coherent, if $r=q^*$
    for some coherent $\bar q\in M_{\beta}$.
\end{itemize}
\end{definition}

\begin{facts*}
\begin{itemize}

\item
The $p_{\beta_i}$ are coherent, more correctly:
\\
The condition $\bigwedge_{i\in 4}p_{\beta_i}$ is coherent; equivalently: The tuple
$\big(h^{*-1}_{\beta,\beta_i}(p_{\beta_i})\big)_{i<4}$ is coherent.
\item 
Any coherent $r$ is stronger than $\bigwedge_{i<4}p_{\beta_i}$.
\item
If $\bar q$ is coherent,
$r_i\le q_i$ in $M_{\beta}$ for $i<4$, 
and $r_i\restriction \beta_i$ is the same for all $i<4$,
then $\bigwedge_{i<4}h^*_{\beta,\beta_i}(r_i)$  is
(a valid condition and) 
compatible with $q^*$.
\item
$r\in P$ is coherent
iff: $\dom(r)\subseteq \bigcup_{i<4} M_{\beta_i}$,
$r\restriction (\mu\cap M_{\beta_i})\in M_{\beta_i}$ is stronger than $p_{\beta_i}$,
and each $r(\beta_i)$ is forced to be the same 
condition. 

In that case, 
$r=q^*$ for 
$q_i:=h^{*-1}_{\beta,\beta_i}(r_i)$ and
$r_i:=r\restriction (\mu\cap M_{\beta_i})$
\end{itemize}
\end{facts*}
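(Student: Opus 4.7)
The plan is to verify each of the four asserted facts by a routine unpacking of the definition of coherence together with the basic homogeneity properties of the $\Delta$-system. The only nontrivial bookkeeping is that the isomorphism $h^*_{\beta,\beta_i}$ fixes every element of $\Delta$ and sends $\beta$ to $\beta_i$, so if $q\in M_\beta$ has $\dom(q)\subseteq M_\beta\cap\mu$, then $h^*_{\beta,\beta_i}(q)$ agrees with $q$ on $\Delta$ and moves the value at coordinate $\beta$ to coordinate $\beta_i$.

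For the first fact, Definition~\ref{def:delta}(3) gives $h^*_{\beta,\beta_i}(p_\beta)=p_{\beta_i}$, hence $h^{*-1}_{\beta,\beta_i}(p_{\beta_i})=p_\beta$. The tuple is therefore the constant $(p_\beta,p_\beta,p_\beta,p_\beta)$; this is trivially coherent, and its associated $q^*$ is $\bigwedge_{i<4}p_{\beta_i}$. For the second fact, if $\bar q$ is coherent then each $q_i\le p_\beta$ in $M_\beta$, and since the $\in$-isomorphism $h^*_{\beta,\beta_i}$ preserves the partial order on $P$ and sends $p_\beta$ to $p_{\beta_i}$, we get $h^*_{\beta,\beta_i}(q_i)\le p_{\beta_i}$; taking the meet yields $q^*\le \bigwedge_{i<4}p_{\beta_i}$.

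For the third fact, the tuple $\bar r=(r_i)_{i<4}$ in $M_\beta$ satisfies exactly the defining clauses of coherence (each $r_i\le p_\beta$ because $r_i\le q_i\le p_\beta$, and the common restriction to $\beta+1$ is given by hypothesis\footnote{Here we read $r_i\restriction\beta_i$ in the statement as $r_i\restriction(\beta+1)$, the restriction to $\beta_i\notin M_\beta$ having no direct meaning for $r_i\in M_\beta$.}). Hence $r^*:=\bigwedge_{i<4}h^*_{\beta,\beta_i}(r_i)$ is a valid coherent condition. Since $h^*_{\beta,\beta_i}(r_i)\le h^*_{\beta,\beta_i}(q_i)$ for every $i$, we have $r^*\le q^*$, so $r^*$ witnesses compatibility.

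For the fourth fact, the forward direction is immediate from the definitions of $q^*$ and coherence of $\bar q$. For the reverse, assume $r$ satisfies the three bulleted conditions. Define $r_i:=r\restriction(\mu\cap M_{\beta_i})\in M_{\beta_i}$ and $q_i:=h^{*-1}_{\beta,\beta_i}(r_i)\in M_\beta$; each $q_i\le p_\beta$ because $r_i\le p_{\beta_i}$ and $h^{*-1}_{\beta,\beta_i}(p_{\beta_i})=p_\beta$. To verify coherence of $\bar q$ we check that $q_i\restriction(\beta+1)$ is independent of $i$: since $M_{\beta_i}\cap\beta_i=\Delta$, the only coordinates of $q_i$ at or below $\beta$ lie in $\Delta\cup\{\beta\}$; on $\Delta$, where $h^*_{\beta,\beta_i}$ acts as the identity, we have $q_i\restriction\Delta=r\restriction\Delta$, which does not depend on $i$; at the coordinate $\beta$ we have $q_i(\beta)=h^{*-1}_{\beta,\beta_i}(r(\beta_i))$, and by hypothesis the $r(\beta_i)$ are all forced to represent the same $P_{\sup(\Delta)}$-name (coming from the same canonical reading, as in the definition of $\tilde p$), so again $q_i(\beta)$ is independent of $i$. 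Thus $\bar q$ is coherent, and by construction $h^*_{\beta,\beta_i}(q_i)=r_i$ for each $i$, so $q^*=\bigwedge_{i<4}r_i=r$ since $\dom(r)\subseteq\bigcup_{i<4}M_{\beta_i}$. The main (minor) obstacle throughout is simply keeping track of which coordinates lie in $\Delta$ versus at $\beta$ or $\beta_i$, and recognising that the coherence clause on $r(\beta_i)$ precisely matches the constancy of $q_i\restriction(\beta+1)$.
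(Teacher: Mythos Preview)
The paper states these as \emph{Facts} without proof, treating them as routine consequences of the definitions and the homogeneity of the $\Delta$-system. Your verification is correct and is exactly the intended unpacking; your footnote on the third item correctly identifies that ``$r_i\restriction\beta_i$'' only makes sense when read as ``$r_i\restriction(\beta+1)$'' (or at least ``$r_i\restriction\beta$''), since for $i\ge 1$ the ordinal $\beta_i$ lies above all of $M_\beta\cap\mu$.

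One small remark on the third fact: the statement only asserts that $\bigwedge_{i<4}h^*_{\beta,\beta_i}(r_i)$ is a valid condition compatible with $q^*$, not that $\bar r$ is coherent. Your argument actually proves the stronger conclusion $r^*\le q^*$ (hence compatibility), and coherence of $\bar r$ is not strictly needed for this---agreement of the $r_i$ on $\Delta$ (equivalently, on $r_i\restriction\beta$) already suffices to make the union a valid condition, since the non-$\Delta$ parts of the $M_{\beta_i}$ are pairwise disjoint. But this is a harmless over-strengthening under your reading of the hypothesis.
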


\begin{lemma}\label{lem:itismajafterall}
  If $r$ is coherent,
  then it can be strengthened\footnote{to a condition that will generally not be coherent} to
  force\footnote{Here we write $\beta_0$ instead of $\beta$ to stress the interaction
with $\beta_1,\dots,\beta_3$, but recall that $\beta:=\beta_0$.} $\na_{\beta_0}=\majority_{i=1,2,3}\na_{\beta_i}$.
\end{lemma}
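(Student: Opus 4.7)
The plan is to reduce to Lemma~\ref{lem:8hhgewuhwegw3t} (the coherent version of the majority trick from Lemma~\ref{lem:blal0}(\ref{item:bla102})) by first strengthening the coherent witness $\bar q = (q_i)_{i<4} \in M_\beta$ so that its common value at coordinate $\beta$ becomes fixed by each $h^*_{\beta, \beta_i}$.

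First, let $p := q_0 \restriction \beta$ and $\name x := q_0(\beta)$, a $P_\beta$-name in $M_\beta$ for a subset of $\lambda$ (equivalently, a $Q$-condition). By coherence, $p \le p_\beta \restriction \beta$ and $(p, \name x) \in M_\beta$, so Lemma~\ref{lem:ogottogott}(\ref{item:qwiuhwqt}) applies and yields $\alpha \in \Delta$, a strengthening $q \le p$ with $q \in P_\alpha \cap M_\beta$, and a $P_\alpha$-name $\name y \in M_\beta$ canonically read by $q$ satisfying $q \Vdash \name y = \name x$, with the crucial property that $h^*_{\beta, \beta_i}$ acts as the identity on $\alpha$, $q$ and $\name y$ for every $\beta_i \in S$. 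Form $\bar q' = (q'_i)_{i<4}$ by replacing $q_i \restriction \beta$ with $q$ and $q_i(\beta)$ with $\name y$, leaving the coordinates above $\beta$ untouched; the $q'_i$ still agree on $[0, \beta]$, so $\bar q'$ is coherent, and we set $r' := (\bar q')^*$.

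Next, for every $i \in \{0,1,2,3\}$,
\[
  r'(\beta_i) \;=\; h^*_{\beta, \beta_i}(q'_i(\beta)) \;=\; h^*_{\beta, \beta_i}(\name y) \;=\; \name y,
\]
so all four coordinates $\beta_0, \ldots, \beta_3$ carry literally the same $P_\alpha$-name (with $\alpha \in \Delta \subseteq \beta_0$). Setting $s_i := r' \restriction (\mu \cap M_{\beta_i})$, we have $s_i \le p_{\beta_i}$ and $\dom(s_i) \subseteq M_{\beta_i}$; by the $\Delta$-system property $M_{\beta_i} \cap \beta_i = \Delta$, the restriction $s_i \restriction \beta_i = r' \restriction \Delta$ is independent of $i$; and $s_i(\beta_i) = \name y$ is the same $P_\alpha$-name for all $i$, hence trivially continuously read from generics below $\beta_0$ in the same way. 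Lemma~\ref{lem:8hhgewuhwegw3t} then produces $r'' \le r'$ forcing $\neta_{\beta_0} = \majority_{i=1,2,3}(\neta_{\beta_i})$, and Lemma~\ref{lem:blal0}(\ref{item:bla101}) transfers this equality to $\na_{\beta_0} =^* \majority_{i=1,2,3}(\na_{\beta_i})$.

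The hard part is the first step — producing the isomorphism-invariant common value at coordinate $\beta$ while preserving coherence — but this is precisely what Lemma~\ref{lem:ogottogott}(\ref{item:qwiuhwqt}) was designed to deliver. Once the four $\beta_i$-coordinates collapse to a single $P_\alpha$-name, the coherent majority lemma from the previous subsection applies verbatim.
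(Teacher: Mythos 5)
Your proof is correct, and it follows the same basic route as the paper: the paper's entire proof consists of applying Lemma~\ref{lem:8hhgewuhwegw3t} to $s_i:=r\restriction(\mu\cap M_{\beta_i})$, reading the hypotheses (``$s_i\restriction\beta_i$ is the same for all $i$'' and ``the $s_i(\beta_i)$ are the same, continuously read from generics below $\beta_0$'') directly off the characterization of coherence given in the Facts preceding the lemma. Where you differ is the preliminary step: before invoking Lemma~\ref{lem:8hhgewuhwegw3t} you strengthen the coherent witness via Lemma~\ref{lem:ogottogott}(\ref{item:qwiuhwqt}), so that the common value $q_i(\beta)$ is replaced (below a stronger $q\in P_\alpha$, $\alpha\in\Delta$) by a literal $P_\alpha$-name $\name y$ fixed by all $h^*_{\beta,\beta_i}$; only then do all four coordinates $\beta_0,\dots,\beta_3$ carry the very same name read from generics below $\beta_0$, as the hypothesis of Lemma~\ref{lem:8hhgewuhwegw3t} demands. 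This detour is legitimate and harmless --- the lemma only asks for \emph{some} strengthening of $r$, your $\bar q'$ is again coherent with $(\bar q')^*\le r$, and the verification of the hypotheses for $s_i:=(\bar q')^*\restriction(\mu\cap M_{\beta_i})$ is then immediate --- and it is in fact slightly more careful than the paper: for an arbitrary coherent $r$ the common $\beta$-coordinate is just a $P_\beta$-name in $M_\beta$, and that its $h^*$-copies are ``read from generics below $\beta_0$ in the same way'' is not literally automatic (this is exactly the point Lemma~\ref{lem:ogottogott} exists to repair, modulo strengthening a condition), whereas in the paper's intended applications the coherent conditions are built canonically so the property holds anyway. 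So: same key lemma and same decomposition, with an extra application of Lemma~\ref{lem:ogottogott}(\ref{item:qwiuhwqt}) that buys an explicit verification of the ``same condition at $\beta_i$'' hypothesis; the only cosmetic imprecision you inherit from the paper is stating the conclusion for the $\na$'s with $=$ rather than $=^*$, and your final appeal to Lemma~\ref{lem:blal0}(\ref{item:bla101}) is already contained in the conclusion of Lemma~\ref{lem:8hhgewuhwegw3t}.
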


\begin{proof}
  This follows from Lemma~\ref{lem:8hhgewuhwegw3t},
  using $s_i:=r\restriction (\mu \cap M_{\beta_i})$.
\end{proof}

\begin{definition}\mbox{}
\begin{itemize}
  \item $\bar w=(w_i)_{i<4}$ is coherent, if 
  $w_i\in [\mu]^{<\lambda}$ is in $M_{\beta}$ 
  and $w_i\cap (\beta+1)$ is independent of $i$.
  
  In the following  we always assume that 
  $\bar q$ and $\bar w$ are coherent.
\item 
  $\bar q$ fits $(\bar w,\zeta)$, if
  each $q_i$ fits $(w_i,\zeta)$.
  \item $\bar q$ is $(\bar w,\zeta)$-canonical, if
  each $q_i$ is $(w_i,\zeta)$-canonical.
  \item $\bar r\le^{+}_{\bar w,\zeta}\bar q$
  means: $\bar r$ is coherent, and $r_i\le^+_{w_i,\zeta}q_i$
  for all $i<4$. 
  \item 
  $\bar x=(x_i)_{i<4}$ is defined to be in $\poss(\bar q,\bar w,\zeta)$
if $x_i\in\poss(q_i,w_i,\zeta)$ and $x_i\restriction \beta$ is independent of $i$. 
Such a $\bar x$ will be called coherent possibility.

(Note that the $x_i(\beta)$ in a coherent possibility can be different for different $i<4$. Also note that
such a $\bar x$ is automatically in $M_\beta$, which is ${<}\lambda$-closed.)
\end{itemize}
\end{definition}

Note that if $\bar r\le^{+}_{\bar w,\zeta}\bar q$ and 
$\bar q$ is $(\bar w,\zeta)$-canonical, then 
$\bar r$ and $\bar q$ have the same coherent $(\bar w,\zeta+1)$-possibilities,
see Fact~\ref{fact:wrq325325}(\ref{item:uqiwhrqwr}).

Several of the previous constructions 
result in coherent 4-tuples
when applied to coherent 4-tuples.
In particular:

\begin{lemma}\label{lem:many}\mbox{}
\begin{enumerate}
    \item\label{item:trivial} Assume $(\bar q^j)_{j\in\delta}$  is a sequence 
    of coherent 4-tuples
    such that, for each $i<4$, the $i$-part
    $(q^j_i)_{j\in\delta}$ satisfies the assumptions of
    Lemma~\ref{lem:pluslimit}.
    
    Then for each $i$, the lemma (in $M_\beta$) gives us a limit
    $r$, which we call $q^\delta_i$.

    We can choose the $q^\delta_i$ 
    so that they form a 
    coherent 4-tuple.
    \item\label{item:trivial2}
    The same applies to Lemma~\ref{lem:fusion}.
    I.e., we can get a coherent fusion limit from a $\lambda$-sequence
    of coherent tuples.
    \item\label{item:trivial3}
    Assume $\bar p$ fits $(\bar w,\zeta)$, and $\alpha_i\in \mu$
    such that $w'_i:=w_i\cup\{\alpha_i\}$ is coherent.
    Then there is a $\xi>\zeta$ and a $\bar q\le^+_{\bar w,\zeta}\bar p$
    which fits $(\bar w',\xi)$ and is $(\bar w',\xi)$-canonical.    
    \item\label{item:annoying} 
    Assume $\bar q$ is  coherent and (for simplicity)
    $(\bar w,\zeta)$-canonical with $\beta\in w_i$ (which is independent of $i<4$), and
    $\ntau_i$ are names of ordinals. 
    Then there is an  
    $\bar r\le^{+}_{\bar w,\zeta}\bar q$ 
    such that $\bar\ntau$ is $(\bar w,\zeta+1)$-decided by $\bar r$.

    By this we mean that 
$\ntau_i$ is $(w_i,\zeta+1)$-decided by $r_i$ for all $i<4$.
\end{enumerate}
\end{lemma}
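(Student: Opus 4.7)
The plan is to treat the four items as ``coherent lifts'' of the corresponding single-forcing results: Lemmas~\ref{lem:pluslimit}, \ref{lem:fusion}, \ref{plem:blba}, and~\ref{lem:pureplus} respectively. All constructions will be performed inside the elementary model $M_\beta$, which is ${<}\lambda$-closed and by coherence contains the input tuple $\bar q$, so the outputs will automatically lie in $M_\beta$. The defining feature to preserve is that the output $4$-tuple agrees when restricted to $\beta+1$; on coordinates $\alpha>\beta$ the four components may differ freely.

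For~(\ref{item:trivial}) and~(\ref{item:trivial2}) the strategy is to run the respective limit construction coordinatewise and make \emph{uniform} choices on coordinates $\alpha\le\beta$: there, the four input sequences $(q^j_i(\alpha))_{j}$ coincide by coherence, so copying a single lower bound (or the canonical fusion limit in $w_i$) across $i<4$ yields the required agreement on $\beta+1$. On coordinates $\alpha>\beta$ in $M_\beta$ we run four independent copies of the construction. Since each component already satisfies the hypotheses of the underlying single-forcing lemma, the conclusions transfer to the $4$-tuple. For~(\ref{item:trivial3}), coherence of $\bar w'=(w_i\cup\{\alpha_i\})_{i<4}$ forces a dichotomy: either the $\alpha_i$ are all equal to a common ordinal ${\le}\beta$ (so adding it to $w_0$ via Lemma~\ref{plem:blba} inside $M_\beta$ automatically serves for each $i$ on coordinates ${\le}\beta$, and we extend above $\beta$ independently), or the $\alpha_i$ are pairwise distinct and all above $\beta$ (so the four applications of~\ref{plem:blba} are essentially disjoint above $\beta$). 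In both cases the four clubs produced by~\ref{plem:blba} can be intersected to yield a single $\xi>\zeta$ fitting all four components.

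The main technical step is~(\ref{item:annoying}). The proof of Lemma~\ref{lem:pureplus} enumerates $\poss(q_i,w_i,\zeta+1)$ and iteratively applies Preliminary Lemma~\ref{plem:iubqwf} to strengthen so as to decide $\ntau_i$ on each possibility. The plan is to enumerate all four $\poss(q_i,w_i,\zeta+1)$ in parallel, grouping possibilities by their coherent restriction $x\restriction(\beta+1)$, and at each iteration perform the four applications of~\ref{plem:iubqwf} so that the strengthenings imposed on coordinates ${\le}\beta$ are literally identical across $i=0,\dots,3$. This is possible because at every stage the four conditions are coherent (agree on $\beta+1$), so the piece of the~\ref{plem:iubqwf}-construction below $\beta$ depends only on data common to all four and can be executed once and copied. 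Between iterations, coherent limits of the intermediate $\le^+_{\bar w,\zeta}$-decreasing $4$-tuples are taken using part~(\ref{item:trivial}). The main obstacle is precisely this synchronization, but coherence of the inputs (propagating to every intermediate tuple) makes it available throughout.
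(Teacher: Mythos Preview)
Your treatment of items~(\ref{item:trivial})--(\ref{item:trivial3}) is essentially the paper's: ``coherent input gives coherent output'' once you make uniform choices on coordinates $\le\beta$. That part is fine.

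For item~(\ref{item:annoying}) there is a genuine gap. You write that a coherent possibility has a well-defined ``coherent restriction $x\restriction(\beta+1)$'' and that the piece of the Preliminary Lemma~\ref{plem:iubqwf} construction below $\beta$ ``depends only on data common to all four and can be executed once and copied.'' But by definition a coherent possibility $\bar x$ only has $x_i\restriction\beta$ independent of $i$; the values $x_i(\beta)$ may differ (the paper says this explicitly). Moreover, the construction of~\ref{plem:iubqwf} at each coordinate $\alpha\le\beta$ depends not only on $p_i(\alpha)$ and $x_i(\alpha)$ but on the witness $r_i$ you picked (some $r_i\le q_i$ extending $x_i$ and deciding $\ntau_i$). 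There is no reason the four independently chosen witnesses $r_i$ agree below $\beta$, nor that $r_i(\beta)$ agree above level $\zeta$; without that, the outputs $r'_i(\alpha)=r_i(\alpha)\vee(q_i(\alpha)\restriction\zeta+1)$ will not coincide and coherence is lost.

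The paper handles this by a \emph{sequential} rather than parallel construction for each coherent $\bar x$: first find $s_0\le r_0$ extending $x_0$ deciding $\ntau_0$; then form $(s_0\restriction\beta)\wedge r_1$, strengthen its $\beta$-coordinate to $s_0(\beta)$ but \emph{replace the trunk by $x_1(\beta)$} (this is where canonicity is used, to ensure $x_1(\beta)$ is still a possibility), extend above $\beta$ to reach $x_1$, and strengthen to decide $\ntau_1$, obtaining $s_1$; iterate for $i=2,3$. The resulting $s_0,\dots,s_3$ satisfy $s_3\restriction\beta\le s_i\restriction\beta$ and $s_3(\beta)$ stronger than each $s_i(\beta)$ above $\zeta$, so one can build the coherent $\bar r'$ by using $s_3$'s data on coordinates $\le\beta$ (with the ``or-else'' construction) and $s_i$'s data above $\beta$. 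This trunk-swap at $\beta$ is exactly the step your proposal is missing.
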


\begin{proof}
%
For the first items, we just have to look at the proofs of the according lemmas
(For ~(\ref{item:trivial3}) this is~\ref{plem:blba} and~\ref{plem:blba5}) and
note that 
coherent input gives us coherent output.
In the following we will prove~(\ref{item:annoying}).
We work in $M_\beta$.

Enumerate all coherent possibilities as $(\bar x_k)_{k\in K}$.
Set $\bar r^0:=\bar q$. We now construct $\bar r^{k+1}$
from $\bar r:=\bar r^k$ where we assume 
$\bar r^k\le^{+}_{\bar w,\zeta}\bar q$.
\begin{itemize}
    \item Find $s_0$ stronger than $r_0$
    and extending $x_0$, deciding $\ntau_0$.
    
    \item 
    $s^*:=(s_0\restriction\beta)\wedge r_1$ 
    is stronger than $r_1$, as $\bar r$ is coherent.
    Strengthen $s^*(\beta)=r_1(\beta)=r_0(\beta)$
    to $s_0(\beta)$, but replace 
    the trunk with $x_1(\beta)$.
    Then $s^*\restriction\beta$ forces that
    $s^*(\beta)\le r_1(\beta)$,
    as 
    $x_1\restriction \beta= x_0\restriction\beta$ and
    as
    $x_1(\beta)$ is guaranteed to be possible, because $r_1$ is canonical.
    Further strengthen $s^*$ (above $\beta$)
    to extend (the rest of) $x_1$; 
    and then strengthen the whole condition once
    more to decide $\ntau_1$. Call the result $s_1$.
    \item 
    Do the same for $i=2$, starting with
    $s_1$, resulting in $s_2$, and then for
    $i=3$, starting with $s_2$, resulting in some $s_3$.
    %
    %
    
    So $s_i\le r_i$ extends $x_i$ and
    decides $\ntau_i$, and 
    $s_3\restriction\beta\le s_i\restriction\beta$
    and $s_3(\beta)$ is stronger than $s_i(\beta)$
    ``above $\zeta+1$''.
    \item 
    We define $r'_i\le r_i$ as follows:
    $\dom(r'_i)=(\dom(s_3)\cap \beta)\cup \dom(s_i)$.
    We define $r'_i(\alpha)$ inductively
    such that 
    $r'_i\restriction\alpha \le^+_{w_i\cap\alpha,\zeta} r_i$
    forces that 
    $x_i\restriction \alpha \triangleleft G$
    implies $s_i\restriction\alpha\in G$.
    \begin{itemize}
        \item 
        For $\alpha\le \beta$:
        
        
        If $s_3\restriction \alpha\notin G_\alpha$, set $r'_i(\alpha)=r_i(\alpha)$. Assume otherwise.
        So $s_3(\alpha)$ is defined
        and stronger than 
        $r_i(\alpha)=r_3(\alpha)$.
        If $\alpha\notin w_i$
        (which implies $\alpha<\beta$), set
        $r'_i(\alpha)=s_3(\alpha)$.
        Otherwise, use $s_3(\alpha)\vee (r_3(\alpha)\restriction \zeta+1)$, as in Lemma~\ref{lem:orelse}.
        \item For $\alpha>\beta$, we do the same 
        but we use $s_i$
        instead of $s_3$. In more detail:
        
        If $s_i\restriction \alpha\notin G_\alpha$, set $r'_i(\alpha)=r_i(\alpha)$. Assume otherwise.
        If $\alpha\notin w_i$, set
        $r'_i(\alpha)=s_i(\alpha)$.
        Otherwise, use $s_i(\alpha)\vee (r_i(\alpha)\restriction \zeta+1)$.
    \end{itemize}
    We can use this $\bar r'$ as $\bar r^{k+1}$: 
    It is coherent, $\bar r'\le^{+}_{\bar w,\zeta}\bar r^k$,
    and 
    $r'_i$  decides $\ntau_i$
    assuming $x_i\triangleleft G$.
\qedhere
\end{itemize}
\end{proof}

Coherent tuples
$\bar q$  naturally define a $P$-condition $q^*$.
However, we 
have to assume that $\bar q$ is canonical to guarantee
that coherent $\bar q$ possibilities correspond to $q^*$-possibilities:
\begin{lemma}
Assume $\bar q$ and $\bar w$ coherent. We set $w^*:=\bigcup_{i<4}h^*_{\beta,\beta_i}(w_i)$.
Let $\bar x$ be in $\poss(\bar q,\bar w,\zeta+1)$.
\begin{enumerate}
    \item\label{item:wegge1} $\bar q$ fits $(\bar w,\zeta)$ iff $q^*$ fits $(w^*,\zeta)$.
    \item\label{item:wegge3} 
    $\bar r\le^+_{\bar w,\zeta}\bar q$ iff $r^*\le^+_{w^*,\zeta} q^*$.
  \item\label{item:wegge2} 
  Assume $\bar q$ fits $(\bar w,\zeta)$. Then $\bar q$ is $(\bar w,\zeta)$-canonical iff $q^*$ is $(w^*,\zeta)$-canonical.
    \item\label{item:wegge4} Assume that $\bar q$
  is $(\bar w,\zeta)$-canonical.
      Let $x^*$
be the union of the $h^*_{\beta,\beta_i}(x_i)$.
Then  $x^*\in \poss(q^*, w^*,\zeta+1)$; and every element of
$\poss(q^*, w^*,\zeta+1)$ is such an 
$x^*$
for some $\bar x\in \poss(\bar q,\bar w,\zeta+1)$.
\item\label{item:wegge5}
  Assume that $\bar q$
  is $(\bar w,\zeta)$-canonical.
  Then $\bar q$
  $(\bar w,\zeta+1)$-decides $(\ntau_i)_{i<4}$ iff $q^*$  $(w^*,\zeta+1)$-decides
   all $h^*_{\beta,\beta_i}(\ntau_i)$.
\end{enumerate}
\end{lemma}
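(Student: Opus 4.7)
The plan is to verify all five items by unpacking definitions and transferring data coordinate-wise along the isomorphisms $h^*_{\beta,\beta_i}$. The structural facts I rely on throughout are: each $h^*_{\beta,\beta_i}$ fixes $\Delta$ (and $\lambda$) pointwise; by coherence of $\bar q$ and $\bar w$, the restrictions $q_i\restriction(\beta{+}1)$ and $w_i\cap(\beta{+}1)$ are independent of $i$; and for distinct $i<4$, the ``private'' images $h^*_{\beta,\beta_i}(\dom(q_i)\setminus(\beta{+}1))$ and $h^*_{\beta,\beta_i}(w_i\setminus(\beta{+}1))$ lie in the pairwise-disjoint intervals $[\beta_i,\sup(M_{\beta_i}\cap\mu))$. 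Thus $q^*$ and $w^*$ decompose as a shared tail below $\beta$ (respectively $\beta{+}1$) together with four disjoint private parts; at any private coordinate $\alpha\in M_{\beta_i}$ the condition $q^*$ agrees with $h^*_{\beta,\beta_i}(q_i)$, while the other three copies are trivial there.

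Items (\ref{item:wegge1}) and (\ref{item:wegge3}) are immediate from this decomposition, since ``fits $(w,\zeta)$'' and ``${\leq^+_{w,\zeta}}$'' are local conditions at each $\alpha\in w$ that transfer verbatim across $h^*_{\beta,\beta_i}$. For (\ref{item:wegge2}), by (\ref{item:wegge1}) we may assume both sides fit; at a private coordinate $\alpha=h^*_{\beta,\beta_i}(\alpha')$, canonicity of $q^*$ at $\alpha$ is the $h^*$-image of canonicity of $q_i$ at $\alpha'$, using that the other three copies' supports are disjoint and thus contribute nothing to $q^*\restriction\alpha$. The delicate case is $\alpha=\beta_i$: canonicity of $q_i$ says $q_i(\beta)\restriction(\zeta{+}1)$ is $(w_i\cap\beta,\zeta{+}1)$-decided by $q_i\restriction\beta$; since $h^*_{\beta,\beta_i}$ fixes both $q_i\restriction\beta$ and $w_i\cap\beta\subseteq\Delta$, the same decision function witnesses canonicity of $q^*$ at $\beta_i$, and enlarging the possibility parameter to $w^*\cap\beta_i$ only refines the decision. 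The converse direction pulls a $q^*$-decision back along $h^*_{\beta,\beta_i}$.

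For (\ref{item:wegge4}), I first verify that any coherent $\bar x\in\poss(\bar q,\bar w,\zeta{+}1)$ actually satisfies $x_i\restriction(\beta{+}1)$ independent of $i$: the shared $x_i\restriction\beta$ together with canonicity of the shared condition $q_i(\beta)$ determines $x_i(\beta)$ uniquely. Hence $x^*$ is a well-defined function on $w^*\times I^*({\leq}\zeta)$. To show $x^*\in\poss(q^*,w^*,\zeta{+}1)$, take $r_0\leq q_0$ in $M_\beta$ forcing $x_0\triangleleft G$; applying Fact~\ref{fact:wrq325325}(\ref{item:uhwet}) inside each copy $h^*_{\beta,\beta_i}(q_i)$ above $\beta$ extends $x_i$ within that copy, and since all four resulting pieces share $r_0\restriction\beta$ and have pairwise disjoint supports above, their union is a single condition $\leq q^*$ forcing $x^*\triangleleft G$. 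Conversely, given $y\in\poss(q^*,w^*,\zeta{+}1)$, the restrictions $h^{*-1}_{\beta,\beta_i}(y\restriction h^*_{\beta,\beta_i}(w_i))$ form a coherent $\bar x$ with $x^*=y$. Item (\ref{item:wegge5}) is then automatic: deciding is a property of the possibility set, and the bijection from (\ref{item:wegge4}) intertwines the decision functions, matching $\ntau_i$ with $h^*_{\beta,\beta_i}(\ntau_i)$.

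The main obstacle is (\ref{item:wegge2}) at $\alpha=\beta_i$, together with the well-definedness argument in (\ref{item:wegge4}): one must check carefully that the canonical decisions producing the common trunk of $q_i(\beta)\restriction(\zeta{+}1)$ rely only on data living in $\Delta$ (and fixed by every $h^*_{\beta,\beta_i}$), so that the four copies glue consistently inside $q^*$ without any hidden interference between the private parts.
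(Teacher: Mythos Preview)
Your overall approach—transporting the local conditions and decisions coordinate-wise along the isomorphisms $h^*_{\beta,\beta_i}$ and using absoluteness—is exactly the paper's approach, and your treatment of items (\ref{item:wegge1}), (\ref{item:wegge3}), (\ref{item:wegge2}), and (\ref{item:wegge5}) is essentially correct.

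There is, however, a genuine error in your argument for (\ref{item:wegge4}). You claim that for any coherent $\bar x\in\poss(\bar q,\bar w,\zeta{+}1)$ the value $x_i(\beta)$ is \emph{uniquely determined} by $x_i\restriction\beta$ together with canonicity of $q_i(\beta)$. This is false. Canonicity at $\beta$ tells you that $q_i(\beta)\restriction(\zeta{+}1)$ is decided by $x_i\restriction\beta$; it does \emph{not} pin down a unique element of $\poss(q_i(\beta),\zeta{+}1)$—since $\zeta\in C^{q_i(\beta)}$, there is genuine splitting at level $\zeta$, and the definition of coherent possibility deliberately only requires $x_i\restriction\beta$ (not $x_i\restriction(\beta{+}1)$) to be independent of $i$. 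The paper even remarks explicitly that the $x_i(\beta)$ can differ across $i$, and this freedom is used essentially later (in the proof of Lemma~\ref{lem:many}(\ref{item:annoying}) one replaces trunks at $\beta$ by different $x_i(\beta)$'s).

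Fortunately your conclusion that $x^*$ is well-defined survives, but for a different reason than you give: the map $h^*_{\beta,\beta_i}$ sends the coordinate $\beta$ to $\beta_i$, and the $\beta_i$ are pairwise distinct, so the (possibly different) values $x_i(\beta)$ land at \emph{different} coordinates of $x^*$ and never collide. The only genuine overlap among the domains of the $h^*_{\beta,\beta_i}(x_i)$ is at $w_i\cap\beta\subseteq\Delta$, where the values agree by the coherence hypothesis on $\bar x$. Once you replace your uniqueness claim by this disjointness observation, the rest of your argument for (\ref{item:wegge4}) goes through.
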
  
\begin{proof}
Assume $\alpha\in w_i$. Set $\alpha':= h^*_{\beta,\beta_i}(\alpha)\in w^*$  and $q':=h^*_{\beta,\beta_i}(q_i)$.

(\ref{item:wegge1}) 
    Assume 
    $q_i,\alpha$
    satisfy $q_i\restriction \alpha\Vdash \zeta\in C^{q_i(\alpha)}$.
    By absoluteness they satisfy it in $M_{\beta}$,
    so the $h^*_{\beta,\beta_i}$-images
    $q',\alpha'$ satisfy it in 
    $M_{\beta_i}$, which again is absolute; and 
    $q^*\restriction\alpha'\le q'\restriction \alpha'$
    forces that
    $q^*(\alpha')=q'(\alpha')$.
    For the other direction, assume (in $M_{\beta}$) some $s\le q_i\restriction \alpha$
    forces $\zeta\notin C^{q_i(\alpha)}$.
    Then $h^*_{\beta,\beta_i}(s)$ 
    is compatible with $q^*$ and forces $\zeta\notin C^{q'_i(\alpha')}
    =C^{q^*(\alpha')}$.

   In the same way we can show (\ref{item:wegge3}),
   as well as (\ref{item:wegge5}) and the trivial directions of 
   (\ref{item:wegge2}), (\ref{item:wegge4}).
   E.g.,
   if $\bar q$ is $(\bar w,\zeta)$-canonical,
   then $q^*$ is $(w^*,\zeta)$-canonical.
   For this, use the fact
   that 
   every element $y^*\in \poss(q^*, w^*,\zeta+1)$
   ``induces'' a coherent possibility $\bar y$
   (which is true whether $\bar q$ is canonical or not). And if additionally
   $\bar x\in \poss(\bar q, \bar w,\zeta+1)$,
   then $x^*\in\poss(q^*, w^*,\zeta+1)$;
   and if each $q_i$ forces that $x_i\triangleleft G$
   implies $\ntau_i=x^i$, then 
   $q^*$ forces that $x^*\triangleleft G$ implies 
   $h^*_{\beta,\beta_1}(\ntau_i)=
   h^*_{\beta,\beta_1}(x^i)$.
   
   We omit the (also straightforward) 
   proofs of the other directions 
   of (\ref{item:wegge2}) and (\ref{item:wegge4})
   (which we do not need in this paper).
\end{proof}

In the following, whenever we mention $q^*$ or $w^*$, we assume $\bar w$,  $\bar q$ to be coherent and in $M_{\beta}$.
We will (and can) use $x^*$ only if $\bar q$ additionally is canonical
(otherwise $x^*$ will generally not be a possibility for $q^*$).
In this case, every $P$-generic filter containing $q^*$
will select an $x^*$ for some coherent possibility $\bar x$.

\begin{lemma}\label{lem:noitsnot}
  Assume $\bar q$ is coherent,
  $\nsigma_i$ are $P$-names in $M_\beta$
  for  elements of $2^\lambda$, and\footnote{As usual, $V_{\beta+1}$ denoted the $P_{\beta+1}$-extension.}
  $q_0\Vdash \nsigma_0\notin V_{\beta+1}$.
  Then there is a coherent $\bar r\le \bar q$,
  and sequences $(\zeta^j)_{j\in\lambda}$
  and $(\bar w^j)_{j\in\lambda}$
  such that 
  $\bar r$ is $(\bar w^j,\zeta^j)$-canonical
  for all $j$,
  and for all $\bar x\in \poss(\bar r, \bar w^j,\zeta^j+1)$
  there is some $\ell\in I^*({>}\zeta^j,{<}\zeta^{j+1})$
  and $\bar b=(b_i)_{i<4}$, with $b_i\in 2$, violating majority\footnote{I.e., 
  $b_0=1-\majority_{i=1,2,3}(b_i)$.} such that 
  for all $i<4$
  \[
      r_i\Vdash x_i\triangleleft G \rightarrow 
      \nsigma_i(\ell) = b_i.
   \]
\end{lemma}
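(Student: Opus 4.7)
The plan is to build $\bar r$ as the coherent fusion limit of a $\lambda$-long $\le^+$-decreasing sequence $\bar q^j$ ($j<\lambda$) of coherent 4-tuples in $M_\beta$. Inductively each $\bar q^j$ will be $(\bar w^j,\zeta^j)$-canonical, with $\zeta^j$ a strictly increasing sequence cofinal in $\lambda$ (living in the relevant club $E$), $\bar w^j$ increasing through $\dom(\bar r)$ by bookkeeping, and $\bar q^{j'}\le^+_{\bar w^j,\zeta^j}\bar q^j$ for $j<j'$. This automatically preserves canonicity at earlier levels and gives $\poss(\bar q^{j'},\bar w^j,\zeta^j+1)=\poss(\bar q^j,\bar w^j,\zeta^j+1)$ by Fact~\ref{fact:wrq325325}(\ref{item:uqiwhrqwr}). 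Limit stages use Lemma~\ref{lem:many}(\ref{item:trivial2}); the purely structural part of the successor step (growing $\bar w^j$, choosing $\zeta^{j+1}\in E$ above $\zeta^j$, and re-canonicalising) uses Lemma~\ref{lem:many}(\ref{item:trivial3}).

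The real content of the successor step is to arrange, for every coherent $\bar x\in\poss(\bar q^j,\bar w^j,\zeta^j+1)$, a witness $\ell^{\bar x}\in I^*({>}\zeta^j)$ and a majority-violating $\bar b^{\bar x}$ such that the next condition forces $x_i\triangleleft G\to \nsigma_i(\ell^{\bar x})=b^{\bar x}_i$ for all $i<4$. Since $|\poss(\bar q^j,\bar w^j,\zeta^j+1)|<\lambda$, I enumerate these possibilities as $(\bar x^k)_{k<K}$ and run a secondary $\le^+_{\bar w^j,\zeta^j}$-decreasing fusion of length $K$ inside $M_\beta$, handling one $\bar x^k$ per step (Lemma~\ref{lem:many}(\ref{item:trivial}) yields the intermediate limits). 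Afterwards I choose $\zeta^{j+1}\in E$ above $\sup_k\ell^{\bar x^k}$, bookkeep a new coordinate into $\bar w^{j+1}$, and re-canonicalise, completing one successor step.

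The key sub-claim is to handle a single coherent possibility $\bar x$ below the current coherent canonical $\bar r'$. The hypothesis $q_0\Vdash\nsigma_0\notin V_{\beta+1}$ persists below $r'_0$, and I claim there exist $\ell\in I^*({>}\zeta^j)$, a common $P_{\beta+1}$-condition $s^*\le r'_0\restriction(\beta+1)$ and two $P$-extensions $s^0,s^1\le s^*$ with $s^b\restriction(\beta+1)=s^*$ such that $s^b\Vdash x_0\triangleleft G\to\nsigma_0(\ell)=b$; otherwise, below the appropriate extension forcing $x_0\triangleleft G$, the bit $\nsigma_0(\ell)$ would be uniformly determined by $G_{\beta+1}$ for every large enough $\ell$, so $\nsigma_0$ would be equivalent to a $P_{\beta+1}$-name, contradicting the hypothesis. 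I then replace the common $P_{\beta+1}$-part of $\bar r'$ by $s^*$ (which preserves coherence, since all four components share that part), and use Lemma~\ref{lem:many}(\ref{item:annoying}) to strengthen the components $i=1,2,3$ coherently above $\beta+1$ so as to $(\bar w^j,\zeta^j+1)$-decide $\nsigma_i(\ell)$ on $\bar x$, yielding values $b_1,b_2,b_3$. Setting $b_0:=1-\majority(b_1,b_2,b_3)$, I strengthen only the zeroth component above $\beta+1$ by adopting the branch $s^{b_0}$; this forces $\nsigma_0(\ell)=b_0$ on $x_0$. The result is coherent (the other three components are untouched above $\beta+1$, and the common $P_{\beta+1}$-part is unchanged) and $\bar b$ violates majority by construction.

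The principal obstacle is synchronising coherence with the splitting of $\nsigma_0$: the four components of any coherent condition must share the same $P_{\beta+1}$-part, so one cannot freely strengthen $r_0\restriction(\beta+1)$ without dragging the other three components along, which a priori destroys one's freedom in choosing $b_0$ at the level of $\nsigma_0(\ell)$. The remedy — and the entire purpose of the hypothesis $\nsigma_0\notin V_{\beta+1}$ — is to fix the common $P_{\beta+1}$-part first, strong enough to leave $\nsigma_0(\ell)$ undecided by $G_{\beta+1}$, and only afterwards carry out the component-specific deciding work above $\beta+1$, where coherence imposes no constraint across components.
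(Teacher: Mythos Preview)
Your global architecture (a $\lambda$-length coherent fusion, with each successor step handling all coherent possibilities one by one via a short $\le^+$-descent) is exactly what the paper does. The gap is in your treatment of a single possibility $\bar x$.

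You propose to (a) first find $\ell$ together with a $P_{\beta+1}$-condition $s^*$ and two branches $s^0,s^1$ with $s^b\restriction(\beta+1)=s^*$ witnessing both values of $\nsigma_0(\ell)$, (b) then decide $\nsigma_i(\ell)$ for $i=1,2,3$ by strengthening ``only above $\beta+1$'', and (c) finally plug in $s^{b_0}$ for the zeroth component. Step (b) does not work: the $\nsigma_i$ are arbitrary $P$-names, so to decide $\nsigma_i(\ell)$ (even just on the single possibility $x_i$) you must in general strengthen below $\beta+1$ as well. Lemma~\ref{lem:many}(\ref{item:annoying}) does exactly that --- its output has a strictly stronger common $P_{\beta+1}$-part --- and it acts on the full $4$-tuple, not on three components in isolation. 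Once the common part moves from $s^*$ to some $s^{**}<s^*$, your pre-chosen branches $s^0,s^1$ (whose $(\beta+1)$-restriction is $s^*$) need no longer be compatible with the new common part, so step (c) fails.

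The paper's remedy is to reverse the order and to introduce a trunk-swap operation $b^{[i]}\wedge a$ at coordinate $\beta$ (needed because the $x_i(\beta)$ may differ, a point your sketch does not address). One first fixes only $\ell$, via some $b_0\in P_{\beta+1}$ below which $s'_0/G_{\beta+1}$ does not decide $\nsigma_0(\ell)$; then iteratively finds $r'_1,r'_2,r'_3$ deciding $\nsigma_i(\ell)=j_i$, each time strengthening the $P_{\beta+1}$-part further and swapping the $\beta$-trunk to $x_i(\beta)$. Only after obtaining $j=\majority(j_1,j_2,j_3)$ does one look for the zeroth branch: the point is that throughout this process the part of $s'_0$ \emph{above} $\beta$ has never been touched, so below the final $P_{\beta+1}$-part one still has that $s'_0/G_{\beta+1}$ does not decide $\nsigma_0(\ell)$, and one can choose $t_0$ forcing $\nsigma_0(\ell)=1-j$. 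Then $t_i:=(t_0\restriction\beta+1)^{[i]}\wedge r'_i$ gives the desired coherent strengthening.
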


As the $p_{\beta_i}$ are coherent, 
we can apply the lemma 
to 
$\nsigma_i:=\na_{\beta}$ (for all $i$) and get:
\begin{corollary}\label{cor:qwghqr}
If $p_{\beta}\Vdash \na_{\beta}\notin V_{\beta+1}$,
then there is a coherent $r^*\le \bigwedge_{i<4}p_{\beta_i}$ forcing that
\[
    \lnot\, \bigl(\na_{\beta_0}=^*\majority_{i=1,2,3}(\na_{\beta_i})\bigr).
\]
\end{corollary}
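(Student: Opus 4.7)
The plan is to derive the corollary as an almost immediate consequence of Lemma~\ref{lem:noitsnot}. Apply that lemma to the trivially coherent tuple $\bar q := (p_\beta,p_\beta,p_\beta,p_\beta) \in M_\beta$ and the names $\nsigma_i := \na_\beta$ (all four components equal): the hypothesis $q_0 \Vdash \nsigma_0 \notin V_{\beta+1}$ is just the standing assumption $p_\beta \Vdash \na_\beta \notin V_{\beta+1}$. The lemma delivers a coherent $\bar r \le \bar q$ together with a strictly increasing $(\zeta^j)_{j<\lambda}$ and a coherent $(\bar w^j)_{j<\lambda}$, such that $\bar r$ is $(\bar w^j,\zeta^j)$-canonical for all $j$, and for every coherent possibility $\bar x \in \poss(\bar r,\bar w^j,\zeta^j+1)$ there are $\ell_{\bar x} \in I^*({>}\zeta^j,{<}\zeta^{j+1})$ and $(b_i)_{i<4}$ violating majority with $r_i \Vdash x_i \triangleleft G \to \na_\beta(\ell_{\bar x}) = b_i$.

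I then set $r^* := \bigwedge_{i<4} h^*_{\beta,\beta_i}(r_i) \le \bigwedge_{i<4}p_{\beta_i}$, the coherent $P$-condition associated to $\bar r$. Since each $h^*_{\beta,\beta_i}$ fixes $\lambda$ pointwise and sends $\na_\beta$ to $\na_{\beta_i}$, applying it to the forcing statement above yields $h^*_{\beta,\beta_i}(r_i) \Vdash h^*_{\beta,\beta_i}(x_i) \triangleleft G \to \na_{\beta_i}(\ell_{\bar x}) = b_i$, and therefore, using the possibility correspondence for canonical coherent tuples, $r^* \Vdash x^* \triangleleft G \to \na_{\beta_i}(\ell_{\bar x}) = b_i$ for every $i<4$, where $x^*$ is the $P$-possibility associated to $\bar x$.

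To conclude, suppose toward a contradiction that some $r'' \le r^*$ forces $\na_{\beta_0} =^* \majority_{i=1,2,3}(\na_{\beta_i})$; by strengthening $r''$ I can assume it forces a fixed bound $\xi<\lambda$ beyond which the equality holds pointwise. Pick $j$ with $\zeta^j > \xi$. Canonicity of $r^*$ for $(w^{*j},\zeta^j)$ makes $\{r^*\wedge x^*:\bar x \in \poss(\bar r,\bar w^j,\zeta^j+1)\}$ a maximal antichain below $r^*$ (Fact~\ref{fact:wrq325325}(2)), so $r''$ is compatible with some $r^* \wedge x^*$. A common extension would force $\na_{\beta_i}(\ell_{\bar x})=b_i$ for every $i<4$ while $\ell_{\bar x}>\xi$, contradicting the majority-violation of $\bar b$.

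The essential combinatorial content is already packaged in Lemma~\ref{lem:noitsnot}; the only subtlety in the corollary itself is the bookkeeping between a coherent possibility $\bar x$ for $\bar r$ in $M_\beta$ and the single $x^*$ sitting in $\poss(r^*,w^{*j},\zeta^j+1)$ of the full condition, for which I rely on the possibility-correspondence and on the fact that $h^*_{\beta,\beta_i}$ acts as the identity on ordinals $<\lambda$.
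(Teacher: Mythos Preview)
Your proof is correct and follows exactly the approach the paper intends: apply Lemma~\ref{lem:noitsnot} to the coherent tuple $\bar q=(p_\beta,p_\beta,p_\beta,p_\beta)$ with $\nsigma_i=\na_\beta$, pass to $r^*$ via the $h^*_{\beta,\beta_i}$, and use the possibility correspondence for canonical coherent tuples to derive unboundedly many violations of majority. The paper merely states the corollary as an immediate consequence without spelling out the antichain/compatibility argument you give in the final paragraph; your added detail is accurate (in particular, $\ell_{\bar x}\in I^*({>}\zeta^j)$ does imply $\ell_{\bar x}>\zeta^j>\xi$ since $\min(I^*_\zeta)\ge\zeta$).
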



\begin{proof}[Proof of the lemma]
We will construct (in $M_\beta$), by induction on $j\in\lambda$,  $\zeta^j$, $\bar w^j$ and  $\bar r^j$
with $r^0_i=q_i$,
such that the following holds:
\begin{enumerate}
    \item $\bar r^j$ is coherent.
    \item $\bar w^j$ is coherent,
    for each $i<4$
    the $w_i^j$ are increasing with $j$, and their union
    covers $\bigcup_{j\in\lambda}\dom(r_i^j)$.
    \item $\bar r^j$ is $(\bar w^j,\zeta^j)$-canonical.
    \item 
    $\bar r^k\le^{+}_{\bar w^j,\zeta^j} \bar r^j$ for $j<k$.
    \item\label{item:abortabort} 
    If  $\bar x\in \poss(\bar r^j,\bar w^j,\zeta^j+1)$,
    then there is an $\ell\in I^*({>}\zeta^j,{<}\zeta^{j+1})$
    and a $b\in 2$ such that for at least two $i_1,i_2$ in $\{1,2,3\}$,
    $r^{j+1}_i$ forces that
    $x_i\triangleleft G$ implies
    \begin{equation}\tag{$*$}\label{eq:hqwet}
    \nsigma_0(\ell)=1-b,
    \quad
    \nsigma_{i_1}(\ell)=b,
    \quad
    \nsigma_{i_2}(\ell)=b.
    \end{equation}
\end{enumerate}

Then we take the usual fusion limits, as in Lemma~\ref{lem:many}(\ref{item:trivial2}),
and are done.






For limits $j$, 
let $\bar r'$ be a (coherent) limit of $(\bar r^{j'})_{j'<j}$,
and set $\zeta^*:=\sup_{j'<j}(\zeta^{j'})$
and $w^*_i:=\bigcup_{j'<j} w^{j'}_i$ for each $i<4$.
Note that $\bar r'$ fits $(\bar w^*,\zeta^*)$.
Then we can find coherent 
$\bar r^*\le^+_{\bar w^*,\zeta^*} \bar r'$
which is $(\bar w^*,\zeta^*)$-canonical, as in 
Lemma~\ref{lem:many}(\ref{item:trivial3}). 

In successor cases $j=j'+1$
set $(\bar r^*,\bar w^*,\zeta^*):=(\bar r^{j'},\bar w^{j'},\zeta^{j'})$.

In any case we want to construct $\bar r^j$, $\bar w^j$, and $\zeta^j$.

Enumerate $\poss(\bar r^*,\bar w^*,\zeta^*+1)$ as 
$(\bar x^k)_{k\in K}$.

We define $\bar s^k$ for $k\le K$, with $\bar s^0:= \bar r^*$
and, as usual, taking (coherent) limits at limits, such that:
\begin{itemize}
    \item $\bar s^k$ is coherent.
    \item $\bar s^\ell\le^+_{\bar w^*,\zeta^* } \bar s^k$ for $k<\ell<K$.
    (This implies that $\bar s^k$ is $(\bar w^*,\zeta^*)$-canonical.)
    \item There is a
    $\xi^k$
    and an $\ell\in I^*({>}\zeta^*,{<}\xi^k)$
    and a $b\in 2$ such that
    \begin{equation}\tag{$**$}\label{eq:hqwet2}
    s_0^{k+1}\Vdash x^k_0\triangleleft G\rightarrow \ntau_0(\ell)=1-b \quad
          \text{and} \quad (\exists^{\ge2} i\in\{1,2,3\})\  
          s_i^{k+1}\Vdash x^k_i\triangleleft G\rightarrow \ntau_i(\ell)=b.
      \end{equation}
\end{itemize}
Assume we can construct these $\bar s^k$, $\xi^k$ for all $k\in K$,
then let $\bar s^K$ be again  a (coherent) limit.
We set $w^{j}_i:=w^*_i\cup\{\alpha_j\}$ such that 
$\bar w^{j}$ is coherent (and such that, by bookkeeping, 
all elements of $\dom(p^j_i)$ will be eventually covered), and find some 
$\zeta^{j}>\sup_{k\in K}(\xi^k)$ and
$\bar r^{j}\le^+_{\bar w^*,\zeta^*} r^*$
which is $(\bar w^{j},\zeta^{j})$-canonical, again as in  Lemma~\ref{lem:many}(\ref{item:trivial3}). 
Then $\bar r^{j}$,  $\bar w^{j}$ and $\zeta^{j}$ are as required.

So it remains to construct, for $k\in K$,
$\bar s^{k+1}$ and $\xi^k$, which we will do in the
rest of the proof. Set $\bar s:= \bar s^k$, 
$\bar x:=\bar x^k$, $\bar w:=\bar w^*$ 
and $\zeta:=\zeta^*$.
Recall that $\bar s$ is $(\bar w, \zeta)$-canonical,
$\bar x\in\poss(\bar s,\bar w,\zeta)$,
and we are looking for $\bar s^{k+1}\le^+_{\bar w,\zeta}\bar s$
which satisfies~\eqref{eq:hqwet2} for $\bar x$.

Set $s'_i:=s_i\wedge x_i$.
It is enough to construct $t_i\le s'_i$
such that:
\begin{itemize}
    \item Both
$t_i\restriction \beta$ 
and $t_i(\beta)\restriction (\lambda\setminus \zeta+1)$ 
are independent of $i$.
   \item $t_0\Vdash \ntau_0(\ell)=1-b \quad
          \text{and} \quad (\exists^{\ge2} i\in\{1,2,3\})\  
          t_i\Vdash \ntau_i(\ell)=b$.
\end{itemize}
Then we can define $\bar s^{k+1}$ in the usual way:
$\dom(s_i^{k+1})=\dom(t_i)$ 
(and we can assume $\dom(s_i)=\dom(t_i)$,
by using trivial conditions).
For $\alpha\in\dom(t_i)$,
if $t_i\restriction \alpha\notin G_\alpha$
then set 
$s_i^{k+1}(\alpha)$ to be $s_i(\alpha)$, otherwise
$t_i(\alpha)\vee (s_i(\alpha)\restriction \zeta+1)$
if $\alpha\in w_i$ and $t_i(\alpha)$ otherwise.
The resulting 
$\bar s^{k+1}\le^+_{\bar w,\zeta} \bar s$ is coherent
and $s^{k+1}_i$ 
forces that $x_i\triangleleft G$ implies
$t_i\in G$.





We have to introduce more notation:
Fix $j\ne i$, 
and $a\le s'_j$ and
$b\le s_i'\restriction \beta+1$ (in $P_{\beta+1}$)
such that $b\restriction\beta\le a$ and
$b\restriction \beta$ forces that $b(\beta)$ is stronger than 
$a(\beta)$ above $\zeta$
(i.e., $b\restriction \beta\Vdash\, (\forall \xi>\zeta)\, b(\beta)(\xi)\supseteq a(\beta)(\xi)$).
Then we define $b^{[j]} \wedge a$ by
\[
   (b^{[j]} \wedge a)(\alpha)(\xi)=\begin{cases}
      b(\alpha)(\xi) & \text{if }\alpha<\beta,\\
      x_j(\beta)(\xi) & \text{if }\alpha=\beta\text{ and }\xi\le \zeta,\\
      b(\beta)(\xi) & \text{if }\alpha=\beta\text{ and }\xi> \zeta,\\
      a(\alpha)(\xi)& \text{otherwise.}
   \end{cases}
 \]
Note that $b^{[j]} \wedge a$ is stronger than $a$, but generally not stronger than $b$.

By our assumption, $q_0$ and therefore $s'_0$
forces $\nsigma_{0}\notin V_{\beta+1}$.
So in an intermediate model $V[G_{\beta+1}]$,
there is some $\ell\in I^*({>}\zeta)$
such that 
$s'_0/G_{\beta+1}$ does not decide $\nsigma_0(\ell)$. 
Back in $V$, 
fix some $b_0\le s'_0\restriction (\beta+1)$ in $P_{\beta+1}$ 
which determines this $\ell$.

Find 
$r'_1\le b_0^{[1]}\wedge s'_1$ 
  which
  determines $\nsigma_1(\ell)$ to be $j_1$ for some $j_1\in 2$.
  Find 
  $r'_2\le (r'_1\restriction \beta+1)^{[2]}\wedge s'_2$ which determines $\nsigma_2(\ell)$ to be $j_2$;
  analogously  find 
  $r'_3\le (r'_2\restriction \beta+1)^{[3]}\wedge s'_3$ which determines $\name a(\ell)$ to be some $j_3$.
  Let $j\in 2$ be equal to at least two of $j_1,j_2,j_3$.
  
  Set $p:= (r'_3\restriction \beta+1)^{[0]}\wedge s'_0$.
  In any $P_{\beta+1}$-extension honoring $p\restriction\beta+1$,
  $\name \nsigma_0(\ell)$ is not determined by $p/G_{\beta+1}$, i.e.,
  there is an $t_0\le p$ 
  forcing that $\name a(\ell)=1-j$.
  
  We now set and $t_i:= (t_0\restriction \beta+1)^{[i]}\wedge r'_i$ for $i=1,2,3$.
  Note that $t_i\le r'_i\le s'_i$ extends $x_i$ and
  forces $\nsigma_i(\ell)$ to be $1-j$ if $i=0$
  and to be $j$ for at least two $i$ in $\{1,2,3\}$.
  \end{proof}
  
  

We can now easily show:
\begin{lemma}\label{lem:wrtqewqw}
  For all but non-stationary many $\beta\in S^{\mu}_{\lambda^+}$
  \begin{equation*}
         p_*\ \Vdash\ \na_\beta\in V_{\beta+1}
  \end{equation*}
\end{lemma}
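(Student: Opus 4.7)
The plan is to argue by contradiction using the tension between Corollary~\ref{cor:qwghqr} and Lemma~\ref{lem:itismajafterall}. Suppose, towards a contradiction, that the set
\[
   S^* := \{\beta \in S^{\mu}_{\lambda^+} : p_* \not\Vdash \na_\beta \in V_{\beta+1}\}
\]
is stationary. For each $\beta \in S^*$, pick a condition $p'_\beta \le p_*$ forcing $\na_\beta \notin V_{\beta+1}$. Using Lemma~\ref{lem:deltaexists} (and Lemma~\ref{lem:canonicdense} to arrange canonical reading), first refine each $p'_\beta$ to a $p_\beta$ that canonically reads $\na_\beta$ and fits into an elementary $S^*$-system $(M_\beta, p_\beta)_{\beta \in S^*}$, and then thin $S^*$ to a stationary $S \subseteq S^*$ so that $(M_\beta, p_\beta)_{\beta \in S}$ forms a $\Delta$-system. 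Note that $p_\beta$ still forces $\na_\beta \notin V_{\beta+1}$ for each $\beta \in S$.

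Next I would pick any four ordinals $\beta_0 < \beta_1 < \beta_2 < \beta_3$ in $S$ and set $\beta := \beta_0$, as in Section~\ref{sec:inbeta}. Since $p_\beta \Vdash \na_\beta \notin V_{\beta+1}$, the hypothesis of Corollary~\ref{cor:qwghqr} is satisfied, so there is a coherent condition
\[
   r^* \le \bigwedge_{i<4} p_{\beta_i}
\]
which forces $\lnot\bigl(\na_{\beta_0} =^* \majority_{i=1,2,3}(\na_{\beta_i})\bigr)$.

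On the other hand, because $r^*$ is coherent, Lemma~\ref{lem:itismajafterall} says that $r^*$ can be further strengthened to a (no longer coherent) condition $r^{**} \le r^*$ forcing $\na_{\beta_0} =\majority_{i=1,2,3}(\na_{\beta_i})$, and in particular $\na_{\beta_0} =^* \majority_{i=1,2,3}(\na_{\beta_i})$. This directly contradicts the statement forced by $r^*$, and hence also by $r^{**}$. Therefore $S^*$ is non-stationary, which is exactly what had to be shown.

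The only slightly delicate point is checking that the $p_\beta$ produced by the $\Delta$-system construction still force $\na_\beta \notin V_{\beta+1}$; but this is automatic because we chose $p_\beta \le p'_\beta$ and $p'_\beta$ already forced this. Everything else is a direct combination of the two central lemmas of Section~\ref{sec:inbeta}, so no further obstacle is expected.
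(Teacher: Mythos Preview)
Your proposal is correct and follows essentially the same approach as the paper's own proof: assume a stationary set of counterexamples, pass to a $\Delta$-system, and then derive a contradiction by playing Corollary~\ref{cor:qwghqr} against Lemma~\ref{lem:itismajafterall} for four fixed $\beta_i$. Your write-up is in fact slightly more explicit than the paper's (you spell out why $p_\beta$ still forces $\na_\beta\notin V_{\beta+1}$ and why the two forced statements clash at the common extension $r^{**}$), but the underlying argument is identical.
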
  

\begin{proof}
We started in this section with an arbitrary 
$\Delta$-system 
and showed that 
Corollary~\ref{cor:qwghqr} 
and Lemma~\ref{lem:itismajafterall} holds for this
system.

We now use a specific $\Delta$-system:

Assume towards a 
a contradiction that on a non-stationary set $S'$ there are $p_\beta\le p_*$
forcing $\na_\beta\notin V_{\beta+1}$. By strengthening we can assume that
$p_\beta$ canonically reads $\na_\beta$.
Let $M_\beta$ contain $p_\beta$ and let $S\subseteq S'$ be such that 
 $(M_\beta,p_\beta)_{\beta\in S}$ is a $\Delta$-system.
Fix $\beta_0<\beta_1<\beta_2<\beta_3$ in $S$.
By Corollary~\ref{cor:qwghqr} we get 
a coherent $\bar r$ stronger than 
$\bar p$ such that $r^*\Vdash
    \lnot\, \bigl(\na_{\beta_0}=^*\majority_{i=1,2,3}(\na_{\beta_i})\bigr)$.
This contradicts Lemma~\ref{lem:itismajafterall}.
\end{proof}




\subsection{Fixing the \texorpdfstring{$\mathbf\Delta$}{Δ}-system}\label{sec:fixit}

We now know that there is a stationary set
$S^0\subseteq S^\mu_{\lambda^+}$
such that for all $\beta\in S^0$,
$\na_\beta$
is forced (by $p_*$) to be in $V_{\beta+1}$
but not in $V_{\beta}$ (see Lemmas~\ref{lem:notinbeta} and~\ref{lem:wrtqewqw}).

For each $\beta\in S^0$ there is a 
$p'_\beta\le p_*$ in $P$ forcing that
$\na_\beta$ is equal to some 
$P_{\beta+1}$-name, call it
$\na^*_\beta$, and we choose $p_\beta\le p'_\beta$ (we only have to strengthen the part below $\beta+1$) 
which canonically reads $\na^*_\beta$.\footnote{So $p_\beta\restriction\beta+1$ reads $\na^*_\beta$,
but generally the whole $p_\beta$ may be required 
to force $\na_\beta=\na^*_\beta$.}

We now fix, as usual, for each $\beta\in S^0$, some elementary model $M_\beta$ containing $p_\beta$,
and fix $S\subseteq S^0$ such that 
$(M_\beta,p_\beta)_{\beta\in S}$ is a $\Delta$-system.

So $p_{**}:=p_\beta\restriction\beta\le p_*$ is 
independent of $\beta\in S$
(it is a $P_\alpha$-condition for some $\alpha\in\Delta$, independent of $\beta\in S$ ); and
$\na^*_\beta$ is read continuously by $p_\beta\restriction\beta+1$ via $(w'_\zeta)_{\zeta\in E'}$ for some 
$E'\subseteq \lambda$ club,
with $w'_\zeta\subseteq \beta+1$.
As usual, due to homogeneity $E'$ is independent of $\beta\in S$, 
and the $w'_\zeta$ 
are independent of $\beta$
apart from the shifting of the final coordinate $\beta$
via the mapping $h^*_{\beta_0,\beta_1}$;
the same holds for the decision functions that map
$\poss(p_\zeta, w'_\zeta,\zeta+1)$ to
$\na_\beta\restriction I^*({<}\zeta+1)$



Let $E$ be the limit points of $E'$, and set $w_\zeta:=\bigcup_{\nu<\zeta}w'_\nu$.
Then
   $\na_\beta\restriction I^*({<}\xi)$
    is 
   $(w_\xi,\xi)$-determined by $p_\beta$
   for all $\xi\in E$.



In the $P_\beta$-extension, only $\neta_\beta$ remains undetermined, i.e.,
there 
are $f_{\xi}$ for $\xi\in E$ such that
$p_\beta/G_\beta$ forces
$\na_\beta\restriction I^*({<}\xi)=f_\xi(\neta_\beta\restriction I^*({<}\xi))$.
The $f_\xi$ are canonically read 
from $p_\beta\restriction\beta$ in a way
independent of $\beta$ (due to homegeneity).

Recall that $x\in\poss(\tilde p,\xi)$
is equivalent to: $x\in 2^{I^*({<}\xi)}$ and $x$ extends $\eta^{\tilde p}\restriction I^*({<}\xi)$.
So the domain of $f_\xi$ is $\poss(\tilde p,\xi)$.



To summarize: 
\begin{fact}\label{fact:summary}
$(M_\beta,p_\beta)_{\beta\in S}$ satisfies:
\begin{itemize}
    \item $p_\beta\restriction\beta=:p_{**}\le p_*$ is 
    a $P_{\sup(\Delta)}$-condition independent of $\beta\in S$.
    \item $p_\beta(\beta)=:\tilde p$ is 
    a $P_{\sup(\Delta)}$-name independent of $\beta\in S$,
    \item There is a club-set $E\subseteq \lambda$ and, for $\xi\in E$,
    $P_{\sup(\Delta)}$-names $\name f_\xi:\poss(\tilde p,\xi)\to 2^{I^*({<}\xi)}$ 
    such that
    for all $\beta\in S$ and $\xi\in E$
    \[
    p_\beta \Vdash \na_\beta\restriction I^*({<}\xi)=\name f_\xi(\neta_\beta\restriction I^*({<}\xi)).
    \]
    \item 
    If $\beta\in S$,
    $\name x\subseteq \lambda$ is a $P_\beta$-name,
    $q\le p_{**}$ in $P_{\beta}$ and 
    $q,\name x$ are in $M_\beta$, then we can find
    $\alpha\in \Delta$ and 
    $p'_{**}\le q$ in $P_\alpha$ 
    which continuously reads $\name x$, $\ntau(\name x)$
    and $\ntau^{-1}(\name x)$ independently\footnote{This means:
$p'_{**}\in M_\gamma$ for all $\gamma\in S$, and there is a
way (independent of $\gamma\in S$) to continuously 
read $\name y_1,\name y_2,\name y_3$ modulo 
$p'_{**}$ from 
the generics below $\alpha$, and for all $\gamma\in S$
we have that $p'_{**}\wedge p_\gamma$ forces 
$\name y_1=\name x'$, $\name y_2=\ntau(\name x')$
and $\name y_3=\ntau^{-1}(\name x')$, where 
$\name x':=h^*_{\beta,\gamma}(\name x)$.} of $\beta$.
\end{itemize}
\end{fact}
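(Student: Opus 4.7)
The plan is to verify each of the four bullet points as an immediate consequence of how the $\Delta$-system $(M_\beta,p_\beta)_{\beta\in S}$ was set up, transporting data via the isomorphisms $h^*_{\beta_0,\beta_1}$. Nothing genuinely new needs to be established; the content is bookkeeping about what lives where.

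First I would handle bullet~(1): since $p_\beta\in M_\beta$ has support of size ${<}\lambda<\lambda^+=\cf(\beta)$, the restriction $p_\beta\restriction\beta$ lies in $P_\alpha$ for some $\alpha<\beta$ with $\alpha\in M_\beta$. By clause~(2) of Definition~\ref{def:delta}, $M_\beta\cap\beta=\Delta$, hence $\alpha\in\Delta$ and $p_\beta\restriction\beta\in P_{\sup(\Delta)}$. The isomorphism $h^*_{\beta_0,\beta_1}$ fixes every element of $\Delta$ and sends $p_{\beta_0}$ to $p_{\beta_1}$; therefore it fixes $p_\beta\restriction\beta$, making $p_{**}:=p_\beta\restriction\beta$ independent of $\beta\in S$. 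Bullet~(2) is precisely the Definition and Lemma stated just after Fact~\ref{fact:joint}: $\tilde p=p_\beta(\beta)$ is a $P_{\sup(\Delta)}$-name independent of $\beta\in S$ because its canonical reading uses the sets $w^{p_\beta}_\zeta$, all contained in $\Delta$, and homogeneity matches the decision functions.

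For bullet~(3) I would use that by our setup $p_\beta$ canonically reads the $P_{\beta+1}$-name $\na^*_\beta$ with $p_\beta\Vdash\na_\beta=\na^*_\beta$. Canonical reading provides a club $E'\subseteq\lambda$ and sets $w'_\zeta\subseteq\beta+1$ with $\beta\in w'_\zeta$ such that $\na_\beta\restriction I^*({\le}\zeta+1)$ is $(w'_\zeta,\zeta+1)$-decided by $p_\beta$. Setting $E$ to be the limit points of $E'$ and $w_\xi:=\bigcup_{\nu<\xi}w'_\nu$, I obtain the corresponding $(w_\xi,\xi)$-decisions of $\na_\beta\restriction I^*({<}\xi)$. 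Because $w_\xi\cap[\sup(\Delta),\beta+1)\subseteq\{\beta\}$, the decision function factors: the generic below $\beta$ determines the action on $w_\xi\cap\Delta$ (giving a $P_{\sup(\Delta)}$-name), and only $\neta_\beta\restriction I^*({<}\xi)$ contributes at stage $\beta$. Reading off the value then yields the desired $P_{\sup(\Delta)}$-name $\name f_\xi$ with domain $\poss(\tilde p,\xi)$, and homogeneity again makes $\name f_\xi$ independent of $\beta\in S$.

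Bullet~(4) is Lemma~\ref{lem:ogottogott}(\ref{item:qwiuhwqt}) applied inside $M_\beta$ to $q$ and $\name x$: it produces $\alpha\in\Delta$ and a common refinement $p'_{**}$ canonically reading $\name x$, $\npi(\name x)$ and $\npi^{-1}(\name x)$ in a manner fixed by every $h^*_{\beta_0,\beta_1}$, which yields the ``independently of $\beta$'' clause. To the extent there is a main obstacle, it is keeping the $\beta$-boundary straight: $p_{**}$ and $\tilde p$ must be $P_{\sup(\Delta)}$-objects whereas $\na^*_\beta$ and $\neta_\beta$ only materialize at stage $\beta+1$. Once one verifies that everything below $\beta$ sits in $\Delta$ and the only stage-$\beta$ datum is $\tilde p$ together with the generic $\neta_\beta$, the factoring of the decision function into $\name f_\xi$ applied to the relevant initial segment of $\neta_\beta$ is automatic.
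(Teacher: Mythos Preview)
Your proposal is correct and matches the paper's approach. The paper does not give a separate proof block for this Fact; it is presented as a summary of the preceding paragraphs in Section~\ref{sec:fixit}, which establish bullets (1)--(3) exactly as you describe (support considerations and $M_\beta\cap\beta=\Delta$ for $p_{**}$; the Definition/Lemma after Fact~\ref{fact:joint} for $\tilde p$; passing to limit points of $E'$ and factoring the decision through $\neta_\beta$ for the $\name f_\xi$), and the paper explicitly notes that the last item follows from Lemma~\ref{lem:ogottogott}.
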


The last item follows from Lemma~\ref{lem:ogottogott}; and
we will use it several times:
Before Corollary~\ref{cor:local} we find
$p^2_{**}\le p_{**}$ to get names for $U$, $F_\xi$ etc.\  that are independent of $\beta$; 
before Lemma~\ref{lem:Alarge}
we get $p^3_{**}\le p^2_{**}$
to get independent names for some unions, intersections and $\npi$-images;  
and finally after Corollary~\ref{cor:qwrqwr} 
we choose 
$q\le p^3_{**}$ to get an independent name for 
the generator $\Gen$.


\subsection{Local reading}\label{sec:localreading}

So we know that we can determine initial segments
of $\na_\beta$ 
from  initial segments of
$\neta_\beta$, more specifically, we can determine 
$\neta_\beta\restriction I$
from 
$\na_\beta\restriction I$ 
for $I:=I^*({<}\xi)$. 

In this section we show that 
on unboundedly many disjoint intervals of the form
$A:=I^*(\ge\xi,<\nu)$,
we can read $\na_\beta\restriction A$
from just $\neta_\beta\restriction A$ (without having to 
use the $\neta_\beta$-values below $A$).

The following definition (the notion of candidate) 
is only used in this section.
In the rest of the paper
we only need Corollary~\ref{cor:local}.

In the following, we work in $V_\beta$, the $P_\beta$-extension
$V[G_\beta]$ where we assume $\beta\in S$ and
$p_{**}\in G_\beta$.

\newcommand{\cand}{\mathrm{cand}}
\begin{definition} (In $V_{\beta}$)
  \begin{itemize}
  \item
     For $A\subseteq \lambda$ and $\bar x=(x_i)_{i<4}$, $x_i:A\to 2$,
     we say $\bar x$ honors majority above $\zeta$, if 
      \[
      x_0(\ell)=\majority_{i=1,2,3}x_i(\ell)\text{ for all }
      \ell\in A\cap I^*({\ge}\zeta).
      \]
      We say $\bar x$ honors $\tilde p$, if each $x_i$ is compatible
       with $\eta^{\tilde p}$ (as partial functions).
      \item 
        $\bar x=(x_i)_{i<4}$ is 
      a $(\zeta_0,\zeta_1)$-candidate,
      (for $\zeta_0\le \zeta_1$ both in $E$)
      if the $x_i\in\poss(\tilde p,\zeta_1)$ 
      honor majority above $\zeta_0$.
      
      (As elements of
      $\poss(\tilde p,\zeta_1)$ they automatically honor $\tilde p$.)
      \item
      If $\bar x$ is a $(\zeta_0,\zeta_1)$-candidate,
      we say ``$\bar y$ extends $\bar x$'' if
      $\bar y$ is a $(\zeta_1,\zeta_2)$-candidate\footnote{or equivalently, a 
      $(\zeta_0,\zeta_2)$-candidate} for some $\zeta_2\ge \zeta_1$
      and each $y_i$ extends $x_i$.
      
      Equivalently, $\bar y=\bar x^\frown \bar b$
      for some $\bar b$, with 
      $b_i:I^*({\ge}\zeta_1,{<}\zeta_2)\to 2$, which honors both majority
      and $\tilde p$.
      \item 
      A $(\zeta_0,\zeta_1)$-candidate  $\bar y$ is
      ``good'',  if for every candidate $\bar z$ of height $\xi>\zeta_1$
      that extends $\bar y$ we have:
\begin{equation}\label{eq:iub253}\tag{$*_1$}
f_\xi(z_0)(\ell) =\majority_{i=1,2,3}f_\xi(z_i)(\ell)\text{ for all }\ell\in I^*({\ge}\zeta_1,{<}\xi).
\end{equation}

  \end{itemize}
\end{definition}

\begin{plemma}\label{plem:qwtqw}
  (In $V_\beta$.)
  Every candidate can be extended to a good candidate.
\end{plemma}

\begin{proof}

  
  Assume otherwise, i.e., there is a $(\zeta',\zeta_0)$-candidate $\bar x$
  which is a counterexample,
      which means: 
  \begin{equation}\label{eq:wtqqwr22}\tag{$*_2$}
      \parbox{0.8\textwidth}{
      Whenever $\bar y$  is a $(\zeta_0,\zeta_1)$-candidate extending $\bar x$
  then there is a $\xi>\zeta_1$ and a $(\zeta_1,\xi)$-candidate $\bar z$ extending $\bar y$
  which violates~\eqref{eq:iub253}.}
  \end{equation}

 
  
  
  We now construct 
  $r_0\le \tilde p$ and, 
  for $i=1,2,3$,
  $Q_\beta$-names $r_i\le \tilde p$.
  All these conditions live on the same $C^*\subseteq E$
  with $\min(C^*)=\zeta_0$.
  The trunk of $r_i$ is $x_i$.
  

  We now construct inductively $C^*\restriction\zeta $ and 
  $r_i\restriction \zeta$.
  
  Assume we have determined that 
  $\zeta\in C^*$ and we have constructed each $r_i$ below $\zeta$.
  Set $r_0(\zeta):=\tilde p(\zeta)$ and pick
  $r_i(\zeta)$ 
  as in~\eqref{eq:rtqw}, i.e., they have 
  majority $\neta_\beta$
  and leave enough freedom to form a valid condition.
  
  We will now construct the $C^*$-successor $\xi$ of $\zeta$,
  together with $r_i$ on $I^*({>}\zeta,{<}\xi)$.
  
  Enumerate  
  all $(\zeta_0,\zeta+1)$-candidates extending $\bar x$
  as $(\bar y^k)_{k\in K}$.
  
  
  Let $\bar a^0$ be the empty $4$-tuple and set $\xi_0:=\zeta+1$.
  We will construct, for $k\in K$, $\xi_k$ and 
  some $\bar a^k$ 
  that honors majority and $\tilde p$, where $a^k_i$ has domain $I^*({\ge}\zeta+1,{<}\xi_k)$ 
  and extends $a^j_i$ if $j<k$.
  
  If $k$ is a limit, let $\bar a^x$ be the (pointwise) 
  union of $\bar a^j$ with $j<k$,
  and set $\xi_k:=\sup_{j<k}(\xi_j)$.
  
  Assume we already have $\bar a^j$.
  Extend ${{\bar y}^j}^\frown \bar a^j$
  to some candidate  ${{\bar y}^j}^\frown \bar a^{j+1}$ of some height $\xi_{j+1}$ in $E$
  such that
  \begin{equation}\label{eq:uhqwefg973t}\tag{$*_3$}
       \text{${\bar{y}^j}^\frown \bar a^{j+1}$ violates~\eqref{eq:iub253} for some 
       $\ell\in I^*({\ge}\xi_j,{<}\xi_{j+1})$.}
 \end{equation}
 We can due that due to~\eqref{eq:wtqqwr22}.
 
  So in the end we get some $\xi>\zeta$ in $E$ and $\bar b^\zeta$
  with domain $I^*({>}\zeta,{<}\xi)$ honoring majority and $\tilde p$
  such that
  \begin{equation}\label{eq:whetqr3235}\tag{$*_4$}
      \parbox{0.9\textwidth}{for every $(\zeta_0,\zeta+1)$-candidate $\bar y$
      extending $\bar x$,
      $\bar y^\frown \bar b^\zeta$ is a $(\zeta_0,\xi)$-candidate violating~\eqref{eq:iub253} for some $\ell\in I^*({>}\zeta,{<}\xi)$.}
  \end{equation}
  We then define $C^*$ below $\xi+1$ by adding only $\xi$,
  i.e., $\xi$ is the $C^*$-successor of $\zeta$.
  %
  We extend the conditions $r_i$ by $b^\zeta_i$ for $i<4$.
  I.e., we
  have $\eta^{r_i}(\ell)=b^\zeta_i(\ell)$.
    This ends the construction of  $r_i\le \tilde p$.
    
  \medskip


  Back in $V$, 
  assume that~\eqref{eq:wtqqwr22} is forced by some $q'\le p_\beta\restriction\beta$.
  Pick an increasing sequence $\beta_i$ ($i<4$) in $S$.
%
%
  We take the union of $q'$ and the $p_{\beta_i}$, call it $s$, and strengthen
  $s(\beta_i)=\tilde p$
  to $r_i$. The resulting condition $s'$ forces the following:
  

  \begin{itemize}
      \item $\na_{\beta_i}\restriction I^*({<}\xi)=f_\xi(\neta_{\beta_i}\restriction I^*({<}\xi))$  for all $\xi\in C^*$. This is because $s'\le p_{\beta_i}$, cf.\ Fact~\ref{fact:summary}.
      \item The $\neta_{\beta_i}$
      honor majority above 
      $\zeta_0$.
      This is because 
      for all $\zeta\in C^*$, the
      $r_i(\zeta)$ are chosen as in~\eqref{eq:rtqw} and therefore
      honor majority; 
      and for $\zeta\in \lambda\setminus (C^*\cup\zeta_0)$ we use values 
      $\bar b$ which honor majority.
      \item Accordingly, the 
      $\na_{\beta_i}$ honor majority above some $\gamma<\lambda$, cf.\ Lemma~\ref{lem:blal0}(\ref{item:bla101}).
      Pick $\zeta_1$ such that $\sup(I^*({<}\zeta_1))>\gamma$.
      \item So for all $\xi>\zeta_1$ 
      the $f_\xi(\neta_{\beta_i}\restriction I^*({<}\xi))$ honor majority above $\zeta_1$.
      \item
      Pick some $\zeta>\zeta_0,\zeta_1$ in $C^*$ with
      $C^*$-successor $\xi$.
      By construction of the $r_i$, 
      $\neta_{\beta_i}\restriction I^*({\ge}\zeta+1,<\xi)$
      is $b^\zeta_i$.
      As $r_i$ extends $x_i$, 
      $\bar y:=\neta_{\beta_i}\restriction I^*({<}\zeta+1)$
      is a $(\zeta_0,\zeta+1)$-candidate extending $\bar x$.
      So by~\eqref{eq:whetqr3235},
      the $\neta_{\beta_i}\restriction I^*({<}\xi)$
      violate~\eqref{eq:iub253} at some $\ell\in I^*({>\zeta},{<\xi})$, a contradiction.
      \qedhere
  \end{itemize}  
\end{proof}

   Let $U\subseteq \lambda$ be club.
   Set $U^\odd$ to be the odd elements\footnote{I.e., if $(u_\alpha)_{\alpha<\lambda}$ is the canonical enumeration
   of $U$, then $\zeta\in U$ is in $U^\odd$ if $\zeta=u_{\delta+2n+1}$ for $\delta$
   a limit (or 0) and $n\in\omega$.} of $U$.
   For $\xi\in U^\odd$ with $U$-successor $\nu$, set
 \[
   A^U_\xi:=I^*({\ge}\xi,{<}\nu)
 \]  

\begin{lemma}\label{lem:local}
  (In $V_{\beta}$.)
  There is an $r_0\le \tilde p$,
  a club $U\subseteq C^{r_0}\subseteq E$
  and, for $\xi\in U^\odd$, an
  $F_\xi:2^{A^U_\xi}\to 2^{A^U_\xi}$
  such that
  \begin{itemize}
      \item $r_0\wedge p_\beta/G_\beta$ forces that 
      $F_\xi(\neta_\beta\restriction A^U_\xi)=
      \na_\beta\restriction A^U_\xi
      $.
      \item $F_\xi$ is not constant: There are, for $k=0,1$,
      $z_\xi^k$ in $\poss(r_0,I^*({<}\nu))$
      and $\ell_\xi\in A^U_\xi$ such that 
      $F_\xi(z_\xi^k\restriction A^U_\xi)(\ell_\xi)=k$.
      (Again, $\nu$ is the $U$-successor of $\xi$.)
  \end{itemize}
\end{lemma}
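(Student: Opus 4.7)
The plan is to build $r_0$, $U$, and the $F_\xi$ by a fusion of length $\lambda$ inside $V_\beta$. Recall that under $p_{**}\in G_\beta$, $\tilde p$ is an actual $Q$-condition in $V_\beta$ and the $\name f_\xi$ are actual functions (Fact~\ref{fact:summary}). Recursively, I build a $\le$-decreasing sequence $(r^\alpha)_{\alpha<\lambda}$ of conditions below $\tilde p$ and an increasing continuous sequence $(\xi_\alpha)_{\alpha<\lambda}$ in $E$, setting $U:=\{\xi_\alpha:\alpha<\lambda\}$ and taking $r_0$ as the fusion limit. At each stage, the stem of $r^\alpha$ will be committed on $I^*({<}\xi_\alpha)$ apart from the cone holes at $\zeta\in U\cap\xi_\alpha$, and I will arrange local reading on $A^U_{\xi_\alpha}$ whenever $\xi_\alpha\in U^\odd$.

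At the successor stage $\alpha+1$, I enumerate the ${<}\lambda$ many pairs $(y_1,y_2)\in\poss(r^\alpha,\xi_\alpha)^2$. For each pair, the tuple $\bar y^{y_1,y_2}:=(y_1,y_1,y_2,y_2)$ is a $(\xi_\alpha,\xi_\alpha)$-candidate (majority above $\xi_\alpha$ is vacuous). Applying Preliminary Lemma~\ref{plem:qwtqw} to each in turn, I obtain an ordinal $\xi_{\alpha+1}\in E$ and, for every pair, a good $(\xi_\alpha,\xi_{\alpha+1})$-candidate $\bar g^{y_1,y_2}$ extending $\bar y^{y_1,y_2}$. I then strengthen $r^\alpha$ to $r^{\alpha+1}$ by shortening $s^{r^{\alpha+1}}_{\xi_\alpha}$ and choosing the stem on $I^*({\ge}\xi_\alpha,{<}\xi_{\alpha+1})$, possibly inserting extra cone holes within that interval, so that for every pair $(y_1,y_2)$ the four components of $\bar g^{y_1,y_2}$ are jointly realizable as restrictions of four elements of $\poss(r^{\alpha+1},\xi_{\alpha+1})$. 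Limit stages are handled by fusion, Lemma~\ref{lem:Qfusion}(\ref{item:Qfusion}).

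For local reading at $\xi:=\xi_\alpha\in U^\odd$ with $U$-successor $\nu:=\xi_{\alpha+1}$: given $z^1,z^2\in\poss(r_0,\nu)$ with $z^1\restriction A^U_\xi=z^2\restriction A^U_\xi$, the $4$-tuple $\bar w:=(z^1,z^1,z^2,z^2)$ is a $(\xi,\nu)$-candidate, since majority holds on $A^U_\xi$ where $z^1=z^2$. Its restriction to $I^*({<}\xi)$ equals $\bar y^{y_1,y_2}$ for $y_i:=z^i\restriction I^*({<}\xi)$, and by construction $\bar w$ extends the good candidate $\bar g^{y_1,y_2}$ produced at stage $\alpha+1$. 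The defining property of good candidates then gives $f_\nu(z^1)(\ell)=\majority(f_\nu(z^1),f_\nu(z^2),f_\nu(z^2))(\ell)=f_\nu(z^2)(\ell)$ for all $\ell\in A^U_\xi$, so $F_\xi(b):=f_\nu(z)(b)$ is well-defined for any $z\in\poss(r_0,\nu)$ with $z\restriction A^U_\xi=b$. Non-constancy of $F_\xi$ is arranged during the construction: at each $\alpha+1$ I choose the stem so that two possibilities $z^0_\xi,z^1_\xi\in\poss(r^{\alpha+1},\xi_{\alpha+1})$ yield $f_\nu(z^0_\xi)(\ell_\xi)=0$ and $f_\nu(z^1_\xi)(\ell_\xi)=1$ for some $\ell_\xi\in A^U_\xi$. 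If at some stage this were impossible then $\na_\beta\restriction A^U_{\xi_\alpha}$ would be fully determined by $r^{\alpha+1}$; if such a failure persisted on a final segment of $U^\odd$, the full $\na_\beta$ on a cobounded set would lie in $V_\beta$, contradicting Lemma~\ref{lem:notinbeta}. So only non-stationary many $\xi_\alpha$ can fail, and we drop them from $U^\odd$.

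The main obstacle is aligning the single-stem-plus-cone-holes structure of the condition $r^{\alpha+1}$ with the ${<}\lambda$ many good extensions $\bar g^{y_1,y_2}$, each of which consists of four potentially distinct functions on $I^*({\ge}\xi_\alpha,{<}\xi_{\alpha+1})$. Making them all simultaneously realizable inside the possibility structure of a single $r^{\alpha+1}$ requires inserting enough cone holes within that interval and a careful bookkeeping argument leveraging ${<}\lambda$-closure of $Q$; this is the technical heart of the construction.
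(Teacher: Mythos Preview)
There is a genuine gap in the local-reading step. You claim that ``by construction $\bar w$ extends the good candidate $\bar g^{y_1,y_2}$'', but both $\bar w$ and $\bar g^{y_1,y_2}$ have height $\nu=\xi_{\alpha+1}$, so this would force $\bar w=\bar g^{y_1,y_2}$; yet $\bar w=(z^1,z^1,z^2,z^2)$ has all four components equal on $A^U_\xi$, whereas $\bar g^{y_1,y_2}\restriction A^U_\xi$ is an arbitrary majority-honouring $4$-tuple handed to you by Lemma~\ref{plem:qwtqw} (and that extension need not even preserve the pairwise equalities $g_0=g_1$, $g_2=g_3$). More fundamentally, goodness of a $(\xi_\alpha,\xi_{\alpha+1})$-candidate constrains the $f$-values only on intervals \emph{above} $\xi_{\alpha+1}$: by definition it says that extensions $\bar z$ of height $\xi'>\xi_{\alpha+1}$ satisfy majority on $I^*({\ge}\xi_{\alpha+1},{<}\xi')$. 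It says nothing about $A^U_\xi=I^*({\ge}\xi_\alpha,{<}\xi_{\alpha+1})$, which is exactly where you want local reading. So even granting $\bar w=\bar g^{y_1,y_2}$, you cannot conclude $f_\nu(z^1)\restriction A^U_\xi=f_\nu(z^2)\restriction A^U_\xi$.

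The paper fixes this by separating ``isolation'' from ``reading'' into two consecutive intervals. At an even $\zeta\in U$ with $U$-successor $\xi$, it builds a \emph{single} universal isolator $\bar b^\zeta$ on $I^*({>}\zeta,{<}\xi)$ (by stacking good extensions over \emph{all} height-$(\zeta{+}1)$ candidates, not one per pair), and commits $r_0$ to the value $b_0^\zeta$ there with no freedom left; this makes every candidate carrying $\bar b^\zeta$ in that slot good \emph{at height $\xi$}. At the following odd $\xi$ with $U$-successor $\nu$, $r_0$ retains the full $\tilde p$-freedom on $A^U_\xi$. For $x_0\in\poss(r_0,\nu)$ one then defines auxiliary $x_1,x_2,x_3$ (not required to lie in $\poss(r_0,\nu)$) by replacing the part below $\xi$ with a fixed stem followed by $b_i^\zeta$; the resulting $\bar x$ carries $\bar b^\zeta$ in its $(\zeta,\xi)$-slot, so goodness at height $\xi$ gives majority on $A^U_\xi$, and since $x_1,x_2,x_3$ depend only on $x_0\restriction A^U_\xi$, this yields a well-defined $F_\xi$. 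Collapsing isolation and reading into the same interval, as you do, produces both the height mismatch above and the ``joint realizability'' problem you flag at the end (which, incidentally, is not the real obstacle).
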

(Note: Only those elements of $2^{A^U_\xi}$ that are compatible with $r_0$
are relevant as arguments for $F_\xi$.)

\begin{proof}
  We construct $r_i$ for $i<4$ and $U$ iteratively;
  $C^{r_i}$ will be independent of $i$, call it $C$.
  
  All $r_i$ have the same trunk as $\tilde p$; 
  i.e., $\min(C)=\min(C^{\tilde p})=:\zeta_0$ and
  $r_i\restriction\zeta_0:=\tilde p\restriction\zeta_0$.
  We also set $\min(U)=\zeta_0$.

  For all $\zeta\in C$, we choose 
  some $r^*_i(\zeta)$ 
  as in~\eqref{eq:rtqw}, i.e., $r^*_0(\zeta)=\tilde p(\zeta)$,
  and the $r^*_i(\zeta)$ for $i=1,2,3$ 
  are such that the majority of their
  generics would be the $r^*_0(\zeta)$-generic.

  Assume that we already know that some $\zeta$ is in $U$ (which is a subset of $C$), and 
  that we know $r_i\restriction \zeta$ for $i<4$.

  We now construct the $U$-successor $\xi$ of $\zeta$,
  $C\restriction [\zeta,\xi]$, and $r_i(\nu)$
  for $i<4$ and $\nu\in[\zeta,\xi)$.
  
  \begin{itemize}
      \item 
      Even case:
  If $\zeta$ is an even element of $U$, we 
  start with $r_i(\zeta):=r_i^*(\zeta)$, but then
  add a ``shield'', or
  ``isolator'' above $\zeta$: 
  As in the previous proof, we iterate over all $\zeta+1$-candidates $\bar y^j$, but 
  but in~\eqref{eq:uhqwefg973t}, instead of 
   violating~\eqref{eq:iub253} for some $\ell$, we demand that 
   ${\bar y^j}^\frown \bar z^{j+1}$ is good.
  (We already know that every candidate can be extended to a good one.)
  Accordingly, 
we get some $\xi>\zeta$ and $\bar b^\zeta$
with domain
$I^*({>}\zeta, {<}\xi)$ 
(and honoring majority and $\tilde p$)
such that 
$\bar y^\frown \bar b^\zeta$ is good
for every 
candidate $\bar y$ of height $\zeta+1$; i.e.:
\begin{equation}\label{eq:iub253b}\tag{$*_4'$}
\parbox{0.8\textwidth}{If $\bar z$ is a $(\zeta+1,\nu)$-candidate whose restriction to $I^*({>}\zeta, {<}\xi)$ is 
$\bar b^\zeta$, then the $f_{\nu}(z_i)$ honor majority above $\xi$.}
\end{equation}
We now let this $\xi$ be the successor of $\zeta$ in both $C$ and $U$ (and extend each $p_i(\zeta)$ by $b_i$).

  \item {Odd case:} Now assume $\zeta$ is odd in $U$.  
  Then we choose 
  some $\xi>\zeta$ in $C^{\tilde p}$ large enough such that 
  there are,
  for $k=0,1$,
  $z^k_\xi$ in $\poss(\tilde p,\xi)$ compatible with
  all the $r_0$ constructed so far,
  such that the $f_{\xi}(z^k_\xi)(\ell)=k$ for some
  $\ell> I^*({<}\zeta)$.
  (Such $\xi$ and $\ell$ have to exist as $\na_\beta$ is not 
  in $V_\beta$.)

  We let $C$ restricted to $[\zeta,\xi]$ be the same as $C^{\tilde p}$, and set 
  $r_i(\nu):=r_i^*(\nu)$ for $\nu\in C\cap [\zeta,\xi)$.
  (For $\zeta\in [\zeta,\xi)\setminus C$ there is no
  freedom left, i.e., 
  $\tilde p(\zeta)$ is already completely determined,
  so the only choice for any $r\le \tilde p$ is 
  $r(\zeta)=\tilde p(\zeta)$.)
  \end{itemize}
  This ends the construction of $U$ and of $r_i$ (for $i<4$).

  \medskip 
  
  Pick $\xi\in U^\odd$, let $\zeta$ be the 
  $U$-predecessor and $\nu$ the $U$-successor.
  We have to show that we can determine (modulo $p_{\beta}$) 
  $\na_{\beta}\restriction I^*(\ge\xi,<\nu)$
  from $\neta_{\beta} \restriction I^*(\ge\xi,<\nu)$ alone. (We already know that we can determine it from
  $\neta_{\beta} \restriction I^*(<\nu)$.)
  
  Fix any $z^\zeta_*\in\poss(r_0,\zeta+1)$.
  Let $x_0\in\poss(r_0,\nu)$. In particular
  $x_0$ extends $b_0^\zeta$.
  For $i=1,2,3$, let $x_i$ be the copy of 
  $x_0$ with
  the initial segment $x_0\restriction \xi$
  replaced by 
  $z^\zeta_*{}^\frown b_i^\zeta$.
  Note that $\bar x$ is a candidate extending $\bar b^\zeta$.
  Accordingly the $f_\nu(x_i)$ honor majority above $\xi$.
  So we can define 
  \[
     F_{\xi}(x_0\restriction A^U_\xi):=\majority_{i=1,2,3}f_{\nu}(x_i)
     \restriction A^U_\xi = f_\nu(x_0) \restriction A^U_\xi.
  \]
  This is well-defined,\footnote{Assume $y$ and $x$ 
  in $\poss(r_0,\nu)$
  are the identical
  restricted to $A^U_\xi$. 
  Then $y$ defines the same $(x_i)_{i=1,2,3}$ and thus the 
  same $F_\xi$.} and $r_0\wedge p_\beta/G_\beta$ forces that
  $F_\xi(x_0\restriction A^U_\xi)=\na_\beta\restriction A^U_\xi
  $.
\end{proof}

We now summarize this lemma, which was shown in
$V_\beta$ for some $\beta\in S$, from the point of view of the ground model.
The lemma only uses the parameters
$\neta_\beta$ and $\na_\beta$ (and $\tilde p$, which
is just $\neta_\beta(\beta)$), so by absoluteness
$M_\beta$ knows that the Lemma is forced by $p_{**}$.
Accordingly, we can find $P_\beta$-names for $U$, $F_\xi$ etc
in $M_\beta$.
Using the last item of Fact~\ref{fact:summary},
we can strengthen $p_{**}$ to 
$p_{**}^2$ to canonically read these names:

\begin{corollary}\label{cor:local}
    There is
        an $\alpha\in \Delta$, a $p_{**}^2\le p_{**}$ in $P_\alpha$ and 
        $P_{\alpha}$-names
    for: A condition $r_0\le \tilde p$, a set $U$
    and a sequence $(F_\xi, z^0_\xi, z^1_\xi,\ell^0_\xi, \ell^1_\xi)_{\xi\in U}$,
    such that the following holds for all
    $\beta\in S$, where we set
     \[
            p^+_\beta\text{ to be the condition }p^2_{**} \wedge p_\beta\text{ where we strengthen } p_\beta(\beta)\text{ to } r_0.
        \]
    \begin{enumerate}
        \item $\alpha$, the condition $p^2_{**}$ and all the names are in $M_\beta$.
        \item $p^2_{**}\Vdash  U\subseteq C^{r_0}\subseteq \lambda$ club.
        \item  for $k=0,1$: 
    $p^2_{**}\Vdash\ \forall \xi\in U^\odd\,  \bigg(z_\xi^k\in \poss(r_0,I^*({<}\nu))
    \,\&\,
      \ell_\xi\in A^U_\xi
      \,\&\,
      F_\xi(z_\xi^k\restriction A^U_\xi)(\ell_\xi)=k
      \bigg)$.
        \item $p_\beta^+\Vdash (\forall \xi\in U^\odd)\, 
            F_\xi(\neta_\beta\restriction A^U_\xi)=
      \na_\beta\restriction A_\xi$, where we define 
      \[
      A_\xi\text{ to be }I^*({\ge}\xi,{<}\nu)\text{
    with $\nu$ the $U$-successor of $\xi$}.   
    \]
    \end{enumerate}

\end{corollary}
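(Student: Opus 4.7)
The plan is to lift Lemma~\ref{lem:local}, which was proved in $V_\beta$, back to the ground model by invoking the uniform canonical-reading mechanism packaged in the last item of Fact~\ref{fact:summary}.

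First, I would fix any $\beta\in S$ and note that the statement of Lemma~\ref{lem:local} refers only to $\neta_\beta$, $\na_\beta$, $\tilde p$, and the sequence $(\name f_\xi)_{\xi\in E}$, all of which are available as names in $M_\beta$. By elementarity $M_\beta\preceq H(\chi)$ together with absoluteness, $M_\beta$ sees that $p_{**}$ forces the conclusion of Lemma~\ref{lem:local}. Working inside $M_\beta$, I can therefore pick $P_\beta$-names for witnesses: a name for $r_0$, a name for the club $U$, and, for each $\xi\in U^\odd$, names for $F_\xi$, $z^0_\xi$, $z^1_\xi$, $\ell_\xi$. These can all be bundled into a single $P_\beta$-name $\name\sigma$ for a $\lambda$-sequence of subsets of $\lambda$ which lives in $M_\beta$.

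Second, I would apply the last item of Fact~\ref{fact:summary} to the pair $(p_{**},\name\sigma)\in M_\beta$: this yields an $\alpha\in\Delta$ and a condition $p^2_{**}\le p_{**}$ in $P_\alpha$ that canonically reads $\name\sigma$ in a way independent of $\beta\in S$. In particular, $p^2_{**}\in M_\gamma$ for every $\gamma\in S$, and the isomorphism $h^*_{\beta,\gamma}$ acts trivially on $p^2_{**}$ as well as on the decision functions; consequently, the extracted names $r_0$, $U$, and $(F_\xi, z^k_\xi, \ell_\xi)_{\xi\in U^\odd}$ are forced to witness Lemma~\ref{lem:local} simultaneously for every $\gamma\in S$, which at once verifies item~(1).

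Third, items~(2) and~(3) are clauses already forced by $p_{**}$ via Lemma~\ref{lem:local}, and so hold a fortiori under $p^2_{**}$. For item~(4), I combine the identity $p_\beta\Vdash \na_\beta\restriction I^*({<}\nu)=\name f_\nu(\neta_\beta\restriction I^*({<}\nu))$ from Fact~\ref{fact:summary} with the local reading $F_\xi(\neta_\beta\restriction A^U_\xi)=\na_\beta\restriction A_\xi$ supplied by Lemma~\ref{lem:local}: the condition $p^+_\beta$, obtained from $p^2_{**}\wedge p_\beta$ by strengthening $p_\beta(\beta)=\tilde p$ to $r_0$, then forces precisely the desired equation for all $\xi\in U^\odd$.

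The main obstacle is entirely one of uniformity in $\beta$ — ensuring that a single $\alpha$, $p^2_{**}$ and sequence of names serve every $\gamma\in S$ at once. This is exactly what the last item of Fact~\ref{fact:summary} (itself a direct consequence of Lemma~\ref{lem:ogottogott}(\ref{item:qwiuhwqt})) was tailored to deliver, so beyond careful bookkeeping no new work is required; the substance sits in Lemma~\ref{lem:local} itself.
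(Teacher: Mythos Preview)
Your proposal is correct and follows essentially the same approach as the paper: use absoluteness and elementarity to find $P_\beta$-names in $M_\beta$ for the witnesses supplied by Lemma~\ref{lem:local}, then invoke the last item of Fact~\ref{fact:summary} to strengthen $p_{**}$ to a $p^2_{**}\in P_\alpha$ (with $\alpha\in\Delta$) that canonically reads these names independently of $\beta$. The paper's own argument is exactly this, stated in the short paragraph immediately preceding the corollary.
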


\subsection{Finding the generator}
In this section we use these
$p^2_{**}$, $r_0$, $(F_\xi, z^0_\xi, z^1_\xi,\ell^0_\xi, \ell^1_\xi)_{\xi\in U}$.

We start working in $V_{\beta}=V[G_\beta]$,
where we assume $p^2_{**}\in G_\beta$.

Let $\xi\in U^\odd$ and $\nu$ its $U$-successor.
Set
\begin{align}
\nonumber A_\xi&:=I^*({\ge}\xi,{<}\nu)
,&
A^?_\xi&:= A_\xi\setminus \dom(\eta^{r_0})
,\\\label{eq:defodd}
\odd&:=\bigcup_{\xi\in U^\odd} A_\xi
,&
\odd^?&:=\bigcup_{\xi\in U^\odd} A^?_\xi=\odd\setminus \dom(\eta^{r_0})
.
\end{align}

For $F_\xi$ it is enough to use 
$\neta_\beta\restriction A^?_\xi$ as input
(the part in $A_\xi\setminus A^?_\xi$ is determined
anyway by $r_0$), and \emph{every} element of $2^{A^?_\xi}$ is compatible with $r_0$ (and thus a possible input for $F_\xi$).
Identifying $2^B$ and $\mathcal P(B)$ as usual, we get:
\[
  F_\xi: \mathcal P(A^?_\xi)\to \mathcal P(A_\xi)
\]
is such that $p^+_\beta/G_\beta$ forces
\[
  F_\xi(\neta_\beta\cap A^?_\xi)
  = 
  \na_\beta\cap A_\xi,
\]
 We now define 
\[
    F:\mathcal P(\odd^?)\to \mathcal P(\odd)\quad \text{by}\quad x\mapsto \bigcup_{\xi\in U^\odd}F_\xi(x \cap A^?_\xi).
\]

So in particular $p^+_\beta/G_\beta$ forces that
\begin{equation}\label{eq:hourglass}
  F(\neta_\beta\cap \odd^?)= \na_\beta \cap \odd.  
\end{equation}

Note that for every $z\subseteq \odd^?$ (in $V_\beta$ that is)
there is an $r'\le r_0$
forcing that $\neta_\beta\cap \odd^?=z$.
($C':=U\setminus U^\odd$ is club, so it is enough to leave freedom at $C'$
and we may assign arbitrary values everywhere else.)


\medskip

Back in the ground model $V$, using the last item of Fact~\ref{fact:summary} again,
we can strengthen $p^2_{**}$ to 
$p^3_{**}$ so that 
\begin{equation}\label{eq:hhourglass}
\text{$p^3_{**}$ canonically reads each of the following (countably many) sets:}\footnote{We can do this for $\lambda$ many sets, 
of course; but we cannot assume e.g. that
$\npi(z)\in V_\beta$ for \emph{all} $z\in V_\beta$,
let alone that each such $\npi(z)$ is canonically read
by $p^3_{**}$.} 
\end{equation}
 

\begin{itemize}
    \item $(A_\xi)_{\xi\in U^\odd}$, $\odd$, 
$r_0$,
$(A^?_\xi)_{\xi\in U^\odd}$, $\odd^?$ (actually, these are already read by $r^2_{**}$).
    \item The closure of these sets under $\npi$, $\npi^{-1}$, finite unions,  and finite intersections.
\end{itemize}

In particular, the (names for) all these sets
are independent of $\beta\in S$,
modulo $p^3_{**}$.\footnote{But we need $p^{+}_\beta$
to force that these names have anything to do with $\na_\beta$.}

 

\begin{lemma}\label{lem:Alarge}
  (In $V$)
  $p^3_{**}\Vdash |\npi(\odd^?)\cap \odd|=\lambda$.
\end{lemma}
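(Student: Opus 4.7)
The plan is to argue by contradiction, exploiting the BA identities of the automorphism $\nphi$ together with the non-constancy of the local functions $F_\xi$. Suppose some condition $p\le p^3_{**}$ forces $|\npi(\odd^?)\cap \odd|<\lambda$. By Corollary~\ref{cor:cof} we may assume $p$ has support bounded by some $\gamma<\mu$; pick two distinct $\beta,\beta'\in S$ with $\beta,\beta'>\gamma$, and using Fact~\ref{fact:joint} together with the $\Delta$-system structure, form the condition
\[
q:=p\wedge p^+_\beta\wedge p^+_{\beta'}\le p^3_{**}.
\]
Write $\nY_\gamma:=\neta_\gamma\cap\odd^?$ and $\nZ_\gamma:=\neta_\gamma\setminus\odd^?$ for $\gamma\in\{\beta,\beta'\}$.

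The key identity follows from Equation~\eqref{eq:hourglass} and the fact that $\nphi$ is a BA automorphism: $q$ forces
\[
\na_\gamma\cap\odd=F(\nY_\gamma),\qquad \npi(\neta_\gamma)=\na_\gamma,\qquad \npi(\neta_\gamma\cap\odd^?)=^*\na_\gamma\cap\npi(\odd^?),
\]
so the contribution of the $\odd^?$-part to $\na_\gamma\cap\odd$ lies (mod small) in $\npi(\odd^?)\cap\odd$, which is small by assumption. Splitting $\neta_\gamma=\nY_\gamma\sqcup\nZ_\gamma$ and using $\npi(A\cup B)=^*\npi(A)\cup\npi(B)$ gives
\[
F(\nY_\gamma)=\na_\gamma\cap\odd=^*\npi(\nZ_\gamma)\cap\odd.\qquad(\heartsuit)
\]
Subtracting the versions of $(\heartsuit)$ for $\gamma=\beta$ and $\gamma=\beta'$,
\[
F(\nY_\beta)\triangle F(\nY_{\beta'})=^*\npi(\nZ_\beta\triangle \nZ_{\beta'})\cap\odd=^*\npi(D^\out)\cap\odd,\qquad(\star)
\]
where $D^\out:=\nZ_\beta\triangle \nZ_{\beta'}\subseteq\lambda\setminus\odd^?$.

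Next, carry out a $\lambda$-length fusion below $q$ (using Lemma~\ref{lem:fusion} on the coordinates $\beta$ and $\beta'$, together with the canonical reading of the $F_\xi$'s from Corollary~\ref{cor:local}), enumerating $U^\odd=\{\xi_i:i<\lambda\}$ and $T=(\lambda\setminus\odd)\setminus\dom(\eta^{r_0})$ in parallel, to produce $q^*\le q$ satisfying:
\begin{itemize}
\item At each stage the $Q_\beta$- and $Q_{\beta'}$-parts of the fusion are strengthened (using that $z^0_{\xi_i},z^1_{\xi_i}$ are compatible with $r_0$ and the continuous reading of $F_{\xi_i}$) so that $q^*$ forces $F_{\xi_i}(\nY_\beta\cap A^?_{\xi_i})(\ell_{\xi_i})=0$ and $F_{\xi_i}(\nY_{\beta'}\cap A^?_{\xi_i})(\ell_{\xi_i})=1$ for $\lambda$-many $i$.
\item Simultaneously, on $T$ (and trivially on $\dom(\eta^{r_0})$, where both $\neta_\beta$ and $\neta_{\beta'}$ are forced equal to $\eta^{r_0}$) the fusion matches the $Q_\beta$- and $Q_{\beta'}$-generics stage by stage, so that $q^*$ forces $|D^\out|<\lambda$.
\end{itemize}
Under $q^*$ the left side of $(\star)$ contains $\{\ell_{\xi_i}:i<\lambda\}\setminus\text{small}$ and hence has size $\lambda$, while the right side is forced to be small (since $|D^\out|<\lambda$ implies $|\npi(D^\out)|<\lambda$). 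This is the desired contradiction.

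The main obstacle is the joint fusion construction: matching the two independent generics $\neta_\beta,\neta_{\beta'}$ on the large set $T$ while simultaneously steering the $F_\xi$-values at the witness points $\ell_\xi$ apart. This is handled by interleaving, in the $\lambda$-length bookkeeping, the decisions on $T$ (achievable coordinate-wise because $Q_{\beta'}$ is evaluated in a model containing $\neta_\beta$, so that $Q_{\beta'}$-extensions can copy $\neta_\beta$-values) with the $F_\xi$-deciding extensions on the disjoint sets $A^?_\xi\subseteq\odd^?$; compatibility of the two constraint families is immediate from the disjointness $T\cap\odd^?=\emptyset$, and the fusion limit exists by Lemma~\ref{lem:fusion}.
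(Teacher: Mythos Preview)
Your reduction via $(\heartsuit)$ and $(\star)$ is correct and is morally the same mechanism the paper exploits. The fatal gap is the fusion step where you claim to force $|D^{\mathrm{out}}|<\lambda$.

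Recall that $T=(\lambda\setminus\odd)\setminus\dom(\eta^{r_0})=\bigcup\{[s^{r_0}_\zeta]:\zeta\in C^{r_0},\ I^*_\zeta\subseteq\lambda\setminus\odd\}$. The set of such $\zeta$ contains $U^{\mathrm{even}}$, which is club. Any condition $q(\beta')\in Q$ must carry a club $C^{q(\beta')}$, and at every $\zeta\in C^{q(\beta')}$ the cone $[s^{q(\beta')}_\zeta]$ is left undetermined; the values of $\neta_{\beta'}$ there are decided by the generic, not by the condition. Hence no condition below $q$ can make $\neta_{\beta'}$ agree with the already-fixed $\neta_\beta$ on all of $T$: you would have to remove every even-interval $\zeta$ from $C^{q(\beta')}$, leaving a set disjoint from the club $U^{\mathrm{even}}$ --- not club. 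A fusion of length $\lambda$ does not help; its limit is again a condition with a club of free positions, club-many of which lie in even intervals, and on each such cone a density argument shows $\neta_\beta$ and $\neta_{\beta'}$ differ. So $D^{\mathrm{out}}$ is necessarily of size $\lambda$.

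This obstruction is exactly what the paper's four-coordinate majority trick is designed to bypass. Instead of forcing two generics to coincide on the even part (impossible), the paper takes $\beta_0<\beta_1<\beta_2<\beta_3$, fixes at each even-interval $\zeta$ three pairwise disjoint sub-cones for $r_1,r_2,r_3$ and copies $\neta_{\beta_0}$ on the complements (Lemma~\ref{lem:blal0}(\ref{item:bla102})). Each $r_i$ keeps a nonempty cone of freedom at every $\zeta$, so all four conditions are legal, yet $\neta_{\beta_0}\setminus\odd^?=\majority_{i=1,2,3}(\neta_{\beta_i}\setminus\odd^?)$ is forced exactly. The analogue of your $(\star)$ then reads $F(B^0)=^*\majority_{i=1,2,3} F(B^{\sign(i)})=F(B^1)$, contradicting the non-constancy of the $F_\xi$. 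Two coordinates cannot reproduce this: with only one ``copy'' you must match, not major, and matching on a club of free positions is exactly what a $Q$-condition cannot do.
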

\begin{proof}
  Let $q\le p^3_{**}$ in $P_\beta$
  be arbitrary. We have to show that $q$
  does not force (in $P_\beta$) $|\npi(\odd^?)\cap \odd|<\lambda$.

  For $\xi\in U^\odd$ and $k=0,1$, 
  use $r_0$, $p_\beta^+$, 
  $z^k_\xi$ and $\ell_\xi$ as in
  Corollary~\ref{cor:local} and set
  $b^k_\xi:= z^k_\xi\cap A_\xi^?$.

  
  For $k=0,1$, set $B^k:=\bigcup_{\xi\in U^\odd}(b^k_\xi)$.
  Note that $F(B^1)\setminus F(B^0)$ contains
  $\{\ell_\xi:\, \xi\in U^\odd\}$, a set of size $\lambda$.
  
  Pick increasing $(\beta_i)_{i< 4}$ in $S$ with
  $\beta_0=\beta$. 
  Set  
  $s:=q\wedge \bigwedge_{i<4} p^+_{\beta_i}\in P$.

  Now for each $i<4$, strengthen $s(\beta_i)$ (i.e., $r_0$) 
  as follows: At the even intervals in some way 
  that together they honor majority;
  and at the odd intervals (where we do not have to leave freedom) 
  to the value $B^{\sign(i)}$
  (where $\sign(k)=0$ for $k=0$ and $1$ for $k=1,2,3$).
  
  Accordingly, we have
  \[\npi(\neta_{\beta_i})\cap \odd = F(\neta_{\beta_i}\cap \odd^?)=F(B^{\sign(i)}),\]
  or, when we split $\npi(\neta_{\beta_i})$
  into the parts in and out of $\npi(\odd^?)$:
  \[\biggl(\bigl( \npi(\neta_{\beta_i})\setminus \npi(\odd^?)\bigr) \cap \odd \biggr)
  \cup
  \biggl( \npi(\neta_{\beta_i})\cap \npi(\odd^?) \cap \odd \biggr) =^* F(B^{\sign(i)})
  \]

  Now assume towards a contradiction that 
  $\npi(\odd^?)\cap \odd=^*\emptyset$.
  Then we get:
  \begin{equation}\label{eq:qwijrwrq}
  \bigl( \npi(\neta_{\beta_i})\setminus \npi(\odd^?)\bigr) \cap \odd =^* F(B^{\sign(i)}).
  \end{equation}
  But on the other hand we have:
  \begin{gather*}
   \neta_{\beta_0}\setminus \odd^? = \majority_{i=1,2,3}(\neta_{\beta_i}\setminus \odd^?)
  \text{, so } 
  \\\nonumber
  \npi( \neta_{\beta_0})\setminus \npi(\odd^?) =^* 
  \npi\big( \neta_{\beta_0}\setminus \odd^?\big) = 
  \npi\big(\majority_{i=1,2,3}(\neta_{\beta_i}\setminus \odd^?)\big)=^*
  \\\nonumber
  =^*\majority_{i=1,2,3}\big(\npi(\neta_{\beta_i}\setminus \odd^?) \big)=^*
  \majority_{i=1,2,3}\big(\npi(\neta_{\beta_i}\big)\setminus \npi(\odd^?)
  \text{, and } 
  \\\nonumber
  \bigl(\npi(\neta_{\beta_0})\setminus \npi(\odd^?)\bigr)\cap \odd =^* 
  \majority_{i=1,2,3}\biggl(\bigl(\npi(\neta_{\beta_i})\setminus \npi(\odd^?)\bigr)\cap \odd\biggr).
  \end{gather*}
  Applying~\eqref{eq:qwijrwrq} to both sides of the last line, we get 
  $F(B^0)=^*\majority_{i=1,2,3}F(B^{\sign(i)})=F(B^1)$,\linebreak[1]
  a contradiction.
\end{proof}

Set\begin{equation}\label{eq:defX}
  \nX:=\odd^?\cap \npi^{-1}(\odd).  
\end{equation}
By choice of $p^3_{**}$,
$\nX$ and $\npi(\nX)$ are canonically read by $p^3_{**}$ 
(and independent of $\beta$).

We now show that $F(z)\cap \npi(\nX)=\npi(z)$
for $z\subseteq\nX$. Again, here we are talking about $z\in V_\beta$. To make that more explicit, let us 
formulate in the ground model $V$:
\begin{lemma}\label{lem:ipqjet}  For $\beta\in S$,
  \\\mbox{}\hfill
  $p^3_{**}\Vdash_{P_\beta}\ \bigg(\ 
  |\nX|=\lambda\text{, and 
  for all }z\subseteq \nX,\  
  p^{+}_\beta/G_\beta\Vdash 
      \npi(z)=^* F(z) \cap \npi(\nX)\bigg).$
\end{lemma}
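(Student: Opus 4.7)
The lemma has two parts. For $|\nX|=\lambda$: this is immediate from Lemma~\ref{lem:Alarge} combined with $\nX=\odd^?\cap\npi^{-1}(\odd)$. Applying $\npi$ (which represents an automorphism of $P^\lambda_\lambda$) gives $\npi(\nX)=^*\npi(\odd^?)\cap\odd$, which has size $\lambda$ by that lemma; hence $|\nX|=\lambda$.

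For the equation, fix $z\subseteq\nX$ in $V_\beta$. The main idea is to exploit that $r_0$ leaves every position in $\odd^?$ free (by definition of $\odd^?=\odd\setminus\dom(\eta^{r_0})$). Thus there is a condition $r_z\le r_0$ in $V_\beta$ whose partial function $\eta^{r_z}$ extends $\eta^{r_0}$ by setting $\eta^{r_z}(\ell)=1$ iff $\ell\in z$, for each $\ell\in\odd^?$, with $C^{r_z}:=C^{r_0}\setminus U^\odd$ (still a club, since limit points of $U^\odd$ lie in $U\setminus U^\odd$). Under $r_z\wedge p^+_\beta$ I then compute: (i) $\neta_\beta\cap\odd^?=z$, so by \eqref{eq:hourglass}, $F(z)=^*\npi(\neta_\beta)\cap\odd$; (ii) $\neta_\beta\cap\nX=z\cap\nX=z$ since $z\subseteq\nX$, so $\npi(z)=\npi(\neta_\beta\cap\nX)=^*\npi(\neta_\beta)\cap\npi(\nX)$ as $\npi$ represents an automorphism; (iii) $\nX\subseteq\npi^{-1}(\odd)$ yields $\npi(\nX)\subseteq^*\odd$, so $F(z)\cap\npi(\nX)=^*\npi(\neta_\beta)\cap\odd\cap\npi(\nX)=^*\npi(\neta_\beta)\cap\npi(\nX)=^*\npi(z)$.

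To lift the equation from the specific $r_z\wedge p^+_\beta$ to $p^+_\beta/G_\beta$ itself, I appeal to continuous reading: by Lemma~\ref{lem:ogottogott} and Fact~\ref{fact:summary}, both $\npi(z)$ and $\npi(\nX)$ can be canonically read as $P_\alpha$-names for some $\alpha\in\Delta$. Hence their realizations lie in $V_\alpha\subseteq V_\beta$, and combined with $F(z)\in V_\beta$, the equation becomes a $V_\beta$-statement, absolute under further forcing. Since we have exhibited a condition $r_z\wedge p^+_\beta$ compatible with $G_\beta$ (its $P_\beta$-part is already in $G_\beta$) in whose generic extension the equation holds, the equation is already true in $V_\beta$, so $p^+_\beta/G_\beta$ trivially forces it. The main obstacle will be making the continuous-reading step uniform in $z$, as $z$ need not lie in the fixed $M_\beta$; if this becomes delicate, an alternative is a direct majority argument in the style of Lemma~\ref{lem:Alarge}: pick fresh $\beta_1,\beta_2,\beta_3\in S$, form $s:=q\wedge\bigwedge_{i=1,2,3}p^+_{\beta_i}$ for an arbitrary $q\le p^+_\beta$ that putatively forces the negation, and strengthen the $\beta_i$-coordinates to decide $\neta_{\beta_i}\cap\odd^?$ to values coordinating with $z$ on $\nX$ while honoring majority on $\odd^?\setminus\nX$, whence $\npi$ applied to the resulting majority relation yields the desired equation for $z$, contradicting $q$.
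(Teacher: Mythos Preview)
Your computation (i)--(iii) is correct and is exactly the paper's derivation of the key implication
\[
p^+_\beta/G_\beta\Vdash\bigl(y\subseteq^*\nX\ \rightarrow\ \npi(y)=^*F(y)\cap\npi(\nX)\bigr)\quad\text{for }y:=\neta_\beta\cap\odd^?.
\]
The gap is in the lifting step. You claim that $\npi(z)$ can be read as a $P_\alpha$-name for some $\alpha\in\Delta$, hence lies in $V_\beta$; but this is false in general, and the paper explicitly warns against it (see the remark after the lemma's statement and the footnote at~\eqref{eq:hhourglass}). Lemma~\ref{lem:ogottogott} only handles a single name at a time, after strengthening the condition, so $p^3_{**}$ cannot catch $\npi(z)$ uniformly for \emph{all} $z\in V_\beta$. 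Consequently $\npi(z)=^*F(z)\cap\npi(\nX)$ is not a $V_\beta$-statement, and showing it below $r_z\wedge p^+_\beta$ does not let you descend to $p^+_\beta/G_\beta$.

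The fix, and this is the paper's argument, is not to strengthen at $\beta$ but to pass to a \emph{fresh} index $\beta'\in S$ above the support of a putative counterexample condition $q\le p^+_\beta$. Since $z$ (once canonically read) is independent of $\beta$ by homogeneity of the $\Delta$-system, and since $F,\nX,\npi(\nX)$ are likewise independent, one can strengthen $p^+_{\beta'}(\beta')=r_0$ to $r_1$ forcing $\neta_{\beta'}\cap\odd^?=z$. Now your computation (i)--(iii) applied at $\beta'$ gives $\npi(z)=^*F(z)\cap\npi(\nX)$ below $q\wedge p^+_{\beta'}\wedge r_1$, contradicting $q$. Your proposed majority alternative with several $\beta_i$'s is not needed here and does not directly yield the equation; a single fresh $\beta'$ suffices.
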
  
(Note that,
other than $F(z)$,
 $\npi(z)$ will generally not be in $V_\beta$, and we have to force with $p^{+}_\beta/G_\beta$.)


\begin{proof}
Work in $V_\beta$.
$|\nX|=\lambda$  follows from Lemma~\ref{lem:Alarge},
as $\npi(\nX)=^*\npi(\odd^?)\cap\odd$.
  
Set $y:=\neta_\beta\cap \odd^?$.
So by~\eqref{eq:hourglass}, $p^+_\beta/G_\beta\le r_0$ forces:
$F(y)= \npi(\neta_\beta)\cap\odd$.
As $\npi(\nX)\subseteq^* \odd$, we get
$F(y)\cap \npi(\nX)=^*\npi(\neta_\beta)\cap\npi(\nX)$.
Then $y\subseteq^* \npi^{-1}(\odd)$ (or equivalently, $y\subseteq^* \nX$)
implies $y=^*y\cap \npi^{-1}(\odd)=\neta_\beta\cap \nX$ and thus $\npi(y)=^*\npi(\neta_\beta)\cap \npi(\nX)$.
To summarize:
\begin{equation}\label{eq:wqehtewt}\tag{$*$}
    p^+_\beta/G_\beta\Vdash\bigg(
y\subseteq^* \nX\ \rightarrow\     
    \npi(y) =^* F(y)\cap \npi(\nX),
\text{ for }y:=\neta_\beta\cap \odd^?    
\bigg)
\end{equation}

  Now back in $V$ assume towards a contradiction that some $q\le p^+_\beta$ forces that 
  the lemma fails, i.e., that $\name z\subseteq \nX$
  in $V_\beta$ is a counterexample (in the final extension).
  By absoluteness, we can assume that $q$ and $\name z$
  are in $M_\beta$, in particular $\name z$ is a 
  $P_\beta$-name in $M_\beta$. Strengthen 
  $q\restriction \beta$ to canonically read $\name z$.
  So for every $\beta'\in S$, 
  $h^*_{\beta,\beta'}(\name z)$ will be evaluated 
  in $V_{\beta'}$ to
  the same $z\subseteq \lambda$ as $\name z$ in $V_\beta$.
   
  Chose a $\beta'$ above $\supp(q)$.  
  Then we can strengthen $q\wedge p_{\beta'}$ at
  index $\beta'$, i.e., $r_0$,  
  to some $r_1$ that forces
  $\neta_\beta\cap \odd^?=h^*_{\beta,\beta'}(\name z)$.
  (Recall that we can fix the values in the odd intervals, as
  the even intervals still form a club). 
  Let $G$ be $P$-generic containing $q\wedge p^+_{\beta'}\wedge r_1$. Then we have:
    \begin{itemize}
        \item The evaluation of
        $h^*_{\beta,\beta'}(\name z)$ in $V_{\beta'}$, 
        is the same as the evaluation of
        $\name z$ in $V_\beta$, call it $z$.
        \item Also the evaluation of $\nX$ and $F$ are 
        the same  $\beta$ and $\beta'$, cf.~\eqref{eq:hhourglass}.
        \item $z\subseteq \nX$ 
        is a counterexample (as this is forced by $q$).
        
        In particular, $z\subseteq \nX$ and $\npi(z)\ne^* F(z)\cap\npi(X)$
        in the final extension.
        
        \item $p_{\beta'}\wedge r_1$ forces
        in $V_{\beta'+1}$
        that $\neta_\beta\cap\odd^?=z$;
        also we have just seen that $z\subseteq \nX$; 
        and so $\npi(z)=^*F(z)\cap \npi(X)$ 
        by~\eqref{eq:wqehtewt},
        a contradiction.\qedhere
    \end{itemize}
\end{proof}

   For $\xi\in U^\odd$, we define the following $P_\beta$-names (independent of $\beta$):\footnote{
   More concretely, canonically read by 
   $p^3_{**}$, see~\eqref{eq:hhourglass}.} 
   \begin{align*}
   \nx_\xi&:=  A^?_\xi\cap\nX
   &
   \ny_\xi&:= A_\xi\cap \npi(\nX)\\
   \text{so}\quad\bigcup_{\xi\in U^\odd}\nx_\xi&=\nX
   & \bigcup_{\xi\in U^\odd} \ny_\xi&=
    \odd\cap \npi(\nX)=^* \npi(\nX),
   \end{align*}
   as well as\begin{center}
   \begin{tabular}{llcl}
    &$F'_\xi: \mathcal P(\nx_\xi)\to \mathcal P(\ny_\xi)$
   &by
   &$a\mapsto F_\xi (a)\cap \npi(\nX)$, \\[2ex]
   and
   & $F':P(\nX)\to P(\npi(\nX))$
   &by
   &$z\mapsto \bigcup_{\xi\in U^\odd}F'_\xi(z\restriction \nx_\xi)=F(z)\cap \npi(\nX).$
   \end{tabular}
   \end{center}
   So the $p^3_{**}$ forces that
   for all $z \in V_\beta$ the 
   following is forced by $p^{+}_\beta/G_\beta$:
   \begin{equation}
       z\subseteq \nX\ \rightarrow F'(z)=^*\npi(z),
       \quad 
       \text{in particular }F'(\nX)=^*\npi(\nX),
       \quad
       \text{also }F'(z)\subseteq \npi(\nX)\text{ for all }z
   \end{equation}

\begin{lemma}   
   $p^3_{**}$ forces: 
   For almost all $\xi\in U^\odd$, 
   $F'_\xi$ is a Boolean algebra isomorphism
   from $P(\nx_\xi)$ to $P(\ny_\xi)$.
\end{lemma}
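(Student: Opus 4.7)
Fix a $P$-generic $G$ with $p^3_{**}\in G$. The first step is to upgrade Lemma~\ref{lem:ipqjet} from a per-$\beta$ statement to the universal assertion that in $V[G]$, $F'(z)=^*\npi(z)$ holds for \emph{every} $z\subseteq\nX$. By the $\lambda^{++}$-cc, any such $z$ lies in $V_\gamma$ for some $\gamma<\mu$; and for any $\gamma$, the set of conditions stronger than $p^+_\beta$ for some $\beta\in S$ with $\beta>\gamma$ is dense below $p^3_{**}$ (given $q$, pick $\beta\in S$ above both $\gamma$ and $\sup(\supp(q))$; then $q\wedge p^+_\beta$ is a condition). So one can find $\beta\in S$ with $\beta>\gamma$ and $p^+_\beta\in G$, and Lemma~\ref{lem:ipqjet} applies to $z$. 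Because $\npi$ represents a Boolean-algebra automorphism, this yields in $V[G]$ the $=^*$-equations $F'(z\cup z')=^*F'(z)\cup F'(z')$, $F'(z\cap z')=^*F'(z)\cap F'(z')$, and $F'(\nX\setminus z)=^*\npi(\nX)\setminus F'(z)$ for all $z,z'\subseteq\nX$ in $V[G]$.

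The rest of the argument is a disjoint-components argument exploiting that the $\ny_\xi\subseteq A_\xi$ are pairwise disjoint: I plan to show that each of $\mathrm{BadHom}$ (the $\xi$ for which $F'_\xi$ fails some BA-operation), $\mathrm{BadInj}$ (the $\xi$ outside $\mathrm{BadHom}$ for which $F'_\xi$ is not injective), and $\mathrm{BadSurj}$ (the $\xi$ for which $F'_\xi$ is not surjective onto $\mathcal P(\ny_\xi)$) has size less than $\lambda$. For $\mathrm{BadHom}$, picking witnesses $(a_\xi,b_\xi)$ for each bad $\xi$ and setting $z:=\bigcup a_\xi$, $z':=\bigcup b_\xi$ produces a contradiction with the first $=^*$-equation above, since the local defects live in disjoint $A_\xi$ and thus contribute at least $|\mathrm{BadHom}|$ many elements to the global symmetric difference; analogously for $\cap$ and complement. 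For $\mathrm{BadInj}$, outside $\mathrm{BadHom}$ non-injectivity of a BA-hom $F'_\xi$ yields some $c_\xi\ne\emptyset$ with $F'_\xi(c_\xi)=\emptyset$; setting $z:=\bigcup c_\xi$ gives $|z|\ge|\mathrm{BadInj}|$ while $F'(z)$ is concentrated on the small set $\mathrm{BadHom}$, so $F'(z)=^*\npi(z)$ forces $|\mathrm{BadInj}|<\lambda$.

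Surjectivity is the main obstacle: the natural witness is built using $\npi^{-1}$ and does not a priori lie in $V_\beta$ for any fixed $\beta\in S$. Concretely, for each $\xi\in\mathrm{BadSurj}$ pick $w_\xi\subseteq\ny_\xi$ outside the image of $F'_\xi$, and set $W:=\bigcup_\xi w_\xi\subseteq\npi(\nX)$ and $Z:=\npi^{-1}(W)\cap\nX$; the upgraded equation gives $F'(Z)=^*\npi(Z)=^*W$, while the $\xi$-th piece of $F'(Z)$ is $F'_\xi(Z\cap\nx_\xi)\in\mathrm{image}(F'_\xi)$, hence $\ne w_\xi$, contributing at least one element of $A_\xi$ to the global symmetric difference for each $\xi\in\mathrm{BadSurj}$. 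This step is precisely what requires the universal version of $F'\approx\npi$ established in the first paragraph. Combining the three bounds, for all but fewer than $\lambda$ many $\xi\in U^\odd$ the map $F'_\xi$ is a Boolean-algebra homomorphism that is both injective and surjective, hence the desired isomorphism $\mathcal P(\nx_\xi)\to\mathcal P(\ny_\xi)$.
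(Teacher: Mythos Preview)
Your proof is correct and follows essentially the same disjoint-components strategy as the paper: one shows, for each of the Boolean-algebra properties (preservation of $\emptyset$ and the top, unions, complements, injectivity, surjectivity), that the set of bad $\xi$ is small by assembling local witnesses into a single global $z\subseteq\nX$ and invoking $F'(z)=^*\npi(z)$.

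The one genuine difference is your first paragraph. The paper jumps straight into the casework and, in the surjectivity step, simply ``picks $x$ with $\npi(x)=^* y$'' and asserts $F'(x)=^* y$, without saying why Lemma~\ref{lem:ipqjet} applies (that lemma only covers $z\in V_\beta$ for a $\beta$ with $p^+_\beta\in G$, and the preimage $x$ is not obviously in such a $V_\beta$). Your density argument---that below $p^3_{**}$ one can always pass to $q\wedge p^+_\beta$ for $\beta\in S$ above any given $\gamma$---cleanly upgrades Lemma~\ref{lem:ipqjet} to the universal statement $F'(z)=^*\npi(z)$ for \emph{all} $z\subseteq\nX$ in $V[G]$, which is exactly what the surjectivity step needs. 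This makes explicit what the paper leaves implicit. (For the other cases the paper does not need this: the sets $x,y$ it builds are assembled from the $P_\alpha$-names $F'_\xi,\nx_\xi,\ny_\xi$ and hence already live in $V_\beta$.)

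One small imprecision: in your injectivity argument, $F'(z)$ is not literally ``concentrated on $\mathrm{BadHom}$'' but on the (also small) set of $\xi$ with $F'_\xi(\emptyset)\ne\emptyset$; this is harmless since that set is contained in your $\mathrm{BadHom}$ anyway.
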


\begin{proof}
      \emph {All and nothing:}
  We claim that for almost all $\zeta$,
  $F'_\zeta(\nx_\zeta)=\ny_\zeta$.
  Assume that $\ell\in \ny_\zeta\setminus F'_\zeta(\nx_\zeta)\subseteq 
  \npi(\nX)$. Then 
  $\ell\in \npi(\nX)$, and 
  $\ell$ is not in $F'(\nX)=^* \npi(\nX)$,
  so there cannot be many such $\ell$.
  Similarly 
  $F'_\zeta(\emptyset)=\emptyset$
  for almost all $\zeta$ .

  

  \emph{Unions:} We claim that for almost all $\zeta$,
  $F'_\zeta(a)\cup F'_\zeta(b)= 
  F'_\zeta(a\cup b)$ for all subsets $a,b$ of $\nx_\zeta$.
  Let $A\subseteq \lambda$ be the set of counterexamples,
  i.e., for $\xi\in A$
  there are 
  $\ell_\xi\in \ny_\xi$, and $a_\xi$, $b_\xi$ subsets
  of $\nx_\zeta$ such that 
  $\ell_\xi\in 
  \bigl(F'_\xi(a_\xi)\cup F'_\xi(b_\xi)\bigr)\Delta
  F'_\xi(a_\xi\cup b_\xi)$.
  Set $x:=\bigcup_{\xi\in A}a_\xi $
  and $y:=\bigcup_{\xi\in A}b_\xi $.
  Then $\ell_\xi$ is in $\bigr(F'(x)\cup F'(y)\bigl) \Delta F'(x\cup y)=^*\emptyset$, so $A$ cannot be large.


  
  
  
  \emph{Complements:}
  We claim that for almost all $\xi$,
  $F'_\xi(a)\cap F'_\xi(\nx_\xi\setminus a)=\emptyset$.
  Let $A$ be the set of counterexamples, i.e., for $\xi\in A$
  there is an $a_\xi\subseteq \nx_\xi$ and $\ell\in \ny_\xi$ 
  such that
  $\ell_\xi\in F'_\xi(a_\xi)\cap F'_\xi(\nx_\xi\setminus a_\xi)$. 
  Then $\ell_\xi$ is in $F'(\bigcup_{\zeta\in A}a_\zeta )
  \cap F'(\bigcup_{\zeta\in A}\nx_\xi\setminus a_\zeta)=^*\emptyset$, so $A$ cannot be large.
  
  
  
  \emph{Injectivity:}
  We already know that union and complements
  (and thus disjointness) are preserved,
  so it is enough to
  show that a nonempty set is mapped to a nonempty set.
  
  Assume this fails often, then we get an $x\subseteq \nX$
  of size $\lambda$ such that $\emptyset=F'(x)=^*\npi(x)$,
  a contradiction.
  
  \emph{Surjectivity:}
  Assume surjectivity fails often; 
  i.e., there are many $b_\zeta\subseteq \npi(\nX)\cap\odd$ not in the range of $F'_\zeta$. Let $y$ be the union
  of those $b_\zeta$.
  Pick $x\subseteq \lambda$ such that $\npi(x)=^*y\subseteq \npi(\nX)$.
  So we can assume $x\subseteq \nX$ and so
  $F'(x)=^*y$, which implies that 
  $F_\zeta(x\cap \nx_\zeta)=y\cap A_\zeta=b_\zeta$
  for almost all $\zeta$, a contradiction.
\end{proof}

\begin{lemma} 
For each $\beta\in S$:
$p^3_{**}$ forces (in $P_\beta$):
There is a $\Gen:\nX \to \npi(\nX)$ bijective
such that for all $z\subseteq\nX$ (in $V_\beta$),
$p^{+}_\beta/G_\beta$ forces
$\npi(z)=^*\Gen''z$.
\end{lemma}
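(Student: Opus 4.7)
\medskip

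\noindent\textbf{Proof proposal.}
The plan is to turn the pointwise Boolean algebra isomorphisms $F'_\xi$ supplied by the previous lemma into pointwise \emph{set-theoretic} bijections $g_\xi\colon \nx_\xi\to\ny_\xi$, and then glue them into a single bijection $\Gen$, discarding a small error set.

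First, I would work in $V_\beta$ assuming $p^3_{**}\in G_\beta$ and let $E^*\subseteq U^\odd$ be the co-small set of $\xi$ for which $F'_\xi:\mathcal P(\nx_\xi)\to\mathcal P(\ny_\xi)$ is a Boolean algebra isomorphism, as guaranteed by the previous lemma. The crucial observation is that, since $\lambda$ is inaccessible, each interval $A_\xi=I^*({\ge}\xi,{<}\nu)$ has cardinality $<\lambda$ (being a ${<}\lambda$-union of sets of size $2^{\theta^*_\zeta}<\lambda$); consequently $|\nx_\xi|,|\ny_\xi|<\lambda$, and the powersets $\mathcal P(\nx_\xi)$ and $\mathcal P(\ny_\xi)$ are complete atomic Boolean algebras whose atoms are exactly the singletons.

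Next, I would argue that any Boolean algebra isomorphism $\phi$ between two powersets $\mathcal P(X)$ and $\mathcal P(Y)$ is induced by a bijection $X\to Y$. Indeed, $\phi$ is an order isomorphism, hence preserves arbitrary suprema; since $\phi$ maps atoms to atoms, there is a bijection $g\colon X\to Y$ with $\phi(\{x\})=\{g(x)\}$, and then for any $A\subseteq X$ we get $\phi(A)=\phi\bigl(\bigvee_{x\in A}\{x\}\bigr)=\bigvee_{x\in A}\{g(x)\}=g''A$. Applying this to each $F'_\xi$ with $\xi\in E^*$ produces bijections $g_\xi\colon\nx_\xi\to\ny_\xi$ with $F'_\xi(a)=g_\xi''a$ for all $a\subseteq\nx_\xi$.

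I would then glue: set $\nX_0:=\bigcup_{\xi\in E^*}\nx_\xi$ and $\nY_0:=\bigcup_{\xi\in E^*}\ny_\xi$. Both $\nX\setminus\nX_0$ and $\npi(\nX)\setminus\nY_0$ are small (unions of ${<}\lambda$ many sets each of size ${<}\lambda$, hence of size ${<}\lambda$ by inaccessibility), while $|\nX|=|\npi(\nX)|=\lambda$ by Lemma~\ref{lem:Alarge}. Define $\Gen\restriction \nX_0:=\bigcup_{\xi\in E^*}g_\xi$, and extend arbitrarily to a bijection $\Gen\colon\nX\to\npi(\nX)$ (possibly first shrinking $\nX_0$ and $\nY_0$ by a small set so the remainders have the same cardinality). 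For $z\subseteq\nX$ in $V_\beta$, we compute
\[
\Gen''z\ =^*\ \bigcup_{\xi\in E^*}g_\xi''(z\cap\nx_\xi)\ =\ \bigcup_{\xi\in E^*}F'_\xi(z\cap\nx_\xi)\ =^*\ F'(z),
\]
the last equation because the omitted $\xi\notin E^*$ contribute only a small set. Combined with Lemma~\ref{lem:ipqjet}, which asserts $p^+_\beta/G_\beta\Vdash \npi(z)=^*F'(z)$, this gives $p^+_\beta/G_\beta\Vdash \npi(z)=^*\Gen''z$, as required.

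The main obstacle is essentially bookkeeping: one must verify that $\Gen$, defined in $V_\beta$ from the $P_\beta$-names for $E^*$ and the $g_\xi$'s, really is a single object (a bijection) in $V_\beta$ and that the error terms discarded along the way are of size ${<}\lambda$ uniformly. Both points are immediate from inaccessibility of $\lambda$ and the fact that ``almost all $\xi$'' in the previous lemma means all but a set of size ${<}\lambda$; so no real difficulty arises beyond carefully tracking the small sets.
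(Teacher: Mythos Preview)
Your proposal is correct and follows essentially the same approach as the paper's proof: extract from each Boolean algebra isomorphism $F'_\xi$ the underlying bijection $g_\xi$ on atoms, glue these into a single map, and repair on a small set to obtain a genuine bijection $\Gen:\nX\to\npi(\nX)$. The paper's proof is simply a terse two-sentence version of what you wrote; your additional remarks (that the $A_\xi$ have size ${<}\lambda$ by inaccessibility, and the explicit verification that $\Gen''z=^*F'(z)$) make explicit the size bounds that the paper leaves implicit. One minor point: the fact that a Boolean algebra isomorphism $P(A)\to P(B)$ is induced by a bijection $A\to B$ does not actually require $A,B$ to be small---it holds for arbitrary sets, since such an isomorphism maps atoms to atoms and every element of $P(A)$ is determined by the atoms below it---so your size observation is needed only for the ``small error'' bookkeeping at the end, not for the existence of the $g_\xi$.
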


\begin{proof}
Every Boolean algebra isomorphism from $P(A)$ to 
$P(B)$ is generated by 
a bijection from $A$ to $B$ (the restriction to
the atoms). 
So there is an $U'\subseteq U^\odd$
with $|U^\odd\setminus U'|<\lambda$
such that 
$\zeta\in U'$ implies that 
$F'_\zeta$ is generated by some bijection $g_\zeta: \nx_\zeta\to\ny_\zeta$.
So $F'$ is generated by 
   $g:=\bigcup_{\zeta\in U'} g_\zeta$;
   and 
   we can change $g$ into a bijection from $\nX$
   to $\npi(\nX)$ by changing less than $\lambda$
   many values.   
\end{proof}

We now strengthen $p^3_{**}$ to some $q$
to continuously read $\Gen$ (independently of $\beta$),
again using Fact~\ref{fact:summary}.

So to summarize, we have the following (where 
we start with the 
$\Delta$-system $(M_\beta,p_\beta)_{\beta\in S}$ of Section~\ref{sec:fixit}):
\begin{corollary}\label{cor:qwrqwr}
    There is $\alpha\in \Delta$,  
    $q\in P_\alpha$
    stronger than all $p_\beta\restriction\beta$
    and canonically reading 
    $r_0\le \tilde p$, $\nX$, $\Gen$ and $\npi(\nX)$, such that the following 
    holds for all $\beta\in S$:
    \begin{itemize}
    \item $q\wedge p_\beta$ with the condition\footnote{which is $p_\beta(\beta)=\tilde p$} at index $\beta$ 
    strengthened to $r_0$ is a valid condition, called 
    $p^{++}_\beta$.
        \item $\alpha$, $p^{++}_\beta$ and the names are in $M_\beta$.
        \item $q$ forces in $P_\beta$: $|\nX|=\lambda$,
        $\Gen:\nX\to\npi(\nX)$ is a bijection, and
        if $z\subseteq \nX$ is in $V_\beta$, then
        $p^{++}_\beta/G_\beta\Vdash \npi(z)=^*\Gen''z$.
    \end{itemize}
\end{corollary}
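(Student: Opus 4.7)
The plan is to treat this as the bookkeeping summary it is and invoke, one last time, the fourth item of Fact~\ref{fact:summary}, which was tailored precisely for this kind of one-shot strengthening that canonically reads a further name uniformly in $\beta\in S$.

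First, I would collect what is already on the table. The condition $p^3_{**}$ produced in~\eqref{eq:hhourglass} already canonically reads $r_0$, $\nX$, and $\npi(\nX)$ independently of $\beta$, and the preceding lemma shows that $p^3_{**}$ forces in $P_\beta$ the existence of a bijection $\Gen:\nX\to\npi(\nX)$ with the required pointwise generating property on every $z\subseteq \nX$ in $V_\beta$. Since $p_\beta$, $\nX$, $\npi(\nX)$, $r_0$, $\npi$ and $p^3_{**}$ all lie in $M_\beta$, by elementarity $M_\beta$ sees that $p^3_{**}$ forces the existence of such a $\Gen$; so I pick a $P_\beta$-name $\Gen\in M_\beta$ witnessing this.

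Next, I apply the last item of Fact~\ref{fact:summary} with $p^3_{**}$ as the base condition and $\Gen$ as $\name x$, folded together with $r_0$, $\nX$ and $\npi(\nX)$ so that all four names are handled in a single application. This produces some $\alpha\in\Delta$ and some $q\in P_\alpha$ with $q\le p^3_{**}$ that canonically reads all four of these names in the same way for every $\beta\in S$ (i.e.\ $h^*_{\beta,\beta'}$ carries the name in $M_\beta$ to the corresponding name in $M_{\beta'}$). Because $p^3_{**}\le p_{**}=p_\beta\restriction\beta$ and $\alpha\in\Delta\subseteq\beta$, the condition $q$ is stronger than $p_\beta\restriction\beta$ for every $\beta\in S$.

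Finally, define $p^{++}_\beta := q\wedge p_\beta$ with the coordinate at $\beta$ strengthened from $\tilde p$ to $r_0$. This is a valid condition: $\dom(q)\subseteq\alpha<\beta$, so the strengthening takes place at the single index $\beta$, and $q$ already forces $r_0\le \tilde p$ (since $p^3_{**}$ does). In the notation of Corollary~\ref{cor:local}, $p^{++}_\beta=q\wedge p^{+}_\beta$. Membership of $\alpha$, $p^{++}_\beta$ and the names in $M_\beta$ is immediate from the previous steps, and the final forcing clause is a direct restatement of the previous lemma, transferred along $p^{++}_\beta\le p^{+}_\beta$. The only ``obstacle'' is purely bookkeeping — making sure that strengthening $p^3_{**}$ to read $\Gen$ canonically does not destroy the already-established canonical reading of $r_0$, $\nX$, $\npi(\nX)$, and that the whole construction is independent of $\beta$. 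Both are handled automatically by performing a single invocation of Fact~\ref{fact:summary}'s fourth item on the tuple of all four names at once.
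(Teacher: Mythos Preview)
Your proposal is correct and follows exactly the paper's approach: the paper's ``proof'' is literally the single sentence preceding the corollary, ``We now strengthen $p^3_{**}$ to some $q$ to continuously read $\Gen$ (independently of $\beta$), again using Fact~\ref{fact:summary},'' followed by the phrase ``So to summarize.'' You have simply unpacked this sentence with the appropriate bookkeeping details (membership in $M_\beta$, validity of $p^{++}_\beta$, compatibility with the earlier canonical readings), all of which are routine.
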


\subsection{Putting everything together}

\begin{corollary}
    (Assuming $\lambda$ is inaccessible and $2^\lambda=\lambda^+$.)
   $P$ forces that every automorphism 
   of $P^\lambda_\lambda$ 
   is somewhere trivial.
\end{corollary}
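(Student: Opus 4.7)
The strategy is to verify that the names $\nX$ and $\Gen$ produced in Corollary~\ref{cor:qwrqwr} already witness somewhere-triviality in the full $P$-extension; combined with Lemma~\ref{lem:somewhereimpliesdensely}, this yields Theorem~\ref{thm:main4}.

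Fix an arbitrary $P$-name $\nphi$ for an automorphism (represented by some $\npi$) and an arbitrary condition $p_*$; by a density argument it suffices to produce some $q\le p_*$ forcing that $\nphi$ is trivial on a set of size $\lambda$. Running through the machinery of Sections~\ref{sec:fixit}--\ref{sec:localreading}, one obtains a $\Delta$-system $(M_\beta,p_\beta)_{\beta\in S}$ of conditions below $p_*$, an $\alpha\in\Delta$, a condition $q\in P_\alpha$ stronger than every $p_\beta\restriction\beta$, and $P_\alpha$-names $r_0\le\tilde p$, $\nX$, $\Gen$, $\npi(\nX)$ as in Corollary~\ref{cor:qwrqwr}. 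Then $q$ forces $|\nX|=\lambda$, that $\Gen:\nX\to\npi(\nX)$ is a bijection, and, for each $\beta\in S$, that the strengthening $p^{++}_\beta\le q$ forces $\npi(z)=^*\Gen''z$ whenever $z\subseteq\nX$ lies in $V_\beta$. I would prove that $q$ itself already forces $\npi(z)=^*\Gen''z$ for \emph{every} $z\subseteq\nX$ in the final extension, which exhibits $\nphi$ as trivial on $\nX$ with generator $\Gen$.

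To that end, suppose toward a contradiction that some $q'\le q$ and some $P$-name $\name z$ satisfy $q'\Vdash \name z\subseteq\nX$ and $q'\Vdash \npi(\name z)\ne^*\Gen''\name z$. By the $\lambda^{++}$-cc (Corollary~\ref{cor:cof}) together with $\cf(\mu)=\mu>\lambda$, a standard argument shows that $\name z$ is equivalent modulo $q'$ to a $P_\gamma$-name for some $\gamma<\mu$; likewise $\supp(q')\subseteq\gamma'$ for some $\gamma'<\mu$. Using the stationarity of $S$, pick $\beta\in S$ with $\beta>\max(\gamma,\gamma',\sup(\Delta))$. Then $q'$ and $p^{++}_\beta$ are compatible: their common support lies in $\gamma'\cap M_\beta\subseteq\Delta\cap\gamma'$, where both conditions extend $q$ (since $p^{++}_\beta\restriction\beta=q$ and $q'\le q$), while at coordinate $\beta$ and above $q'$ is trivial. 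Pick a common strengthening $q''\le q',p^{++}_\beta$. Since $\name z$ is a $P_\beta$-name modulo $q'$, and $q''\le q'$, we have $q''\Vdash \name z\in V_\beta$; combining this with $q''\le p^{++}_\beta$ and Corollary~\ref{cor:qwrqwr} gives $q''\Vdash\npi(\name z)=^*\Gen''\name z$, contradicting $q''\le q'$.

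The principal obstacle is bridging the gap between the conclusion of Corollary~\ref{cor:qwrqwr}, which only controls $z\in V_\beta$, and the desired statement for arbitrary $z$ in the full $P$-extension. The $\lambda^{++}$-chain condition, the stationarity of $S$, and the freedom to choose $\beta$ above the support of any prospective counterexample together provide exactly the mechanism needed; a secondary but routine verification uses the $\Delta$-system structure (specifically that $\supp(p^{++}_\beta)\subseteq M_\beta$ and $M_\beta\cap\beta=\Delta$) to guarantee that the chosen $p^{++}_\beta$ is compatible with the counterexample condition $q'$.
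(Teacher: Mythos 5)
Your proof is correct and follows essentially the same route as the paper's: the decisive step --- choosing $\beta\in S$ above the support of the would-be counterexample condition and above a stage where $\name z$ is decided, so that this condition is compatible with $p^{++}_\beta$ and Corollary~\ref{cor:qwrqwr} yields $\npi(\name z)=^*\Gen''\name z$, a contradiction --- is exactly the paper's argument. The only differences are cosmetic: the paper runs a single contradiction from the assumption that $\nphi$ is nowhere trivial and captures $\name z$ by canonical reading below $q$, whereas you prove directly that $q$ forces triviality on $\nX$ (then conclude by density) and capture $\name z$ via the $\lambda^{++}$-cc; both devices are available in the paper and the underlying mechanism is identical.
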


\begin{proof}
   Assume towards a contradiction that 
   some $p_*$ forces that $\name\phi$ is a nowhere trivial automorphism represented by $\npi$. 
   
   As described in Section~\ref{sec:fixit}
   we find a $\Delta$-system 
   $(M_\beta,p_\beta)_{\beta\in S}$
   with $p_\beta\restriction\beta\le p_*$ for all
   $\beta\in S$,
   and we find 
   $q$, $\nX$, $\Gen$ as in Corollary~\ref{cor:qwrqwr},
   so in particular: $q\le p_\beta\restriction\beta$
   for all $S$; and
   $q$ forces that $|\nX|=\lambda$ and that
   $\Gen:\nX\to\npi(\nX)$ is a bijection.
   
   As $\npi$ is nowhere trivial, $\Gen$ cannot be 
   a generator, i.e., there is some $z\subseteq \nX$
   with $\npi(z)\ne^* \Gen''z$.
   Fix a name for this $z$ and let $q^*\le q$
   canonically read $z$. 

   Pick $\beta\in S$ above $\dom(q^*)$.
   So $q^*\wedge p^{++}_\beta$ is a valid condition,
   which forces that in the final extension $V[G]$ the following holds:
    \begin{itemize}
        \item  $z\subseteq \nX$
   with $\npi(z)\ne^* \Gen''z$, as this is forced by $q^*$.
        \item $z\in V_\beta$, as $q^*$ canonically reads $z$.
        \item So by Corollary~\ref{cor:qwrqwr} and
        as $p^{++}_\beta\in G$, we get
        $\npi(z)=^*\Gen''z$, a contradiction.\qedhere
    \end{itemize}
\end{proof}   

\bibliographystyle{amsalpha}
\bibliography{1224}
\end{document}